\documentclass[a4paper,11pt,oneside,reqno]{amsart}
\allowdisplaybreaks
\usepackage[utf8]{inputenc}
\usepackage{tikz}
\usetikzlibrary{arrows.meta,calc}
\usepackage{amssymb}
\usepackage{esint}
\usepackage{amsmath}
\usepackage{amsfonts}
\usepackage{amsthm}
\usepackage{bm}
\usepackage{mathrsfs}
\usepackage{mathtools}
\usepackage{comment}
\mathtoolsset{showonlyrefs=true}
\usepackage{appendix}
\usepackage{enumerate}
\usepackage[a4paper, total={6.6in, 9.7in}]{geometry}
\usepackage[stretch=30,shrink=30]{microtype}
\usepackage{xcolor}
\definecolor{antiquefuchsia}{rgb}{0.57, 0.36, 0.51}
\definecolor{azure}{rgb}{0.0, 0.5, 1.0}

\usepackage[backref=page, colorlinks = true, linkcolor = black, citecolor = antiquefuchsia]{hyperref}

\renewcommand*{\backref}[1]{}
\renewcommand*{\backrefalt}[4]{%
    \ifcase #1 (Not cited.)%
    \or        (Cited on page~#2.)%
    \else      (Cited on pages~#2.)%
    \fi}

\makeatletter
\def\th@plain{%
	\thm@notefont{}
	\itshape 
}
\def\th@definition{%
	\thm@notefont{}
	\normalfont 
}
\makeatother

\numberwithin{equation}{section}

\newtheorem{theorem}{Theorem}[section]
\newtheorem{lemma}[theorem]{Lemma}
\newtheorem{proposition}[theorem]{Proposition}
\newtheorem{corollary}[theorem]{Corollary}

\newtheorem*{question}{Question}

\theoremstyle{definition}

\newtheorem{remark}[theorem]{Remark}

\renewcommand{\b}{\mathbb{B}}

\renewcommand{\r}{\mathbb{R}}
\let\textcedilla\c
\renewcommand{\c}{\mathbb{C}}

\DeclareMathOperator{\dvol}{dvol}

\DeclareMathOperator{\vol}{vol}

\renewcommand\labelitemi{$\vcenter{\hbox{\tiny$\bullet$}}$}

\title[Negatively Curved Minimal Surfaces in the Round Four-Sphere]{Negatively Curved Minimal Surfaces in the Round Four-Sphere}

\author[Riccardo Caniato]{Riccardo Caniato}
\address{Warwick Mathematics Institute, Zeeman Building, University of Warwick, Coventry, CV4 7AL, United Kingdom}
\email{riccardo.caniato@warwick.ac.uk}
\date{\today} 
\subjclass[2020]{Primary 53A10; Secondary 53C42, 53C21}

\begin{document}
\begin{abstract}
    We construct smooth closed embedded minimal surfaces with everywhere negative Gaussian curvature in the unit round four-sphere. More precisely, for every sufficiently large integer $m$, our construction gives a minimal surface $\Sigma_m$ 
    of genus $m^2+1$ such that $\operatorname{Area}(\Sigma_m)\to 4\pi^2$ as $m\to+\infty$. This result yields a complete solution to Problem 101 in Yau's 1982 list of open problems.
\end{abstract}
\maketitle
\tableofcontents

\section{Introduction}
\label{sec:s4-introduction}
\subsection{Background and main results}
Understanding how the curvature of an ambient manifold constrains the intrinsic geometry of its minimal surfaces is a classical and fascinating problem in Riemannian geometry. By the Gauss equation, the Gaussian curvature of a minimal surface equals the ambient sectional curvature along its tangent plane minus one half of the squared norm of its second fundamental form. Nonpositive ambient curvature therefore forces nonpositive Gaussian curvature. On the other hand, positive ambient curvature presents a more subtle question, since the sign of the Gaussian curvature of a minimal surface depends on the balance between the ambient curvature and the second fundamental form. Round spheres, as the model spaces of constant positive sectional curvature, provide a natural setting in which to study this interaction.

Let $\Sigma$ be a smooth Riemannian surface and let $\iota:\Sigma\to\mathbb{S}^n$ be a smooth minimal immersion, where $g_{\mathbb{S}^n}$ denotes the round metric on $\mathbb{S}^n$ of sectional curvature one. Write $g_\iota:=\iota^*g_{\mathbb{S}^n}$ for its induced metric, $K_\iota$ for the Gaussian curvature of $g_\iota$, and $A_\iota$ for its normal-bundle-valued second fundamental form in $\mathbb{S}^n$. The Gauss equation for the immersion $\iota$ in the ambient $\mathbb{S}^n$ reads
\begin{align*}
    K_\iota=1-\frac{|A_\iota|_{g_\iota}^2}{2},
\end{align*}
where the norm uses $g_\iota$ on the tangent bundle and the restriction of $g_{\mathbb{S}^n}$ on the normal bundle.
In his 1982 problem list, Yau asked whether the contribution of the second fundamental form can dominate the positive ambient curvature at every point of a closed minimal surface, thus giving $K_\iota<0$ everywhere \cite[Problem~101]{YauProblemSection}. Despite the extensive existence theory for minimal surfaces in spheres, this question remained open for more than four decades.

Two classical nonexistence results reveal substantial restrictions on the ambient dimension and the induced metric. Lawson \cite[Proposition~1.5]{LawsonCompleteMinimalSurfaces} proved that no closed minimal surface in the round $\mathbb S^3$ can have negative Gaussian curvature everywhere. Hence, negatively curved examples must be sought in higher dimensional round spheres. However, Bryant \cite[Theorem~2.3]{BryantConstantCurvature} proved that a surface of constant negative Gaussian curvature admits no minimal isometric immersion into a round sphere of any finite dimension. Thus, any solution to Yau's problem must have nonconstant Gaussian curvature.

Twistor geometry in even-dimensional spheres provides another natural class in which to seek negatively curved minimal surfaces. Bryant \cite{BryantMinimalS4} proved that every compact Riemann surface admits a linearly full conformal superminimal immersion into $\mathbb{S}^4$, and Hano \cite{HanoConformalImmersions} extended this existence result to every even-dimensional sphere $\mathbb S^{2n}$, $n\geq2$.\footnote{A branched conformal minimal immersion $\iota:\Sigma\to\mathbb S^{2n}\subset\mathbb R^{2n+1}$
is called \emph{superminimal} if, in any local conformal coordinate
$z$ on its regular locus, $(\partial_z^j\iota,\partial_z^k\iota)=0$
for all integers $j,k\geq1$, where the derivatives are
taken in $\mathbb R^{2n+1}$ and $(\,\cdot\,,\,\cdot\,)$
denotes the complex-bilinear extension of the Euclidean
inner product to $\mathbb C^{2n+1}$.} 
Here and throughout, linearly full means that the image of the immersion is not contained in any proper totally geodesic subsphere.
Subsequently, Chi--Mo \cite{ChiMoSuperminimalModuli} obtained dimension bounds for the moduli spaces of branched superminimal immersions into $\mathbb S^4$ with fixed degree and conformal structure, while Fern\'andez \cite{FernandezHarmonicSpheres} determined the exact complex dimensions of the spaces of linearly full superminimal two-spheres of fixed degree in $\mathbb S^{2n}$.\footnote{\,Note that specifying ``superminimal'' is redundant here, as every minimal two-sphere is automatically superminimal; see \cite{CalabiMinimalImmersionsSpheres}.}  
More recently, in \cite{CaniatoHighDimensionalFamilies}, we constructed families of arbitrarily large dimension consisting of linearly full branched superminimal immersions, with both the conformal structure of the domain and the target sphere $\mathbb S^{2n}$, $n\geq2$, fixed. However, in $\mathbb{S}^4$ superminimality imposes an obstruction to negative curvature: a nonconstant branched superminimal immersion of a closed Riemann surface into $\mathbb{S}^4$ cannot have nonpositive Gaussian curvature throughout its regular locus \cite[Remark~1.4]{CaniatoHighDimensionalFamilies}. Thus, any solution to Yau's problem in $\mathbb S^4$ must lie outside the superminimal class.

These nonexistence results might suggest that positive ambient sectional curvature obstructs the existence of closed minimal surfaces with everywhere negative Gaussian curvature. Holomorphic embeddings into complex projective spaces show, however, that no such obstruction holds in general. Tian's approximation theorem \cite{TianPolarizedKahlerMetrics} gives holomorphic embeddings of closed hyperbolic surfaces into projective spaces of increasing dimension, whose induced Fubini--Study metrics converge in $C^2$ to the hyperbolic metric after rescaling. Their Gaussian curvature is therefore everywhere negative for sufficiently large dimension. More recently, Mohsen \cite{MohsenNegativelyCurvedCompleteIntersections} proved an existence theorem with the ambient manifold fixed: every smooth complex projective manifold of complex dimension at least three, equipped with a prescribed Hermitian metric, contains a smooth closed complex curve with everywhere negative induced Gaussian curvature. For K\"ahler ambient metrics, these holomorphic curves are minimal; in particular, Mohsen's theorem gives embedded examples already in $\mathbb{CP}^3$ with its Fubini--Study metric. These curves arise as complete intersections defined by sections of high tensor powers of an ample line bundle, using the asymptotic methods of Donaldson and Auroux; see \cite{Donaldson1996,Auroux1997,Auroux2001Estimated}.

Turning back to the setting of round spheres, recent progress has come from the connection between harmonic maps, unitary representations and random matrix theory developed by Song \cite{SongRandomHarmonicMaps}. In particular, \cite[Theorem~0.3]{SongRandomHarmonicMaps} constructs closed branched minimal immersions $\iota_j:\Sigma_j\to\mathbb{S}^{n_j}$, with $n_j\to+\infty$, such that
\begin{align*}
    \lim_{j\to\infty}
    \frac{1}{\operatorname{Area}(\iota_j)}
    \int_{\Sigma_j}
    |K_{\iota_j}+8|\,
    d\operatorname{vol}_{g_{\iota_j}}=0.
\end{align*}
This averaged convergence does not establish negativity of the Gaussian curvature at every point. Song's argument gives smooth convergence on compact subsets of the underlying thrice-punctured sphere, but does not provide uniform curvature control near its cusps.

Building on Song's method, Ancona--Labourie--Roig-Sanchis--Toulisse \cite{AnconaLabourieRoigSanchisToulisse} obtained closed minimal immersions into all sufficiently high-dimensional round spheres with everywhere negative Gaussian curvature. More precisely, their curvatures converge uniformly to $-8$ as the ambient dimension tends to infinity. They replace the thrice-punctured sphere in Song's argument with compact hyperbolic orbifolds with trivial Teichm\"uller space, obtaining uniform geometric control over the entire surface. However, their argument does not provide an explicit dimension threshold or establish embeddedness.

In this paper, we prove the existence of closed embedded negatively curved minimal surfaces in the round four-sphere, the smallest possible ambient dimension. For a surface specified as a submanifold rather than by a parametrisation, we write $K_\Sigma$ for its Gaussian curvature.
\begin{theorem}
    \label{thm:s4-main}
    There exists $m_0\in\mathbb{N}$ such that, for every integer $m\geq m_0$, the unit round sphere $\mathbb{S}^4$ contains a smooth closed connected embedded minimal surface $\Sigma_m$ of genus $m^2+1$ satisfying
    \begin{align*}
        &K_{\Sigma_m}<0 \quad\text{ everywhere on }\Sigma_m \qquad\mbox{ and }\qquad \lim_{m\to\infty}\operatorname{Area}(\Sigma_m)=4\pi^2.
    \end{align*}
\end{theorem}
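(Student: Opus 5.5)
The area $4\pi^2$ is twice the area $2\pi^2$ of the Clifford torus, and the genus $m^2+1$ is what one gets by joining two tori along $m^2$ necks: each neck raises the genus by one, so two tori of total genus $2$ with $m^2$ necks give genus $2+m^2-1=m^2+1$. I therefore plan a gluing (desingularization) construction in the spirit of Kapouleas. The building blocks are two Clifford-type flat tori $T_1,T_2\subset\mathbb{S}^4$ that meet transversally. A flat torus in $\mathbb{S}^4$ has $K=0$, so $|A|^2\equiv 2$, which is the borderline case. Since the necks (scaled catenoid-type pieces) have strongly negative curvature, the difficulty is entirely on the torus parts.

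\textbf{Choice of tori.} I will take $T_1$ to be the Clifford torus in a totally geodesic $\mathbb{S}^3$. For $T_2$ I will take a rotated copy, such as its image under an isometry of $\mathbb{S}^4$ in a one-parameter family, arranged so that $T_1\cap T_2$ is a finite set of transverse points. I will use the $\mathbb{Z}_m\times\mathbb{Z}_m$ symmetry of the Clifford torus, generated by rotations in the two coordinate planes, so that the intersection consists of $m^2$ points forming one orbit. At each intersection point the two tangent planes are transverse 2-planes in $\mathbb{R}^4$. Where the planes meet at a generic angle, I will desingularize using a Lawlor-type neck, or a holomorphic curve $zw=\varepsilon$ after a rotation that makes the pair of planes complex for a suitable orthogonal complex structure. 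This neck is a minimal surface in $\mathbb{R}^4$ asymptotic to the two planes, and its Gaussian curvature is strictly negative.

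\textbf{Steps.} (1) Build an approximate solution by gluing the $m^2$ scaled necks of scale $\tau\ll m^{-1}$ into $T_1\cup T_2$, with cutoff transitions, imposing the full symmetry group so that kernels are killed. (2) Study the Jacobi operator on the initial surface. On each torus the Jacobi operator is $\Delta+|A|^2+2$ acting on normal sections of a rank-two bundle. I will use the symmetries, for example a reflection swapping the tori together with the $\mathbb{Z}_m^2$ action, to exclude Jacobi fields and the finite-dimensional approximate kernel coming from isometries, keeping at most a balancing parameter such as the neck scale or dislocation angle to absorb the remaining cokernel. (3) Solve the nonlinear equation by a fixed-point argument in weighted Hölder spaces, obtaining a minimal $\Sigma_m$ that is $C^2$-close to the initial surface in rescaled norms. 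Embeddedness then follows because the necks are embedded and the tori meet only at the desingularized points. (4) Control the area: the necks contribute $O(m^2\tau^2\log)$, which tends to $0$, giving area $\to 4\pi^2$.

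\textbf{Main obstacle: negative curvature on the torus regions.} There $K=0+O(\text{perturbation})$, and the perturbation has no sign a priori. I plan to show that the correction $u$ solving the linearized equation forced by the neck, which to leading order is a sum of Green's functions of the Jacobi operator centered at the $m^2$ points, has an explicit leading term. Computing $K$ to second order gives $K=-\frac{1}{2}\bigl(2\langle A,\nabla^2 u\rangle+\dots\bigr)+O(\text{quadratic})$. Because the Clifford torus's second fundamental form is parallel and trace-free, the first-order variation of $|A|^2$ is $2\langle A,\nabla^2u\rangle$, which need not be signed. So I expect I must choose the tori so that the leading correction is an ambient-rotation-like field, with quantitative size of order $\tau^2$ dominating all errors uniformly. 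The quadratic term $\frac{1}{2}|\nabla^2u|^2$-type contribution, or the normal connection twist, should then make $K$ strictly negative everywhere away from the necks. I would first try averaging: by $\mathbb{Z}_m^2$ symmetry the linear term's average vanishes, so negativity must come from a quadratic term with sign. Establishing a uniform lower bound on this term, relative to errors, is the crux. In the transition annuli between neck and torus, negativity will come from the catenoidal curvature, which decays like $\tau^2/r^4$, and this must be matched against the torus-region estimate.
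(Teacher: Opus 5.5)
\textbf{The building blocks cannot produce $m^2$ necks.} Two congruent Clifford tori in $\mathbb S^4$ never meet transversally in $m^2$ points for large $m$. Write $T_1=\mathbb S^4\cap\{x_5=0\}\cap\{Q=0\}$ with $Q:=x_1^2+x_2^2-x_3^2-x_4^2$, and let $T_2=g(T_1)$.

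If $ge_5=\pm e_5$, both tori lie in the same $\mathbb S^3$, so they cannot meet in isolated transverse points. Otherwise $T_1\cap T_2$ lies in the great $2$-sphere $S=\mathbb S^4\cap V$, where $V:=\{x_5=0\}\cap g(\{x_5=0\})\cong\mathbb R^3$. There it equals $S\cap\{Q|_V=0\}\cap\{(Q\circ g^{-1})|_V=0\}$, the common zeros of two quadratic forms on $V$. By Bezout, two conics in $\mathbb{RP}^2$ without a common component meet in at most $4$ points. Hence, whenever the intersection is finite, it has at most $8$ points, for every $g$.

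For the same reason you cannot make the intersection a free $\mathbb Z_m\times\mathbb Z_m$-orbit. For $m\geq3$, a rotated Clifford torus invariant under the same rotation group must lie in the same $\mathbb S^3$ and coincide with $T_1$. Also, the neck models you name ($zw=\varepsilon$, Lawlor necks) require the two tangent planes to be complex lines for a common orthogonal complex structure. That is a nongeneric condition on their principal angles.

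So desingularizing transverse tori yields a bounded number of necks and cannot give genus $m^2+1$ for large $m$. With $m^2\to\infty$ necks and area tending to $4\pi^2$, the necks must accumulate, and the limit is a single Clifford torus with multiplicity two. This is the paper's doubling construction. Two sheets $z=\pm h_\zeta$ over the central torus are joined by catenoidal necks at the lattice points. Here $h_\zeta\approx\tau\Phi_m$, with $\Phi_m$ the $L_m$-periodic Green function of $\Delta_{\mathbb T}+2$, and $\tau\asymp m^{-1}e^{-m^2/(2\pi)}$ is fixed by matching the constant terms of $\tau\Phi_m$ and the catenoid.

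\textbf{The curvature mechanism you propose cannot close the crux you flag.} Away from the necks the correction satisfies $|\nabla^2u|\sim\tau\rho^{-2}$. A nonzero linear term $\langle A,\nabla^2u\rangle$ therefore has size $\tau\rho^{-2}$. Any quadratic term has size $\tau^2\rho^{-4}$, smaller by the factor $\tau\rho^{-2}$, which is tiny wherever $\rho\gg\sqrt\tau$, i.e.\ on essentially the whole sheet.

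Pointwise negativity thus requires the linear term to vanish identically; vanishing on average under the symmetries is irrelevant. This is exactly what fails for doubling inside $\mathbb S^3$, where the linearisation is $\partial_{ss}w-\partial_{tt}w$. Near a neck, $\partial_{ss}\log r-\partial_{tt}\log r=2(t^2-s^2)r^{-4}$, which changes sign.

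The paper's key idea is to double inside the minimal hypersurface $\mathcal N=\{\cos z\,f+\sin z\,e_5\}$. There a graph of height $w$ has metric $\cos^2w\,g_{\mathbb T}+dw\otimes dw$, which is even in $w$. The curvature expansion therefore starts at $\mathcal K_2(w)=\det D^2w+w\Delta_{\mathbb T}w+|Dw|^2$. For $w=\tau\Phi_m$ this equals $\tau^2(|D\Phi_m|^2-\tfrac12|D^2\Phi_m|^2)\leq-c\tau^2(m^4+\rho^{-4})$ on both sheets, by the Hessian lower bound for the Green function.

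The price is that $\mathcal N$ is not totally geodesic, so one must also solve for a displacement $b$ normal to $\mathcal N$. This $b$ is $O(\tau^2)$, via a uniform inverse coming from the $\chi$-equivariant angular Poincar\'e inequality, and it changes $K$ by only $O((m^{-2}+\tau)\tau^2\rho^{-4})$. Near the waists, negativity follows from $C^2$ convergence to the unit catenoid after rescaling by $\tau^{-1}$.
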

These surfaces are obtained by deforming two nearby toroidal sheets joined by $m^2$ small necks modelled on catenoids, via a procedure known as \emph{doubling}. Their areas tend to twice the area of the flat Clifford torus, while their genera tend to infinity. In particular, the Gauss--Bonnet theorem gives
\begin{align*}
    \int_{\Sigma_m}K_{\Sigma_m}\,d\operatorname{vol}_{\Sigma_m}=-4\pi m^2.
\end{align*}
The construction and the estimates that preserve negative curvature under the deformations are described below.
\begin{corollary}
    \label{cor:s4-higher-spheres}
    Every unit round sphere $\mathbb{S}^n$, $n\geq4$, contains infinitely many smooth closed connected embedded minimal surfaces with everywhere negative Gaussian curvature and distinct topological type.
\end{corollary}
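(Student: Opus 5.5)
The corollary follows from Theorem~\ref{thm:s4-main} by placing the surfaces $\Sigma_m$ inside an equatorial four-sphere. Fix $n\geq4$ and let $\mathbb{S}^4\subset\mathbb{S}^n$ be the totally geodesic great four-sphere cut out by the last $n-4$ coordinates of $\mathbb{R}^{n+1}$. For each $m\geq m_0$, I would take the surface $\Sigma_m\subset\mathbb{S}^4$ from Theorem~\ref{thm:s4-main} and regard it as a submanifold of $\mathbb{S}^n$ via this inclusion.

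Three properties need checking.

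\emph{Embeddedness.} The surface $\Sigma_m$ is smooth, closed, connected and embedded in $\mathbb{S}^n$, because it is a composition of two embeddings.

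\emph{Minimality.} The mean curvature vector of $\Sigma_m$ in $\mathbb{S}^n$ splits into the tangential-to-$\mathbb{S}^4$ part, which is the mean curvature of $\Sigma_m$ in $\mathbb{S}^4$, plus the normal-to-$\mathbb{S}^4$ part, which is the trace of the second fundamental form of $\mathbb{S}^4\subset\mathbb{S}^n$. The first vanishes by Theorem~\ref{thm:s4-main}. The second vanishes because $\mathbb{S}^4$ is totally geodesic. Hence $\Sigma_m$ is minimal in $\mathbb{S}^n$.

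\emph{Curvature.} The Gaussian curvature $K_{\Sigma_m}$ is intrinsic to the induced metric. The induced metric from $\mathbb{S}^n$ coincides with the one from $\mathbb{S}^4$, since the round metric of $\mathbb{S}^n$ restricts to the round metric of $\mathbb{S}^4$. So $K_{\Sigma_m}<0$ everywhere. As a consistency check, the Gauss equation in $\mathbb{S}^n$ gives the same value, because the extra normal directions contribute nothing to $A$.

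Finally, the genera $m^2+1$ are pairwise distinct for distinct $m\geq m_0$. Closed orientable surfaces are classified by genus, so the $\Sigma_m$ have pairwise distinct topological types. Orientability holds because embedded closed surfaces in $\mathbb{S}^4$ arising from doubling two-sided tori are orientable; alternatively, nonorientable surfaces with genus in the appropriate sense are still distinguished by Euler characteristic $2-2(m^2+1)$, which is distinct for distinct $m$. This gives infinitely many examples. There is no real obstacle; the only point needing care is the totally geodesic argument for minimality, which is standard.
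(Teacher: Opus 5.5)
Your proposal is correct and is exactly the paper's argument: compose the embeddings from Theorem~\ref{thm:s4-main} with a totally geodesic inclusion $\mathbb{S}^4\hookrightarrow\mathbb{S}^n$. This preserves embeddedness, minimality and the induced metric (hence $K<0$), and the distinct genera $m^2+1$ give distinct topological types. Your detour on orientability is unnecessary, since the distinct Euler characteristics $-2m^2$ already distinguish the surfaces; orientability is in any case immediate, because $M_\zeta$ is the boundary of an open subset of $\mathcal N$.
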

\begin{proof}
    Compose any embedding supplied by Theorem~\ref{thm:s4-main} with a totally geodesic inclusion
    $\mathbb{S}^4\hookrightarrow\mathbb{S}^n$. This preserves the induced metric, minimality and
    embeddedness.
\end{proof}
\subsection{Outline of the proof}
We construct our minimal surfaces by joining two nearby copies of the Clifford torus with small catenoidal necks inside a suitable minimal hypersurface $\mathcal N\subset\mathbb S^4$, and then perturbing the resulting initial surfaces to correct their mean curvature. Our analytic approach relies on the doubling construction of Kapouleas--Yang \cite{KapouleasYangClifford} and the linearised doubling theory of Kapouleas--McGrath \cite{KapouleasMcGrathLD}. To the best of our knowledge, this is the first extension of doubling techniques to higher codimension. The choice of $\mathcal N$ is essential: its symmetries and its geometry as a hypersurface of $\mathbb S^4$ provide the cancellations and estimates needed both to solve the mean-curvature equations and to establish strictly negative Gaussian curvature on the final surfaces.

For each sufficiently large integer $m$, we choose a square lattice $L_m\subset\mathbb T$ with $m^2$ points. For $\tau>0$ to be determined later, we then let $\psi_{\tau}=\tau\Phi_m$ be the principal term of the height function defining the upper sheet, where
$\Phi_m\in L^1(\mathbb T)\cap C^\infty(\mathbb T\smallsetminus L_m)$ is the unique function invariant under translations by elements of $L_m$ that satisfies
\begin{align*}
    (\Delta_{\mathbb T}+2)\Phi_m=2\pi\sum_{p\in L_m}\delta_p \qquad\mbox{ distributionally on }\mathbb T.
\end{align*}
Here, $\delta_p$ denotes the Dirac measure of mass one at $p$. Thus, $\psi_{\tau}$ satisfies $(\Delta_{\mathbb T}+2)\psi_{\tau}=0$ away from $L_m$. This cancels the linear term in the scalar mean curvature within $\mathcal N$ of the graph with height $\psi_{\tau}$. The nonlinear terms will be estimated and corrected later. Near a lattice point $p\in L_m$, $\psi_{\tau}$ has the expansion
\begin{align*}
    \psi_{\tau}=\tau\log r+\tau A_m+o_r(1), \qquad r:=d_{\mathbb T}(\,\cdot\,,p)\to0,
\end{align*}
where $A_m\in\mathbb R$ is independent of $p$ by translation invariance, and $d_{\mathbb T}$ denotes the geodesic distance induced by the flat metric on $\mathbb T$. The Euclidean catenoid used to define the model catenoidal neck $F_{\tau,p}$ at $p$ in \eqref{eq:s4-symmetry-neck-map} has upper height $C_\tau(r)=\tau\operatorname{arcosh}(r/\tau)$ for $r\geq\tau$. Since
\begin{align*}
    C_\tau(r)=\tau\log r+\tau\log(2/\tau)+O(\tau^3r^{-2})\qquad\forall\,r\ge\tau,
\end{align*}
its logarithmic term agrees with that of $\psi_\tau$. To make the additive constants agree as well, we choose the reference radius $\tau_0$ so that
\begin{align*}
    A_m=\log(2/\tau_0),
    \qquad\text{ or equivalently }\qquad
    \tau_0=2e^{-A_m}.
\end{align*}
The Green-function estimates give $A_m=m^2/(2\pi)+\log m+O(1)$ as $m\to+\infty$, and hence they force
\begin{align*}
    \tau_0\asymp m^{-1}\exp\left(-\frac{m^2}{2\pi}\right),
\end{align*}
with comparison constants independent of $m$. We allow the common minimum radius of the catenoidal necks to vary by setting $\tau_\zeta:=e^\zeta\tau_0$, with $\zeta\in[-1,1]$. Since by construction $\pm\psi_{\tau_{\zeta}}$ are
the principal terms of the two sheet height functions, $\zeta$ varies the neck waist radius and the separation of the sheets at once. This parameter will be chosen when solving the remaining scalar mean-curvature equation, to cancel the coefficient which is not determined by the scalar correction alone.

In Section~\ref{sec:s4-scalar-family}, we join the two sheets to the catenoidal models $F_{\tau_{\zeta},p}$ at distances comparable to $\tau_\zeta^\alpha$, for a fixed sufficiently small $\alpha>0$. These distances are much larger than $\tau_\zeta$ and much smaller than the lattice spacing, which is of order $m^{-1}$. The resulting surface $M_\zeta\subset\mathcal N$ is smooth, closed, connected and embedded, of genus $m^2+1$.

To specify a normal deformation of $M_\zeta$ within $\mathbb{S}^4$ that will make it minimal, we introduce two smooth displacement functions $b,\phi\in C^{\infty}(M_{\zeta})$. Writing $\nu_\zeta$ for the outward unit normal of $M_\zeta$ within $\mathcal N$, we first move each point within $\mathcal N$ along $\phi\nu_{\zeta}$, and then move the resulting point in the direction of the unit normal $n$ to $\mathcal N$ in $\mathbb S^4$ by a displacement $b$:
\begin{align*}
    G_{\zeta,\phi}(x)&:=\exp_x^{\mathcal N}\bigl(\phi(x)\nu_\zeta(x)\bigr),\\
    \widehat X_{\zeta,\phi,b}(x)&:=\exp_{G_{\zeta,\phi}(x)}^{\mathbb S^4}\bigl(b(x)n(G_{\zeta,\phi}(x))\bigr)\qquad\forall\,x\in M_\zeta.
\end{align*}
The final minimal surfaces will be the images of $\widehat X_{\zeta,\phi,b}$ for a suitable choice of $\zeta,\phi,b$.

Throughout our construction, we impose the symmetries defined in Section~\ref{sec:s4-geometry} on the initial surfaces and all their deformations to make the linear problem solvable with suitable estimates.
The hypersurface $\mathcal N$ is preserved by the lattice translations, reflections in the torus coordinates, interchange of the two circle factors, and reflection across the central torus. Interchanging the circle factors reverses the chosen normal orientation of $\mathcal N$, while preserving the outward normal orientation of $M_\zeta$ within $\mathcal N$. Consequently, $b$ must change sign under this interchange, whereas $\phi$ is invariant.
On the central torus, lattice invariance excludes the kernel of $\Delta_{\mathbb T}+2$, making the Green function $\Phi_m$ uniquely determined. On each model neck, the symmetry conditions on $b$ force zero average around every circular cross-section. The resulting angular Poincar\'e inequality is essential to obtaining inverse estimates independent of the waist radius. The conditions on $\phi$ allow a constant angular component, which needs to be treated via a separate linear argument. Since these symmetries are induced by ambient isometries, the nonlinear mean-curvature equations preserve the corresponding conditions on the two components.

To test minimality after deformation, we parallel transport the two normals of $G_{\zeta,\phi}(M_\zeta)$ along the spherical displacement geodesics and project onto the new normal planes. For small deformations, these projections remain independent. Minimality is therefore equivalent to the vanishing of the mean-curvature vector's inner products with both normal fields.

For every $\zeta\in[-1,1]$ and every $\phi$ satisfying the bounds in Section~\ref{sec:s4-transverse}, we first solve for $b$ so that the mean-curvature component associated with $n$ vanishes. We linearise the mean-curvature equation associated with $n$ with respect to $b$ at $b=0$, keeping $\zeta$ and $\phi$ fixed. The uniform bound on the second fundamental form of $\mathcal N$ controls the zeroth-order coefficient of the resulting operator, even as the neck curvatures diverge. The angular Poincar\'e inequality, metric comparisons and elliptic estimates then give a uniformly bounded inverse in the weighted spaces used in that section. The quadratic gradient bound \eqref{eq:s4-height-graph-identities} on the sheets and the neck estimates bound the mean-curvature component associated with $n$ at $b=0$ by $C\tau_\zeta^2$ in the weighted H\"older norm used in Section~\ref{sec:s4-transverse}. The nonlinear estimates show that the fixed-point map preserves a closed ball of radius $C\tau_\zeta^2$ in the weighted displacement norm and has Lipschitz constant $O(\tau_\zeta)$. For sufficiently large $m$, the contraction theorem therefore gives a solution satisfying
\begin{align*}
    \|b\|_{L^\infty(M_\zeta)}\leq C\tau_\zeta^2.
\end{align*}
The weighted bound also controls the first two derivatives of $b$, which will appear in the subsequent Gaussian curvature estimates.

The displacement $b$ also changes the remaining mean-curvature component. By \eqref{eq:s4-feedback-estimates}, this change has norm at most $C\tau_\zeta^{2+\alpha/2}$ in $Y_\zeta$, the weighted $C^{0,\beta}$ space defined in \eqref{eq:s4-XY}. Adding this contribution to the scalar nonlinear remainder preserves the $O(\tau_\zeta^2)$ bound needed to solve the remaining
equation for $\phi$ and $\zeta$. The scalar linear construction solves the linearised equation up to a multiple of one prescribed function, and varying $\zeta$ allows us to cancel the remaining coefficient. We formulate these choices as a fixed-point problem that accounts for the dependence of $b$ on $(\phi,\zeta)$. Since positive powers of $\tau_\zeta$ absorb the polynomial losses in $m$, the estimates show that the fixed-point map preserves a nonempty convex set of corrections and parameters. In a weaker H\"older topology, this set is compact and the map is continuous. Schauder's fixed-point theorem therefore gives a choice for which both normal components vanish. The resulting surface $\Sigma_m:=\widehat X_{\zeta,\phi,b}(M_\zeta)$ is minimal in $\mathbb S^4$.

It remains to prove that $K_{\Sigma_m}<0$ everywhere. Write $\tau_m:=\tau_\zeta$ for the final neck parameter. For a smooth height function $w$ on an open subset of $\mathbb T$, the quadratic term in the Gaussian curvature of its graph within $\mathcal N$ is
\begin{align*}
    \mathcal K_2(w):=\det D^2w+w\Delta_{\mathbb T}w+|Dw|^2,
\end{align*}
where derivatives, norms and determinants are computed using the flat metric on $\mathbb T$. Since $\Delta_{\mathbb T}\psi_{\tau_m}=-2\psi_{\tau_m}$ away from $L_m$, the Green-function estimates give
\begin{align*}
    \mathcal K_2(\psi_{\tau_m})=|D\psi_{\tau_m}|^2-\frac12|D^2\psi_{\tau_m}|^2
    \leq-c\tau_m^2\bigl(m^4+\rho^{-4}\bigr)
    \qquad\mbox{ on }\mathbb T\smallsetminus L_m,
\end{align*}
where $\rho:=d_{\mathbb T}(\,\cdot\,,L_m)$ and $c>0$ is independent of $m$. At distances comparable to the lattice spacing, the magnitude of this negative bound is $m^4\tau_m^2\to0$ as $m\to+\infty$. To preserve negative curvature,
we must therefore control the total curvature error relative to $\tau_m^2(m^4+\rho^{-4})$.

Section~\ref{sec:s4-negative-curvature} proves this comparison on the sheet portions outside a fixed multiple $R$ of the neck radius. Choosing $R$ sufficiently large and then $m$ sufficiently large depending on $R$ makes the combined error from interpolation, higher-order curvature terms and both deformations smaller than half the magnitude of the negative bound.

On the remaining neck portions, we rescale lengths by $\tau_m^{-1}$. For fixed $R$, the initial necks converge smoothly to a compact portion of the Euclidean catenoid of waist radius one. Both deformations tend to zero in $C^2$ in the rescaled coordinates, so the final necks have the same limit. Since this catenoidal portion has Gaussian curvature bounded above by a strictly negative
constant, curvature continuity and scaling back give $K_{\Sigma_m}\leq-c_R\tau_m^{-2}$ on the neck portions, with $c_R>0$ independent of $m$. Together with the sheet estimates, this proves $K_{\Sigma_m}<0$ everywhere.

Finally, the neck and interpolation regions of $M_\zeta$ have total area $O(m^2\tau_m^{2\alpha})\to0$ as $m\to+\infty$, while each remaining sheet has area tending to the Clifford torus area $2\pi^2$. The two deformations change the area element by a factor that converges uniformly to one on $M_\zeta$ as $m\to+\infty$. Consequently, $\operatorname{Area}(\Sigma_m)\to4\pi^2$.

\begin{remark}[Comparison with doubling in $\mathbb S^3$]
    The choice of the minimal hypersurface $\mathcal{N}$ in which the Clifford torus $\mathbb{T}$ is doubled is essential to the curvature argument. In flat coordinates $(s,t)$ along the two circle factors, choose the unit normal to $\mathbb T$ within the equatorial $\mathbb S^3$ so that the principal curvatures in the $\partial_s$ and $\partial_t$ directions are $-1$ and $1$, respectively. For a smooth height function $w$ on an open subset of $\mathbb T$, with sufficiently small values, the normal graph within $\mathbb S^3$ has induced metric
    \begin{align*}
        (1+\sin 2w)\,ds^2+(1-\sin 2w)\,dt^2+dw\otimes dw.
    \end{align*}
    The linearisation of its Gaussian curvature at $w=0$ is $\partial_{ss}w-\partial_{tt}w$. This term changes sign for opposite sheet heights and therefore cannot supply a strictly negative bound on both sheets.
    
    In $\mathcal N$, the corresponding induced metric is 
    \begin{align*}
        \cos^2w\,g_{\mathbb T}+dw\otimes dw.
    \end{align*}
    This metric, and hence its Gaussian curvature, is unchanged under $w\mapsto-w$. Consequently, the curvature expansion at $w=0$ has no linear term. Its leading term is the quadratic expression $\mathcal K_2(w)$ described above, for which the Green heights give the same strictly negative bound on both sheets.
\end{remark}

\subsection{Open problems}
\label{subsec:s4-open-problems}
The results presented in this work leave several open questions concerning general ambient metrics, the possible genera of negatively curved minimal surfaces in round spheres, and the existence of negatively curved examples within more specific subclasses.

In particular, Song has proposed the following extension of Yau's question to general closed Riemannian manifolds
\cite[p.~4]{SongQuestions}.
\begin{question}[Song]
    \label{que:s4-general-ambient}
    Let $(M,g)$ be a closed Riemannian manifold with
    $\dim M\geq4$. Does there exist a closed connected
    surface $\Sigma$ and a smooth minimal immersion
    $\iota:\Sigma\to(M,g)$ such that the Gaussian
    curvature of $g_\iota:=\iota^*g$ is strictly
    negative at every point?
\end{question}
Corollary~\ref{cor:s4-higher-spheres} and Mohsen's work
give a positive answer to this question for round spheres
and for complex projective manifolds of complex dimension
at least three equipped with K\"ahler metrics, respectively.
The question remains open for general closed Riemannian
manifolds.

A more focused problem is to establish the same existence result for every smooth metric in some $C^\infty$-neighbourhood of the round metric on $\mathbb{S}^4$. Strict negativity of Gaussian curvature persists under sufficiently small $C^2$ perturbations of a fixed immersion and the ambient metric. However, one must also construct a minimal immersion for the perturbed metric, as curvature continuity alone does not provide this.

Even for the round metric, the present construction does not determine all genera for which negatively curved minimal surfaces exist. Theorem~\ref{thm:s4-main} gives only the genera $m^2+1$ for all sufficiently large integers $m$.
\begin{question}
    \label{que:s4-prescribed-genus}
    Which integers $\gamma\geq2$ occur as the genus of a smooth closed connected orientable embedded minimal surface in $\mathbb{S}^4$ with everywhere negative Gaussian curvature? In particular, does every sufficiently large integer occur?\,\footnote{\,The lower bound $\gamma\geq2$ follows from Gauss--Bonnet.}
\end{question}
Finally, the nonexistence of negatively curved closed superminimal surfaces in $\mathbb S^4$ motivates the corresponding existence problem in $\mathbb S^6$. The following question also asks for embeddedness, which is not required in
\cite[Conjecture~1.3]{CaniatoHighDimensionalFamilies}.
\begin{question}
    \label{que:s4-superminimal-six-sphere}
    Does $\mathbb{S}^6$ contain a smooth closed connected embedded linearly full superminimal surface with everywhere negative Gaussian curvature?
\end{question}

\renewcommand\labelitemi{$\vcenter{\hbox{\tiny$\bullet$}}$}
\begingroup
\renewcommand{\addcontentsline}[3]{}
\subsection*{On the use of AI}
The author used ChatGPT-6 Astra solely to improve the clarity and style of the prose, to assist in identifying potential errors or gaps in the mathematical arguments, and to create the figures appearing in the manuscript. The research ideas and all the proofs were developed without the use of AI tools. All mathematical arguments were independently checked by the author, who takes full responsibility for the content of this paper.
\par
\endgroup
\renewcommand\labelitemi{$\vcenter{\hbox{\tiny$\bullet$}}$}
\begingroup
\renewcommand{\addcontentsline}[3]{}
\subsection*{Acknowledgements}
I am very grateful to Antoine Song, for introducing me to this problem during our first meeting at Caltech more than three years ago, and for many enlightening discussions we had since then on this topic.

I would also like to thank Xingzhe Li, Ben Lowe, and Daniel Stern, for the interesting exchanges we had over time on (equivariant) minimal surfaces in round spheres.

Finally, I would like to thank Otis Chodosh, Christos Mantoulidis, and Felix Schulze, for their interest in this work and for the very insightful comments and suggestions which helped to improve the quality of the present manuscript.
\par
\endgroup
\section{The ambient hypersurface and its symmetries}
\label{sec:s4-geometry}

We first embed the Clifford torus inside an explicit minimal
hypersurface $\mathcal{N}$ of the round four-sphere and compute the geometric quantities needed in the construction. For a surface in $\mathcal{N}$, we then express the two components of its mean-curvature vector within $\mathbb{S}^4$ and their normal linearisation, and provide suitable tubular coordinates for deformations out of the hypersurface. Finally, we specify the finite group of symmetries used in the construction and derive the resulting restrictions on the normal variations and their angular Fourier terms near each neck.

\subsection{A minimal hypersurface containing the Clifford torus}

The hypersurface $\mathcal{N}$ constructed below will contain the initial surfaces used in the perturbation argument. Crucially for our purposes, we will show that $\mathcal N$ is minimal in $\mathbb S^4$ and contains a totally geodesic Clifford torus whose scalar Jacobi operator within $\mathcal N$ admits a particularly simple expression.

We regard the unit round four-sphere as
\begin{align*}
    \mathbb{S}^4=\{(w_1,w_2,x_5)\in\mathbb{C}^2\oplus\mathbb{R}\mbox{ : }|w_1|^2+|w_2|^2+x_5^2=1\}.
\end{align*}
Set $\ell:=\sqrt2\pi$ and let $\mathbb{T}:=(\mathbb{R}/\ell\mathbb{Z})^2$, equipped with the flat metric $g_{\mathbb{T}}:=ds^2+dt^2$. Define $f:\mathbb{T}\to\mathbb{S}^4$ by
\begin{align}
    \label{eq:s4-clifford-frame}
    f(s,t):=\frac{1}{\sqrt{2}}
    \bigl(e^{i\sqrt{2}s},e^{i\sqrt{2}t},0\bigr) \qquad\forall\,(s,t)\in\mathbb{T}.
\end{align}
Note that $f$ is an isometric embedding of $\mathbb{T}$ into $\mathbb{S}^4$ and $\operatorname{Area}(\mathbb{T})=2\pi^2$. Fix $z_0\in(0,\pi/4)$ and define the smooth embedding $\mathcal F:\mathbb{T}\times(-z_0,z_0)\to\mathbb{S}^4$ by
\begin{align}
    \label{eq:s4-suspension}
    \mathcal{F}(s,t,z):=\cos z\,f(s,t)+\sin z\,e_5 \qquad\forall\,(s,t,z)\in\mathbb{T}\times(-z_0,z_0),
\end{align}
where $e_5=(0,0,1)\in\mathbb{C}^2\oplus\mathbb{R}$. We let 
\begin{align*}
    \mathcal{N}:=\mathcal{F}\bigl(\mathbb{T}\times(-z_0,z_0)\bigr)
\end{align*}
and we define the projection $\pi:\mathcal{N}\to\mathbb{T}$ as
\begin{align*}
    \pi(\mathcal{F}(s,t,z)):=(s,t) \qquad\forall(s,t,z)\in\mathbb{T}\times(-z_0,z_0).
\end{align*}
Let $g_{\mathbb{S}^4}$ denote the unit round metric on $\mathbb{S}^4$,
let $\iota_{\mathcal{N}}:\mathcal{N}\hookrightarrow\mathbb{S}^4$
denote the inclusion, and equip $\mathcal{N}$ with the induced
metric $g_{\mathcal{N}}:=\iota_{\mathcal{N}}^*g_{\mathbb{S}^4}$.
Note that 
\begin{align}
    \label{eq:s4-suspension-metric}
    g_{\mathcal{N}}=(\mathcal{F}^{-1})^*\bigl(dz^2+\cos^2z\,(ds^2+dt^2)\bigr).
\end{align}
Define $N:\mathbb{T}\to\mathbb{C}^2\oplus\mathbb{R}$ by
\begin{align*}
    N(s,t):=\frac{1}{\sqrt{2}}\bigl(e^{i\sqrt{2}s},-e^{i\sqrt{2}t},0\bigr)\qquad\forall\,(s,t)\in\mathbb{T}.
\end{align*}
The vector $N(s,t)$ has unit Euclidean norm and is orthogonal to $\mathcal{F}(s,t,z)$ and to each of $\partial_s\mathcal{F}$, $\partial_t\mathcal{F}$ and $\partial_z\mathcal{F}$ at $(s,t,z)$. Consequently, the field $n$ defined along $\mathcal{N}$ by
\begin{align*}
    n\bigl(\mathcal{F}(s,t,z)\bigr):=N(s,t) \qquad\forall\,(s,t,z)\in\mathbb{T}\times(-z_0,z_0)
\end{align*}
is a smooth global unit normal to $\mathcal{N}$ within $\mathbb{S}^4$.

We call $\mathcal{N}$ the \textit{suspension band} of the Clifford torus and refer to $f(\mathbb{T})=\mathcal{F}(\mathbb{T}\times\{0\})$ as its \textit{central torus}. 

\begin{remark}
 \label{rem:s4-signed-distance}
    Note that the function on $\mathcal{N}$ defined by $\mathcal{F}(s,t,z)\mapsto z$ is the signed distance to the central torus in $(\mathcal{N},g_{\mathcal{N}})$, i.e.
    \begin{align*}
        \operatorname{dist}_{g_{\mathcal{N}}}
        \bigl(\mathcal{F}(s,t,z),f(\mathbb{T})\bigr)=|z|
        \qquad\forall\,(s,t,z)\in
        \mathbb{T}\times(-z_0,z_0).
    \end{align*}
    Indeed, the coordinate segment joining $\mathcal{F}(s,t,z)$ to $\mathcal{F}(s,t,0)$ has length $|z|$, while every curve joining $\mathcal{F}(s,t,z)$ to the central torus has length at least $|z|$, by \eqref{eq:s4-suspension-metric}.
\end{remark}
%
%
\begin{figure}[htbp]
    \centering
    \begingroup
    \definecolor{bandteal}{RGB}{0,112,116}
    \definecolor{bandorange}{RGB}{190,102,36}
    \begin{tikzpicture}[
        >=Stealth,
        every node/.style={font=\small}
    ]
        \begin{scope}[
            x={(1cm,0cm)},y={(0.42cm,0.28cm)},z={(0cm,0.9cm)}
        ]
            \node[font=\normalsize] at (1.65,1,2.28)
                {The parameter domain of $\mathcal F$};
            \fill[bandteal!12]
                (0,0,0)--(3.3,0,0)--(3.3,2.3,0)--(0,2.3,0)--cycle;
            \foreach \height in {-1.1,1.1} {
                \draw[black!35,dashed]
                    (0,0,\height)--(3.3,0,\height)--(3.3,2.3,\height)
                    --(0,2.3,\height)--cycle;
            }
            \foreach \x in {0,3.3} {
                \foreach \y in {0,2.3} {
                    \draw[black!40] (\x,\y,-1.1)--(\x,\y,1.1);
                }
            }
            \foreach \x in {0.825,1.65,2.475} {
                \draw[bandteal!35] (\x,0,0)--(\x,2.3,0);
            }
            \foreach \y in {0.575,1.15,1.725} {
                \draw[bandteal!35] (0,\y,0)--(3.3,\y,0);
            }
            \draw[bandteal,line width=0.9pt]
                (0,0,0)--(3.3,0,0)--(3.3,2.3,0)--(0,2.3,0)--cycle;
            \foreach \y in {0,2.3} {
                \draw[bandteal,line width=1pt,->]
                    (1.42,\y,0)--(1.93,\y,0);
            }
            \foreach \x in {0,3.3} {
                \draw[bandorange,line width=1pt,->]
                    (\x,0.85,0)--(\x,1.4,0);
            }
            \draw[bandorange,line width=1.1pt]
                (1.03,0.72,-0.87)--(1.03,0.72,0.87);
            \fill[bandorange] (1.03,0.72,0) circle[radius=1.8pt];
            \node[anchor=south east,text=bandorange] at (1.03,0.72,0.1)
                {$(q,0)$};
            \node[anchor=west] at (3.48,2.3,1.1) {$z=z_0$};
            \node[anchor=west] at (3.48,2.3,-1.1) {$z=-z_0$};
            \node[anchor=north] at (2.45,0,-0.14)
                {$\mathbb T\times\{0\}$};
            \draw[black!50,->] (0,0,-1.1)--(0,0,1.67)
                node[above] {$z$};
        \end{scope}
        \begin{scope}[shift={(9.1,0.55)}]
            \node[font=\normalsize] at (0,2.25)
                {A fixed $q\in\mathbb T$};
            \draw[black!35] (0,0) circle[radius=1.5];
            \draw[black!45,->] (-1.75,0)--(1.92,0);
            \draw[black!45,->] (0,-1.65)--(0,1.73)
                node[right] {$e_5$};
            \coordinate (base) at (1.5,0);
            \coordinate (point) at ({1.5*cos(32)},{1.5*sin(32)});
            \draw[black!40] (0,0)--(point);
            \draw[bandorange,line width=1.5pt]
                ({1.5*cos(-32)},{1.5*sin(-32)})
                arc[start angle=-32,end angle=32,radius=1.5];
            \fill[bandteal] (base) circle[radius=1.8pt];
            \fill[bandorange] (point) circle[radius=1.8pt];
            \node[below right=2pt] at (base) {$f(q)$};
            \node[above right=3pt,text=bandorange] at (point)
                {$\mathcal F(q,z)$};
            \draw[black!65,->]
                (0.48,0) arc[start angle=0,end angle=32,radius=0.48];
            \node at (0.69,0.18) {$z$};
            \node[align=center,text=black!65] at (0,-2.02)
                {$\mathcal F(q,z)=\cos z\,f(q)+\sin z\,e_5$};
        \end{scope}
    \end{tikzpicture}
    \endgroup
    \caption{The parameter band and the geodesics defining $\mathcal N$.
    Opposite vertical faces are identified; the central square represents
    $\mathbb T\times\{0\}$. For fixed $q$, the orange segment maps
    to the circle arc $\cos z\,f(q)+\sin z\,e_5$ shown on the right.}
    \label{fig:s4-band-coordinates}
\end{figure}

We fix the following conventions for immersions, which will
also be used in the subsequent sections.
Let $\iota:M\to(P,\overline g)$ be a smooth immersion and
equip $M$ with the induced metric $g:=\iota^*\overline g$.
We denote the Levi-Civita connection of $g$ by $\nabla^g$.
For a smooth function $u$ on $M$, the notation $\nabla^g u$
denotes its gradient with respect to $g$. We use the convention
\begin{align*}
    \Delta_g u:=\operatorname{div}_g(\nabla^g u)
    \qquad\forall\,u\in C^\infty(M)
\end{align*}
for the Laplace--Beltrami operator.
We omit the metric subscript when it is clear from the context.

Denote by $\overline\nabla$ the connection on $\iota^*TP$
induced by the Levi-Civita connection of $(P,\overline g)$.
Let
\begin{align*}
    T^\perp_\iota M
    :=\bigl(d\iota(TM)\bigr)^\perp\subset\iota^*TP
\end{align*}
be the normal bundle of $\iota$, where ${\cdot}^\perp$ indicates the $\overline g$-orthogonal complement of $d\iota(TM)$ in $\iota^*TP$. For a vector in $\iota^*TP$, the superscript ${}^\perp$
denotes its $\overline g$-orthogonal projection onto
$T^\perp_\iota M$.
The \textit{second fundamental form} of $\iota$ is the tensor $A_\iota\in\Gamma(\operatorname{Sym}^2T^*M\otimes
T^\perp_\iota M)$ defined by
\begin{align*}
    A_\iota(X,Y)
    :=\bigl(\overline\nabla_X(d\iota(Y))\bigr)^\perp
    \qquad\forall\,X,Y\in\Gamma(TM).
\end{align*}
For a smooth unit normal field $\eta\in\Gamma(T^\perp_\iota M)$, its scalar
component in the direction $\eta$ is the bilinear form
$h_\iota^\eta\in\Gamma(\operatorname{Sym}^2T^*M)$, defined by
\begin{align*}
    h_\iota^\eta(X,Y)
    :=\overline g\bigl(A_\iota(X,Y),\eta\bigr)
    \qquad\forall\,X,Y\in\Gamma(TM).
\end{align*}
The \textit{shape operator} of $\iota$ in the direction $\eta$ is the $g$-self-adjoint endomorphism
$S_\iota^\eta\in\Gamma(T^*M\otimes TM)$ characterised by
\begin{align*}
    g(S_\iota^\eta(X),Y)=h_\iota^\eta(X,Y)
    \qquad\forall\,X,Y\in\Gamma(TM).
\end{align*}
In particular, $|S_\iota^\eta|_g=|h_\iota^\eta|_g$.
The \textit{mean-curvature vector} of $\iota$ and its scalar component in the direction $\eta$ are defined using the trace convention:
\begin{align*}
    \mathbf{H}_\iota
    &:=\operatorname{tr}_g A_\iota
    =\sum_{a=1}^{\dim M}A_\iota(e_a,e_a),\\
    H_{\iota,\eta}
    &:=\operatorname{tr}_g h_\iota^\eta
    =\sum_{a=1}^{\dim M}h_\iota^\eta(e_a,e_a)
    =\overline g(\mathbf{H}_\iota,\eta),
\end{align*}
where $(e_a)_{a=1}^{\dim M}$ is any local
$g$-orthonormal frame of $TM$.

The \textit{normal connection} of $\iota$ is given by
\begin{align*}
    \nabla^{\perp,\iota}_X\xi
    :=(\overline\nabla_X\xi)^\perp
    \qquad\forall\,X\in\Gamma(TM),\quad
    \xi\in\Gamma(T^\perp_\iota M).
\end{align*}
With these conventions, the Weingarten identity reads
\begin{align}
    \label{eq:s4-weingarten-convention}
    \overline\nabla_X\eta
    =-d\iota(S_\iota^\eta(X))
    +\nabla^{\perp,\iota}_X\eta
    \qquad\forall\,X\in\Gamma(TM).
\end{align}
Indeed, differentiating $\overline g(\eta,d\iota(Y))=0$
gives
\begin{align*}
    \overline g(\overline\nabla_X\eta,d\iota(Y))
    =-\overline g(\eta,\overline\nabla_X(d\iota(Y)))=-h_\iota^\eta(X,Y)
    =-g(S_\iota^\eta(X),Y)
    \qquad\forall\,X,Y\in\Gamma(TM),
\end{align*}
which determines the tangential component of
$\overline\nabla_X\eta$; its normal component is
$\nabla^{\perp,\iota}_X\eta$ by definition.
In codimension one, this normal component vanishes because
$\eta$ has unit length and spans the normal bundle.

For the fixed metrics on $\mathbb{S}^4$ and $\mathcal{N}$,
we use the abbreviations
\begin{align*}
    \nabla^{\mathbb{S}^4}:=\nabla^{g_{\mathbb{S}^4}},
    \qquad
    \nabla^{\mathcal{N}}:=\nabla^{g_{\mathcal{N}}},
\end{align*}
for their respective Levi-Civita connections.

\begin{lemma}
    \label{lem:s4-suspension-geometry}
    Write $S:=S_{\iota_{\mathcal{N}}}^n\in\Gamma(T^*\mathcal{N}\otimes T\mathcal{N})$ for the shape
    operator of the inclusion $\iota_{\mathcal{N}}:\mathcal{N}\hookrightarrow\mathbb{S}^4$ with respect to the unit normal $n$, using the preceding conventions.
    Define $E_1,E_2,E_3\in\Gamma(T\mathcal{N})$ by
    \begin{align*}
        E_1(\mathcal{F}(s,t,z))&:=\frac{1}{\cos z}\partial_s\mathcal F(s,t,z),\\
        E_2(\mathcal{F}(s,t,z))&:=\frac{1}{\cos z}\,
        \partial_t\mathcal F(s,t,z),\\
        E_3(\mathcal{F}(s,t,z))&:=\partial_z\mathcal F(s,t,z),
    \end{align*}
    for every $(s,t,z)\in\mathbb{T}\times(-z_0,z_0)$.
    Then, $(E_1,E_2,E_3)$ is a global
    $g_{\mathcal{N}}$-orthonormal frame of $T\mathcal{N}$. For every $(s,t,z)\in\mathbb{T}\times(-z_0,z_0)$, writing $q:=\mathcal{F}(s,t,z)$, one has
    \begin{align}
        \label{eq:s4-S}
        \begin{split}
            S_q(E_1(q))&=-\sec z\,E_1(q),\\
            S_q(E_2(q))&=\sec z\,E_2(q),\\
            S_q(E_3(q))&=0.
        \end{split}
    \end{align}
    In particular, $\mathcal{N}$ is a minimal hypersurface of $\mathbb{S}^4$.

    The central torus $f(\mathbb{T})\subset\mathcal N$ is totally geodesic as a hypersurface of $(\mathcal{N},g_{\mathcal{N}})$. The scalar Jacobi operator
    $L_{\mathbb{T}}:C^\infty(\mathbb{T})\to C^\infty(\mathbb{T})$
    of the isometric embedding
    $f:(\mathbb{T},g_{\mathbb{T}})\to(\mathcal{N},g_{\mathcal{N}})$,
    with respect to the unit normal field $E_3\circ f$,
    is given by
    \begin{align}
        \label{eq:s4-base-Jacobi}
        L_{\mathbb{T}}u=\Delta_{\mathbb{T}}u+2u
        \qquad\forall\,u\in C^\infty(\mathbb{T}),
    \end{align}
    where $\Delta_{\mathbb{T}}:=\Delta_{g_{\mathbb{T}}}$.

    Finally, there exists a constant $C=C(z_0)>0$ such that, for every $(s,t,z)\in\mathbb{T}\times(-z_0,z_0)$ and $q:=\mathcal{F}(s,t,z)$,
    \begin{align}
        \label{eq:s4-S-derivative}
        |S_q|_{g_{\mathcal{N}}}\leq C, 
        \qquad
        |(\nabla^{\mathcal{N}}S)_q|_{g_{\mathcal{N}}}
        \leq C|z|.
    \end{align}
    Here $\nabla^{\mathcal{N}}S$ is the covariant derivative of
    the tensor field $S$, and both norms are induced by
    $g_{\mathcal{N}}$.
\end{lemma}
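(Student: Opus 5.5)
This is a direct computation in the Euclidean coordinates of $\mathbb{C}^2\oplus\mathbb{R}$, organised in five steps.

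\textbf{Orthonormal frame.} Differentiating \eqref{eq:s4-suspension} gives
\[
\partial_s\mathcal F=\cos z\,(ie^{i\sqrt2 s},0,0),\qquad \partial_t\mathcal F=\cos z\,(0,ie^{i\sqrt2 t},0),\qquad \partial_z\mathcal F=-\sin z\,f+\cos z\,e_5 .
\]
These vectors are mutually orthogonal, with norms $\cos z,\cos z,1$. Hence $(E_1,E_2,E_3)$ is orthonormal, in agreement with \eqref{eq:s4-suspension-metric}.

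\textbf{Shape operator.} Since $\mathcal{N}\subset\mathbb{S}^4\subset\mathbb{R}^5$ and $n\perp$ position, the Weingarten identity \eqref{eq:s4-weingarten-convention} lets us compute $S$ through Euclidean derivatives. The ambient $\mathbb{S}^4$ connection is the tangential projection of the flat one, and $dN$ is already tangent to $\mathbb{S}^4$ because $N\perp\mathcal F$ and $|N|=1$. So $S(X)=-dN(X)$ in the sense $\overline\nabla_X n=-S(X)$. We have
\[
\partial_s N=(ie^{i\sqrt2 s},0,0)=\sec z\,\partial_s\mathcal F/\cos z\cdot\cos z,
\]
so that $E_1 n=\tfrac{1}{\cos z}\partial_sN=\tfrac{1}{\cos z}(ie^{i\sqrt2 s},0,0)=\sec z\,E_1$. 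This gives $S(E_1)=-\sec z\,E_1$. Similarly $\partial_tN=-(0,ie^{i\sqrt2t},0)$ gives $S(E_2)=\sec z\,E_2$, and $\partial_zN=0$ gives $S(E_3)=0$. The trace vanishes, so $\mathcal{N}$ is minimal.

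\textbf{Central torus.} Within $\mathcal{N}$, the second fundamental form of $f(\mathbb T)$ with respect to $E_3$ is $\langle \overline\nabla_{\partial_a}\partial_b f,E_3\rangle$. At $z=0$ we have $E_3=e_5$, and all second derivatives of $f$ lie in $\mathbb{C}^2\oplus 0$, so this form vanishes and the torus is totally geodesic.

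\textbf{Jacobi operator.} The Jacobi operator is $\Delta_{\mathbb T}+|A|^2+\mathrm{Ric}_{\mathcal N}(E_3,E_3)$ with $|A|=0$. By the Gauss equation for $\mathcal{N}\subset\mathbb{S}^4$,
\[
\mathrm{Ric}_{\mathcal N}(E_3,E_3)=(3-1)+H\,h(E_3,E_3)-|S(E_3)|^2=2 ,
\]
since $H=0$ and $S(E_3)=0$. Alternatively, one can compute directly from the warped metric $dz^2+\cos^2z\,g_{\mathbb T}$: the sectional curvatures of the planes containing $\partial_z$ are $-(\cos z)''/\cos z=1$, giving $\mathrm{Ric}(\partial_z,\partial_z)=2$. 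This yields \eqref{eq:s4-base-Jacobi}.

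\textbf{Bounds.} The bound $|S|\le \sqrt2\sec z_0$ is immediate. The main work is the covariant derivative estimate. The eigenvalues of $S$ depend only on $z$, and $\nabla^{\mathcal N}S$ involves $d(\sec z)$ together with the connection coefficients of the frame. I would compute $\nabla E_i$ from the warped product structure. We have $\nabla_{E_3}E_j=0$ for all $j$. For $a=1,2$ we have $\nabla_{E_a}E_a=\tan z\,E_3$ and $\nabla_{E_a}E_3=-\tan z\,E_a$, while $\nabla_{E_1}E_2=\nabla_{E_2}E_1=0$ because the flat torus coordinates give parallel fields in each slice. Then:
\begin{itemize}
\item $(\nabla_{E_3}S)(E_1)=-\sec z\tan z\,E_1$, and similarly for $E_2$, which is $O(|z|)$.
\item $(\nabla_{E_a}S)(E_3)=\nabla_{E_a}(S E_3)-S(\nabla_{E_a}E_3)=\tan z\,S(E_a)=O(|z|)$.
\item $(\nabla_{E_a}S)(E_a)=E_a(\pm\sec z)E_a\pm\sec z\tan z\,E_3-S(\tan z\,E_3)=\pm\sec z\tan z\,E_3$, which is again $O(|z|)$.
\end{itemize}
All remaining components vanish. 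Hence $|\nabla S|\le C|z|$, using $|\tan z|\le C|z|$ on $(-z_0,z_0)$.
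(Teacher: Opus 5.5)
Your proposal is correct and follows essentially the same route as the paper: $S$ comes from the Euclidean derivatives of $N$, then the connection coefficients of $(E_1,E_2,E_3)$, the Gauss equation giving $\operatorname{Ric}_{g_{\mathcal N}}(E_3,E_3)=2$, and a componentwise computation of $\nabla^{\mathcal N}S$ in which every entry is a multiple of $\sec z\tan z$. The differences are cosmetic: you read the connection coefficients off the warped metric $dz^2+\cos^2z\,g_{\mathbb T}$ rather than projecting Euclidean derivatives (the paper uses $\partial_{ss}f=-f-N$), and you check total geodesicity of the central torus directly from $E_3=e_5$ at $z=0$.
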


The proof of Lemma \ref{lem:s4-suspension-geometry} relies only on standard computations in Riemannian geometry, and is therefore deferred to Appendix~\ref{app:s4-geometric-identities}.

\subsection{Normal variations and tubular coordinates}

For a surface $M$ contained in $\mathcal{N}$, minimality in $\mathbb{S}^4$ requires both its scalar mean curvature within $\mathcal{N}$ and the component of its mean-curvature vector within $\mathbb{S}^4$ in the direction $n$ to vanish. We compute the latter component and the normal connection in terms of the shape operator $S$ of $\mathcal{N}\subset\mathbb{S}^4$ and the chosen unit normal of $M$ within $\mathcal{N}$, and specialise
these formulas to height graphs over the central torus. We then represent the normal linearisation of the mean curvature map as a $2\times2$ matrix of scalar differential operators, acting on the coefficients of a normal variation in these two directions. Its off-diagonal entries describe how a variation in either direction affects the mean-curvature component in the other direction. Finally, we construct convenient tubular coordinates around $\mathcal{N}$ in $\mathbb{S}^4$.

Let $M\subset\mathcal{N}$ be a smooth embedded two-sided surface in $\mathcal{N}$, and let
\begin{align*}
    \jmath:M\hookrightarrow\mathcal{N},
    \qquad
    \iota:M\hookrightarrow\mathbb{S}^4
\end{align*}
denote its inclusions in $\mathcal N$ and $\mathbb S^4$, respectively. Equip $M$ with the induced metric
$g_M:=\jmath^*g_{\mathcal{N}}=\iota^*g_{\mathbb{S}^4}$, and fix a smooth global unit normal field $\nu$ to $M$
within $\mathcal{N}$. No compactness or minimality assumption is imposed on $M$ in this subsection. We write $\nabla^M:=\nabla^{g_M}$ and $\Delta_M:=\Delta_{g_M}$, and denote the restriction of $n$ to $M$ again by $n$. Then $(\nu,n)$ is a global orthonormal frame of the normal bundle $T^\perp_\iota M$.

Using the conventions above, the mean-curvature vectors of the two inclusions decompose as
\begin{align*}
    \mathbf{H}_\jmath&=H_{\jmath,\nu}\nu,\\
    \mathbf{H}_\iota&=H_{\iota,\nu}\nu+H_{\iota,n}n.
\end{align*}
The equality $h_\iota^\nu=h_\jmath^\nu$, verified in the proof of Lemma \ref{lem:s4-normal-identities} below, implies $H_{\iota,\nu}=H_{\jmath,\nu}$. Thus minimality in $\mathcal{N}$ requires $H_{\iota,\nu}=0$, while minimality in $\mathbb{S}^4$ requires both $H_{\iota,\nu}=0$ and $H_{\iota,n}=0$.

Inner products of vectors are taken with respect to $g_{\mathbb{S}^4}$; tensor norms and contractions use the corresponding induced metrics.

For the normal connection of $\iota$, we abbreviate $\nabla^\perp:=\nabla^{\perp,\iota}$. In the frame $(\nu,n)$, the connection $\nabla^{\perp}$ is determined by the one-form $\omega\in\Gamma(T^*M)$ defined by
\begin{align}
    \label{eq:s4-omega-definition}
    \omega(X):=\langle\nabla^\perp_X\nu,n\rangle
    =\langle\nabla^{\mathbb{S}^4}_X\nu,n\rangle
    \qquad\forall\,X\in\Gamma(TM).
\end{align}
Indeed, differentiating
$\langle\nu,\nu\rangle=\langle n,n\rangle=1$ and
$\langle\nu,n\rangle=0$ gives
\begin{align}
    \label{eq:s4-normal-connection-frame}
    \nabla^\perp_X\nu=\omega(X)n,
    \qquad
    \nabla^\perp_Xn=-\omega(X)\nu
    \qquad\forall\,X\in\Gamma(TM).
\end{align}
Thus $\omega$ records the coupling of the two normal directions.
The following lemma expresses this one-form and the second
fundamental form in the direction $n$ in terms of $S$. To state
the estimates, write
\begin{align*}
    \nu_h(x):=\sum_{a=1}^2
    \langle\nu(x),E_a(x)\rangle E_a(x)
    \qquad\forall\,x\in M
\end{align*}
for the component of $\nu$ in the plane spanned by $E_1,E_2$.

\begin{lemma}
    \label{lem:s4-normal-identities}
    For every $X,Y\in\Gamma(TM)$, one has
    \begin{align}
        \label{eq:s4-normal-identities}
        \begin{split}
            h_\iota^n(X,Y)&=\langle S(X),Y\rangle,\\
            H_{\iota,n}&=-\langle S(\nu),\nu\rangle,\\
            \omega(X)&=\langle S(X),\nu\rangle.
        \end{split}
    \end{align}
    Moreover,
    \begin{align}
        \label{eq:s4-omega-derivative}
        \begin{split}
            (\nabla^M_X\omega)(Y)&=\langle(\nabla^{\mathcal{N}}_XS)(Y),\nu\rangle+h_\iota^\nu(X,Y)\langle S(\nu),\nu\rangle-\langle S(Y),S_\iota^\nu (X)\rangle
            \qquad\forall\,X,Y\in\Gamma(TM).
        \end{split}
    \end{align}
    There exists $C=C(z_0)>0$ such that, for every
    $(s,t,z)\in\mathbb{T}\times(-z_0,z_0)$ with
    $x:=\mathcal{F}(s,t,z)\in M$,
    \begin{align}
        \label{eq:s4-normal-bounds}
        \begin{aligned}
            |(h_\iota^n)_x|&\leq C,
            &\qquad|H_{\iota,n}(x)|&\leq C|\nu_h(x)|^2,\\
            |\omega_x|&\leq C|\nu_h(x)|,
            &\qquad|(\nabla^M\omega)_x|
            &\leq C\bigl(|z|+|(S_\iota^\nu)_x|\bigr).
        \end{aligned}
    \end{align}
    The norms of $h_\iota^n$, $\omega$, $\nabla^M\omega$ and $S_\iota^\nu$ are induced by $g_M$, while $|\nu_h|$ is computed using $g_{\mathcal{N}}$.
\end{lemma}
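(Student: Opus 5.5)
The plan is to compute every quantity in the ambient connection $\nabla^{\mathbb S^4}$, decomposing vectors along $\mathcal N$ into their components tangent to $\mathcal N$ and along $n$. Two facts do most of the work. First, by Gauss's formula for $\mathcal N\subset\mathbb S^4$, for $X,Y$ tangent to $\mathcal N$ we have $\nabla^{\mathbb S^4}_XY=\nabla^{\mathcal N}_XY+\langle S(X),Y\rangle n$. Second, by the Weingarten identity \eqref{eq:s4-weingarten-convention} in codimension one, $\nabla^{\mathbb S^4}_Xn=-S(X)$.

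For $X,Y\in\Gamma(TM)$, the $n$-component of $\nabla^{\mathbb S^4}_XY$ is therefore $\langle S(X),Y\rangle$, which gives $h^n_\iota(X,Y)=\langle S(X),Y\rangle$. The $\nu$-component is $\langle\nabla^{\mathcal N}_XY,\nu\rangle$, which gives $h^\nu_\iota=h^\nu_\jmath$. Tracing the first identity over an orthonormal frame $(e_1,e_2)$ of $TM$, and using that $(e_1,e_2,\nu)$ is orthonormal in $T\mathcal N$, yields $H_{\iota,n}=\operatorname{tr}S-\langle S(\nu),\nu\rangle=-\langle S(\nu),\nu\rangle$, since $\operatorname{tr}S=0$ by \eqref{eq:s4-S}. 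For $\omega$, differentiate $\langle\nu,n\rangle=0$ to get
\begin{align*}
\omega(X)=\langle\nabla^{\mathbb S^4}_X\nu,n\rangle=-\langle\nu,\nabla^{\mathbb S^4}_Xn\rangle=\langle S(X),\nu\rangle.
\end{align*}

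For \eqref{eq:s4-omega-derivative}, write $\omega(Y)=\langle S(Y),\nu\rangle$, an inner product in $g_{\mathcal N}$, and differentiate along $X$ with $\nabla^{\mathcal N}$. This gives
\begin{align*}
X\omega(Y)=\langle(\nabla^{\mathcal N}_XS)(Y),\nu\rangle+\langle S(\nabla^{\mathcal N}_XY),\nu\rangle+\langle S(Y),\nabla^{\mathcal N}_X\nu\rangle.
\end{align*}
Now decompose $\nabla^{\mathcal N}_XY=\nabla^M_XY+h^\nu_\iota(X,Y)\nu$. By the Weingarten formula for $M\subset\mathcal N$, $\nabla^{\mathcal N}_X\nu=-S^\nu_\iota(X)$, which is tangent to $M$ in codimension one. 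Subtracting $\omega(\nabla^M_XY)$ then leaves exactly the three stated terms, after using the self-adjointness of $S$ to write $\langle S(\nu),\nu\rangle$.

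For the bounds \eqref{eq:s4-normal-bounds}, the first uses $|S|\le C$ from \eqref{eq:s4-S-derivative}. For the others, write $\nu=\nu_h+\nu_3E_3$. Since $S(E_3)=0$ and $S$ preserves $\operatorname{span}\{E_1,E_2\}$, we get $S(\nu)=S(\nu_h)$. Hence $|H_{\iota,n}|=|\langle S(\nu_h),\nu_h\rangle|\le C|\nu_h|^2$, and $|\omega(X)|=|\langle X,S(\nu_h)\rangle|\le C|\nu_h||X|$. The last bound follows from \eqref{eq:s4-omega-derivative}: the first term is at most $|\nabla^{\mathcal N}S|\le C|z|$, and the remaining two are bounded by $C|S^\nu_\iota|$ because $|S|\le C$. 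There is no real obstacle here. The only points needing care are the sign conventions in the Weingarten identity and the observation that $S(E_3)=0$ and the invariance of $\operatorname{span}\{E_1,E_2\}$ give the quadratic and linear dependence on $\nu_h$. All constants depend only on $z_0$ through $\sec z_0$ and the constant in \eqref{eq:s4-S-derivative}.
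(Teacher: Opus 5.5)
Your proposal is correct and follows the paper's proof essentially step by step. It obtains the three identities by differentiating the orthogonality relations with respect to $n$ (your Gauss formula for $\mathcal N\subset\mathbb S^4$ is the same computation written differently). It derives the formula for $\nabla^M\omega$ by substituting $\nabla^{\mathcal N}_XY=\nabla^M_XY+h^\nu_\iota(X,Y)\nu$ and $\nabla^{\mathcal N}_X\nu=-S^\nu_\iota(X)$ into the derivative of $\langle S(Y),\nu\rangle$, and it gets the bounds from $S(\nu)=S(\nu_h)$, $|S|\le C$ and $|\nabla^{\mathcal N}S|\le C|z|$.
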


Since Lemma \ref{lem:s4-normal-identities} follows from routine computations, we postpone its proof to Appendix~\ref{app:s4-geometric-identities}.

\begin{remark}
    For a surface given by a height graph over the central torus, the identities \eqref{eq:s4-normal-identities} give explicit expressions for $H_{\iota,n}$ and $\omega$ in terms of the defining height function and its first derivatives. Indeed, let $U\subset\mathbb{T}$ be open and let $w:U\to(-z_0,z_0)$ be smooth. Define
    $F_w:U\to\mathcal{N}$ and its image $M_w$ by
    \begin{align*}
        F_w(s,t):=\mathcal{F}(s,t,w(s,t))
        \qquad\forall\,(s,t)\in U,
        \qquad M_w:=F_w(U).
    \end{align*}
    On $U$, set $w_s:=\partial_sw$, $w_t:=\partial_tw$,
    $Dw:=(w_s,w_t)$, $c:=\cos w$ and $Q:=c^2+|Dw|^2$,
    where $|Dw|^2=w_s^2+w_t^2$. Apply the preceding notation
    to $M_w$, choosing its unit normal $\nu$ within $\mathcal{N}$
    so that $\langle\nu,E_3\rangle>0$. Then, the following identities hold on $U$:
    \begin{align}
        \label{eq:s4-height-graph-identities}
        \begin{split}
            \nu\circ F_w
            &=\frac{-w_s(E_1\circ F_w)-w_t(E_2\circ F_w)
            +c(E_3\circ F_w)}{\sqrt Q},\\
            H_{\iota,n}\circ F_w&=\frac{w_s^2-w_t^2}{cQ},\\
            F_w^*\omega&=\frac{w_s\,ds-w_t\,dt}{\sqrt Q}.
        \end{split}
    \end{align}

    Indeed, the graph tangent vectors are $cE_1+w_sE_3$
    and $cE_2+w_tE_3$, evaluated at $F_w(s,t)$.
    Their common unit normal with positive $E_3$ component
    is the first displayed vector. Since $S(E_1)=-c^{-1}E_1$,
    $S(E_2)=c^{-1}E_2$ and $S(E_3)=0$ there, substitution
    into \eqref{eq:s4-normal-identities} gives the remaining
    two formulas.
\end{remark}
We next consider normal variations of a general surface
$M\subset\mathcal{N}$ as above. The geometric identities
already established allow us to express the
linearisation of its mean-curvature vector in the frame
$(\nu,n)$.
For $V\in\Gamma(T^\perp_\iota M)$, fix an open set
$U\subset M$ whose closure in $M$ is compact.
There exists $r_U>0$ such that the maps
\begin{align*}
    G_r:U&\rightarrow\mathbb{S}^4,\\
    x&\mapsto\exp_{\iota(x)}^{\mathbb{S}^4}(rV(x))
\end{align*}
are smooth immersions for every $r\in(-r_U,r_U)$. Indeed, $G_0=\iota|_U$,
and the differential of $G_r$ depends smoothly on
$(x,r)$. Since $\iota$ is an immersion, compactness
of $\overline U$ ensures that this differential
remains injective for all $x\in U$ when $|r|$
is sufficiently small. Moreover,
\begin{align*}
    \left.\frac{\partial}{\partial r}G_r(x)\right|_{r=0}
    =V(x)
    \qquad\forall\,x\in U.
\end{align*}
Thus, these maps define a normal variation of $\iota$
near every point of $M$. If $M$ is compact, we may take $U=M$.
Define the \textit{normal linearisation of the mean curvature map} at $\iota$ to be the operator $J:\Gamma(T^\perp_\iota M)\to\Gamma(T^\perp_\iota M)$ given by
\begin{align*}
    (JV)(x):=\left(
    \left.\frac{D}{dr}\right|_{r=0}\mathbf{H}_{G_r}(x)
    \right)^\perp
    \qquad\forall\,x\in M,
\end{align*}
where $D/dr$ denotes covariant differentiation in
$\mathbb{S}^4$ along the geodesic $r\mapsto G_r(x)$ and the projection
is onto $(T^\perp_\iota M)_x$. The following normal variation formula \eqref{eq:s4-normal-variation-formula} expresses $J$ in
terms of the Laplacian associated with the normal
connection, given by
\begin{align}
    \label{eq:s4-delta-perp}
    \Delta^\perp V
    :=\sum_{i=1}^2\left(
    \nabla^\perp_{e_i}\nabla^\perp_{e_i}V
    -\nabla^\perp_{\nabla^M_{e_i}e_i}V\right)
    \qquad\forall\,V\in\Gamma(T^\perp_\iota M),
\end{align}
where $(e_1,e_2)$ is any local $g_M$-orthonormal frame
of $TM$. For a surface in the unit round sphere, this
formula reads
\begin{align}
    \label{eq:s4-normal-variation-formula}
    JV=\Delta^\perp V+2V+
    \sum_{i,j=1}^2
    \langle A_\iota(e_i,e_j),V\rangle A_\iota(e_i,e_j)
    \qquad\forall\,V\in\Gamma(T^\perp_\iota M);
\end{align}
see \cite[Lemma~3.1]{LagaceLynchCanonicalFoliation}.
This local identity applies without assuming that $M$
is minimal. The term $2V$ is the ambient curvature
contribution, with one copy of $V$ for each orthonormal
tangent direction.

In the next lemma, we compute the scalar operators representing $J$.
In particular, we compare its component in the direction
$\nu$ with the scalar mean-curvature linearisation for
$M\subset\mathcal{N}$.

\begin{lemma}
    \label{lem:s4-normal-linearisation}
    Define $L_{M/\mathcal{N}},B,B^*:C^\infty(M)\to
    C^\infty(M)$ and $q\in C^\infty(M)$ by
    \begin{align}
        L_{M/\mathcal{N}}u
        &:=\Delta_Mu+
        \bigl(|S_\jmath^\nu|^2+
        \operatorname{Ric}_{g_{\mathcal{N}}}(\nu,\nu)
        \bigr)u,
        \label{eq:s4-scalar-Jacobi}\\
        q&:=2+|S_\iota^n|^2-|\omega|^2,\nonumber\\
        Bb&:=-2\langle\omega,db\rangle
        -(\operatorname{div}_M\omega)b
        +\langle h_\iota^\nu,h_\iota^n\rangle b,\nonumber\\
        B^*u&:=2\langle\omega,du\rangle
        +(\operatorname{div}_M\omega)u
        +\langle h_\iota^\nu,h_\iota^n\rangle u\nonumber
    \end{align}
    for every $u,b\in C^\infty(M)$. Here
    \begin{align*}
        \operatorname{div}_M\omega
        &:=\sum_{i=1}^2(\nabla^M_{e_i}\omega)(e_i),\\
        \langle h_\iota^\nu,h_\iota^n\rangle
        &:=\sum_{i,j=1}^2h_\iota^\nu(e_i,e_j)h_\iota^n(e_i,e_j),
    \end{align*}
    where $(e_1,e_2)$ is any local $g_M$-orthonormal
    frame of $TM$.
    Then, for every $u,b\in C^\infty(M)$,
    \begin{align}
        \label{eq:s4-J-matrix}
        \begin{pmatrix}
            \langle J(u\nu+bn),\nu\rangle\\
            \langle J(u\nu+bn),n\rangle
        \end{pmatrix}
        =
        \begin{pmatrix}
            L_{M/\mathcal{N}}+H_{\iota,n}^2&B\\
            B^*&\Delta_M+q
        \end{pmatrix}
        \begin{pmatrix}u\\b\end{pmatrix},
    \end{align}
    where $H_{\iota,n}^2$ and $q$ act by multiplication.
    If $M$ is closed, then $B^*$ is the formal
    $L^2(M,d\operatorname{vol}_M)$ adjoint of $B$, where $d\operatorname{vol}_M$
    is the area measure induced by $g_M$.
\end{lemma}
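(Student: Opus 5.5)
The plan is to substitute $V=u\nu+bn$ into the normal variation formula \eqref{eq:s4-normal-variation-formula} and expand each of its three terms in the orthonormal normal frame $(\nu,n)$. The normal connection is handled entirely through \eqref{eq:s4-normal-connection-frame}, and the geometric identities of Lemma~\ref{lem:s4-normal-identities} together with the Gauss equation for $\mathcal N\subset\mathbb S^4$ identify the resulting coefficients.

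\emph{Laplacian term.} By \eqref{eq:s4-normal-connection-frame}, $\nabla^\perp_X(u\nu)=du(X)\nu+u\,\omega(X)n$. Differentiating once more, using \eqref{eq:s4-delta-perp} with a local orthonormal frame that is geodesic at the point, and taking the trace gives
\begin{align*}
    \Delta^\perp(u\nu)=\bigl(\Delta_Mu-|\omega|^2u\bigr)\nu+\bigl(2\langle\omega,du\rangle+(\operatorname{div}_M\omega)u\bigr)n.
\end{align*}
The same computation with $\nabla^\perp_Xn=-\omega(X)\nu$ gives
\begin{align*}
    \Delta^\perp(bn)=\bigl(\Delta_Mb-|\omega|^2b\bigr)n-\bigl(2\langle\omega,db\rangle+(\operatorname{div}_M\omega)b\bigr)\nu.
\end{align*}

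\emph{Second fundamental form term.} Write $A_\iota=h_\iota^\nu\nu+h_\iota^nn$. Then
\begin{align*}
    \sum_{i,j}\langle A_\iota(e_i,e_j),V\rangle A_\iota(e_i,e_j)=\bigl(|h_\iota^\nu|^2u+\langle h_\iota^\nu,h_\iota^n\rangle b\bigr)\nu+\bigl(\langle h_\iota^\nu,h_\iota^n\rangle u+|h_\iota^n|^2b\bigr)n.
\end{align*}

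\emph{Collecting terms.} Adding the two computations to $2V$, the $n$-row is immediately $B^*u+(\Delta_M+q)b$, since $|h_\iota^n|=|S_\iota^n|$. The $\nu$-row is $\Delta_Mu+(2+|h_\iota^\nu|^2-|\omega|^2)u+Bb$. It therefore remains to prove the identity
\begin{align*}
    2+|h_\iota^\nu|^2-|\omega|^2=|S_\jmath^\nu|^2+\operatorname{Ric}_{g_{\mathcal N}}(\nu,\nu)+H_{\iota,n}^2.
\end{align*}
This is the only step needing more than bookkeeping, and it proceeds in three parts.
\begin{itemize}
    \item First, $h_\iota^\nu=h_\jmath^\nu$, because $\nu$ is tangent to $\mathcal N$ and so $\langle\nabla^{\mathbb S^4}_XY,\nu\rangle=\langle\nabla^{\mathcal N}_XY,\nu\rangle$.
    \item Second, the Gauss equation for the hypersurface $\mathcal N\subset\mathbb S^4$ gives $\operatorname{Ric}_{g_{\mathcal N}}(\nu,\nu)=2+(\operatorname{tr}S)\langle S\nu,\nu\rangle-|S\nu|^2=2-|S\nu|^2$, since $\operatorname{tr}S=0$ by Lemma~\ref{lem:s4-suspension-geometry}.
    \item Third, $S\nu\in T\mathcal N=TM\oplus\mathbb R\nu$. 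Its $TM$-component is metrically dual to $\omega$, since $\omega(X)=\langle SX,\nu\rangle=\langle X,S\nu\rangle$ by \eqref{eq:s4-normal-identities} and the self-adjointness of $S$. Its $\nu$-component is $\langle S\nu,\nu\rangle=-H_{\iota,n}$. Hence $|S\nu|^2=|\omega|^2+H_{\iota,n}^2$.
\end{itemize}
Substituting the third part into the second gives $\operatorname{Ric}_{g_{\mathcal N}}(\nu,\nu)=2-|\omega|^2-H_{\iota,n}^2$, and together with the first part this is exactly the required identity. This proves \eqref{eq:s4-J-matrix}.

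\emph{Adjointness.} For closed $M$, the divergence theorem gives
\begin{align*}
    -2\int_Mu\langle\omega,db\rangle\,d\operatorname{vol}_M=2\int_Mb\operatorname{div}_M(u\omega)\,d\operatorname{vol}_M=\int_Mb\bigl(2\langle\omega,du\rangle+2u\operatorname{div}_M\omega\bigr)\,d\operatorname{vol}_M.
\end{align*}
Adding the remaining zeroth-order terms of $\int_M u\,Bb$ yields $\int_Mu\,Bb\,d\operatorname{vol}_M=\int_Mb\,B^*u\,d\operatorname{vol}_M$, so $B^*$ is the formal $L^2$ adjoint of $B$.

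The main obstacle is the sign and bookkeeping in the Gauss-equation step, namely the orientation convention for $S$ and the relation $H_{\iota,n}=-\langle S\nu,\nu\rangle$. Since only squares of these quantities enter the identity, the result is insensitive to these sign choices, which makes the step robust.
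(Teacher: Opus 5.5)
Your proposal is correct and follows the paper's proof essentially step by step. You expand $\Delta^\perp(u\nu+bn)$ through the one-form $\omega$ in a frame that is geodesic at the point, read off the second-fundamental-form matrix in the frame $(\nu,n)$, and close the $\nu\nu$-entry using $h_\iota^\nu=h_\jmath^\nu$, $\operatorname{Ric}_{g_{\mathcal N}}(\nu,\nu)=2-|S(\nu)|^2$ and $|S(\nu)|^2=|\omega|^2+H_{\iota,n}^2$; the adjointness via the divergence theorem is the same integration by parts the paper uses.
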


For completeness, we provide the details underlying the proof of Lemma \ref{lem:s4-normal-linearisation} in Appendix~\ref{app:s4-geometric-identities}.

Finally, we construct an explicit parametrisation of a
neighbourhood of $\mathcal{N}$ in $\mathbb{S}^4$ by following the geodesics normal to $\mathcal{N}$.
\begin{lemma}
    \label{lem:s4-tubular}
    Choose $b_0\in(0,\pi/2)$ such that
    $\tan b_0<\cos z_0$. Define
    $\Psi:\mathbb{T}\times(-z_0,z_0)\times(-b_0,b_0)
    \to\mathbb{S}^4$ by
    \begin{align}
        \label{eq:s4-tubular-map}
        \Psi(s,t,z,b)
        :=\cos b\,\mathcal{F}(s,t,z)+\sin b\,N(s,t) \qquad\forall\,(s,t,z,b)\in\mathbb{T}\times(-z_0,z_0)\times(-b_0,b_0).
    \end{align}
    Then, $\Psi$ is a smooth diffeomorphism onto an open neighbourhood of $\mathcal{N}$ in $\mathbb{S}^4$.
    Moreover,
    \begin{align*}
        \Psi(s,t,z,b)
        =\exp^{\mathbb{S}^4}_{\mathcal{F}(s,t,z)}
        \bigl(b\,n(\mathcal{F}(s,t,z))\bigr)
    \end{align*}
    for every $(s,t,z,b)$ in its domain.
\end{lemma}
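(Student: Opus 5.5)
The plan has three parts: compute the differential of $\Psi$ in the ambient $\mathbb{R}^5$, show it is injective and nondegenerate on the whole domain, and identify the $b$-curves as normal geodesics.

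\textbf{Geodesic identity.} This part is immediate. For fixed $(s,t,z)$, the point $p:=\mathcal F(s,t,z)$ and the vector $N(s,t)$ are orthonormal in $\mathbb{R}^5$, and $N(s,t)$ is tangent to $\mathbb{S}^4$ at $p$. Geodesics of the unit sphere are great circles, so
\[
\exp^{\mathbb{S}^4}_p(bN)=\cos b\,p+\sin b\,N .
\]
Since $n(p)=N(s,t)$, this gives the stated formula, and it shows that $\Psi$ is smooth with values in $\mathbb{S}^4$.

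\textbf{Local diffeomorphism.} I compute the partial derivatives of $\Psi$ in $\mathbb{R}^5$ and show that they are linearly independent. Write $f=\tfrac1{\sqrt2}(e^{i\sqrt2 s},e^{i\sqrt2 t},0)$, so that
\[
\partial_sf=(ie^{i\sqrt2 s},0,0),\qquad \partial_sN=\partial_sf,\qquad \partial_tf=(0,ie^{i\sqrt2 t},0),\qquad \partial_tN=-\partial_tf.
\]
This gives
\begin{align*}
\partial_s\Psi&=(\cos b\cos z+\sin b)\,\partial_sf,\\
\partial_t\Psi&=(\cos b\cos z-\sin b)\,\partial_tf,\\
\partial_z\Psi&=\cos b\,\partial_z\mathcal F,\\
\partial_b\Psi&=-\sin b\,\mathcal F+\cos b\,N.
\end{align*}
The vectors $\partial_sf$, $\partial_tf$, $\partial_z\mathcal F=-\sin z\,f+\cos z\,e_5$ and $-\sin b\,\mathcal F+\cos b\,N$ are mutually orthogonal and nonzero. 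Indeed, $\partial_sf$ and $\partial_tf$ are orthogonal to $f$, $e_5$ and $N$, while $\partial_z\mathcal F$ is orthogonal to both $\mathcal F$ and $N$. Hence $d\Psi$ is injective exactly when both coefficients $\cos b\cos z\pm\sin b$ are nonzero, together with $\cos b\neq0$. The coefficients are nonzero when $|\tan b|<\cos z$, and this holds because $|\tan b|<\tan b_0<\cos z_0<\cos z$. So $\Psi$ is an immersion between $4$-manifolds, hence a local diffeomorphism, and its image is open by the inverse function theorem. The image contains $\mathcal N=\Psi(\,\cdot\,,\cdot\,,\cdot\,,0)$.

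\textbf{Injectivity (the main step).} I recover the coordinates explicitly from the image point. Write
\[
\Psi=\Bigl(\tfrac{\cos b\cos z+\sin b}{\sqrt2}\,e^{i\sqrt2 s},\ \tfrac{\cos b\cos z-\sin b}{\sqrt2}\,e^{i\sqrt2 t},\ \cos b\sin z\Bigr).
\]
The bound $|\tan b|<\cos z$ makes both moduli $\alpha_\pm:=\cos b\cos z\pm\sin b$ strictly positive. Therefore $e^{i\sqrt2 s}$ and $e^{i\sqrt2 t}$ are determined by the arguments of the first two components. Because $s,t$ range over $\mathbb{R}/\ell\mathbb{Z}$ with $\ell=\sqrt2\pi$, the map $s\mapsto e^{i\sqrt2 s}$ is injective on $\mathbb{T}$, so $s$ and $t$ are determined. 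The moduli then determine
\[
\alpha_+-\alpha_-=2\sin b,
\]
and $b\in(-b_0,b_0)\subset(-\pi/2,\pi/2)$ is recovered from $\sin b$. Next, $\alpha_++\alpha_-=2\cos b\cos z$ together with $\cos b\sin z$ determine $z\in(-z_0,z_0)$, because $\cos b>0$ and $\cos z>0$ there. An injective local diffeomorphism is a diffeomorphism onto its open image, which completes the proof.

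The only delicate point is that positivity of both moduli is exactly what allows the arguments to be read off. This is where the hypothesis $\tan b_0<\cos z_0$ enters, in both the immersion step and the injectivity step.
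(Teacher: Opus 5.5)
Your proposal is correct and follows essentially the same route as the paper. The paper uses the same great-circle identity, and the same positivity $r_1,r_2>0$ from $\tan b_0<\cos z_0$, to read off $s,t$ from the arguments and $b,z$ from $|w_1|-|w_2|$ and $x_5$. The one difference is the local diffeomorphism step: you prove it by computing $d\Psi$ in an orthogonal frame, whereas the paper observes that its explicit inverse formulas remain smooth near image points and so give a local inverse.
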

\begin{proof}
    Since $N(s,t)=n(\mathcal F(s,t,z))$ is a unit
    tangent vector to $\mathbb S^4$ at $\mathcal F(s,t,z)$,
    the formula for spherical geodesics gives
    \begin{align*}
        \Psi(s,t,z,b)
        =
        \exp_{\mathcal F(s,t,z)}^{\mathbb S^4}
        \bigl(b\,n(\mathcal F(s,t,z))\bigr).
    \end{align*}
    Thus, $\Psi$ takes values in $\mathbb S^4$
    and has the asserted normal exponential formula.

    To prove that $\Psi$ is a diffeomorphism onto its image, write
    \begin{align*}
        \Psi(s,t,z,b)=\bigl(r_1e^{i\sqrt2s},r_2e^{i\sqrt2t},
        \cos b\sin z\bigr)
    \end{align*}
    with
    \begin{align*}
        r_1:=\frac{\cos b\cos z+\sin b}{\sqrt2},
        \qquad
        r_2:=\frac{\cos b\cos z-\sin b}{\sqrt2}.
    \end{align*}
    The choice of $b_0$ ensures that
    $|\tan b|<\cos z$, so $r_1,r_2>0$.
    Consequently, a point $(w_1,w_2,x_5)$ in the
    image of $\Psi$ uniquely determines its preimage by
    \begin{align*}
        b&=\arcsin\left(\frac{|w_1|-|w_2|}{\sqrt2}\right),
        \qquad
        z=\arcsin\left(\frac{x_5}{\cos b}\right),\\
        s&=\frac{\arg w_1}{\sqrt2}\pmod{\ell},
        \qquad
        t=\frac{\arg w_2}{\sqrt2}\pmod{\ell}.
    \end{align*}
    Here, $\arcsin$ takes values in $(-\pi/2,\pi/2)$,
    and the arguments are taken modulo $2\pi$,
    so $s,t$ are well defined modulo $\ell$.
    These formulas remain smooth and take values
    in the parameter domain for points of $\mathbb S^4$
    sufficiently close to any image point. Direct substitution verifies that they define a local inverse of $\Psi$. Hence, $\Psi$ is injective and a local
    diffeomorphism, and therefore a diffeomorphism onto an open subset of $\mathbb S^4$. This subset contains $\mathcal N$, since $\Psi(s,t,z,0)=\mathcal F(s,t,z)$.
\end{proof}
\subsection{Symmetries of the construction}
We will construct the initial surfaces inside $\mathcal N$ by
joining two sheets on opposite sides of the central torus with
small necks. Each neck will be parametrised using a Euclidean catenoid.
The neck positions will be indexed by a square lattice
$L_m\subset\mathbb T$, defined below: for each $p\in L_m$,
we will insert one neck near the point $f(p)$ of the
central torus $f(\mathbb T)$. We will impose the same finite group of
symmetries on the initial surfaces and on the maps used to
deform them in $\mathbb S^4$.

The symmetry conditions serve two purposes in the
subsequent linear analysis. Invariance under lattice translations removes the
kernel of $L_{\mathbb T}=\Delta_{\mathbb T}+2$ on the functions
used to construct the two sheets. Near each neck, the other
symmetries impose different conditions on the two coefficients
of a normal variation. The coefficient in the direction $n$
has no constant angular term and satisfies an angular
Poincar\'e inequality with a constant independent of the neck
size. This inequality is used to estimate the correction in
the direction $n$, uniformly as the neck radii tend to zero.
The coefficient in the normal direction
within $\mathcal N$ can have a constant angular term; that
term must be treated in the analysis of deformations within
$\mathcal N$. We establish these facts below and verify that
the normal linearisation from the preceding subsection
preserves the selected symmetry conditions.

Fix an integer $m\geq2$, and define
\begin{align}
    \label{eq:s4-lattice}
    L_m:=
    \left\{\left(\frac{j\ell}{m},\frac{k\ell}{m}\right):
    j,k\in\{0,\ldots,m-1\}\right\}\subset\mathbb T.
\end{align}
The corresponding points of the central torus are $f(L_m)$.
Define the following isometries of $\mathbb S^4$:
\begin{align*}
    R_1(w_1,w_2,x_5)&:=(e^{2\pi i/m}w_1,w_2,x_5),\\
    R_2(w_1,w_2,x_5)&:=(w_1,e^{2\pi i/m}w_2,x_5),\\
    C_1(w_1,w_2,x_5)&:=(\overline{w_1},w_2,x_5),\\
    C_2(w_1,w_2,x_5)&:=(w_1,\overline{w_2},x_5),\\
    \sigma(w_1,w_2,x_5)&:=(w_2,w_1,x_5),\\
    Z(w_1,w_2,x_5)&:=(w_1,w_2,-x_5)
    \qquad\forall\,(w_1,w_2,x_5)\in\mathbb S^4.
\end{align*}
Set
\begin{align*}
    \Gamma_m^0:=\langle R_1,R_2,C_1,C_2,\sigma\rangle,
    \qquad
    \Gamma_m:=\langle\Gamma_m^0,Z\rangle.
\end{align*}
These groups are finite. Indeed, every element permutes the
two complex coordinates, possibly conjugates each of them,
and multiplies them by $m$th roots of unity; an element of
$\Gamma_m$ may also change the sign of the fifth coordinate.
There are only finitely many maps of this form.

The definition of $\mathcal F$ gives the following actions
on its parameters:
\begin{align*}
    \begin{array}{c|c}
        \text{isometry}&
        \text{parameters of its value at }\mathcal F(s,t,z)\\ \hline
        R_1&(s+\ell/m,t,z)\\
        R_2&(s,t+\ell/m,z)\\
        C_1&(-s,t,z)\\
        C_2&(s,-t,z)\\
        \sigma&(t,s,z)\\
        Z&(s,t,-z)
    \end{array}
    \qquad
    \forall\,(s,t,z)\in\mathbb T\times(-z_0,z_0).
\end{align*}
For example,
\begin{align*}
    R_1\bigl(\mathcal F(s,t,z)\bigr)
    &=
    \frac{\cos z}{\sqrt2}
    \bigl(e^{i\sqrt2(s+\ell/m)},e^{i\sqrt2t},0\bigr)
    +\sin z\,e_5=\mathcal F(s+\ell/m,t,z),
\end{align*}
because $\sqrt2\ell/m=2\pi/m$. The other entries follow
by conjugating or interchanging the displayed coordinates,
or by changing the sign of $\sin z$.
In particular, every $g\in\Gamma_m$ preserves $\mathcal N$
and the central torus. Write $\widehat g:\mathbb T\to\mathbb T$
for the map induced by $g\in\Gamma_m$ on the first two parameters, i.e.
\begin{align*}
    \hat g:=f^{-1}\circ (g|_{f(\mathbb{T})})\circ f. 
\end{align*}
Then
\begin{align*}
    \pi\circ g=\widehat g\circ\pi,
    \qquad
    \widehat g(L_m)=L_m
    \qquad\forall\,g\in\Gamma_m.
\end{align*}
The two translations induced by $R_1,R_2$ are denoted by
\begin{align*}
    T_s(s,t):=(s+\ell/m,t),\qquad
    T_t(s,t):=(s,t+\ell/m)
    \qquad\forall\,(s,t)\in\mathbb T.
\end{align*}
%

\begin{figure}[htbp]
    \centering
    \begingroup
    \definecolor{symteal}{RGB}{0,112,116}
    \definecolor{symorange}{RGB}{190,102,36}
    \begin{tikzpicture}[
        x=0.87cm,y=0.87cm,
        >=Stealth,
        every node/.style={font=\small},
        lattice point/.style={circle,fill=symteal,inner sep=1.8pt},
        repeated point/.style={circle,draw=black!40,fill=white,
                              line width=0.55pt,inner sep=1.65pt}
    ]
        \fill[symteal!8] (0,0) rectangle (1,1);
        \draw[step=1,black!17,line width=0.35pt] (0,0) grid (5,5);
        \draw[black!65,line width=0.8pt] (0,0) rectangle (5,5);
        \foreach \j in {0,...,4} {
            \foreach \k in {0,...,4} {
                \node[lattice point] at (\j,\k) {};
            }
        }
        \foreach \j in {0,...,4} {
            \node[repeated point] at (\j,5) {};
        }
        \foreach \k in {0,...,5} {
            \node[repeated point] at (5,\k) {};
        }
        \foreach \y in {0,5} {
            \draw[symteal,line width=1pt,->] (2.20,\y) -- (2.75,\y);
        }
        \foreach \x in {0,5} {
            \draw[symorange,line width=1pt,->] (\x,2.20) -- (\x,2.75);
        }
        \draw[symorange,line width=1.05pt,->,shorten <=4pt,shorten >=4pt]
            (2,1) -- (3,1) node[midway,above=4pt] {$T_s$};
        \draw[symorange,line width=1.05pt,->,shorten <=4pt,shorten >=4pt]
            (1,2) -- (1,3) node[midway,left=4pt] {$T_t$};
        \node[below left=3pt] at (0,0) {$0$};
        \node[below=4pt] at (5,0) {$\ell$};
        \node[left=4pt] at (0,5) {$\ell$};
        \node[right=9pt] at (5,0) {$s$};
        \node[above=9pt] at (0,5) {$t$};
        \draw[black!65,|-|] (0,-0.57) -- (1,-0.57)
            node[midway,below=3pt] {$\ell/m$};
        \node[anchor=west,font=\normalsize] at (6.1,4.60)
            {$\mathbb T=(\mathbb R/\ell\mathbb Z)^2$};
        \node[anchor=west,text=black!65] at (6.1,4.05)
            {The case $m=5$.};
        \node[lattice point] at (6.25,3.10) {};
        \node[anchor=west] at (6.60,3.10) {points of $L_m$};
        \node[repeated point] at (6.25,2.48) {};
        \node[anchor=west] at (6.60,2.48) {periodic copies};
        \node[anchor=west,align=left] at (6.1,1.43)
            {$T_s(s,t)=(s+\ell/m,t)$,\\[4pt]
             $T_t(s,t)=(s,t+\ell/m)$.};
        \node[anchor=west,align=left,text=black!65] at (6.1,0.25)
            {Matching edge arrows indicate\\identified sides.};
    \end{tikzpicture}
    \endgroup
    \caption{The lattice $L_m$ and its elementary translations.
    Opposite edges of the square are identified.}
    \label{fig:s4-square-lattice}
\end{figure}

We next show that the scalar Jacobi operator of the central
torus has trivial kernel on the space of smooth functions
invariant under the lattice translations.
For integers $a,b$, define the complex-valued Fourier
function
\begin{align*}
    e_{a,b}(s,t):=
    \exp\left(\frac{2\pi i}{\ell}(as+bt)\right)
    \qquad\forall\,(s,t)\in\mathbb T.
\end{align*}
Since $\ell^2=2\pi^2$, differentiation gives
\begin{align*}
    \Delta_{\mathbb T}e_{a,b}
    &=-2(a^2+b^2)e_{a,b},\\
    L_{\mathbb T}e_{a,b}
    &=2(1-a^2-b^2)e_{a,b}
    \qquad\forall\,(a,b)\in\mathbb Z^2.
\end{align*}
Moreover,
\begin{align*}
    e_{a,b}\circ T_s=e^{2\pi ia/m}e_{a,b},
    \qquad
    e_{a,b}\circ T_t=e^{2\pi ib/m}e_{a,b}.
\end{align*}
Uniqueness of Fourier coefficients therefore implies that
a smooth function invariant under both translations can
have a nonzero coefficient at $(a,b)$ only if $a,b\in m\mathbb Z$.
For $m\geq2$, no such pair satisfies $a^2+b^2=1$. Hence
\begin{align}
    \label{eq:s4-translation-kernel}
    \left.
    \begin{gathered}
        v\in C^\infty(\mathbb T),\qquad
        v\circ T_s=v,\qquad v\circ T_t=v,\\
        L_{\mathbb T}v=0
    \end{gathered}
    \right\}
    \quad\rightarrow\quad v=0.
\end{align}
The constant functions are permitted by these symmetries,
but they are not in the kernel, since
$L_{\mathbb T}1=2$. The later construction of the sheet
heights uses the operator $L_{\mathbb T}$ on this
translation-invariant space.

We next determine how the group $\Gamma_m$ acts on the two normal
directions of a surface in $\mathcal N$.
Every $g\in\Gamma_m$ either preserves the two complex
coordinate planes individually or interchanges them. Define
\begin{align*}
    \chi(g):=
    \begin{cases}
        +1,&\text{if $g$ preserves the two planes individually},\\
        -1,&\text{if $g$ interchanges the two planes}
    \end{cases}
    \qquad\forall\,g\in\Gamma_m.
\end{align*}
Interchanging the planes twice preserves them individually,
so $\chi(gh)=\chi(g)\chi(h)$ for every $g,h\in\Gamma_m$.
Thus $\chi:\Gamma_m\to\{1,-1\}$ is a group homomorphism,
also called a \textit{character}. It equals $-1$ on $\sigma$ and
$+1$ on each of the other five generators.

The value $\chi(g)$ determines whether the differential
of $g$ preserves or reverses the chosen unit normal field $n$:
it sends $n(x)$ to $n(g(x))$ when $\chi(g)=+1$, and to
$-n(g(x))$ when $\chi(g)=-1$, for every $x\in\mathcal N$.
For every isometry $g:\mathbb S^4\to\mathbb S^4$ and
$x\in\mathbb S^4$, let $dg_x:T_x\mathbb S^4\rightarrow T_{g(x)}\mathbb S^4$ denote its differential at $x$. We first compute how the differentials of the six generators transform the normal field $n$. Fix
$(s,t,z)\in\mathbb T\times(-z_0,z_0)$ and set
$x:=\mathcal F(s,t,z)$. Since $n(x)=N(s,t)$,
differentiating the coordinate formulas for the generators gives
\begin{align*}
    (dR_1)_x(n(x))
    &=N(s+\ell/m,t)=n(R_1(x)),\\
    (dR_2)_x(n(x))
    &=N(s,t+\ell/m)=n(R_2(x)),\\
    (dC_1)_x(n(x))
    &=N(-s,t)=n(C_1(x)),\\
    (dC_2)_x(n(x))
    &=N(s,-t)=n(C_2(x)),\\
    (d\sigma)_x(n(x))
    &=-N(t,s)=-n(\sigma(x))\\
    (dZ)_x(n(x))
    &=N(s,t)=n(Z(x)).\\
\end{align*}
Since $\chi(\sigma)=-1$ and $\chi$ equals $+1$ on the
other five generators, these computations prove
\begin{align*}
    dg_x(n(x))=\chi(g)n(g(x))
    \qquad
    \forall\,g\in\{R_1,R_2,C_1,C_2,\sigma,Z\},
    \quad x\in\mathcal N.
\end{align*}
Indeed, $\chi(\sigma)=-1$, whereas $\chi$ equals $+1$
on each of the other five generators. We now show that the same identity holds for every element of $\Gamma_m$. Suppose that it holds for two elements
$g,h\in\Gamma_m$. Applying first $h$ and then $g$, the
chain rule gives
\begin{align*}
    d(g\circ h)_x(n(x))
    &=dg_{h(x)}\bigl(dh_x(n(x))\bigr)\\
    &=\chi(h)\,dg_{h(x)}(n(h(x)))\\
    &=\chi(h)\chi(g)\,n(g(h(x)))\\
    &=\chi(g\circ h)\,n((g\circ h)(x))
    \qquad\forall\,x\in\mathcal N,
\end{align*}
where the last equality uses that $\chi$ is a group
homomorphism. Thus the identity is preserved under composition.
Moreover, $R_i^{-1}=R_i^{m-1}$ for $i\in\{1,2\}$, and
each of $C_1,C_2,\sigma,Z$ is its own inverse. Every element
of $\Gamma_m$ can therefore be written as a finite composition
of these six generators. We conclude that
\begin{align}
    \label{eq:s4-character}
    dg_x(n(x))=\chi(g)n(g(x))
    \qquad\forall\,g\in\Gamma_m,\quad x\in\mathcal N.
\end{align}

Let $\Omega\subset\mathcal N$ be an open set with smooth
boundary $M:=\partial_{\mathcal N}\Omega$, and assume
$g(\Omega)=\Omega$ for every $g\in\Gamma_m$. Use the
notation of the preceding subsection for the inclusions
$\jmath:M\hookrightarrow\mathcal N$ and
$\iota:M\hookrightarrow\mathbb S^4$, their induced metric
$g_M$, and their second fundamental forms. Choose $\nu$
to be the outward unit normal to $\Omega$ within
$\mathcal N$. Each $g$ preserves $M$, the metric, and
the side occupied by $\Omega$. Therefore,
\begin{align*}
    d g_x\bigl(\nu(x)\bigr)=\nu(g(x))
    \qquad\forall\,g\in\Gamma_m,\,\forall\,x\in M.
\end{align*}
For $k\in\mathbb Z_{\geq0}$ and $\beta\in(0,1)$, define
\begin{align}
    \label{eq:s4-symmetry-spaces}
    \begin{split}
        C^{k,\beta}_{\mathrm{inv}}(M)
        &:=
        \{u\in C^{k,\beta}(M):
        u\circ g=u\ \text{for every }g\in\Gamma_m\},\\
        C^{k,\beta}_{\chi}(M)
        &:=
        \{b\in C^{k,\beta}(M):
        b\circ g=\chi(g)b\ \text{for every }g\in\Gamma_m\}
    \end{split}
\end{align}
to be the spaces of the \textit{invariant} and $\chi$-\textit{equivariant} functions on $M$, respectively. In these compositions, $g$ denotes its restriction to $M$.
The same subscripts on $C^\infty(M)$ impose the same
conditions on smooth functions.
If $V=u\nu+bn$ is a normal vector field along $\iota$, then
\begin{align*}
    d g_x\bigl(V(x)\bigr)
    &=u(x)\nu(g(x))+\chi(g)b(x)n(g(x)),\\
    V(g(x))
    &=u(g(x))\nu(g(x))+b(g(x))n(g(x))
\end{align*}
for every $g\in\Gamma_m$ and $x\in M$. The two vectors
$\nu(g(x)),n(g(x))$ are linearly independent. Thus the
condition
\begin{align*}
    d g_x\bigl(V(x)\bigr)=V(g(x))
    \qquad\forall\,g\in\Gamma_m,\,\forall\,x\in M,
\end{align*}
which is the definition of \textit{equivariance} for a vector field $V\in\Gamma(T^\perp_\iota M)$, is
equivalent to $u\circ g=u$ and $b\circ g=\chi(g)b$.

An ambient isometry commutes with the exponential map.
Hence an equivariant normal field $V$ gives
\begin{align*}
    \exp_{\iota(g(x))}^{\mathbb S^4}(rV(g(x)))
       =g\bigl(\exp_{\iota(x)}^{\mathbb S^4}(rV(x))\bigr)
    \qquad\forall\,g\in\Gamma_m,\quad x\in M,\quad r\in\mathbb R.
\end{align*}
The maps are defined for all $r$ because the sphere is
complete; the immersion property holds on the small
intervals described above. Their images are invariant.
The same identity holds for $\exp^{\mathcal N}$ wherever
defined, because the group preserves $g_{\mathcal N}$.

The linearised operator respects the same conditions.
Indeed, invariance of the ambient connection and the
transformation rules for the two normals give
\begin{align*}
    g^*h_\iota^\nu=h_\iota^\nu,\qquad
    g^*h_\iota^n=\chi(g)h_\iota^n,\qquad
    g^*\omega=\chi(g)\omega
    \qquad\forall\,g\in\Gamma_m.
\end{align*}
Here pullback means evaluating a tensor at $g(x)$ on
the vectors $dg_x(X),dg_x(Y)$. Taking traces gives
$H_{\iota,\nu}\circ g=H_{\iota,\nu}$ and
$H_{\iota,n}\circ g=\chi(g)H_{\iota,n}$.
Isometries commute with covariant differentiation,
contraction and the Laplacian. Thus both diagonal
operators in \eqref{eq:s4-J-matrix} preserve the
corresponding function spaces, and
\begin{align*}
    (Bv)\circ g=\chi(g)B(v\circ g),\qquad
    (B^*v)\circ g=\chi(g)B^*(v\circ g)
    \qquad\forall\,v\in C^\infty(M),\quad g\in\Gamma_m.
\end{align*}
In particular, $B$ maps $C^\infty_\chi(M)$ to
$C^\infty_{\mathrm{inv}}(M)$, while $B^*$ maps in
the opposite direction. Consequently $J$ takes an
equivariant normal field to an equivariant normal
field, and its two equations restrict to these spaces.

We finish by describing the consequences near a neck.
Choose $\tau>0$ and $\xi_*>0$ such that
\begin{align}\label{Equation: conditions symmetries}
    \tau\cosh\xi_*<\frac{\ell}{2m},
    \qquad
    \tau\xi_*<z_0.
\end{align}
For each $p=(p_s,p_t)\in L_m$, define
\begin{align}
    \label{eq:s4-symmetry-neck-map}
    \begin{split}
        F_{\tau,p}:
        (-\xi_*,\xi_*)\times(\mathbb R/2\pi\mathbb Z)
        &\rightarrow\mathcal N,\\
        (\xi,\theta)
        &\mapsto
        \mathcal F\bigl(
        p+\tau\cosh\xi(\cos\theta,\sin\theta),\tau\xi
        \bigr).
    \end{split}
\end{align}
The first argument of $\mathcal F$ is computed in the
flat torus $\mathbb T$. The first inequality in \eqref{Equation: conditions symmetries} places it
in an embedded coordinate disk about $p$ containing no
other lattice point, and the second keeps the height
inside $(-z_0,z_0)$. In these coordinates, the map is the usual catenoid parametrisation, followed by $\mathcal F$. Its coordinate derivatives before applying $\mathcal F$
are
\begin{align*}
    \partial_\xi
    \bigl(p+\tau\cosh\xi(\cos\theta,\sin\theta),\tau\xi\bigr)
    &=
    \bigl(\tau\sinh\xi\cos\theta,
    \tau\sinh\xi\sin\theta,\tau\bigr),\\
    \partial_\theta
    \bigl(p+\tau\cosh\xi(\cos\theta,\sin\theta),\tau\xi\bigr)
    &=
    \bigl(-\tau\cosh\xi\sin\theta,
    \tau\cosh\xi\cos\theta,0\bigr).
\end{align*}
These two coordinate derivatives are linearly independent:
the last component of the first is $\tau>0$, whereas
the last component of the second is zero and the second
vector is nonzero. For a point $x\in\mathcal N$, write
\begin{align*}
    \mathcal F^{-1}(x)=(q,z),
    \qquad q=\pi(x)\in\mathbb T,\quad z\in(-z_0,z_0).
\end{align*}
and recall that $z$ is the signed distance coordinate
from the central torus within $(\mathcal N,g_{\mathcal N})$;
see Remark~\ref{rem:s4-signed-distance}. Therefore, we say that $z$ is the \textit{height} of $x$. For $x=F_{\tau,p}(\xi,\theta)$, the defining formula gives
\begin{align*}
    z=\tau\xi,
    \qquad
    q=p+\tau\cosh\xi(\cos\theta,\sin\theta).
\end{align*}
In particular, we recover the parameters $(\xi,\theta)$ from $x$ by
\begin{align*}
    \xi=\frac z\tau,
    \qquad
    (\cos\theta,\sin\theta)=\frac{q-p}{|q-p|},
\end{align*}
where $q-p$ is computed in the flat coordinate disk
about $p$. The denominator is nonzero because
$|q-p|=\tau\cosh\xi\geq\tau>0$.
These formulas give a smooth inverse on the image,
with $\theta$ understood modulo $2\pi$.
Together with the linear independence of the coordinate
derivatives and the fact that $\mathcal F$ is an embedding,
this proves that $F_{\tau,p}$ is an embedding. We call the image of $F_{\tau,p}$ the
\textit{catenoidal model neck} at $p$. Note that a catenoidal model need not be minimal in $(\mathcal N,g_{\mathcal N})$.

For each fixed $\xi\in(-\xi_*,\xi_*)$, varying $\theta$
gives a closed curve at height $z=\tau\xi$.
Its projection under $\pi$ is the circle in the flat
torus centred at $p$ with radius
\begin{align*}
    r(\xi):=\tau\cosh\xi.
\end{align*}
Since $\cosh\xi\geq1$, with equality only at $\xi=0$,
this projected radius is smallest when $\xi=0$.
We call the corresponding closed curve the
\textit{waist} of the neck. Explicitly,
\begin{align*}
    F_{\tau,p}(0,\theta)
    =f\bigl(p+\tau(\cos\theta,\sin\theta)\bigr)
    \qquad\forall\,\theta\in\mathbb R/2\pi\mathbb Z.
\end{align*}
Thus the waist lies in the central torus, and its
projection is the circle of radius $\tau$ about $p$.
The portions with $\xi>0$ and $\xi<0$ lie on opposite
sides of the central torus and meet along the waist.

Only group elements fixing $p$ on the base torus
preserve this individual neck. Denote this subgroup by
\begin{align*}
    \Gamma_{m,p}
    :=
    \{g\in\Gamma_m:\widehat g(p)=p\}.
\end{align*}
For $p=(j\ell/m,k\ell/m)\in L_m$, set
$T_p:=R_1^jR_2^k$. Its induced translation sends $0$
to $p$. Define
\begin{align*}
    C_{2,p}:=T_pC_2T_p^{-1},\qquad
    \sigma_p:=T_p\sigma T_p^{-1},\qquad
    Q_p:=\sigma_pC_{2,p}.
\end{align*}
These maps and $Z$ belong to $\Gamma_{m,p}$. At $p=0$,
$C_2$ changes $(r\cos\theta,r\sin\theta)$ to
$(r\cos\theta,-r\sin\theta)$, whereas $\sigma$
changes it to $(r\sin\theta,r\cos\theta)$.
Their composition $\sigma C_2$ changes it to
$(-r\sin\theta,r\cos\theta)$.
Conjugation by $T_p$ gives the same actions on the
displacement from any $p\in L_m$.
Finally, $\cosh(-\xi)=\cosh\xi$, so $Z$ changes
$\xi$ to $-\xi$ without changing $\theta$.
We have therefore proved the following table:
\begin{align*}
    \begin{array}{c|c|c}
        g&\text{map }\vartheta_g
        \text{ satisfying }g\circ F_{\tau,p}
        =F_{\tau,p}\circ\vartheta_g&\chi(g)\\ \hline
        C_{2,p}&(\xi,\theta)\mapsto(\xi,-\theta)&+1\\
        \sigma_p&(\xi,\theta)\mapsto(\xi,\pi/2-\theta)&-1\\
        Q_p&(\xi,\theta)\mapsto(\xi,\theta+\pi/2)&-1\\
        Z&(\xi,\theta)\mapsto(-\xi,\theta)&+1
    \end{array}
\end{align*}
All angular expressions are understood modulo $2\pi$.
The signs are unchanged by conjugation, because
$\chi(T_p)=1$.

Let $b$ be a smooth function on this model neck
satisfying $b\circ g=\chi(g)b$ for $g\in\Gamma_{m,p}$.
For the following calculations, write $b(\xi,\theta)$
for its pullback by $F_{\tau,p}$. The table above gives
\begin{align*}
    b(\xi,-\theta)&=b(\xi,\theta),\\
    b(\xi,\pi/2-\theta)&=-b(\xi,\theta),\\
    b(\xi,\theta+\pi/2)&=-b(\xi,\theta),\\
    b(-\xi,\theta)&=b(\xi,\theta)
\end{align*}
for every $\xi\in(-\xi_*,\xi_*)$ and
$\theta\in\mathbb R/2\pi\mathbb Z$.
To determine the angular Fourier terms, write
\begin{align*}
    b(\xi,\theta)=\sum_{k\in\mathbb Z}
    \widehat b_k(\xi)e^{ik\theta},
    \qquad
    \widehat b_k(\xi):=
    \frac1{2\pi}\int_0^{2\pi}
    b(\xi,\theta)e^{-ik\theta}\,d\theta.
\end{align*}
The quarter-turn identity implies
$(e^{ik\pi/2}+1)\widehat b_k(\xi)=0$.
Thus, a nonzero coefficient requires $k\equiv2\pmod4$.
The reflection $\theta\mapsto-\theta$ makes the
coefficients at $k$ and $-k$ equal, removing the sine
terms. The reflection in $\xi$ makes each remaining
coefficient even. Hence
\begin{align}
    \label{eq:s4-neck-modes}
    b(\xi,\theta)=
    \sum_{j\geq0}b_j(\xi)\cos((4j+2)\theta),
    \qquad
    b_j(-\xi)=b_j(\xi)
\end{align}
for every $(\xi,\theta)$ in the parameter domain.
Here
\begin{align*}
    b_j(\xi):=\frac1\pi\int_0^{2\pi}
    b(\xi,\theta)\cos((4j+2)\theta)\,d\theta
    \qquad\forall\,j\in\mathbb Z_{\geq0},\quad
    \xi\in(-\xi_*,\xi_*).
\end{align*}
The Fourier series and its derivatives converge on
compact subsets of this domain, since $b$ is smooth.

Parseval's identity and $(4j+2)^2\geq4$ give
\begin{align}
    \label{eq:s4-symmetry-angular-poincare}
    \int_0^{2\pi}|\partial_\theta b(\xi,\theta)|^2\,d\theta
    \geq
    4\int_0^{2\pi}|b(\xi,\theta)|^2\,d\theta
    \qquad\forall\,\xi\in(-\xi_*,\xi_*).
\end{align}
The constant $4$ depends only on the permitted angular
orders. In particular, it is independent of $\tau$
and $\xi_*$.

Evenness across the axis and waist, and oddness across
the diagonal, give the boundary conditions
\begin{align*}
    \partial_\theta b(\xi,0)=0, \quad b(\xi,\pi/4)=0 \qquad \forall\,\xi\in[0,\xi_*)
\end{align*}
and
\begin{align*}
    \partial_\xi b(0,\theta)=0 \qquad\forall\,\theta\in[0,\pi/4].
\end{align*}
The zero-value condition is a homogeneous Dirichlet
condition. The zero normal-derivative conditions are
homogeneous Neumann conditions. To relate the displayed
coordinate derivatives to normal derivatives, direct
differentiation of $F_{\tau,p}$ gives
\begin{align*}
    F_{\tau,p}^*g_{\mathcal N}
    =
    \tau^2\bigl(1+\cos^2(\tau\xi)\sinh^2\xi\bigr)\,d\xi^2
    +
    \tau^2\cos^2(\tau\xi)\cosh^2\xi\,d\theta^2.
\end{align*}
This metric is diagonal and positive definite.
Consequently, $\partial_\theta$ is perpendicular to
each angular boundary and $\partial_\xi$ is perpendicular
to the boundary $\xi=0$. These boundary conditions
follow from restrictions of smooth functions with the
stated symmetries. We impose no boundary condition at
the artificial truncation $\xi=\xi_*$.
Figure~\ref{fig:s4-neck-symmetries} illustrates the
three reflection conditions.

For comparison, let $u$ be a smooth function on the
model neck invariant under $\Gamma_{m,p}$, and again use its pullback by
$F_{\tau,p}$. Its quarter-turn identity is
$u(\xi,\theta+\pi/2)=u(\xi,\theta)$.
The Fourier calculation above now permits only
$k\equiv0\pmod4$. The two evenness conditions give
\begin{align*}
    u(\xi,\theta)
    =u_0(\xi)+\sum_{j\geq1}u_j(\xi)\cos(4j\theta),
    \qquad
    u_j(-\xi)=u_j(\xi)
    \quad\forall\,j\in\mathbb Z_{\geq0},
\end{align*}
where
\begin{align*}
    u_0(\xi)&:=\frac1{2\pi}\int_0^{2\pi}u(\xi,\theta)\,d\theta,\\
    u_j(\xi)&:=\frac1\pi\int_0^{2\pi}
    u(\xi,\theta)\cos(4j\theta)\,d\theta
    \qquad\forall\,j\geq1,\quad \xi\in(-\xi_*,\xi_*).
\end{align*}
The expansion holds for every $(\xi,\theta)$ in the
parameter domain.
Invariance under $\theta\mapsto\pi/2-\theta$ gives
zero angular derivative at $\theta=\pi/4$, by
differentiating that identity at its fixed point.
The other two reflections are the same as for $b$.
Thus
\begin{align*}
    \partial_\theta u(\xi,0)=0, \quad \partial_\theta u(\xi,\pi/4)&=0 \qquad\forall\,\xi\in[0,\xi_*)
\end{align*}
and
\begin{align*}
    \partial_\xi u(0,\theta)&=0 \qquad\forall\,\theta\in[0,\pi/4].
\end{align*}
Both $u$ and $b$ therefore have zero coefficients of
$\cos\theta$ and $\sin\theta$, which are the first
angular harmonics. However, $u_0$ need not vanish.
For example, a nonzero constant function satisfies
all the conditions on $u$ and has zero angular derivative,
so no inequality of the form
\eqref{eq:s4-symmetry-angular-poincare} can hold for all
invariant $u$.
This explains why the later estimates for the correction
in the direction $n$ use angular Poincar\'e inequalities,
whereas the analysis within $\mathcal N$ must also handle
the angular average $u_0$. The model calculation here
concerns a single catenoidal neck; the later analysis
also treats its attachment to the two sheets.

\begin{figure}[htbp]
    \centering
    \begingroup
    \definecolor{symteal}{RGB}{0,112,116}
    \definecolor{symorange}{RGB}{190,102,36}
    \begin{tikzpicture}[
        x=1cm,y=1cm,>=Stealth,
        every node/.style={font=\small},
        axis/.style={black!40,line width=0.55pt},
        symmetry axis/.style={dashed,line width=0.7pt}
    ]
        \begin{scope}[shift={(0,0)}]
            \node[font=\normalsize] at (0,2.75) {Angular action};
            \fill[symteal!11] (0,0) -- (1.7,0)
                arc[start angle=0,end angle=45,radius=1.7] -- cycle;
            \draw[black!45,line width=0.75pt] (0,0) circle[radius=1.7];
            \draw[axis,->] (0,-1.95) -- (0,2.05)
                node[above left=1pt] {$t-p_t$};
            \draw[symmetry axis,symteal] (-2.10,0) -- (2.10,0);
            \draw[symmetry axis,symorange] (-1.45,-1.45) -- (1.56,1.56);
            \node[below=4pt] at (2.02,0) {$s-p_s$};
            \draw[symteal,line width=1.2pt] (0,0) -- (1.7,0);
            \draw[symorange,line width=1.2pt] (0,0) -- (1.2021,1.2021);
            \fill[black!75] (0,0) circle[radius=1.6pt];
            \node[text=symteal,above=5pt] at (1.55,0)
                {$\mathrm N$};
            \node[text=symorange,right=4pt] at (1.22,1.22)
                {$\mathrm D$};
            \node[text=symteal] at (1.02,0.43) {$\mathcal S$};
            \draw[black!60,->] (0.58,0)
                arc[start angle=0,end angle=45,radius=0.58];
            \node[font=\scriptsize] at (0.77,0.18) {$\pi/4$};
            \draw[black!65,line width=0.85pt,->]
                ({2.14*cos(25)},{2.14*sin(25)})
                arc[start angle=25,end angle=115,radius=2.14];
            \node[fill=white,inner sep=2pt] at (0.85,2.22)
                {$Q_p:\ \chi=-1$};
            \node[anchor=west,text=symteal] at (-2.35,-2.26)
                {$C_{2,p}:\ \theta\mapsto-\theta,\quad\chi=+1$};
            \node[anchor=west,text=symorange] at (-2.35,-2.76)
                {$\sigma_p:\ \theta\mapsto\pi/2-\theta,\quad\chi=-1$};
            \node[anchor=west] at (-2.35,-3.26)
                {$Q_p:\ \theta\mapsto\theta+\pi/2$.};
        \end{scope}
        \begin{scope}[shift={(7.3,0)}]
            \node[font=\normalsize] at (0,2.75) {Axial action};
            \fill[symteal!8]
                (0.60,0) -- plot[domain=0:1.65,samples=45] ({0.60*cosh(\x)},\x)
                -- ({-0.60*cosh(1.65)},1.65)
                -- plot[domain=1.65:0,samples=45] ({-0.60*cosh(\x)},\x)
                -- cycle;
            \draw[axis,->] (0,-2.02) -- (0,2.16)
                node[above] {$\xi$};
            \draw[black!65,line width=0.9pt]
                plot[domain=-1.65:1.65,samples=70] ({0.60*cosh(\x)},\x);
            \draw[black!65,line width=0.9pt]
                plot[domain=-1.65:1.65,samples=70] ({-0.60*cosh(\x)},\x);
            \draw[black!40] (0,1.65)
                ellipse[x radius=1.6196,y radius=0.23];
            \draw[black!40] (0,-1.65)
                ellipse[x radius=1.6196,y radius=0.23];
            \draw[symteal,line width=0.85pt] (0,0)
                ellipse[x radius=0.60,y radius=0.11];
            \draw[symteal,symmetry axis] (-1.95,0) -- (1.95,0);
            \node[anchor=east,text=symteal,align=right]
                at (-0.82,0.30) {waist: $\xi=0$\\[-1pt] $\mathrm N$};
            \draw[black!65,line width=0.85pt,<->]
                (1.62,1.05) to[bend left=38] (1.62,-1.05);
            \node[anchor=west,align=left] at (2.10,0)
                {$Z$\\[2pt] $\chi=+1$};
            \node at (0,-2.26) {$Z:\ \xi\mapsto-\xi$};
            \node at (0,-2.76) {$b(-\xi,\theta)=b(\xi,\theta)$};
            \node[text=black!65] at (0,-3.26)
                {$r=\tau\cosh\xi,\qquad z=\tau\xi$.};
        \end{scope}
    \end{tikzpicture}
    \endgroup
    \caption{Symmetry conditions for $b$: Neumann data on the axis
    $\theta=0$ and waist $\xi=0$, and Dirichlet data on the
    diagonal $\theta=\pi/4$. Shading marks the chosen sector
    and upper half-neck.}
    \label{fig:s4-neck-symmetries}
\end{figure}
\section{Construction of the equivariant initial surfaces}
\label{sec:s4-scalar-family}

For each sufficiently large integer $m$ and each
$\zeta\in[-1,1]$, we construct a smooth closed embedded
surface $M_\zeta\subset\mathcal N$ which consists of two
sheets on opposite sides of the central torus, joined by
one catenoidal neck near each of the $m^2$ lattice points
in $L_m$. We then obtain the estimates that we will use to correct both components of the mean-curvature vector of $M_{\zeta}$ within $\mathbb S^4$, and to prove that the final surfaces have strictly negative Gaussian curvature. Throughout this section, scalar mean curvature and normal displacements of a surface refer to its inclusion into $(\mathcal N,g_{\mathcal N})$.

We first construct and estimate a Green function on a fixed
flat torus. After rescaling, it prescribes the height of the
sheets away from the lattice points. Its logarithmic terms
agree with those of the ends of a catenoid; comparing the
constant terms determines the waist radius $\tau_\zeta$.
We subtract the remaining constant difference and interpolate
between these graph heights and the catenoids. The resulting
surface is denoted by $M_\zeta$, and we prove its embeddedness,
topology, and geometric estimates directly.

In the last two subsections, we define the function spaces and
establish the scalar estimates used later. We identify all
$M_\zeta$ with a fixed reference surface $M_0$ so that the
unknown displacement functions have a fixed domain when $\zeta$ varies. Using the linear construction of Kapouleas--McGrath, we solve the scalar linearised equation up to a multiple of one specified function. We retain that coefficient and estimate the nonlinear error for every allowed scalar displacement. Section~\ref{sec:s4-transverse} will add a displacement normal to $\mathcal N$, and Section~\ref{sec:s4-coupled} will choose the scalar displacement and $\zeta$ so that both components of the mean-curvature vector of $M_{\zeta}$ within $\mathbb S^4$ vanish. No exact minimality is imposed in the present section.

All constants below are independent of $m$ and
$\zeta\in[-1,1]$ after the fixed cutoffs and exponents have
been chosen. Their dependence on a fixed derivative order
or H\"older exponent is allowed. Every assertion for
sufficiently large $m$ uses a threshold independent of
$\zeta$; we may increase that threshold finitely many times.

\subsection{A Green function on the fixed square torus}

We first construct a function with one logarithmic singularity on a fixed flat torus. Rescaling this function will give a solution of $L_{\mathbb T}\Phi_m=0$ away from $L_m$, with the same logarithmic singularity at every lattice point. We will need estimates both for this function and for its derivatives to construct the initial surfaces. We also prove a lower bound for its Hessian; this bound will later be used to establish negative Gaussian curvature between the necks.

Let $\mathbb T_0:=(\mathbb R/2\pi\mathbb Z)^2$, equipped with the flat metric $dX_1^2+dX_2^2$. Write $dX$ for its area measure, $D_0$ for its flat covariant derivative, and $\Delta_0$ for its Laplace--Beltrami operator. Thus, $\operatorname{Area}(\mathbb T_0)=4\pi^2$. Let $\delta_0$ be the Dirac measure of mass one at the origin of $\mathbb T_0$. For every $a\in[0,1/4]$, define
\begin{align}
    \label{eq:s4-cell-Green}
    G_a(X):=\sum_{n\in\mathbb Z^2\smallsetminus\{0\}}\frac{e^{in\cdot X}}{a-|n|^2}.
\end{align}
Here, $X=(X_1,X_2)$ and $n\cdot X:=n_1X_1+n_2X_2$ for every $n=(n_1,n_2)\in\mathbb Z^2$. The series on the right-hand side of \eqref{eq:s4-cell-Green} converges in $L^2(\mathbb T_0)$, because its coefficients are square summable. For $v\in L^1(\mathbb T_0)$, we define its Fourier coefficients by
\begin{align*}
    \widehat v(n):=\frac1{4\pi^2}\int_{\mathbb T_0}v(X)e^{-in\cdot X}\,dX \qquad\forall\,n\in\mathbb Z^2.
\end{align*}
With this convention, $4\pi^2\delta_0=\sum_{n\in\mathbb Z^2}e^{in\cdot X}$ in the sense of distributions on $\mathbb{T}_0$. Multiplication of each Fourier coefficient in \eqref{eq:s4-cell-Green} by $a-|n|^2$ therefore gives
\begin{align}
    \label{eq:s4-cell-equation}
    \begin{cases}
        (\Delta_0+a)G_a
        =4\pi^2\delta_0-1 & \text{ distributionally on }\mathbb{T}_0,\\[2mm]
        \displaystyle{\int_{\mathbb T_0}G_a\,dX=0.}
    \end{cases}
\end{align}
Note that the conditions in \eqref{eq:s4-cell-equation} uniquely determine $G_a$. As the Fourier coefficients of $G_a$ at $n$ and $-n$ are equal and real, $G_a$ is real valued. Moreover, since the Fourier coefficients of $G_a$  depend only on $|n|^2$, $G_a$ is invariant under both coordinate reflections and interchange of the coordinates. Finally, by standard elliptic regularity, we have $G_a\in C^\infty(\mathbb T_0\smallsetminus\{0\})$.

Here and throughout, we denote by $d_{\mathbb T_0}$ and $d_{\mathbb T}$ the geodesic distances induced by the flat metrics, and by $B_{\mathbb T_0}(p,r)$ and $B_{\mathbb T}(p,r)$ the corresponding open metric balls. All tensor norms on either torus are induced by the flat metric. To state estimates near the singularity $0\in\mathbb T_0$ of $G_a$, we introduce suitably rescaled H\"older norms on dyadic annuli. For $0<R<\pi/2$, set
\begin{align*}
    \mathcal A_R
    :=\{q\in\mathbb T_0\mbox{ : }
    R/2<d_{\mathbb T_0}(q,0)<2R\}.
\end{align*}
For a nonnegative integer $k$, $\beta\in(0,1)$, $R>0$,
an open subset $U\subset\mathbb T_0$ (or its closure), and
a scalar function or tensor field $v\in C^{k,\beta}(U)$, define
\begin{align*}
    \|v\|_{C^{k,\beta}_R(U)}
    &:=\sum_{j=0}^k R^j
    \|D_0^jv\|_{L^\infty(U)}+R^{k+\beta}[D_0^kv]_{\beta;U},
\end{align*}
where, for an open subset $U\subset\mathbb T_0$ and a tensor field $T$ on $U$, we set
\begin{align*}
    [T]_{\beta;U}:=\sup_{\substack{p,q\in U\\p\neq q}}\frac{|T(p)-P_{q,p}T(q)|}{d_{\mathbb T_0}(p,q)^\beta}.
\end{align*}
Here, $P_{q,p}$ denotes parallel transport of tensors from $q$ to $p$ along any smooth
path, acting as the identity on scalars.\footnote{Recall that, on a flat torus, parallel transport between two points is independent of the chosen path.}
On $\mathbb T$, we use the same definitions with $d_{\mathbb T}$ and the flat covariant derivative $D$ of $g_{\mathbb T}$ in place of $d_{\mathbb T_0}$ and $D_0$, respectively.
\begin{lemma}[Uniform estimates for $G_a$]
    \label{lem:s4-Green}
    Set $r_*:=1/4$. There exist constants
    $a_*\in(0,1/4]$ and $c,C>0$ with the following
    properties.

    For every $a\in[0,1/4]$, there are a number
    $c_a\in\mathbb R$ and a function
    $E_a\in C^\infty(B_{\mathbb T_0}(0,r_*)
    \smallsetminus\{0\})$ such that
    \begin{align}
        \label{eq:s4-Green-local}
        G_a(q)=2\pi\log r+c_a+E_a(q),
        \qquad
        |c_a|\leq C,
        \qquad
        |c_a-c_0|\leq Ca,
    \end{align}
    for every $q\in B_{\mathbb T_0}(0,r_*)
    \smallsetminus\{0\}$, with
    $r:=d_{\mathbb T_0}(q,0)$.
    For every nonnegative integer $k$ and every
    $\beta\in(0,1)$, there exists $C_{k,\beta}>0$
    such that
    \begin{align}
        \label{eq:s4-Green-remainder}
        \|E_a\|_{C^{k,\beta}_R(\mathcal A_R)}
        \leq C_{k,\beta}R^2(1+a|\log R|)
        \qquad
        \forall\,a\in[0,1/4],\quad 0<R<r_*/2.
    \end{align}

    On the whole punctured torus, there exist
    constants $C_j>0$, $j=1,2,\ldots$, such that
    \begin{align}
        \label{eq:s4-Green-global}
        |G_a(q)-2\pi\log r|
        &\leq C,\nonumber\\
        |D_0^jG_a(q)|
        &\leq C_j(1+r^{-j})
        \qquad\forall\,j=1,2,\ldots,
    \end{align}
    for every $a\in[0,1/4]$ and
    $q\in\mathbb T_0\smallsetminus\{0\}$, with
    $r:=d_{\mathbb T_0}(q,0)$.

    Finally, for every $a\in[0,a_*]$ and
    $q\in\mathbb T_0\smallsetminus\{0\}$,
    with $r:=d_{\mathbb T_0}(q,0)$, one has
    \begin{align}
        \label{eq:s4-Green-Hessian}
        |D_0^2G_a(q)|^2
        &\geq c(1+r^{-4}),\nonumber\\
        2a|D_0G_a(q)|^2-2|D_0^2G_a(q)|^2
        &\leq-c(1+r^{-4}).
    \end{align}
    All constants are independent of $a$
    in the stated ranges.
\end{lemma}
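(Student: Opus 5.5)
The plan is to split $G_a$ into an explicit local singular part and a smooth correction. Fix a cutoff $\eta\in C^\infty_c(B_{\mathbb T_0}(0,2r_*))$ with $\eta\equiv1$ on $B_{\mathbb T_0}(0,r_*)$. For $a>0$ the natural model is the radial solution of $(\Delta_0+a)u=2\pi\delta_0$ in the plane, namely $-\pi Y_0(\sqrt a\,r)$ (Bessel function of the second kind), normalised so that its leading term is $2\pi\log r$. Its expansion is $2\pi\log r+2\pi\bigl(\log(\sqrt a/2)+\gamma\bigr)+O(a r^2|\log(ar)|)$. The constant $\log\sqrt a$ is unbounded as $a\to0$, so I will use the truncated series instead: set
\begin{align*}
    \Lambda_a(r):=2\pi\log r\,J_0(\sqrt a\,r)-\pi\sqrt a\,\text{(regular part)}.
\end{align*}
More precisely, $\Lambda_a:=2\pi\log r\,J_0(\sqrt a r)+2\pi\sum_{k\geq1}(\text{Frobenius coefficients})(ar^2)^k$, obtained from $-\pi Y_0(\sqrt a r)$ by subtracting the multiple of $J_0(\sqrt a r)$ that carries $\log\sqrt a$. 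This $\Lambda_a$ still solves $(\Delta_0+a)\Lambda_a=2\pi\delta_0$ on $B(0,2r_*)$, is analytic in $a$, and satisfies
\begin{align*}
    \Lambda_a=2\pi\log r+O\bigl(ar^2(1+|\log r|)\bigr),
\end{align*}
with the matching dyadic derivative bounds. Then $(\Delta_0+a)(G_a-\eta\Lambda_a)=4\pi^2\delta_0\cdot0+\text{smooth}-1$. Here I use $4\pi^2\delta_0$ versus $2\pi$; note the Fourier normalisation gives $(\Delta_0+a)G_a=4\pi^2\delta_0-1$, whose log coefficient is $2\pi$, consistent with the claim. The right-hand side is supported away from $0$ and bounded in every $C^k$ uniformly in $a\in[0,1/4]$.

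Since $a\le1/4<1$, the operator $\Delta_0+a$ is invertible on mean-zero functions and also on constants, with a uniform gap: its spectrum is $a-|n|^2$, so $|a-|n|^2|\ge3/4$. Therefore $H_a:=G_a-\eta\Lambda_a$ is smooth with $C^k$ norms bounded uniformly in $a$. Its dependence on $a$ is Lipschitz, because differentiating in $a$ gives $(\Delta_0+a)\partial_aH_a=-H_a+\partial_a(\text{rhs})$, which is uniformly bounded. I then define $c_a:=H_a(0)$ and $E_a:=(\Lambda_a-2\pi\log r)+(H_a-H_a(0))$. This gives $|c_a|\le C$ and $|c_a-c_0|\le Ca$. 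Because of the reflection and interchange symmetries of $G_a$, $\Lambda_a$ and $\eta$, the function $H_a$ is even and $D_0H_a(0)=0$, so $H_a-H_a(0)=O(r^2)$ in the scaled norms. The Bessel remainder contributes $O(aR^2|\log R|)$ on $\mathcal A_R$. Together these give \eqref{eq:s4-Green-remainder}. The global bounds \eqref{eq:s4-Green-global} then follow: inside $B(0,r_*)$ they come from the local expansion, and outside from uniform smoothness of $G_a$ away from $0$. This smoothness follows from $G_a=\eta\Lambda_a+H_a$, or from interior elliptic estimates for $(\Delta_0+a)G_a=-1$ together with the $L^2$ bound.

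The Hessian bounds are the main obstacle, especially the factor $1+r^{-4}$ away from the origin. Near $0$ the estimate is easy, since $D_0^2(2\pi\log r)$ has eigenvalues $\pm2\pi r^{-2}$, so $|D_0^2G_a|^2=8\pi^2r^{-4}+O(r^{-2})$, and $a|D_0G_a|^2\le Cr^{-2}$. Both inequalities therefore hold on $B(0,r_1)$ for some small fixed $r_1$. On the compact region $\{r\ge r_1\}$ I need $|D_0^2G_a|$ bounded below. I will argue by continuity in $a$: it suffices to show that $D_0^2G_0$ never vanishes on $\mathbb T_0\smallsetminus\{0\}$, and then choose $a_*$ small. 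For the second inequality, the term $2a|D_0G_a|^2\le Ca$ is absorbed the same way.

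For $G_0$, which is the periodic Green function with $\Delta_0G_0=-1$ away from $0$, suppose $D_0^2G_0(q)=0$. That would force $\Delta_0G_0(q)=0$, contradicting $\Delta_0G_0=-1$. So $|D_0^2G_0|^2\ge\frac12(\Delta_0G_0)^2=\frac12$ everywhere, and in fact $|D^2u|^2\ge(\Delta u)^2/2$ in dimension two. The same pointwise inequality gives $|D_0^2G_a|^2\ge\frac12(1+aG_a)^2$, which is positive where $|aG_a|\le1/2$. That holds on $\{r\ge r_1\}$ once $a_*\le1/(2\sup|G_a|)$, so no compactness argument is needed. Combining the two regions yields \eqref{eq:s4-Green-Hessian}.
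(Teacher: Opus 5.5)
Your route is essentially the paper's. Your $\Lambda_a=\pi^2Y_0(\sqrt a\,r)-2\pi\bigl(\log(\sqrt a/2)+\gamma\bigr)J_0(\sqrt a\,r)$ is exactly the paper's radial function $P_a$. As in the paper, $c_a$ is the value at $0$ of the smooth remainder, and reflection symmetry kills its gradient. Away from the origin you also use the paper's argument: $|D_0^2G_a|^2\geq\frac12(\Delta_0G_a)^2=\frac12(1+aG_a)^2$, with $a_*$ chosen so that $|aG_a|\leq\frac12$ there. Your constants are off, though. The function with leading term $2\pi\log r$ is $\pi^2Y_0$, not $-\pi Y_0$, and it solves $(\Delta_0+a)\Lambda_a=4\pi^2\delta_0$, not $2\pi\delta_0$. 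Only with this normalisation does its Dirac mass cancel that of $G_a$.

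One step fails as written. $\Delta_0+a$ has no uniform gap on constants: its symbol at $n=0$ is $a$, which vanishes at $a=0$, and $|a-|n|^2|\geq3/4$ holds only for $n\neq0$. So your Fourier inversion does not control the mean of $H_a$. For the same reason, your Lipschitz-in-$a$ argument for $c_a$ does not control the mean of $\partial_aH_a$.

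The repair uses the normalisation $\int_{\mathbb T_0}G_a\,dX=0$. It gives that the mean of $H_a$ equals $-\frac1{4\pi^2}\int_{\mathbb T_0}\eta\Lambda_a\,dX$. This is bounded and analytic in $a$, since $\Lambda_a$ has only a logarithmic singularity. The mean of $\partial_aH_a$ is handled the same way, with $\partial_a\Lambda_a$ in place of $\Lambda_a$. The nonzero modes are then controlled by your gap.

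The paper avoids this issue differently. It bounds $\|G_a-G_0\|_{C^0}\leq Ca$ directly from the coefficient bound $a/(|n|^2(|n|^2-a))\leq4a/(3|n|^4)$, which gives a uniform $L^2$ bound. It then applies interior elliptic estimates to $Q_a=G_a-P_a\circ r$, which solves $(\Delta_0+a)Q_a=-1$ on $B_{\mathbb T_0}(0,1)$. Finally it reads off $|c_a-c_0|\leq Ca$ by evaluating $G_a-G_0$ at the origin, using $P_a(r)-P_0(r)\to0$ as $r\downarrow0$, so no differentiation in $a$ is needed.
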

The proof of Lemma \ref{lem:s4-Green} is given in Appendix~\ref{app:s4-green}.
\subsection{The lattice solution and the choice of the waist radii}
We now construct the height function on $\mathbb T$ that will determine the shape of the two sheets away from the necks. We first rescale the Green function $G_{m^{-2}}$ to construct a function $\Phi_m$ on $\mathbb{T}$ whose singularities occur precisely at $L_m$ and which satisfies the equation $L_{\mathbb T}\Phi_m=0$ away from $L_m$. We then compare its expansion near a lattice point with the height of a catenoid. This comparison will determine the waist radius of the catenoidal necks up to a multiplicative constant.

For an integer $m\geq2$, define
\begin{align}
    \label{eq:s4-physical-Green}
    \Phi_m(s,t):=\frac{m^2}{2\pi}+\frac1{2\pi}G_{m^{-2}}\big(\sqrt2m\,s,\sqrt2m\,t\big) \qquad\forall\,(s,t)\in\mathbb T\smallsetminus L_m.
\end{align}
The arguments of $G_{m^{-2}}$ are taken modulo $2\pi$ in each coordinate. Since $\sqrt2m\ell=2\pi m$, the map $(s,t)\mapsto\big(\sqrt2m\,s,\sqrt2m\,t\big)$ is a well-defined covering of $\mathbb T_0$ by $\mathbb T$. The inverse image of the origin by such covering map is exactly $L_m$. The symmetries of $G_{m^{-2}}$ imply that $\Phi_m$ is invariant under the action of $\Gamma_m^0$ on $\mathbb T$.
The normalisation in \eqref{eq:s4-physical-Green} and Lemma \ref{lem:s4-Green} give
\begin{align}
    \label{eq:s4-physical-equation}
    \begin{cases}
        \displaystyle{L_{\mathbb T}\Phi_m=2\pi\sum_{p\in L_m}\delta_p} & \mbox{ distributionally on } \mathbb{T},\\[2mm]
        \displaystyle{\frac1{2\pi^2}\int_{\mathbb T}\Phi_m\,d\vol_{g_{\mathbb{T}}}=\frac{m^2}{2\pi}}
    \end{cases}
\end{align}
where $\delta_p$ is the Dirac measure of mass one at $p$.
For $q\in\mathbb T$, let $\rho(q):=d_{\mathbb T}(q,L_m)$. As every point in $\mathbb{T}$ lies within distance $\ell/(\sqrt2m)=\pi/m$ of a lattice point and
\begin{align*}
    d_{\mathbb T_0}\big((\sqrt2m\,s,\sqrt2m\,t),0\big)=\sqrt2m\rho(q)\qquad\forall\,q=(s,t)\in\mathbb T,
\end{align*}
Lemma~\ref{lem:s4-Green} implies that
\begin{align}
    \Phi_m(q)&=\frac{m^2}{2\pi}
        +\log\big(\sqrt2m\rho(q)\big)+O(1),
        \label{eq:s4-physical-values}\\
    |D^j\Phi_m(q)|&\leq C_j\rho(q)^{-j}
        \qquad\forall\,j\geq1,
        \label{eq:s4-physical-derivatives}\\
    |D^2\Phi_m(q)|^2&\geq c\bigl(m^4+\rho(q)^{-4}\bigr)
        \label{eq:s4-physical-Hessian}
\end{align}
for every $q\in\mathbb T\smallsetminus L_m$ and all
sufficiently large $m$, chosen so that $m^{-2}\leq a_*$. 
\begin{remark}[Local H\"older bounds]
    The above derivative bounds also give local H\"older estimates. More precisely, if $B_q:=B_{\mathbb{T}}(q,\rho(q)/8)$, then
    \begin{align*}
        \|D^j\Phi_m\|_{C^{k,\beta}_{\rho(q)}(B_q)}\leq C_{j,k,\beta}\rho(q)^{-j}
    \end{align*}
    for all integers $j\geq1$, $k\geq0$ and $\beta\in(0,1)$. Indeed, since $\rho$ is bounded below by $7\rho(q)/8$ on $B_q$, we can apply \eqref{eq:s4-physical-derivatives} there, and use one more derivative and the mean-value estimate to bound the H\"older seminorm.
\end{remark}
\begin{remark}[Bounds on the Gaussian curvature term]
    The quantity $|D\Phi_m|^2-\tfrac12|D^2\Phi_m|^2$, which will enter the Gaussian curvature estimates, has a negative upper bound independent of $m$ and $q$. Indeed, for $q=(s,t)\in\mathbb T\smallsetminus L_m$, set $X:=(\sqrt2m\,s,\sqrt2m\,t)\in\mathbb T_0$. By the chain rule and \eqref{eq:s4-Green-Hessian}, we have
    \begin{align}
    \begin{split}
        \label{eq:s4-height-quadratic-margin}
        |D\Phi_m(q)|^2-\tfrac12|D^2\Phi_m(q)|^2&=\frac{m^4}{4\pi^2}\bigl(2m^{-2}|D_0G_{m^{-2}}(X)|^2-2|D_0^2G_{m^{-2}}(X)|^2\bigr)\\
        &\leq-c\bigl(m^4+\rho(q)^{-4}\bigr)
    \end{split}
    \end{align}
    for every $q\in\mathbb T\smallsetminus L_m$ and $m^{-2}\leq a_*$, with $c>0$ independent of $m$ and $q$. Note that this inequality is strictly stronger than the assertion that the Hessian of $\Phi_m$ does not vanish.
\end{remark}
We next describe the expansion of $\Phi_m$ near each
lattice point. Set $\delta:=(100m)^{-1}$. This constant will simply represent a fixed small fraction of the lattice spacing.
Given any $p\in L_m$, define $r_p(q):=d_{\mathbb T}(q,p)$ for every $q\in\mathbb T$. Since $2\delta<\operatorname{inj}(\mathbb T)$, the exponential map $\exp_p^{\mathbb T}$ identifies the disk $\{v\in T_p\mathbb T \mbox{ : }|v|<2\delta\}$ in $T_p
\mathbb{T}$ with $B_{\mathbb T}(p,2\delta)$ in $\mathbb{T}$. For $q$ in this ball, we write $q-p:=(\exp_p^{\mathbb T})^{-1}(q)$; thus $r_p(q)=|q-p|$. Rescaling \eqref{eq:s4-Green-local} and \eqref{eq:s4-Green-remainder} gives
\begin{align}
    \label{eq:s4-physical-expansion}
    \begin{split}
        \Phi_m(q)=\log r_p(q)+A_m+\mathcal E_{m,p}(q), \qquad \|\mathcal E_{m,p}\|_{C^{k,\beta}_R(\mathcal A_R(p))}\leq C_{k,\beta}R^2(m^2+|\log R|),
    \end{split}
\end{align}
with
\begin{align*}
    A_m:=\frac{m^2}{2\pi}+\log(\sqrt2m)+\frac{c_{m^{-2}}}{2\pi},
\end{align*}
for every $q\in B_{\mathbb T}(p,2\delta)\smallsetminus\{p\}$,
every nonnegative integer $k$, every $\beta\in(0,1)$,
and every $R\in(0,\delta)$. Here, the norm of $\mathcal E_{m,p}$ is taken on the annulus $\mathcal A_R(p):=\{x\in\mathbb T:R/2<r_p(x)<2R\}$.

We compare this expansion with the height of a catenoidal
end. For $\tau>0$, the upper half of a Euclidean catenoid
of waist radius $\tau$ is the radial graph
\begin{align*}
    C_\tau(r):=\tau\operatorname{arcosh}(r/\tau)
    \qquad\forall\,r\geq\tau.
\end{align*}
Since $\operatorname{arcosh}x=\log(x+\sqrt{x^2-1})$,
\begin{align*}
    C_\tau(r)
    =\tau\log r+\tau\log(2/\tau)
        +O(\tau^3r^{-2})
    \qquad\text{as }r/\tau\to\infty.
\end{align*}
The logarithmic coefficients of $\tau\Phi_m$ and
$C_\tau$ agree for every $\tau>0$. Their constant terms
agree precisely when
\begin{align*}
    A_m=\log(2/\tau),
    \qquad\text{ or equivalently }\qquad
    \tau=2e^{-A_m}.
\end{align*}
We allow a multiplicative variation of this radius by setting
\begin{align}
    \label{eq:s4-scales}
    \tau_\zeta:=2e^{-A_m+\zeta}
    \qquad\forall\,\zeta\in[-1,1].
\end{align}
The reference waist radius $\tau_0=2e^{-A_m}$ therefore
corresponds to equality of the constant terms.

Define the proposed upper sheet height before interpolation by
\begin{align*}
    \psi_\zeta:=\tau_\zeta\Phi_m
    \quad\mbox{on }\mathbb T\smallsetminus L_m,
    \qquad\forall\,\zeta\in[-1,1].
\end{align*}
By \eqref{eq:s4-physical-values},
\begin{align*}
    \psi_\zeta(q)=\frac{m^2\tau_\zeta}{2\pi}
        +O(\tau_\zeta)
    \qquad\mbox{when }\rho(q)\geq\delta,
\end{align*}
where the remainder term is uniform in $q$, $m$, and $\zeta$.
Thus, $m^2\tau_\zeta/(2\pi)$ is the principal height scale of the sheets outside the radius-$\delta$ disks.
\begin{remark}[Asymptotics of the waist radii as $m\to+\infty$]
    \label{rem:s4-waist-asymptotics}
    Using the definition \eqref{eq:s4-scales} of $\tau_\zeta$ and the expression for $A_m$, we obtain
    \begin{align*}
        \tau_\zeta=\frac{\sqrt2}{m}\exp\left(-\frac{m^2}{2\pi}-\frac{c_{m^{-2}}}{2\pi}+\zeta\right)\qquad\forall\,\zeta\in[-1,1].
    \end{align*}
    Since $c_{m^{-2}}=c_0+O(m^{-2})$ by \eqref{eq:s4-Green-local}, there are constants $c,C>0$ such that
    $cm^2\leq|\log\tau_\zeta|\leq Cm^2$ for all sufficiently
    large $m$ and all $\zeta\in[-1,1]$. More generally,
    \begin{align*}
        \sup_{\zeta\in[-1,1]}m^p\tau_\zeta^q\rightarrow0
        \qquad\text{as }m\to\infty
    \end{align*}
    for every fixed pair of real numbers $p\geq0$ and $q>0$.
    We will use this fact whenever a fixed power of $m$
    multiplies a positive power of $\tau_\zeta$.
\end{remark}
The H\"older estimate in \eqref{eq:s4-physical-expansion}
with $k=1$ shows that the function $\psi_\zeta-\tau_\zeta\log(2r_p/\tau_\zeta)$ extends to a function $\widehat\psi_{\zeta,p}\in C^1(B_{\mathbb T}(p,2\delta))$. We let $a_p:=\widehat\psi_{\zeta,p}(p)$. For $p,p'\in L_m$, the translation $q\mapsto q+p'-p$ preserves $\psi_\zeta$ and satisfies $r_{p'}(q+p'-p)=r_p(q)$. Hence, $a_{p'}=a_p$. By \eqref{eq:s4-physical-expansion}, this common value is given by
\begin{align}
    \label{eq:s4-exact-mismatch}
    a_p
    =\tau_\zeta\bigl(A_m+\log(\tau_\zeta/2)\bigr)
    =\tau_\zeta\zeta
    \qquad\forall\,p\in L_m,\quad\zeta\in[-1,1],
\end{align}
where we used $\log(\tau_\zeta/2)=-A_m+\zeta$. Moreover, as $\widehat\psi_{\zeta,p}$ is invariant under the two coordinate reflections fixing $p$, we have $d\widehat\psi_{\zeta,p}(p)=0$. Consequently,
\begin{align*}
    \psi_\zeta(q)-\tau_\zeta\log(2r_p(q)/\tau_\zeta)=\tau_{\zeta}\zeta+o(\lvert q-p\rvert) \qquad\mbox{ as } q\to p.
\end{align*}
for every $p\in L_m$. The parameter $\zeta$ will be chosen in Section~\ref{sec:s4-coupled}, together with the normal
correction within $\mathcal N$, so that the resulting
surface is minimal in $\mathbb S^4$.
\begin{remark}[Mismatch jets]
    \label{rem:s4-mismatch-jets}
    We now relate the preceding calculation to the matching data used in the linearised doubling construction which we apply in Appendix~\ref{app:s4-scalar-input}.

    Since $L_{\mathbb T}\psi_\zeta=0$ on $\mathbb T\smallsetminus L_m$ and $\psi_\zeta$ has logarithmic coefficient $\tau_\zeta$ at every lattice point, the height function $\psi_\zeta$ is a \textit{linearised doubling solution} in the sense of \cite[Definition~3.6]{KapouleasMcGrathLD}. Its \textit{mismatch jet} at $p$, as defined in \cite[Definition~3.10]{KapouleasMcGrathLD}, is the affine function
    \begin{align*}
        v\mapsto
        \widehat\psi_{\zeta,p}(p)
        +d\widehat\psi_{\zeta,p}(p)[v]
        =\tau_\zeta\zeta
        \qquad\forall\,v\in T_p\mathbb T.
    \end{align*}
    Thus, the mismatch jets are constant and equal at all
    lattice points, and their common value divided by
    $\tau_\zeta$ is exactly $\zeta$.
\end{remark}
\subsection{Joining the two graphs by catenoidal necks}
We now join the two sheets by a catenoidal neck at each
lattice point. We first correct $\psi_\zeta$ to remove
the constant mismatch $\tau_\zeta\zeta$ computed in
\eqref{eq:s4-exact-mismatch}. The correction will satisfy
$L_{\mathbb T}v=0$ near each lattice point, so that its
image under $L_{\mathbb T}$ will be supported away from the
necks. We then interpolate between the corrected sheet heights
and the catenoidal heights, and complete the surface 
by inserting the central portions of the catenoids.

Choose a smooth nondecreasing function
$\vartheta:\mathbb R\to[0,1]$ such that
\begin{align*}
    \begin{cases}
        \vartheta(x)=0 & \forall\,x\le-1,\\
        \vartheta(x)=1 & \forall\,x\ge1,\\
        \vartheta(-x)=1-\vartheta(x) & \forall\,x\in\mathbb R.
    \end{cases}
\end{align*}
For $a_1,a_2\in\mathbb R$ with $a_1\ne a_2$, define
\begin{align}
    \label{eq:s4-cutoff}
    \eta_{a_1,a_2}(r)
    :=\vartheta\left(6\frac{r-a_1}{a_2-a_1}-3\right)
    \qquad\forall\,r\in\mathbb R.
\end{align}
Then $\eta_{a_1,a_2}+\eta_{a_2,a_1}=1$. If $a_1<a_2$,
the function $\eta_{a_1,a_2}$ is zero for
$r\leq(2a_1+a_2)/3$ and one for
$r\geq(a_1+2a_2)/3$. Thus, its variation from zero to one
occurs strictly between $a_1$ and $a_2$. Moreover,
\begin{align}
    \label{eq:der-bounds}
    \left|\frac{d^j}{dr^j}\eta_{a_1,a_2}(r)\right|
    \leq C_j|a_2-a_1|^{-j}
    \qquad\forall\,r\in\mathbb R,\quad j\geq1,
\end{align}
where $j$ is an integer and $C_j$ depends only on $j$
and $\vartheta$. This is the cutoff convention in
\cite[Definition~1.5]{KapouleasMcGrathLD}.

To remove the constant mismatch, we seek a smooth
function whose value is one and whose differential
vanishes at every lattice point. We also require this
function to solve $L_{\mathbb T}v=0$ near those points.
A radial solution with these properties is provided
by the Bessel function of the first kind of order zero,
\begin{align}
    \label{eq:j0-def}
    J_0(x):=\sum_{j=0}^{\infty}
        \frac{(-1)^j}{(j!)^2}\left(\frac{x^2}{4}\right)^j
        \qquad\forall\,x\in\mathbb R.
\end{align}
Recall that $\delta=(100m)^{-1}$, and define
$\widehat v_m,k_m:\mathbb T\to\mathbb R$ by
\begin{align}
    \label{eq:s4-substitute}
    \begin{split}
        \widehat v_m(q)
        &:=\sum_{p\in L_m}
            \eta_{2\delta,\delta}(r_p(q))
            J_0(\sqrt2r_p(q))
            \qquad\forall\,q\in\mathbb T,\\
        k_m&:=L_{\mathbb T}\widehat v_m.
    \end{split}
\end{align}
Each summand in the first equation of \eqref{eq:s4-substitute} is defined on $B_{\mathbb T}(p,2\delta)$ and extended by zero to its complement. These disks are pairwise disjoint. The power series shows that $J_0(\sqrt2r_p)$ is smooth at $p$, and termwise differentiation gives
\begin{align*}
    J_0(\sqrt2r_p)
        &=1-\frac{r_p^2}{2}+O(r_p^4)
            \qquad\text{as }q\to p,\\
    L_{\mathbb T}\bigl(J_0(\sqrt2r_p)\bigr)
        &=0
            \qquad\text{on }B_{\mathbb T}(p,2\delta).
\end{align*}
The cutoff equals one on $B_{\mathbb T}(p,\delta)$
and vanishes near the boundary of
$B_{\mathbb T}(p,2\delta)$. Consequently,
$\widehat v_m$ and $k_m$ are smooth on $\mathbb T$, and
\begin{align}
    \label{eq:s4-substitute-properties}
    \begin{split}
        \widehat v_m(p)&=1,\qquad
        d\widehat v_m(p)=0
            \qquad\forall\,p\in L_m,\\
        \operatorname{spt}k_m
        &\subset\bigcup_{p\in L_m}
            \{q\in\mathbb T:\delta\leq r_p(q)\leq2\delta\}.
    \end{split}
\end{align}
Indeed, applying $L_{\mathbb T}$ to a summand gives
zero wherever its cutoff is constant. Both functions
are $\Gamma_m^0$-invariant, since the symmetries preserve
distances and permute the lattice points.

We now define the corrected height function by
\begin{align}
    \label{eq:s4-corrected-exterior}
    \widetilde\psi_\zeta
        :=\psi_\zeta-\tau_\zeta\zeta\widehat v_m
        \qquad\text{on }\mathbb T\smallsetminus L_m.
\end{align}
For every $p\in L_m$, the function
\begin{align*}
    q\mapsto
    \widehat\psi_{\zeta,p}(q)
        -\tau_\zeta\zeta\widehat v_m(q)
    \qquad\forall\,q\in B_{\mathbb T}(p,2\delta)
\end{align*}
belongs to $C^1(B_{\mathbb T}(p,2\delta))$ and agrees
with $\widetilde\psi_\zeta-\tau_\zeta\log(2r_p/\tau_\zeta)$
on $B_{\mathbb T}(p,2\delta)\smallsetminus\{p\}$.
It therefore provides a $C^1$ extension of the difference $\widetilde\psi_\zeta-\tau_\zeta\log(2r_p/\tau_\zeta)$ across $p$. By \eqref{eq:s4-exact-mismatch} and \eqref{eq:s4-substitute-properties}, its value and differential at $p$ are
\begin{align*}
    \widehat\psi_{\zeta,p}(p)
        -\tau_\zeta\zeta\widehat v_m(p)
        &=\tau_\zeta\zeta-\tau_\zeta\zeta=0,\\
    d\widehat\psi_{\zeta,p}(p)
        -\tau_\zeta\zeta\,d\widehat v_m(p)
        &=0.
\end{align*}
Thus, this correction cancels the constant mismatch
and preserves the vanishing differential. Moreover, since $L_{\mathbb T}\psi_\zeta=0$ away from $L_m$, we have
\begin{align*}
    L_{\mathbb T}\widetilde\psi_\zeta
        =-\tau_\zeta\zeta k_m
        \qquad\text{ on }\,\mathbb T\smallsetminus L_m.
\end{align*}
By \eqref{eq:s4-substitute-properties}, the right-hand
side is supported in
\begin{align*}
    \bigcup_{p\in L_m}
        \{q\in\mathbb T:\delta\leq r_p(q)\leq2\delta\}.
\end{align*}
In particular, the corrected height still satisfies
$L_{\mathbb T}\widetilde\psi_\zeta=0$ on each punctured
disk $B_{\mathbb T}(p,\delta)\smallsetminus\{p\}$.

Fix the H\"older exponent $\beta\in(0,1)$ to be used
in the subsequent estimates, and choose $\alpha>0$
sufficiently small as in
\cite[Convention~2.10]{KapouleasMcGrathLD}, with the
additional restriction
\begin{align}
    \label{eq:s4-alpha}
    \alpha<\min\left\{\frac18,\frac15,\frac1{9(3+\beta)}\right\}.
\end{align}
A further restriction on $\alpha$, depending only on
a fixed cutoff, will be specified when we define the
diffeomorphisms between the initial surfaces. Every
choice of $\alpha$ is made before choosing $m$.
For the rest of this subsection, fix $\zeta\in[-1,1]$. We will interpolate between the corrected height and the catenoidal height
on the annuli $2r_c\leq r_p\leq3r_c$, where
\begin{align*}
    r_c:=\tau_{\zeta}^\alpha.
\end{align*}
By Remark~\ref{rem:s4-waist-asymptotics},
\begin{align*}
    \frac{\tau_{\zeta}}{r_c}=\tau_{\zeta}^{1-\alpha}\rightarrow0,
    \qquad
    \frac{r_c}{\delta}=100m\tau_{\zeta}^\alpha\rightarrow0
    \qquad\text{as }m\to\infty,
\end{align*}
uniformly in $\zeta$. Hence, for all sufficiently
large $m$,
\begin{align*}
    9\tau_{\zeta}<2r_c<3r_c<\delta.
\end{align*}
In particular, the interpolation annuli lie inside
the disks where $\widehat v_m=J_0(\sqrt2r_p)$
and are disjoint from the support of $k_m$.
On
\begin{align*}
    \Omega_\zeta
        :=\mathbb T\smallsetminus
            \bigcup_{p\in L_m}B_{\mathbb T}(p,9\tau_{\zeta}),
\end{align*}
define the interpolated height function
\begin{align}
    \label{eq:s4-glued-height}
    h_\zeta(q):=
    \begin{cases}
        \eta_{3r_c,2r_c}(r_p(q))C_{\tau_{\zeta}}(r_p(q))
        +\eta_{2r_c,3r_c}(r_p(q))\widetilde\psi_\zeta(q),
        &9\tau_{\zeta}\leq r_p(q)\leq3r_c,\,\,p\in L_m,\\
        \widetilde\psi_\zeta(q),
        &\rho(q)\geq3r_c,
    \end{cases}
\end{align}
for every $q\in\Omega_\zeta$. The disks
$B_{\mathbb T}(p,3r_c)$ are pairwise disjoint, so the
choice of $p$ in the first line is unambiguous.
By our cutoff choice, we have
\begin{align*}
    h_\zeta(q)&=C_{\tau_{\zeta}}(r_p(q))
        &&\text{ when }\,9\tau_{\zeta}\leq r_p(q)\leq2r_c,\,\,p\in L_m,\\
    h_\zeta(q)&=\widetilde\psi_\zeta(q)
        &&\text{ when }\,\rho(q)\geq3r_c.
\end{align*}
The two expressions defining $h_\zeta$ agree on
a neighbourhood of each circle $r_p=3r_c$.
Thus, $h_\zeta$ is smooth up to the boundary of
$\Omega_\zeta$.
Define
\begin{align*}
    \mathcal D_\zeta
        :=\{(\xi,\theta)\in\mathbb R\times
            (\mathbb R/2\pi\mathbb Z):\tau_{\zeta}\cosh\xi<2r_c\}.
\end{align*}
For each $p\in L_m$, define the catenoidal parametrisation
\begin{align}
    \label{eq:s4-catenoid-parametrisation}
    \mathcal C_{\zeta,p}(\xi,\theta)
        :=\mathcal F\bigl(
            p+\tau_{\zeta}\cosh\xi(\cos\theta,\sin\theta),\tau_{\zeta}\xi
            \bigr)
        \qquad\forall\,(\xi,\theta)\in\mathcal D_\zeta.
\end{align}
Here $p+v:=\exp_p^{\mathbb T}(v)$ for
$v\in T_p\mathbb T$ with $|v|<2\delta$.
This is the catenoidal model in
\eqref{eq:s4-symmetry-neck-map}, with
$\xi_*=\operatorname{arcosh}(2r_c/\tau_{\zeta})$.
Indeed, $2r_c<\delta<\ell/(2m)$ and
\begin{align*}
    \tau_{\zeta}\operatorname{arcosh}(2r_c/\tau_{\zeta})
        \leq C\tau_{\zeta}|\log\tau_{\zeta}|
        \leq Cm^2\tau_{\zeta}<z_0
\end{align*}
for all sufficiently large $m$. The conditions
\eqref{Equation: conditions symmetries} therefore hold,
and $\mathcal C_{\zeta,p}$ is an embedding.
Its waist corresponds to $\xi=0$.
As shown at the beginning of the proof of Lemma~\ref{lem:s4-initial-geometry} in Appendix~\ref{app:s4-initial-geometry}, the interpolated height $h_{\zeta}$ satisfies
\begin{align*}
    0<h_\zeta(q)
        \leq C\frac{m^2\tau_\zeta}{2\pi}<z_0
        \qquad\forall\,q\in\Omega_\zeta
\end{align*}
for all sufficiently large $m$. Thus, the graphs of $h_\zeta$ and $-h_\zeta$ are disjoint and lie in $\mathcal N$. Define the \textit{initial surface} of parameter $\zeta$ by
\begin{align}
    \label{eq:s4-initial-double}
    \begin{split}
        M_\zeta:=\{\mathcal F(q,h_\zeta(q)):q\in\Omega_\zeta\}
            \cup\{\mathcal F(q,-h_\zeta(q)):q\in\Omega_\zeta\}\cup\bigcup_{p\in L_m}
            \mathcal C_{\zeta,p}(\mathcal D_\zeta).
    \end{split}
\end{align}
On $9\tau_{\zeta}\leq r_p<2r_c$, we have
$h_\zeta=C_{\tau_{\zeta}}(r_p)$. Since
$C_{\tau_{\zeta}}(\tau_{\zeta}\cosh\xi)=\tau_{\zeta}|\xi|$, the upper and
lower graphs agree on this overlap with the
catenoidal portions where $\xi>0$ and $\xi<0$,
respectively.
%
%
\begin{figure}[htbp]
    \centering
    \begingroup
    \definecolor{doubleteal}{RGB}{0,112,116}
    \definecolor{doubleorange}{RGB}{190,102,36}
    \begin{tikzpicture}[
        x=1cm,y=1cm,>=Stealth,
        every node/.style={font=\small},
        sheet edge/.style={black!65,line width=0.8pt},
        neck edge/.style={black!75,line width=0.85pt},
        reference/.style={black!40,dashed,line width=0.55pt}
    ]
        \path[fill=doubleteal!8,even odd rule]
            (0,-1.15)--(7.3,-1.15)--(8.25,-0.40)--(0.95,-0.40)--cycle
            (1.6,-0.78) ellipse[x radius=0.66,y radius=0.19]
            (3.98,-0.78) ellipse[x radius=0.66,y radius=0.19]
            (6.36,-0.78) ellipse[x radius=0.66,y radius=0.19];
        \draw[sheet edge]
            (0,-1.15)--(7.3,-1.15)--(8.25,-0.40)--(0.95,-0.40)--cycle;
        \foreach \xc in {1.6,3.98,6.36} {
            \draw[black!35,dashed] (\xc,-0.78)
                ellipse[x radius=0.66,y radius=0.19];
        }
        \draw[reference]
            (0,0)--(7.3,0)--(8.25,0.75)--(0.95,0.75)--cycle;
        \draw[reference] (0.48,0.37)--(7.78,0.37);
        \path[fill=doubleteal!12,even odd rule]
            (0,1.15)--(7.3,1.15)--(8.25,1.90)--(0.95,1.90)--cycle
            (1.6,1.52) ellipse[x radius=0.66,y radius=0.19]
            (3.98,1.52) ellipse[x radius=0.66,y radius=0.19]
            (6.36,1.52) ellipse[x radius=0.66,y radius=0.19];
        \draw[sheet edge]
            (0,1.15)--(7.3,1.15)--(8.25,1.90)--(0.95,1.90)--cycle;
        \foreach \xc in {1.6,3.98,6.36} {
            \begin{scope}[shift={(\xc,0.37)}]
                \path[fill=doubleteal!7]
                    plot[domain=-2.23:2.23,samples=65]
                        ({0.14*cosh(\x)},{1.15*\x/2.23})
                    -- plot[domain=2.23:-2.23,samples=65]
                        ({-0.14*cosh(\x)},{1.15*\x/2.23}) -- cycle;
                \draw[neck edge]
                    plot[domain=-2.23:2.23,samples=65]
                        ({0.14*cosh(\x)},{1.15*\x/2.23});
                \draw[neck edge]
                    plot[domain=-2.23:2.23,samples=65]
                        ({-0.14*cosh(\x)},{1.15*\x/2.23});
                \draw[black!45] (0,1.15)
                    ellipse[x radius=0.66,y radius=0.19];
                \draw[black!35,dashed] (0,-1.15)
                    ellipse[x radius=0.66,y radius=0.19];
                \draw[doubleteal,line width=1pt] (0,0)
                    ellipse[x radius=0.14,y radius=0.04];
                \fill[doubleorange] (0,0) circle[radius=1.3pt];
            \end{scope}
        }
        \draw[black!50,->] (-0.30,-1.34)--(-0.30,2.13)
            node[above] {$z$};
        \node[anchor=west] at (8.65,1.53) {upper sheet: $z=h_\zeta(q)$};
        \draw[black!45] (8.56,1.53)--(7.91,1.65);
        \node[anchor=west] at (8.65,0.36) {central level: $z=0$};
        \draw[black!45,dashed] (8.56,0.36)--(7.94,0.51);
        \node[anchor=west] at (8.65,-0.81) {lower sheet: $z=-h_\zeta(q)$};
        \draw[black!45] (8.56,-0.81)--(7.91,-0.62);
        \node[anchor=north,align=center] at (1.6,-1.78)
            {waist\\$r_p=\tau_\zeta,\quad z=0$};
        \draw[black!45] (1.6,-1.68)--(1.02,-1.45)--(1.02,0.20)--(1.46,0.35);
        \node[anchor=north,align=center] at (5.65,-1.78)
            {centres $(p,0)$\\with $p\in L_m$};
        \draw[doubleorange,->] (5.50,-1.67)--(5.27,-1.41)--(5.27,0.15)--(6.27,0.35);
        \node[anchor=west,text=black!65] at (0,2.38)
            {A local part of $\mathcal F^{-1}(M_\zeta)$ in the $(s,t,z)$ coordinates};
    \end{tikzpicture}
    \endgroup
    \caption{Two sheets joined by catenoidal necks form $M_\zeta$.
    Three neighbouring necks in one lattice row are shown.}
    \label{fig:s4-initial-double}
\end{figure}

Recall that $z:\mathcal N\to(-z_0,z_0)$ is the signed
height coordinate, characterised by $z(\mathcal F(q,z'))=z'$ for every $(q,z')\in\mathbb T\times(-z_0,z_0)$. Define the positive Lipschitz function $\rho_\zeta:M_\zeta\to(0,\infty)$ by
\begin{align}
    \label{eq:s4-rho-on-double}
    \rho_\zeta(x):=\rho(\pi(x))=d_{\mathbb T}(\pi(x),L_m)\qquad\forall\,x\in M_\zeta.
\end{align}
\begin{lemma}
    \label{lem:s4-initial-geometry}
    For all sufficiently large $m$ and every
    $\zeta\in[-1,1]$, $M_\zeta$ is a smooth closed
    connected embedded $\Gamma_m$-invariant surface
    in $\mathcal N$, of genus $m^2+1$. Moreover, $M_{\zeta}$ is the
    boundary of a $\Gamma_m$-invariant open subset
    of $\mathcal N$ with compact closure.

    Let $\nu_\zeta$ be the outward unit normal of $M_{\zeta}$ within $\mathcal N$, and consider the two inclusions
    $\jmath_\zeta:M_\zeta\hookrightarrow\mathcal N$
    and $\iota_\zeta:M_\zeta\hookrightarrow\mathbb S^4$, which induce the same metric
    \begin{align*}
        g_\zeta:=\jmath_\zeta^*g_{\mathcal N}
            =\iota_\zeta^*g_{\mathbb S^4}
    \end{align*}
    on $M_{\zeta}$. For every $x\in M_\zeta$, we have
    $\tau_\zeta\leq\rho_\zeta(x)\leq\pi/m$ and
    \begin{align}
        \label{eq:s4-initial-geometry-bounds}
        |z(x)|\leq C\frac{m^2\tau_\zeta}{2\pi},
        \qquad
        |(\nu_\zeta)_h(x)|\leq
            C\frac{\tau_\zeta}{\rho_\zeta(x)},
        \qquad
        |S_{\jmath_\zeta}^{\nu_\zeta}(x)|_{g_\zeta}
            \leq C\frac{\tau_\zeta}{\rho_\zeta(x)^2}.
    \end{align}
    The notation for the shape operator and the
    horizontal component of the normal is as in
    Section~\ref{sec:s4-geometry}; the norm of
    $(\nu_\zeta)_h$ is taken with respect to
    $g_{\mathcal N}$.

    For each $x\in M_\zeta$, there is a smooth coordinate
    parametrisation from $B_{\mathbb R^2}(0,1)$,
    sending $0$ to $x$, whose image contains
    $B_{g_\zeta}(x,c\rho_\zeta(x))$ for a fixed $c>0$.
    In these coordinates, the metric
    $\rho_\zeta(x)^{-2}g_\zeta$ has uniformly elliptic
    coefficients with uniform $C^k$ bounds for every
    nonnegative integer $k$. Moreover,
    \begin{align}
        \label{eq:s4-scaled-injectivity}
        \operatorname{inj}_{g_\zeta}(x)
            \geq\rho_\zeta(x)/5
            \qquad\forall\,x\in M_\zeta.
    \end{align}
    All constants are independent of $m$, $\zeta$,
    and $x$.
\end{lemma}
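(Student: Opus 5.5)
The plan is to verify everything region by region on the decomposition $M_\zeta=$ (upper and lower graphs over $\Omega_\zeta$) $\cup$ (catenoidal pieces $\mathcal C_{\zeta,p}(\mathcal D_\zeta)$). The proof rests on two pointwise bounds for the graph height $h_\zeta$ on $\Omega_\zeta$:
\begin{align*}
    0<h_\zeta\leq Cm^2\tau_\zeta,
    \qquad
    |D^jh_\zeta|\leq C_j\tau_\zeta\rho^{-j}\quad (j\geq1).
\end{align*}
All later claims are read off from these and from explicit catenoid formulas.

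\textbf{Step 1: height and derivative bounds.} On $\{\rho\geq3r_c\}$ we have $h_\zeta=\tau_\zeta\Phi_m-\tau_\zeta\zeta\widehat v_m$. Here \eqref{eq:s4-physical-values}, \eqref{eq:s4-physical-derivatives} and the smooth bounds $|D^j\widehat v_m|\leq C_j\delta^{-j}\leq C_j\rho^{-j}$ (valid since $\rho\lesssim m^{-1}$) give the derivative estimates. For positivity, I use
\begin{align*}
    \Phi_m\geq\frac{m^2}{2\pi}+\log(\sqrt2 m\rho)-C\geq\frac{m^2}{2\pi}+\alpha\log\tau_\zeta-C>0.
\end{align*}
This holds because $|\log\tau_\zeta|\leq m^2/(2\pi)+O(\log m)$ and $\alpha<1/8$; this is exactly where smallness of $\alpha$ enters. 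On the transition annuli $2r_c\leq r_p\leq3r_c$, I write
\begin{align*}
    h_\zeta-C_{\tau_\zeta}=\eta\,(\widetilde\psi_\zeta-C_{\tau_\zeta}).
\end{align*}
By the mismatch cancellation, the $C^1$ extension has zero value and differential at $p$. Combined with \eqref{eq:s4-physical-expansion}, the $J_0$ expansion and $C_\tau=\tau\log(2r/\tau)+O(\tau^3r^{-2})$, this gives
\begin{align*}
    \|\widetilde\psi_\zeta-C_{\tau_\zeta}\|_{C^k_{r_c}}\leq C\tau_\zeta\bigl(r_c^2(m^2+|\log r_c|)+\tau_\zeta^2r_c^{-2}\bigr),
\end{align*}
which is negligible, so $h_\zeta$ inherits the catenoidal bounds $|D^jC_\tau|\leq C\tau r^{-j}$ for $r\geq9\tau$. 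Then $0<C_\tau$ for $r>\tau$ gives positivity there.

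\textbf{Step 2: topology, smoothness and invariance.} $M_\zeta$ is smooth because the graph pieces and the catenoid pieces coincide on the overlaps $9\tau_\zeta\leq r_p<2r_c$. It is embedded: the graphs of $\pm h_\zeta$ are disjoint by positivity, each catenoid piece projects into a disk $B(p,2r_c)$ and is embedded, and the disks are pairwise disjoint. $M_\zeta$ bounds $\{|z|<h_\zeta\}\cup(\text{solid necks})$, which has compact closure. $\Gamma_m$-invariance follows from the $\Gamma_m^0$-invariance of $\Phi_m,\widehat v_m,r_p$, from the table of actions on $\mathcal F$-parameters, and from the symmetry $h\mapsto -h$ under $Z$. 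The surface is connected because each sheet is connected ($\Omega_\zeta$ is connected) and the necks join them. For the genus, $\chi(M_\zeta)=2\chi(\Omega_\zeta)+m^2\cdot 0 = 2(0-m^2)=-2m^2$, so the genus is $m^2+1$.

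\textbf{Step 3: curvature quantities and local charts.} For the bounds \eqref{eq:s4-initial-geometry-bounds}: $|z|=h_\zeta$ on the graphs and $|z|\leq\tau_\zeta\xi_*\lesssim\tau_\zeta|\log\tau_\zeta|$ on the necks. By \eqref{eq:s4-height-graph-identities}, $|(\nu_\zeta)_h|\leq|Dh|/\cos h\leq C\tau_\zeta/\rho$. On the necks the horizontal normal component is $1/\cosh\xi=\tau_\zeta/r_p$, with $\cos(\tau\xi)$ metric corrections. The shape operator of the graph in the warped metric $dz^2+\cos^2z\,g_{\mathbb T}$ is $D^2h$ plus terms $O(|h||Dh|+|Dh|^2\cdots)$, hence bounded by $C\tau\rho^{-2}$. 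On the necks, the Euclidean catenoid curvature is $\tau/r^2$, and the metric perturbation $\cos^2(\tau\xi)$ adds $O(\tau|\xi|)$ errors, which are controlled after rescaling.

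The charts in the last claim come from rescaling by $\rho(x)$. On graph regions I use flat coordinates on $B_{\mathbb T}(\pi(x),\rho/2)$; the metric there is $\cos^2h\,g_{\mathbb T}+dh^2$, which after dividing by $\rho^2$ is $C^k$-close to Euclidean, since $\rho^{j-1}|D^jh|\lesssim\tau/\rho\leq1/9$. On the neck region $r_p\leq 9\tau$ (and a bit beyond), I use catenoid coordinates $(\xi,\theta)$ scaled: the pulled-back metric $\tau^2(\cosh^2\xi)(d\xi^2+d\theta^2)$ up to $1+O(\tau\xi)$ factors, and $\rho\sim\tau\cosh\xi$ there, so $\rho^{-2}g$ is uniformly conformally flat on unit disks in $(\xi,\theta)$ after a fixed dilation. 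The injectivity radius bound then follows from the uniform $C^2$ chart bounds, which give bounded curvature $\leq C\rho^{-2}$, together with the chart size. The circles of length $2\pi\tau\cosh\xi\approx2\pi\rho$ are the shortest loops, which explains the constant $1/5$: it is below $\pi$ times the scaled radius, and a Klingenberg-type argument with curvature $\lesssim\tau/\rho^3\cdot\rho$ handles the rest.

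The main obstacle is Step 1's uniform bookkeeping in the transition annuli and the positivity estimate. Precision in comparing $\log(\sqrt2m\rho)$ with $|\log\tau_\zeta|$ via $A_m$ is where the constraint on $\alpha$ is used.
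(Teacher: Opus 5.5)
\paragraph{Verdict.} Most of the proposal follows the paper's proof, but the injectivity radius bound $\operatorname{inj}_{g_\zeta}\ge\rho_\zeta/5$ is not proved; that is the one genuine gap.

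\paragraph{Steps 1--3 (except injectivity).} These track the paper's argument closely. The pieces that match are: the Green-expansion bounds for $h_\zeta$ on $\{\rho\ge 3r_c\}$; the matching estimate $\|\widetilde\psi_\zeta-C_{\tau_\zeta}\circ r_p\|_{C^k_{r_c}}\le C\tau_\zeta(r_c^2|\log\tau_\zeta|+\tau_\zeta^2r_c^{-2})$ on the transition annuli; $M_\zeta$ as the boundary of $\{|z|<h_\zeta\}$ plus the neck solids; $\chi(M_\zeta)=-2m^2$; the graph formulas for $(\nu_\zeta)_h$ and $S^{\nu_\zeta}_{\jmath_\zeta}$; and rescaled catenoid charts near the waists. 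Two small corrections:
\begin{itemize}
\item Positivity only needs $\alpha<1$, not $\alpha<1/8$. Your bound actually gives $\Phi_m\ge(1-\alpha)m^2/(2\pi)-O(\log m)$, and you need this (not just $\Phi_m>0$) to dominate $\zeta\widehat v_m$.
\item The graph's second fundamental form contains the zeroth-order term $\sin h\cos h\,\delta_{ij}$, of size $|h|\le Cm^2\tau_\zeta$. The level tori $\{z=\mathrm{const}\}$ are umbilic, not totally geodesic in $\mathcal N$, so this term is not of the form $O(|h||Dh|+|Dh|^2)$. It, like your $O(\tau_\zeta|\xi|)$ neck error, is absorbed into $C\tau_\zeta\rho_\zeta^{-2}$ only because $\rho_\zeta\le\pi/m$.
\end{itemize}

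\paragraph{The gap: the injectivity radius bound.}
\begin{itemize}
\item Uniform $C^2$ bounds for $\rho_\zeta(x)^{-2}g_\zeta$ on charts of size comparable to $\rho_\zeta(x)$ only give $\operatorname{inj}_{g_\zeta}(x)\ge\epsilon\rho_\zeta(x)$, with $\epsilon$ depending on the chart constants.
\item Your charts are not close to Euclidean. In the neck charts the conformal factor $(\cosh\xi/\cosh\xi_0)^2$ varies by a fixed large factor over the unit disc. In graph charts with $\rho\approx 9\tau_\zeta$ the slope $|Dh_\zeta|$ is bounded but not small.
\item Klingenberg's lemma does not rescue this globally. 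The waists are closed geodesics of length about $2\pi\tau_\zeta$, so it only yields $\operatorname{inj}\gtrsim\tau_\zeta$.
\item The pointwise statement requires excluding conjugate points and geodesic loops based at $x$ of length less than $2\rho_\zeta(x)/5$. Your assertion that the parallel circles are the shortest loops is exactly the unproved step; those circles are not even geodesics off the waist.
\end{itemize}

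\paragraph{What is missing, and how the paper supplies it.} The missing input is topological.
\begin{itemize}
\item On the complete Euclidean catenoid, take a loop based at a point of horizontal radius $r$ with length less than $2r$. It projects into the disc of radius $r$ about the projected base point, which misses the axis, so the loop is null-homotopic.
\item Since $K\le 0$, Cartan--Hadamard on the universal cover excludes both conjugate points and such geodesic loops. Hence $\operatorname{inj}\ge r$ on the model.
\item On $\{\rho_\zeta\le B\tau_\zeta\}$, the paper transfers this to $M_\zeta$ using smooth convergence of $\tau_\zeta^{-2}g_\zeta$ to the unit catenoid and $C^1$-stability of embedded exponential maps on compact tangent balls. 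This gives $\operatorname{inj}\ge\rho_\zeta/2$ there.
\item On $\{\rho_\zeta\ge B\tau_\zeta\}$, with $B$ chosen large rather than your threshold $9\tau_\zeta$, the rescaled graph charts are $O(B^{-2})+o_m(1)$-close to Euclidean in $C^2$. Then the rescaled exponential map satisfies $\|DE_x-I\|<1/2$, so it is injective on the tangent ball of radius $1/5$.
\end{itemize}
Within your own framework, you could instead apply Gauss--Bonnet to geodesic monogons inside your disc charts, using $K<0$ on the necks and small $\sup|K|\rho_\zeta^2$ on the sheets, together with a Rauch bound for conjugate points. Either way, some such argument has to be supplied to reach $\operatorname{inj}_{g_\zeta}\ge\rho_\zeta/5$.
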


The proof of Lemma \ref{lem:s4-initial-geometry} is given in Appendix~\ref{app:s4-initial-geometry}.
\subsection{Weighted norms and a fixed domain for the corrections}
Lemma~\ref{lem:s4-initial-geometry} gives uniform local geometry after rescaling distances near $x\in M_\zeta$ by $\rho_\zeta(x)^{-1}$. We use this scale to define weighted H\"older norms for scalar functions on $M_\zeta$, which will be used to
measure the scalar coefficients of normal displacements in the directions $\nu_\zeta$ and $n$, with different weights for the two components. We then identify the surfaces $M_\zeta$ with $M_0$, so that the fixed-point argument in Section~\ref{sec:s4-coupled} takes place on a fixed domain.

For a continuous positive function $w:M_\zeta\to(0,\infty)$, a nonnegative integer $k$, and $u\in C^{k,\beta}(M_\zeta)$, define
\begin{align}
    \label{eq:s4-weighted-norm}
    \|u\|_{C^{k,\beta}_\zeta(w)}:=\sup_{x\in M_\zeta}w(x)^{-1}\|u\|_{C^{k,\beta}(B_{g_\zeta}(x,\rho_\zeta(x)/100),\,\rho_\zeta(x)^{-2}g_\zeta)}.
\end{align}
In the definition of the local H\"older norm $\|u\|_{C^{k,\beta}(B_{g_\zeta}(x,\rho_\zeta(x)/100),\,\rho_\zeta(x)^{-2}g_\zeta)}$, covariant derivatives at two different points of $B_{g_\zeta}(x,\rho_\zeta(x)/100)$ are compared by parallel transport along the minimising geodesic joining them, whose existence and uniqueness follow from \eqref{eq:s4-scaled-injectivity}. Indeed, since $|d\pi|\leq\sec z_0$ makes $\rho_\zeta$ $\sec z_0$-Lipschitz, we have
\begin{align*}
    \operatorname{inj}_{g_\zeta}(y)\geq\frac{1-\sec z_0/100}{5}\rho_\zeta(x)>\frac{\rho_\zeta(x)}{50}\qquad\forall\,y\in B_{g_\zeta}(x,\rho_\zeta(x)/100),
\end{align*}
and, for any two points $y,z$ in this ball, the triangle
inequality gives
\begin{align*}
    d_{g_\zeta}(y,z)
        \leq d_{g_\zeta}(y,x)+d_{g_\zeta}(x,z)
        <\frac{\rho_\zeta(x)}{50}
        <\operatorname{inj}_{g_\zeta}(y).
\end{align*}
Thus, there is a unique minimising geodesic joining $y$ to $z$. We also extend the definition of these weighted norms to tensor fields and $\r^m$-valued functions on $M_{\zeta}$ in the obvious way.

To estimate scalar functions $u$ defining normal deformations of $M_{\zeta}$ within $\mathcal{N}$, we introduce the positive weight $f_\zeta:M_\zeta\to(0,\infty)$ given by
\begin{align}
    \label{eq:s4-weight}
    f_\zeta(x):=\max\{\rho_\zeta(x)^{3/2},
        \tau_\zeta^{(1-\alpha)/2}\rho_\zeta(x)^{1/2}\}
        \qquad\forall\,x\in M_\zeta.
\end{align}
%
Write $C^{k,\beta}_{\mathrm{inv}}(M_\zeta)$ for the functions $u\in C^{k,\beta}(M_{\zeta})$ which are invariant under every element of $\Gamma_m$, and define
\begin{align}
    \label{eq:s4-XY}
    \begin{aligned}
        X_\zeta&:=C^{2,\beta}_{\mathrm{inv}}(M_\zeta),
        &\qquad\|u\|_{X_\zeta}
            &:=\|u\|_{C^{2,\beta}_\zeta(f_\zeta)},\\
        Y_\zeta&:=C^{0,\beta}_{\mathrm{inv}}(M_\zeta),
        &\qquad\|E\|_{Y_\zeta}
            &:=\|E\|_{C^{0,\beta}_\zeta(f_\zeta/\rho_\zeta^2)}.
    \end{aligned}
\end{align}
For fixed $m$ and $\zeta$, these norms are equivalent to the usual H\"older norms on the compact surface $M_\zeta$. Moreover, invariance is preserved by uniform convergence. Hence, $X_\zeta$ and $Y_\zeta$ are Banach spaces.
\begin{remark}
    \label{rem:scales}
    Note that the norm $\|\,\cdot\,\|_{C^{k,\beta}_\zeta(w)}$ involves no derivatives of $w$ or $\rho_\zeta$, so the continuity of these two functions suffices for it to be well defined. If $\|u\|_{C^{k,\beta}_\zeta(w)}\leq K$, then
    \begin{align*}
        |(\nabla^{g_{\zeta}})^ju|_{g_\zeta}\leq Kw\rho_\zeta^{-j}\qquad\mbox{ on }\,M_\zeta,\quad\forall\,0\leq j\leq k,
    \end{align*}
    where $\nabla^{g_{\zeta}}$ is the Levi-Civita connection of $g_\zeta$. Since $Y_\zeta$ will be used to estimate terms involving second covariant derivatives of $u\in X_\zeta$, the preceding bound motivates the choice of the weight $f_\zeta/\rho_\zeta^2$ for its norm.
\end{remark}

\begin{remark}[Comparison with the linearised doubling norms]
    The norms in \cite[Definition~4.2]{KapouleasMcGrathLD} contain an additional term measuring the projection of the function being estimated onto the angular modes $\cos\theta$ and $\sin\theta$ on each neck, with the axial coordinate $\xi$ held fixed. This term vanishes for invariant functions. Indeed, the half-turn about $p\in L_m$ preserves $\xi$ and sends $\theta$ to $\theta+\pi$, so
    \begin{align*}
        \int_0^{2\pi}
            u(\mathcal C_{\zeta,p}(\xi,\theta))
            (\cos\theta,\sin\theta)\,d\theta=(0,0).
    \end{align*}
    Consequently, the norms on $X_\zeta$ and $Y_\zeta$ are uniformly equivalent to the norms defined in \cite[Definition~4.2]{KapouleasMcGrathLD} with exponent pairs $(3/2,1/2)$ and $(-1/2,-3/2)$, respectively, restricted to $\Gamma_m$-invariant functions. Changing the fixed radius in the local norms changes only the equivalence constants, by Lemma~\ref{lem:s4-initial-geometry} and local comparability of the weights.
\end{remark}

We now construct diffeomorphisms $F_\zeta:M_0\to M_\zeta$. Near each waist, we rescale the axial coordinate; away from the necks we preserve the base point on $\mathbb T$. Recall that $\tau_\zeta=e^\zeta\tau_0$, and set
\begin{align*}
    r_0:=\tfrac12\tau_0^\alpha,\qquad
    s_\zeta:=\operatorname{arcosh}(\tau_\zeta^{\alpha-1}/2),
    \qquad a_\zeta:=s_\zeta/s_0
    \qquad\forall\,\zeta\in[-1,1].
\end{align*}
For large $m$, we have $s_\zeta>0$ and $2r_0<\delta$.
Define
\begin{align}
    \label{eq:s4-core-identification}
    F_\zeta(\mathcal C_{0,p}(\xi,\theta))
        :=\mathcal C_{\zeta,p}(a_\zeta\xi,\theta)
        \qquad\forall\,p\in L_m,\quad |\xi|\leq s_0.
\end{align}
This sends the circles of radius $r_0$ to those of
radius $\tau_\zeta^\alpha/2$. To extend the map, use the
cutoff from \eqref{eq:s4-cutoff} and define, for $r\geq\tau_0$,
\begin{align}
    \label{eq:s4-radius-identification}
    \begin{split}
        A_\zeta(r)&:=\tau_\zeta
            \cosh\bigl(a_\zeta\operatorname{arcosh}(r/\tau_0)\bigr),\\
        R_\zeta(r)&:=r+
            \eta_{2r_0,r_0}(r)(A_\zeta(r)-r).
    \end{split}
\end{align}
Thus $A_\zeta$ is the radial change induced by
\eqref{eq:s4-core-identification}, while $R_\zeta=A_\zeta$
near $r=r_0$ and $R_\zeta(r)=r$ near and beyond $r=2r_0$.
For a point on either sheet whose base point is
$p+r(\cos\theta,\sin\theta)$ with $r_0\leq r\leq2r_0$,
let $F_\zeta$ be the point on the corresponding sheet
of $M_\zeta$ above $p+R_\zeta(r)(\cos\theta,\sin\theta)$.
For $\rho_0\geq2r_0$, preserve the base point and the
choice of sheet.

\begin{lemma}[Comparison of the surfaces and their function spaces]
    \label{lem:s4-identifications}
    For $\alpha$ sufficiently small in terms of the fixed
    cutoff and $m$ sufficiently large, the maps
    $F_\zeta:M_0\to M_\zeta$ are smooth
    $\Gamma_m$-equivariant diffeomorphisms. They depend
    smoothly on $\zeta$, and $F_0=\operatorname{id}$.
    Moreover, there is $C_F>0$, independent of $m$ and
    $\zeta$, such that
    \begin{align}
        \label{eq:s4-norm-identification}
        \begin{split}
            C_F^{-1}\|u\|_{X_\zeta}
                &\leq\|u\circ F_\zeta\|_{X_0}
                \leq C_F\|u\|_{X_\zeta},\\
            C_F^{-1}\|E\|_{Y_\zeta}
                &\leq\|E\circ F_\zeta\|_{Y_0}
                \leq C_F\|E\|_{Y_\zeta}
        \end{split}
    \end{align}
    for every $u\in X_\zeta$ and $E\in Y_\zeta$.
\end{lemma}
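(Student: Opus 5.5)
The plan is to check the properties of $F_\zeta$ region by region: the catenoidal cores $\{\rho_0\le r_0\}$, the transition annuli $\{r_0\le\rho_0\le 2r_0\}$, and the sheets $\{\rho_0\ge2r_0\}$. On each region we show that $F_\zeta$ is smooth and a diffeomorphism onto the corresponding region of $M_\zeta$. We then show that, after rescaling by $\rho_0^{-1}$, it is $C^{k}$-close to a uniformly bi-Lipschitz map with uniform bounds, and that $\rho_\zeta\circ F_\zeta$ and $f_\zeta\circ F_\zeta$ are comparable to $\rho_0$ and $f_0$.

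\emph{Core region.} Equivariance is immediate, since $F_\zeta$ is defined in terms of $(\xi,\theta)$, $r$, the angle and the choice of sheet, all of which the table in Section~\ref{sec:s4-geometry} shows are acted on compatibly by $\Gamma_m$. Smooth dependence on $\zeta$ and $F_0=\mathrm{id}$ follow because $a_0=1$, $A_0(r)=r$ and $R_0=\mathrm{id}$. On the core, $F_\zeta$ is the linear map $\xi\mapsto a_\zeta\xi$ in catenoidal coordinates. We have
\begin{align*}
    s_\zeta=(1-\alpha)|\log\tau_\zeta|+O(1),
\end{align*}
and $|\log\tau_\zeta|\asymp m^2$ by Remark~\ref{rem:s4-waist-asymptotics}. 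Hence
\begin{align*}
    a_\zeta-1=O(m^{-2})
\end{align*}
uniformly in $\zeta$. In the conformal-type coordinates $(\xi,\theta)$ the scaled metric $\rho^{-2}g$ is uniformly comparable to $d\xi^2+d\theta^2$, by the metric formula for $F_{\tau,p}^*g_{\mathcal N}$. Therefore the map is uniformly $C^k$-close to the identity in these coordinates. Since $\tau_\zeta\cosh(a_\zeta\xi)\asymp\tau_0\cosh\xi$ up to a factor $e^{O(1)}$, the weights are comparable as well. Here one checks that
\begin{align*}
    \tau_\zeta\cosh(a_\zeta\xi)/(\tau_0\cosh\xi)=\exp\bigl(\zeta+(a_\zeta-1)\xi+O(1)\bigr),
\end{align*}
and $(a_\zeta-1)|\xi|\le (a_\zeta-1)s_0=s_\zeta-s_0=O(1)$.

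\emph{Transition annuli (main obstacle).} The key step is showing that $R_\zeta$ is a diffeomorphism of $[r_0,2r_0]$ with $R_\zeta'$ bounded above and below and $r^{j-1}|R_\zeta^{(j)}|\le C_j$. For this I will estimate $A_\zeta(r)/r$ on $[r_0,2r_0]$. Using $\operatorname{arcosh}(r/\tau_0)=\log(2r/\tau_0)+O(\tau_0^2/r^2)$, we get
\begin{align*}
    A_\zeta(r)\approx \tfrac{\tau_\zeta}{2}(2r/\tau_0)^{a_\zeta},
\end{align*}
so that
\begin{align*}
    A_\zeta(r)/r=\exp\bigl(\zeta+(a_\zeta-1)\log(2r/\tau_0)\bigr)\cdot(1+o(1)).
\end{align*}
At $r=r_0$ this equals $\tau_\zeta^\alpha/\tau_0^\alpha=e^{\alpha\zeta}$ by construction. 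The logarithmic derivative $r\,(\log(A_\zeta/r))'=a_\zeta-1=O(m^{-2})$. Hence on the annulus $A_\zeta(r)/r=e^{\alpha\zeta}(1+O(m^{-2}))$, and in particular
\begin{align*}
    |A_\zeta(r)/r-1|\le C\alpha+Cm^{-2},
\end{align*}
with derivatives $r^{j}|(A_\zeta/r)^{(j)}|$ of the same size. Then
\begin{align*}
    R_\zeta'=1+\eta(A_\zeta'-1)+\eta'(A_\zeta-r),
\end{align*}
where $|\eta'|\,|A_\zeta-r|\le C_1 r_0^{-1}\cdot C\alpha r\le C'\alpha$. This is where $\alpha$ must be small in terms of the cutoff constant $C_1$ from \eqref{eq:der-bounds}. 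It gives $R_\zeta'\ge1/2$, so $R_\zeta$ is strictly increasing and maps $[r_0,2r_0]$ onto $[\tau_\zeta^\alpha/2,2r_0]$.

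\emph{Gluing and sheets.} Since $R_\zeta=A_\zeta$ near $r_0$ and $R_\zeta=\mathrm{id}$ near $2r_0$, the three definitions glue smoothly. On the sheets (and annuli) $F_\zeta$ is a graph map over the base. The heights on both surfaces are $O(m^2\tau)$ with scaled gradients $O(\tau/\rho)$ by Lemma~\ref{lem:s4-initial-geometry}, so the projection $\pi$ is a uniformly $C^k$-controlled diffeomorphism in scaled coordinates, and $F_\zeta$ inherits the bounds of $R_\zeta$ (or of the identity). Moreover $\rho_\zeta\circ F_\zeta\asymp\rho_0$, and
\begin{align*}
    \tau_\zeta^{(1-\alpha)/2}\asymp\tau_0^{(1-\alpha)/2},
\end{align*}
so $f_\zeta\circ F_\zeta\asymp f_0$.

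\emph{Norm comparison.} Bijectivity follows from monotonicity in each region. Finally, \eqref{eq:s4-norm-identification} follows from the weighted norm definition \eqref{eq:s4-weighted-norm}. The map $F_\zeta$ sends $B_{g_0}(x,\rho_0(x)/100)$ into a ball of comparable radius about $F_\zeta(x)$, and in the rescaled uniform charts of Lemma~\ref{lem:s4-initial-geometry} it has uniformly bounded $C^{3}$ norms together with its inverse. The chain rule thus controls the local $C^{2,\beta}$ and $C^{0,\beta}$ norms. Changing the local ball radius only costs constants by a covering argument, and the weights are comparable, which yields the constant $C_F$.
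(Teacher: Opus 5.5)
Your proposal is correct and follows essentially the same route as the paper. Both arguments use the same three-region decomposition and the estimate $A_\zeta(r)/r=e^{\alpha\zeta}(1+O(m^{-2}))$ on $[r_0,2r_0]$, which gives $R_\zeta'>1/2$ once $\alpha$ is small relative to the cutoff constant. Both compare scales on the core via $|(a_\zeta-1)\xi|\le|s_\zeta-s_0|=O(1)$, and obtain the norm comparison from the chain rule in the rescaled charts, with sup bounds handling well-separated pairs in the H\"older seminorms. One cosmetic fix: in the core estimate write $|a_\zeta-1|$ rather than $a_\zeta-1$, since $a_\zeta<1$ when $\zeta>0$.
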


\begin{proof}
    We first prove that the radial interpolation is
    increasing. The expansion of $\operatorname{arcosh}$ gives
    \begin{align*}
        s_0&=(1-\alpha)|\log\tau_0|
            +O(\tau_0^{2(1-\alpha)}),\\
        s_\zeta-s_0&=-(1-\alpha)\zeta
            +O(\tau_0^{2(1-\alpha)}),\qquad
        a_\zeta-1=O(|\log\tau_0|^{-1}).
    \end{align*}
    For $r\in[r_0,2r_0]$, set
    $\xi:=\operatorname{arcosh}(r/\tau_0)$. Since
    $\xi=s_0+O(1)$, we obtain
    \begin{align*}
        \log\frac{A_\zeta(r)}r
            &=\alpha\zeta+O(|\log\tau_0|^{-1}),\\
        A_\zeta'(r)
            &=a_\zeta\frac{A_\zeta(r)}r
                \frac{\tanh(a_\zeta\xi)}{\tanh\xi}.
    \end{align*}
    Thus $A_\zeta/r-1$ and $A_\zeta'-1$ are
    $O(\alpha)+o_m(1)$ on this interval. The cutoff
    derivative is bounded by $C/r_0$, so
    $R_\zeta'=1+O(\alpha)+o_m(1)>1/2$ after choosing
    $\alpha$ small and then $m$ large.

    The transition radii and their images lie in the
    catenoidal portions of both surfaces: indeed,
    \eqref{eq:s4-alpha} gives
    $2r_0=\tau_0^\alpha<2\tau_\zeta^\alpha$, and
    $R_\zeta$ increases from $\tau_\zeta^\alpha/2$
    to $2r_0$. Hence the definitions agree smoothly
    at both ends of the transition. At a waist,
    \eqref{eq:s4-core-identification} is the smooth
    invertible map $(\xi,\theta)\mapsto(a_\zeta\xi,\theta)$;
    elsewhere, strict radial monotonicity gives a smooth
    inverse. This proves that $F_\zeta$ is a diffeomorphism.
    Its formulas also give smooth dependence on $\zeta$,
    equivariance, and $F_0=\operatorname{id}$.

    We next compare the scales. For $|\xi|\leq s_0$,
    \begin{align*}
        \frac{\rho_\zeta(F_\zeta(\mathcal C_{0,p}(\xi,\theta)))}
            {\rho_0(\mathcal C_{0,p}(\xi,\theta))}
            =e^\zeta\frac{\cosh(a_\zeta\xi)}{\cosh\xi}.
    \end{align*}
    This ratio is bounded above and below because
    $|(a_\zeta-1)\xi|\leq C$. The transition estimates
    give the same bounds there, and the radii agree
    outside. Since $e^{-1}\leq\tau_\zeta/\tau_0\leq e$,
    it follows that
    \begin{align*}
        C^{-1}\rho_0\leq\rho_\zeta\circ F_\zeta\leq C\rho_0,
        \qquad
        C^{-1}f_0\leq f_\zeta\circ F_\zeta\leq Cf_0.
    \end{align*}
    On $[r_0,2r_0]$, differentiation also gives
    $r^{j-1}|R_\zeta^{(j)}(r)|\leq C$ for $j=1,2,3$.
    Together with the linear axial map near the waists
    and the unchanged base coordinates outside, these
    bounds give uniform $C^3$ bounds for $F_\zeta$ and
    its inverse in the rescaled charts of
    Lemma~\ref{lem:s4-initial-geometry}.
    The chain rule and the weight comparisons now give
    the upper bounds in \eqref{eq:s4-norm-identification}.
    For the H\"older seminorms, use the local estimates
    for nearby pairs and the supremum bounds for pairs
    separated by a fixed fraction of the local scale.
    Applying the same argument to $F_\zeta^{-1}$ gives
    the lower bounds.
\end{proof}
\subsection{The scalar linear equation and the remaining corrections}
We now study the linearised mean-curvature equation for normal deformations of $M_\zeta$ within $\mathcal N$. We use the resulting solution operator to construct a preliminary correction, and establish uniform estimates for the further deformations considered in Sections~\ref{sec:s4-transverse} and~\ref{sec:s4-coupled}.

Write $H_\zeta:=H_{\jmath_\zeta,\nu_\zeta}$ for the scalar mean curvature of the inclusion of $M_{\zeta}$ into $\mathcal N$, measured with its outward unit normal. Its linearisation under a normal variation within $\mathcal N$ is the operator $\mathcal L_\zeta:C^{2,\beta}(M_\zeta)\to C^{0,\beta}(M_\zeta)$ given by
\begin{align}
    \label{eq:s4-L-zeta}
    \mathcal L_\zeta u:=\Delta_{g_\zeta}u+\bigl(|S_{\jmath_\zeta}^{\nu_\zeta}|_{g_\zeta}^2+\operatorname{Ric}_{g_{\mathcal N}}(\nu_\zeta,\nu_\zeta)\bigr)u \qquad\forall\,u\in C^{2,\beta}(M_\zeta).
\end{align}
No minimality assumption on $M_\zeta$ is needed for this linearisation formula. 

To formulate the linear problem, define the lift of $k_m\in C^{\infty}(\mathbb{T})$ to $M_\zeta$ by
\begin{align}
    \label{eq:s4-J-lift}
    \mathcal J_\zeta:=k_m\circ\pi\big|_{M_\zeta}.
\end{align}
This function is smooth, $\Gamma_m$-invariant, and vanishes
on the necks by \eqref{eq:s4-substitute-properties}. It equals $k_m$ at the projected point on both sheets. This agrees with the transformation rule for scalar mean curvature: the reflection $Z$ exchanges the sheets and carries their outward unit normals to one another. In the notation of \cite[Notation~3.25]{KapouleasMcGrathLD}, we have $\mathcal J_\zeta=J_{M_\zeta}(k_m,k_m)$.

We use the construction of \cite{KapouleasMcGrathLD} to solve $\mathcal L_\zeta u=E$ up to a multiple of $\mathcal J_\zeta$. The extra term allows the linear construction to adjust the common value of its auxiliary solution on the central torus at the lattice points while maintaining the weighted estimates. The assertion below is a right inverse for the augmented map $(u,c)\mapsto\mathcal L_\zeta u-c\mathcal J_\zeta$; it does not assert invertibility of $\mathcal L_\zeta$ alone. The total coefficient of $\mathcal J_\zeta$ in the remaining equation will be set equal to zero by choosing $\zeta$ in Section~\ref{sec:s4-coupled}.

The following proposition provides the linear solution operator and nonlinear estimates needed for the coupled fixed-point construction in Section~\ref{sec:s4-coupled}. It constructs a preliminary scalar correction and controls the mean-curvature error after adding a further correction $v\circ F_\zeta^{-1}$ on $M_\zeta$, where $(v,\zeta)$ belongs to the set
\begin{align}
    \label{eq:s4-scalar-ball}
    \mathcal B_m:=\{(v,\zeta)\in X_0\times[-1,1]\mbox{ : } \|v\|_{X_0}\leq\tau_0^{1+\alpha}\}.
\end{align}
\begin{proposition}[Uniform scalar estimates]
    \label{prop:s4-scalar-input}
    For all sufficiently large $m$, the following hold uniformly
    for $|\zeta|\leq1$.
    \begin{enumerate}
        \item There is a linear map
            $\mathscr{R}_\zeta:Y_\zeta\to X_\zeta\times\mathbb{R}$,
            $\mathscr{R}_\zeta E=(u_E,c_E)$, such that
            \begin{align}
                \label{eq:s4-scalar-inverse}
                \mathcal{L}_\zeta u_E=E+c_E\mathcal{J}_\zeta,
                \qquad
                \|u_E\|_{X_\zeta}+|c_E|\|k_m\|_{C^{0,\beta}(\mathbb{T})}
                \leq C\delta^{-4-2\beta}\|E\|_{Y_\zeta}.
            \end{align}
            In particular,
            \begin{align}
                \label{eq:s4-coefficient-bound}
                |c_E|\leq C\delta^{-2-\beta}\|E\|_{Y_\zeta}.
            \end{align}
            The solution operators can be chosen with the following continuity
            property. Fix $m$ and $0<\beta'<\beta$. Suppose
            $\zeta_j\to\zeta$, $E_j\in Y_{\zeta_j}$ have uniformly
            bounded $Y_{\zeta_j}$ norms, $E\in Y_\zeta$, and
            $E_j\circ F_{\zeta_j}\to E\circ F_\zeta$ in
            $C^{0,\beta'}(M_0)$. If
            $(u_j,c_j)=\mathscr{R}_{\zeta_j}E_j$ and
            $(u,c)=\mathscr{R}_\zeta E$, then
            $u_j\circ F_{\zeta_j}\to u\circ F_\zeta$ in
            $C^{2,\beta'}(M_0)$ and $c_j\to c$.
        \item Define the scalar mean-curvature error $D_\zeta$ and the preliminary
            correction $u_\zeta$ by
            \begin{align}
                \label{eq:s4-initial-correction}
                D_\zeta=H_\zeta+\tau_\zeta\zeta\mathcal{J}_\zeta,
                \qquad
                (u_\zeta,c_{H,\zeta})=-\mathscr{R}_\zeta D_\zeta.
            \end{align}
            Then,
            \begin{align}
                \|D_\zeta\|_{Y_\zeta}&\leq C\tau_\zeta^{1+\alpha/3},
                \label{eq:s4-H-error}\\
                \mathcal{L}_\zeta u_\zeta+H_\zeta
                &=(-\tau_\zeta\zeta+c_{H,\zeta})\mathcal{J}_\zeta.
                \label{eq:s4-linear-cancellation}
            \end{align}
        \item For every $(v,\zeta)\in\mathcal{B}_m$, define
            the total scalar displacement $\phi:M_\zeta\to\mathbb R$ by
            \begin{align}
                \label{eq:s4-full-scalar-correction}
                \phi=u_\zeta+v\circ F_\zeta^{-1}.
            \end{align}
            Then
            \begin{align}
                \label{eq:s4-full-ball-bound}
                \|\phi\|_{X_\zeta}
                +|c_{H,\zeta}|\|k_m\|_{C^{0,\beta}(\mathbb{T})}
                \leq\tau_\zeta^{1+\alpha/4}.
            \end{align}
            The map
            \begin{align*}
                G_{\zeta,\phi}:M_\zeta&\rightarrow\mathcal N,\\
                x&\mapsto\exp_{\jmath_\zeta(x)}^{\mathcal N}
                    (\phi(x)\nu_\zeta(x))
            \end{align*}
            is well defined and is an embedding of class
            $C^{2,\beta}$. The same conclusion holds with
            $\phi$ replaced by $t\phi$ for every $t\in[0,1]$.
            Write
            $M_{\zeta,\phi}:=G_{\zeta,\phi}(M_\zeta)$.
            Choose the unit normal $\nu_{\zeta,\phi}$ along
            $G_{\zeta,\phi}$ by continuity from $\nu_\zeta$
            along the family $G_{\zeta,t\phi}$, $0\leq t\leq1$,
            and set
            $H_{\zeta,\phi}:=H_{G_{\zeta,\phi},\nu_{\zeta,\phi}}$.
            This is a function on the fixed domain $M_\zeta$,
            computed using the induced metric
            $G_{\zeta,\phi}^*g_{\mathcal N}$.
            The nonlinear remainder term
            \begin{align}
                \label{eq:s4-Q-definition}
                Q_\zeta(\phi)=H_{\zeta,\phi}-H_\zeta-\mathcal{L}_\zeta\phi
            \end{align}
            satisfies
            \begin{align}
                \label{eq:s4-scalar-quadratic}
                \|Q_\zeta(\phi)\|_{Y_\zeta}
                \leq C\tau_\zeta^{-\alpha/2}\|\phi\|_{X_\zeta}^2
                \leq C\tau_\zeta^2.
            \end{align}
            The bound \eqref{eq:s4-full-ball-bound} also gives
            \begin{align}
                \label{eq:s4-scalar-two-jet}
                |(\nabla^{g_{\zeta}})^j\phi|_{g_\zeta}\leq C\tau_\zeta^{1+\alpha/4}f_\zeta\rho_\zeta^{-j},
                \qquad\mbox{ on }\,M_\zeta,\quad\forall\,j\in\{0,1,2\},
            \end{align}
            with the corresponding local scaled H\"older estimates.
    \end{enumerate}
\end{proposition}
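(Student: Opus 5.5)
The proposition is essentially the Kapouleas--McGrath linearised doubling package transplanted to the suspension band $\mathcal N$. I would verify their hypotheses and then import their conclusions, treating the three parts in order.

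\textbf{Part (1).} I would follow the linear construction of \cite{KapouleasMcGrathLD}, restricted to $\Gamma_m$-invariant functions. By the remark comparing norms, invariance kills the $\cos\theta,\sin\theta$ modes on each neck, so $X_\zeta,Y_\zeta$ coincide up to uniform constants with their weighted spaces with exponent pairs $(3/2,1/2)$ and $(-1/2,-3/2)$. Given $E\in Y_\zeta$, I would build $u_E$ from three pieces:
\begin{itemize}
\item[(a)] On each neck, rescale by $\tau_\zeta^{-1}$ and solve on a catenoid portion using the Jacobi operator of the Euclidean catenoid. Invariant data have only angular modes $k\equiv0\pmod4$, so the only obstruction is the $u_0$ mode, which is handled by the ODE in $\xi$. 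The error from $\operatorname{Ric}_{g_{\mathcal N}}=2g_{\mathcal N}$ and metric deviations is small by Lemma~\ref{lem:s4-initial-geometry}.
\item[(b)] On the sheets, project the cut-off remainder to $\mathbb T$ (averaging the two sheets by $Z$-invariance) and solve with $L_{\mathbb T}$. This is invertible on $\Gamma^0_m$-invariant functions by \eqref{eq:s4-translation-kernel}, with inverse norm polynomial in $m$, hence in $\delta^{-1}$.
\item[(c)] The torus solution has a common value at the lattice points, by translation invariance. I would cancel it by subtracting a multiple of $\widehat v_m$. This produces exactly the term $c_E\mathcal J_\zeta$, with $|c_E|$ bounded by the value of the solution, which gives \eqref{eq:s4-coefficient-bound}.
\end{itemize}
Iterating these steps gives a geometric series with ratio $O(\tau_\zeta^{\alpha'})$, which converges and yields \eqref{eq:s4-scalar-inverse}. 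For the continuity property I would use linearity and uniqueness of the construction. Pulling back by $F_\zeta$ (Lemma~\ref{lem:s4-identifications}) makes all operators vary continuously in $\zeta$ for fixed $m$. Compactness of $C^{2,\beta}\hookrightarrow C^{2,\beta'}$ together with uniform bounds then gives convergence of subsequences, and the limits are identified by uniqueness of the iteration output.

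\textbf{Part (2).} The estimate \eqref{eq:s4-H-error} is the main computation; \eqref{eq:s4-linear-cancellation} follows from the definitions and (1). I would split $M_\zeta$ into four regions:
\begin{itemize}
\item \emph{Outer sheets} ($\rho\ge3r_c$). Here $H_\zeta=L_{\mathbb T}\widetilde\psi_\zeta+O(\text{quadratic})$, with $L_{\mathbb T}\widetilde\psi_\zeta=-\tau_\zeta\zeta k_m$, which $D_\zeta$ cancels. The quadratic terms are $O(\tau_\zeta^2\rho^{-3})$ by \eqref{eq:s4-physical-derivatives}. Compared with the weight $f_\zeta/\rho_\zeta^2$, this is $\le C\tau_\zeta^{1+\alpha/3}$.
\item \emph{Transition annuli} ($2r_c\le r_p\le3r_c$). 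The difference $\widetilde\psi_\zeta-C_{\tau_\zeta}$ is $O(\tau_\zeta r^2(m^2+|\log r|)+\tau_\zeta^3r^{-2})$, using \eqref{eq:s4-physical-expansion} and the cancelled mismatch jet. Cutoff derivatives cost $r_c^{-2}$. At $r\sim\tau_\zeta^\alpha$ the weight is about $\tau_\zeta^{(1-\alpha)/2+\alpha/2-2\alpha}$, and the restriction \eqref{eq:s4-alpha} should leave the exponent $1+\alpha/3$.
\item \emph{Neck portions} ($r\le2r_c$). The catenoid is minimal in flat space, and its mean curvature in $\mathcal N$ differs through the metric $dz^2+\cos^2z\,g_{\mathbb T}$ by $O(|z|\,|A|+|z|)$, where $|z|\le C\tau_\zeta|\log\tau_\zeta|$. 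Against the weight this gives the required power, with the logarithm absorbed.
\end{itemize}
I expect the bookkeeping of exponents in the transition region to be the main obstacle; it is where $\alpha$'s restrictions are used.

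\textbf{Part (3).} The bound \eqref{eq:s4-full-ball-bound} follows from (1), (2) and $\delta^{-4-2\beta}\tau_\zeta^{\alpha/3}\le\tau_\zeta^{\alpha/4}/2$, which holds by Remark~\ref{rem:s4-waist-asymptotics}, together with $\|v\circ F_\zeta^{-1}\|\le C_F\tau_0^{1+\alpha}$. Remark~\ref{rem:scales} then gives \eqref{eq:s4-scalar-two-jet}. It also shows $|\phi|/\rho_\zeta\le C\tau_\zeta^{1+\alpha/4}f_\zeta\rho_\zeta^{-1}\ll1$, because $f_\zeta/\rho_\zeta\le\rho^{1/2}+\tau^{(1-\alpha)/2}\rho^{-1/2}\lesssim1$ on $\rho\ge\tau_\zeta$. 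So in each rescaled chart of Lemma~\ref{lem:s4-initial-geometry} the displacement is $C^{2,\beta}$-small, and $G_{\zeta,t\phi}$ is an embedding. Injectivity across charts holds because points at rescaled distance $\gtrsim1$ move by $\ll\rho$.

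For \eqref{eq:s4-scalar-quadratic}, I would use the standard Taylor expansion of mean curvature in each rescaled chart. The rescaled surface has bounded geometry and the rescaled ambient metric is nearly flat. This gives $|Q|\le C\rho^{-1}\|\rho^{-1}\phi\|^2_{C^{2,\beta}}$ in the rescaled sense. Dividing by the weight $f_\zeta\rho^{-2}$ yields $C\|\phi\|_X^2\sup(f_\zeta/\rho_\zeta)$, which is at most $C\tau_\zeta^{-\alpha/2}\|\phi\|_X^2$ because $\sup f_\zeta/\rho_\zeta\lesssim\tau_\zeta^{-\alpha/2}$ near $\rho\sim\tau_\zeta$. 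Finally, \eqref{eq:s4-full-ball-bound} gives $C\tau_\zeta^{2}$.
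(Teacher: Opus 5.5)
Your overall route is the paper's. Part~(1) runs the Kapouleas--McGrath construction on $\Gamma_m$-invariant functions: invariant inverse of $L_{\mathbb T}$, subtraction of $c\,\widehat v_m$ producing $c_E\mathcal J_\zeta$, symmetrised neck inverses, contraction. The bound \eqref{eq:s4-scalar-quadratic} comes from the same rescaled Taylor argument with $\sup f_\zeta/\rho_\zeta\leq C\tau_\zeta^{-\alpha/2}$. For \eqref{eq:s4-H-error} the paper simply invokes Lemma~4.6 of Kapouleas--McGrath; your direct region-by-region estimate is a reasonable self-contained substitute. However, two steps fail as written, and the weight bookkeeping in Part~(2) contains an error that matters.

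\textbf{Continuity of $\mathscr R_\zeta$.} Compactness plus ``uniqueness'' cannot identify the limit. On invariant functions, $\mathcal L_\zeta:X_\zeta\to Y_\zeta$ is formally self-adjoint and elliptic, hence Fredholm of index zero. So $(u,c)\mapsto\mathcal L_\zeta u-c\mathcal J_\zeta$ has index one; it is surjective (it has the right inverse $\mathscr R_\zeta$), so its kernel is one-dimensional. A subsequential limit $(\bar u,\bar c)$ of $\mathscr R_{\zeta_j}E_j$ satisfies $\mathcal L_\zeta\bar u=E+\bar c\mathcal J_\zeta$, which determines it only modulo this kernel. You have not identified any property characterising $\mathscr R_\zeta E$ that survives the limit. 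What is needed, and what the paper does, is continuity of the construction itself:
\begin{enumerate}
\item fix the normalisations at every stage: base solutions with zero value and differential on $L_m$ after subtracting $c\,\widehat v_m$; constant angular modes on the necks with zero value and axial derivative at the waist; higher modes from the kernel-free inverse on angular orders at least two;
\item check that each finite stage is continuous from $C^{0,\beta'}$ to $C^{2,\beta'}$ after pullback by $F_\zeta$;
\item use the contraction to bound the tail after $N$ stages by $C_m2^{-N}\|E\|_{Y_\zeta}$, uniformly in $\zeta$.
\end{enumerate}
Compactness is then not needed at all.

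\textbf{Embeddedness of $G_{\zeta,t\phi}$.} Smallness of $|\phi|$ relative to $\rho_\zeta$ is the wrong scale for injectivity between distant parts of $M_\zeta$. Two points over the same base point on opposite sheets are at $g_\zeta$-distance $\gtrsim\rho_\zeta$, since any path passes through a neck. Yet their $\mathcal N$-distance is $2h_\zeta\approx m^2\tau_\zeta/\pi$, far below $\rho_\zeta\sim m^{-1}$ in the middle of a cell. Similarly, the two halves of a neck are at most $2\tau_\zeta\operatorname{arcosh}(\rho_\zeta/\tau_\zeta)$ apart. Such pairs lie neither in a common chart nor at ambient distance $\gtrsim\rho_\zeta$, so your argument does not prevent the sheets from crossing. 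You need the absolute bound
$|\phi|\leq\tau_\zeta^{1+\alpha/4}f_\zeta\leq C\tau_\zeta^{1+\alpha/4}m^{-3/2}=o(\tau_\zeta)$,
as in \eqref{eq:s4-sharp-phi}. This lies below the separation of the two sides of $M_\zeta$ away from the waists, where the local charts take over. Also, your claim $f_\zeta/\rho_\zeta\lesssim1$ is false near the waists: it equals $\tau_\zeta^{-\alpha/2}$ at $\rho_\zeta=\tau_\zeta$, as you use later. The weaker bound $|\phi|/\rho_\zeta\leq C\tau_\zeta^{1-\alpha/4}$ still holds.

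\textbf{Weights in Part~(2).} At $\rho_\zeta\sim\tau_\zeta^\alpha$ you took the smaller branch of $f_\zeta$. Since $3\alpha/2<1/2$, the maximum there is $\rho_\zeta^{3/2}$, so $f_\zeta/\rho_\zeta^2\sim\tau_\zeta^{-\alpha/2}$, not $\tau_\zeta^{1/2-2\alpha}$. This is not cosmetic. On the outer part of the necks and on the transition annuli the error has size $\sim\tau_\zeta m^2$, coming from $L_{\mathbb T}C_{\tau_\zeta}\approx2C_{\tau_\zeta}$ and from the cutoff derivatives hitting the $O(\tau_\zeta r^2m^2)$ part of $\widetilde\psi_\zeta-C_{\tau_\zeta}$. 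Against your weight this gives $\tau_\zeta^{1/2+2\alpha}m^2$, which is not $O(\tau_\zeta^{1+\alpha/3})$. Against the correct weight it gives $\tau_\zeta^{1+\alpha/2}m^2$, which is.

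Two minor points. First, $\mathcal N$ is not Einstein: $\operatorname{Ric}_{g_{\mathcal N}}(\nu,\nu)=2-|S(\nu)|^2$, though only boundedness is used. Second, bounding $|c_E|$ by the value of the base solution leaves the power of $\delta$ unspecified unless you track the base source norms at every stage. Your polynomial-in-$m$ base inverse makes this worse; the paper's inverse is independent of $m$. This is harmless downstream, but to get \eqref{eq:s4-coefficient-bound} as stated the paper divides the combined bound by $\|k_m\|_{C^{0,\beta}(\mathbb T)}\geq c\delta^{-2-\beta}$, proved in \eqref{eq:s4-k-size}.
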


The proof of Proposition \ref{prop:s4-scalar-input} is given in Appendix~\ref{app:s4-scalar-input}.
\section{Perturbation normal to the suspension band}
\label{sec:s4-transverse}
In this section, we construct a small displacement of
$M_{\zeta,\phi}$ normal to $\mathcal N$ that cancels the
mean-curvature component corresponding to this normal
direction, for all sufficiently large $m$ and every
$(v,\zeta)\in\mathcal B_m$. 
Fix $(v,\zeta)\in\mathcal B_m$, with $\mathcal B_m$ as in \eqref{eq:s4-scalar-ball}, and set
\begin{align}
    \label{eq:s4-transverse-abbreviations}
    \phi:=u_\zeta+v\circ F_\zeta^{-1}.
\end{align}
Let
\begin{align}
    \label{eq:s4-scalar-graph-map}
    G:=G_{\zeta,\phi}:M_\zeta\to\mathcal N,
    \qquad g_\phi:=G^*g_{\mathcal N},
\end{align}
where $G_{\zeta,\phi}$ is the $C^{2,\beta}$ embedding
given by Proposition~\ref{prop:s4-scalar-input}-(3). Write
$\iota:=\iota_{\mathcal N}\circ G:M_\zeta\to\mathbb S^4$,
where $\iota_{\mathcal N}$ is the inclusion of $\mathcal{N}$ in $\mathbb{S}^4$, and define
\begin{align*}
    \nu:=\nu_{\zeta,\phi},\qquad n(x):=N\bigl(\pi(G(x))\bigr)\qquad\forall\,x\in M_\zeta.
\end{align*}
Then, $(\nu,n)$ is an orthonormal frame of the normal
bundle of $\iota$, and Lemma~\ref{lem:s4-normal-identities}
gives
\begin{align*}
    H_{\iota,\nu}=H_{\zeta,\phi},\qquad
    \mathbf H_\iota=H_{\zeta,\phi}\nu+H_{\iota,n}n.
\end{align*}
We denote by $h_\iota^\nu$ and $h_\iota^n$ the scalar second fundamental forms of $\iota$ in the directions $\nu$ and $n$, respectively, and by $\omega$ the normal-connection one-form defined in \eqref{eq:s4-omega-definition} for the frame $(\nu,n)$. The field $\nu$ is $C^{1,\beta}$, whereas $n$ is $C^{2,\beta}$ because it is the restriction of a fixed smooth field on $\mathcal N$. This regularity suffices for the arguments below.

Unless otherwise stated, constants are independent of $m$ and $(v,\zeta)\in\mathcal B_m$, and may depend on the fixed band, cutoffs, $\alpha$ and $\beta$. The notation $o_m(1)$ denotes convergence to zero as $m\to\infty$, uniformly in $(v,\zeta)\in\mathcal B_m$.
\subsection{The normal deformation and its mean-curvature components}
\begin{lemma}
    \label{lem:s4-full-family-geometry}
    Let $q$ be the function defined in \eqref{eq:s4-scalar-Jacobi}
    for the embedding $\iota$, and write
    \begin{align*}
        \nu_h(x):=\sum_{a=1}^2
        \langle\nu(x),E_a(G(x))\rangle E_a(G(x))
        \qquad\forall\,x\in M_\zeta.
    \end{align*}
    For all sufficiently large $m$, one has
    $C^{-1}g_\zeta\leq g_\phi\leq Cg_\zeta$ and
    \begin{align}
        \label{eq:s4-full-family-geometry}
        \begin{aligned}
            \|z\circ G\|_{C^{0,\beta}_\zeta(1)}
                &\leq C\frac{m^2\tau_\zeta}{2\pi},
            &\qquad\|\nu_h\|_{C^{0,\beta}_\zeta(\rho_\zeta^{-1})}
                &\leq C\tau_\zeta,\\
            \|S_\iota^\nu\|_{C^{0,\beta}_\zeta(\rho_\zeta^{-2})}
                &\leq C\tau_\zeta,
            &\qquad\|h_\iota^n\|_{C^{0,\beta}_\zeta(\rho_\zeta^2)}
                &\leq C,\\
            \|H_{\iota,n}\|_{C^{0,\beta}_\zeta(\rho_\zeta^{-2})}
                &\leq C\tau_\zeta^2,
            &\qquad\|\omega\|_{C^{0,\beta}_\zeta(1)}
                &\leq C\tau_\zeta,\\
            \|\nabla^{g_\phi}\omega\|_{C^{0,\beta}_\zeta(1)}
                &\leq C\tau_\zeta,
            &\qquad\|q\|_{C^{0,\beta}_\zeta(1)}
                &\leq C.
        \end{aligned}
    \end{align}
    Moreover,
    \begin{align}
        \label{eq:s4-deformed-scale}
        d_{\mathbb T}\bigl(\pi(G(x)),L_m\bigr)
        =\bigl(1+o_m(1)\bigr)\rho_\zeta(x)
        \qquad\forall\,x\in M_\zeta.
    \end{align}
\end{lemma}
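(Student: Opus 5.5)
The plan is to treat $M_{\zeta,\phi}=G(M_\zeta)$ as a small $C^{2,\beta}$ normal graph over $M_\zeta$ inside $\mathcal N$. First I establish the geometric bounds for $\phi=0$ in weighted form, and then I show that the displacement $\phi$ perturbs each quantity only by a relatively small amount. Everything is then fed into the pointwise identities of Lemma~\ref{lem:s4-normal-identities}. All estimates are carried out in the rescaled charts of Lemma~\ref{lem:s4-initial-geometry}, where $\rho_\zeta^{-2}g_\zeta$ has uniform geometry, so that each weighted $C^{0,\beta}_\zeta(w)$ bound reduces to a uniform local $C^{0,\beta}$ bound for the rescaled quantity.

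\emph{Step 1 (the undeformed surface).} I upgrade the pointwise bounds \eqref{eq:s4-initial-geometry-bounds} to local scaled H\"older bounds. On the sheets this follows from the derivative estimates for $\Phi_m$ in \eqref{eq:s4-physical-derivatives} and the accompanying local H\"older remark, together with the explicit formula \eqref{eq:s4-height-graph-identities} for $\nu$; here $|Dh_\zeta|\leq C\tau_\zeta/\rho$. On the necks I use the catenoidal parametrisation, whose rescaled geometry is smooth and uniform. This gives $|z|\lesssim m^2\tau_\zeta$, $|\nu_h|\lesssim\tau_\zeta/\rho$ and $|S^\nu|\lesssim\tau_\zeta/\rho^2$, each with matching scaled H\"older seminorms.

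\emph{Step 2 (effect of $\phi$).} By \eqref{eq:s4-scalar-two-jet}, the rescaled displacement $\phi/\rho_\zeta$ has $C^{2,\beta}$ norm at most $C\tau_\zeta^{1+\alpha/4}f_\zeta\rho_\zeta^{-1}\leq C\tau_\zeta^{1+\alpha/4}$, which tends to zero. Consequently $g_\phi$ is uniformly comparable to $g_\zeta$, and the deformed quantities differ from the undeformed ones as follows. The normal $\nu$ changes by $O(|\nabla\phi|+|\phi|)\leq C\tau_\zeta^{1+\alpha/4}f_\zeta/\rho_\zeta\leq C\tau_\zeta/\rho_\zeta$, using $f_\zeta\leq\rho_\zeta$. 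The second fundamental form changes by $O(|\nabla^2\phi|+\ldots)\leq C\tau_\zeta\rho_\zeta^{-2}$. The height changes by $|\phi|\leq C\tau_\zeta$. The projected distance changes by a relative amount $|\phi|/\rho_\zeta\to0$, which gives \eqref{eq:s4-deformed-scale}. These perturbative estimates are the standard smooth dependence of the graph map on the $2$-jet of $\phi$ in uniformly controlled charts.

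\emph{Step 3 (the remaining quantities).} I then apply \eqref{eq:s4-normal-identities} directly. From $h^n_\iota=\langle S(\cdot),\cdot\rangle$ and \eqref{eq:s4-S-derivative}, the form $h^n_\iota$ is bounded and its rescaled H\"older seminorm is controlled by $\rho_\zeta|\nabla^{\mathcal N}S|\lesssim\rho_\zeta$. From $H_{\iota,n}=-\langle S(\nu),\nu\rangle$ and $S(E_3)=0$, I get the quadratic bound $|H_{\iota,n}|\leq C|\nu_h|^2\leq C\tau_\zeta^2\rho_\zeta^{-2}$, and the seminorm is handled by the product rule. Similarly $|\omega|\leq C|\nu_h|\leq C\tau_\zeta\rho_\zeta^{-1}$.

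The weights $C^{0,\beta}_\zeta(1)$ for $\omega$ and $\nabla\omega$ require some care, because $\tau_\zeta/\rho_\zeta$ is as large as $1$ at the waist. On the necks $\omega$ is in fact much smaller: there $|z|\lesssim\tau_\zeta|\log\tau_\zeta|$ and $S=\operatorname{diag}(-\sec z,\sec z,0)$. Exact catenoidal symmetry makes $\langle S(X),\nu\rangle$ combine the $E_1$ and $E_2$ components with opposite signs. I expect to bound $\omega$ by $|S|\,|\nu_h|$ where $\rho_\zeta\gtrsim1$ (the sheets), and by the structure of $S$ on the necks. For $\nabla\omega$ I use \eqref{eq:s4-omega-derivative}, whose terms are bounded by $C(|z|+|S^\nu_\iota|)$ and by $|h^\nu||\nu_h|$.

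\textbf{Main obstacle.} The step I expect to be hardest is obtaining $\|\omega\|,\|\nabla\omega\|\leq C\tau_\zeta$ with weight $1$ near the waists, where $|S^\nu|\sim\tau_\zeta^{-1}$. If the structural cancellation fails, I would instead check whether the norm convention, which involves local scaled norms, implicitly measures $\nabla\omega$ with a factor $\rho_\zeta$, and rescale accordingly.

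Finally, $q=2+|S^n_\iota|^2-|\omega|^2$ is bounded, because $|S^n_\iota|\leq|S|\leq C$ and $\omega$ is bounded by Step 3.
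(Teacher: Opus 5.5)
Your three steps (bounds for $M_\zeta$ in the rescaled charts, the perturbation controlled by the rescaled $2$-jet of $\phi$, then the identities of Lemma~\ref{lem:s4-normal-identities}) are the paper's argument. However, your ``main obstacle'' comes from misreading the norm, and the remedy you try first would fail.

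In \eqref{eq:s4-weighted-norm}, tensor fields are measured in the rescaled metric $\rho_\zeta(x)^{-2}g_\zeta$. A one-form therefore acquires a factor $\rho_\zeta$, and a $(0,2)$-tensor a factor $\rho_\zeta^2$. So the bounds for $\omega$ and $\nabla^{g_\phi}\omega$ mean $|\omega|\le C\tau_\zeta/\rho_\zeta$ and $|\nabla^{g_\phi}\omega|\le C\tau_\zeta/\rho_\zeta^2$. This is exactly how Proposition~\ref{prop:s4-transverse-inverse} uses them. Likewise, the weight $\rho_\zeta^2$ on $h^n_\iota$ just encodes $|h^n_\iota|\le C$.

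Under your unscaled reading the statement is false, and there is no neck cancellation. At a waist, $\nu_\zeta=-E_r$ and $z=0$. For the unit tangent $E_\theta:=-\sin\theta E_1+\cos\theta E_2$ one gets $\omega(E_\theta)=\langle S(E_\theta),-E_r\rangle=-\sin2\theta$, which is of order one. The opposite signs of $S$ on $E_1,E_2$ produce a $\sin2\theta$ mode, not zero.

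So go directly to your fallback. It gives $\rho_\zeta|\omega|\le C\rho_\zeta|\nu_h|\le C\tau_\zeta$. By \eqref{eq:s4-normal-bounds}, it also gives $\rho_\zeta^2|\nabla^{g_\phi}\omega|\le C\rho_\zeta^2(|z\circ G|+|S^\nu_\iota|)\le C(m^{-2}\cdot m^2\tau_\zeta+\tau_\zeta)$. Here $\rho_\zeta\le\pi/m$ is what absorbs the factor $m^2$ in the height bound. The H\"older seminorms follow from the product estimate.

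Separately, the inequality $f_\zeta\le\rho_\zeta$ you invoke is false near the waists. There $f_\zeta/\rho_\zeta=\tau_\zeta^{(1-\alpha)/2}\rho_\zeta^{-1/2}$ reaches $\tau_\zeta^{-\alpha/2}$ at $\rho_\zeta=\tau_\zeta$. Hence the rescaled displacement is only $O(\tau_\zeta^{1-\alpha/4})$, not $O(\tau_\zeta^{1+\alpha/4})$. That is still $o_m(1)$, which suffices for $C^{-1}g_\zeta\le g_\phi\le Cg_\zeta$ and for \eqref{eq:s4-deformed-scale}.

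To show that $\nu_h$ and $S^\nu_\iota$ change by $O(\tau_\zeta/\rho_\zeta)$ and $O(\tau_\zeta/\rho_\zeta^2)$, the correct input is $\tau_\zeta^{\alpha/4}\sup f_\zeta\le C\tau_\zeta^{\alpha/4}m^{-3/2}=o_m(1)$, which is \eqref{eq:s4-sharp-phi}. It makes the changes $C\tau_\zeta^{1+\alpha/4}f_\zeta/\rho_\zeta=o_m(1)\,\tau_\zeta/\rho_\zeta$ and $o_m(1)\,\tau_\zeta/\rho_\zeta^2$, respectively.

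Two smaller points. The change of the normal is really $O(|\nabla\phi|+|\phi|\,|A_{\jmath_\zeta}|)$, with $|A_{\jmath_\zeta}|\lesssim\rho_\zeta^{-1}$. The derivative of $h^n_\iota$ also contains $|S|\,|h^\nu_\iota|$ terms. Both are harmlessly bounded. With these corrections your proof goes through as in the paper.
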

The proof of Lemma~\ref{lem:s4-full-family-geometry} is given in Appendix~\ref{app:s4-normal}. 

Observe that the displacement in the direction $n$ must be $\chi$-equivariant for the deformed surface to remain $\Gamma_m$-equivariant. For $b\in C^{2,\beta}_\chi(M_\zeta)$ (see \eqref{eq:s4-symmetry-spaces}), let
\begin{align}
    \label{eq:s4-transverse-graph}
    \begin{split}
        X_b(x)&:=\widehat X_{\zeta,\phi,b}(x)
            =\exp_{\iota(x)}^{\mathbb S^4}(b(x)n(x))=\cos(b(x))\iota(x)+\sin(b(x))n(x),\\[2mm]
        V_b(x)&:=-\sin(b(x))\iota(x)+\cos(b(x))n(x)
            \qquad\forall\,x\in M_\zeta.
    \end{split}
\end{align}
Note that $X_b$ and $V_b$ inherit their regularity from $b$, and hence they are of class $C^{2,\beta}$ on $M_{\zeta}$. In local coordinates on $M_\zeta$, write
$b_i:=\partial_i b$ and $\omega_i:=\omega(\partial_i)$
for $i\in\{1,2\}$. As the Weingarten equation gives
$\partial_i n=-d\iota(S_\iota^n(\partial_i))-\omega_i\nu$, setting $\mathsf P_b:=\cos b\,\operatorname{Id}_{TM_\zeta}
-\sin b\,S_\iota^n$ we obtain
\begin{align}
    \label{eq:s4-graph-tangent}
    \partial_iX_b
        =d\iota\bigl(\mathsf P_b(\partial_i)\bigr)
            -\sin b\,\omega_i\nu+b_iV_b.
\end{align}
Since the three summands on the right-hand side of \eqref{eq:s4-graph-tangent} are mutually orthogonal, the induced metric $g_b:=X_b^*g_{\mathbb S^4}$ satisfies
\begin{align}
    \label{eq:s4-graph-metric-exact}
    (g_b)_{ij}=g_\phi\bigl(\mathsf P_b(\partial_i),\mathsf P_b(\partial_j)\bigr)+\sin^2b\,\omega_i\omega_j+b_ib_j.
\end{align}
Fix a sufficiently small constant $c_0>0$ and suppose
$\|b\|_{C^{2,\beta}_\zeta(1)}\leq c_0\tau_\zeta$. Since
$\rho_\zeta\geq\tau_\zeta$, Lemma~\ref{lem:s4-full-family-geometry}
gives $|db|_{g_\phi}\leq Cc_0$ and
$|\mathsf P_b-\operatorname{Id}|_{g_\phi}\leq Cc_0\tau_\zeta$.
Thus, $X_b$ is a $C^{2,\beta}$ immersion and
$\frac14g_\phi\leq g_b\leq Cg_\phi$.

\smallskip
By \eqref{eq:s4-graph-tangent}, the orthogonal projections of $V_b$ and $\nu$ onto the normal bundle of the immersion $X_b$ are
\begin{align}
    \label{eq:s4-testing-projections}
    \begin{split}
        U_b&:=V_b-\sum_{i,j=1}^2
            (g_b)^{ij}b_i\partial_jX_b,\\
        W_b&:=\nu+\sin b\sum_{i,j=1}^2
            (g_b)^{ij}\omega_i\partial_jX_b,
    \end{split}
\end{align}
where $(g_b)^{ij}$ are the entries of the inverse
of the metric matrix $((g_b)_{ij})$. Note that
\begin{align}
    \label{eq:s4-testing-lengths}
    \begin{split}
        |U_b|^2&=1-|db|_{g_b}^2,\\
        |W_b|^2&=1-\sin^2b\,|\omega|_{g_b}^2,\\
        \langle U_b,W_b\rangle
            &=\sin b\,\langle db,\omega\rangle_{g_b}.
    \end{split}
\end{align}
Consequently,
$|U_b|^2|W_b|^2-\langle U_b,W_b\rangle^2
\geq1-Cc_0^2\geq1/2$, after possibly decreasing $c_0$. Therefore, for sufficiently small $c_0$ the unit normal fields
\begin{align}
    \label{eq:s4-testing-normals}
    n_b:=\frac{U_b}{|U_b|},\qquad
    \nu_b:=\frac{W_b}{|W_b|}
\end{align}
span the normal plane of $X_b$ at every point of $M_{\zeta}$. They need not be orthogonal, and for $b=0$ they equal $n$ and $\nu$.

To estimate the displacement $b$ and the two mean-curvature components of $X_b$ in $\mathbb{S}^4$, consider the following function spaces:
\begin{align}
    \label{eq:s4-transverse-spaces}
    \begin{aligned}
        \mathcal X_\zeta&:=C^{2,\beta}_\chi(M_\zeta),
        &\qquad\|b\|_{\mathcal X_\zeta}
            &:=\|b\|_{C^{2,\beta}_\zeta(1)},\\
        \mathcal Y_\zeta&:=C^{0,\beta}_\chi(M_\zeta),
        &\qquad\|E\|_{\mathcal Y_\zeta}
            &:=\|E\|_{C^{0,\beta}_\zeta(\rho_\zeta^{-2})},\\
        \mathcal Z_\zeta&:=C_{\mathrm{inv}}^{0,\beta}(M_\zeta),
        &\qquad\|E\|_{\mathcal Z_\zeta}
            &:=\|E\|_{C^{0,\beta}_\zeta(\rho_\zeta^{-2})}.
    \end{aligned}
\end{align}
Define the closed ball
\begin{align*}
    \mathcal U_{\zeta,c_0}:=\{b\in\mathcal X_\zeta\mbox{ : }\|b\|_{\mathcal X_\zeta}\leq c_0\tau_\zeta\}\subset\mathcal{X}_{\zeta}.
\end{align*}
Let $\mathscr P_{\zeta,\phi},\mathscr T_{\zeta,\phi}:\mathcal U_{\zeta,c_0}\to  C^{0,\beta}(M_\zeta)$ be given by
\begin{align}
    \label{eq:s4-component-maps}
    \mathscr P_{\zeta,\phi}(b)
        :=\langle\mathbf H_{X_b},n_b\rangle,
    \qquad
    \mathscr T_{\zeta,\phi}(b)
        :=\langle\mathbf H_{X_b},\nu_b\rangle \qquad\forall\,b\in\mathcal U_{\zeta,c_0},
\end{align}
where $\mathbf H_{X_b}$ is the mean-curvature vector of $X_b$ in $\mathbb S^4$. Note that
\begin{align*}
    \mathscr P_{\zeta,\phi}(0)=H_{\iota,n},
    \qquad\mathscr T_{\zeta,\phi}(0)=H_{\zeta,\phi},
\end{align*}
and $X_b$ is a minimal immersion if and only if
$\mathscr P_{\zeta,\phi}(b)=\mathscr T_{\zeta,\phi}(b)=0$.
The transformation rules for $\iota,n,\nu$ give
$X_b\circ a=a\circ X_b$ for every $a\in\Gamma_m$.
Since isometries commute with normal projection and
normalisation, $n_b$ transforms with character $\chi$
and $\nu_b$ is equivariant. Therefore
\begin{align}
    \label{eq:s4-component-parities}
    \begin{split}
        \mathscr P_{\zeta,\phi}(b)\circ a
            &=\chi(a)\mathscr P_{\zeta,\phi}(b),\\
        \mathscr T_{\zeta,\phi}(b)\circ a
            &=\mathscr T_{\zeta,\phi}(b)
            \qquad\forall\,a\in\Gamma_m.
    \end{split}
\end{align}
In particular, $\mathscr P_{\zeta,\phi}$ and $\mathscr T_{\zeta,\phi}$ map into $\mathcal{Y}_{\zeta}$ and $\mathcal{Z}_{\zeta}$ respectively.
\begin{figure}[htbp]
    \centering
    \begingroup
    \definecolor{normalteal}{RGB}{0,112,116}
    \definecolor{normalorange}{RGB}{190,102,36}
    \begin{tikzpicture}[
        x=1cm,y=1cm,>=Stealth,
        every node/.style={font=\small},
        reference/.style={black!40,line width=0.6pt},
        base edge/.style={normalteal,line width=0.9pt},
        new edge/.style={normalorange,line width=0.9pt}
    ]
        \path[fill=black!3]
            (0,0)--(7.0,0)--(8.25,1.55)--(1.25,1.55)--cycle;
        \draw[reference]
            (0,0)--(7.0,0)--(8.25,1.55)--(1.25,1.55)--cycle;
        \node[anchor=north west,text=black!65] at (0.12,-0.06)
            {fixed hypersurface $\mathcal N$};
        \path[fill=normalteal!12]
            (0.85,0.30)--(6.50,0.30)--(7.10,1.02)--(1.45,1.02)--cycle;
        \draw[base edge]
            (0.85,0.30)--(6.50,0.30)--(7.10,1.02)--(1.45,1.02)--cycle;
        \path[fill=normalorange!12]
            (1.03,2.05)
            .. controls (2.70,2.48) and (4.20,2.04) .. (6.75,2.35)
            --(7.34,3.07)
            .. controls (4.82,2.76) and (3.28,3.20) .. (1.62,2.77)
            --cycle;
        \draw[new edge]
            (1.03,2.05)
            .. controls (2.70,2.48) and (4.20,2.04) .. (6.75,2.35)
            --(7.34,3.07)
            .. controls (4.82,2.76) and (3.28,3.20) .. (1.62,2.77)
            --cycle;
        \draw[black!35,dashed]
            (1.45,1.02) .. controls (1.45,1.65) and (1.48,2.40) .. (1.62,2.77);
        \draw[black!35,dashed]
            (6.50,0.30) .. controls (6.50,1.04) and (6.55,1.98) .. (6.75,2.35);
        \coordinate (basepoint) at (3.65,0.63);
        \coordinate (endpoint) at (3.93,2.61);
        \draw[black!55,->,line width=0.65pt]
            (basepoint)--(3.65,1.29);
        \node[anchor=east,text=black!70] at (3.52,1.13) {$n(x)$};
        \draw[normalorange,->,line width=1.05pt]
            (basepoint) .. controls (3.65,1.35) and (3.69,2.09) .. (endpoint);
        \fill[normalteal] (basepoint) circle[radius=1.6pt];
        \fill[normalorange] (endpoint) circle[radius=1.6pt];
        \node[anchor=north] at (3.65,-0.38) {$G(x)$};
        \draw[black!40] (3.65,-0.28)--(3.65,0.51);
        \node[anchor=south] at (3.93,3.27) {$X_b(x)$};
        \draw[black!40] (3.93,3.17)--(3.93,2.72);
        \node[anchor=west] at (4.11,1.86)
            {geodesic of length $|b(x)|$};
        \node[anchor=west,align=left,text=normalteal] at (8.57,0.62)
            {$M_{\zeta,\phi}=G(M_\zeta)$\\surface in $\mathcal N$};
        \draw[normalteal!70] (8.43,0.66)--(7.02,0.89);
        \node[anchor=west,align=left,text=normalorange] at (8.57,2.64)
            {$X_b(M_\zeta)$\\displaced surface};
        \draw[normalorange!70] (8.43,2.68)--(7.31,2.88);
    \end{tikzpicture}
    \endgroup
    \caption{The second deformation moves $G(x)$ along the spherical
    geodesic with initial velocity $n(x)$, through signed
    distance $b(x)$.}
    \label{fig:s4-normal-displacement}
\end{figure}

To solve the nonlinear equation $\mathscr P_{\zeta,\phi}(b)=0$ locally around $0$, we first identify its linearisation.
\begin{lemma}
    \label{lem:s4-component-linearisation}
    For every $h\in\mathcal X_\zeta$,
    \begin{align}
        \label{eq:s4-component-linearisation}
        D\mathscr P_{\zeta,\phi}(0)h=P_{\zeta,\phi}h:=(\Delta_{g_\phi}+q)h,
        \qquad D\mathscr T_{\zeta,\phi}(0)h=Bh,
    \end{align}
    where $B$ is the operator in \eqref{eq:s4-scalar-Jacobi} for $\iota$.
\end{lemma}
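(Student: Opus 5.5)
The plan is to reduce the claim to the normal variation formula \eqref{eq:s4-J-matrix} of Lemma~\ref{lem:s4-normal-linearisation}, applied to the immersion $\iota=\iota_{\mathcal N}\circ G$ with normal frame $(\nu,n)$ and to the variation field $V=hn$. By \eqref{eq:s4-transverse-graph}, for each $x$ the curve $r\mapsto X_{rh}(x)=\exp^{\mathbb S^4}_{\iota(x)}(rh(x)n(x))$ is exactly the geodesic normal variation $G_r$ used in the definition of $J$. Hence, setting $u=0$ and $b=h$ in \eqref{eq:s4-J-matrix}, we have
\[
\langle JV,n\rangle=(\Delta_{g_\phi}+q)h,\qquad \langle JV,\nu\rangle=Bh.
\]
One caveat is that Lemma~\ref{lem:s4-normal-linearisation} was stated for smooth surfaces in $\mathcal N$, while $G$ is only $C^{2,\beta}$ and $\nu$ only $C^{1,\beta}$. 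I would either note that its proof is pointwise and uses at most the regularity available (the terms $\operatorname{div}\omega$ and $h^\nu$ involve second derivatives of $G$, which are $C^{0,\beta}$), or approximate $\phi$ by smooth invariant functions and pass to the limit, since both sides depend continuously on $\phi$ in $C^{2,\beta}$.

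The remaining task is to relate $\frac{d}{dr}\big|_{r=0}\langle\mathbf H_{X_{rh}},n_{rh}\rangle$ to $\langle JV,n\rangle$. Differentiate the product covariantly along the geodesic $r\mapsto X_{rh}(x)$:
\[
\frac{d}{dr}\Big|_{0}\langle\mathbf H_{X_{rh}},n_{rh}\rangle
=\Big\langle\frac{D}{dr}\Big|_0\mathbf H_{X_{rh}},n\Big\rangle+\Big\langle\mathbf H_\iota,\frac{D}{dr}\Big|_0 n_{rh}\Big\rangle.
\]
Since $n$ is normal to $\iota$, the first term equals $\langle JV,n\rangle$. For the second term, $|n_{rh}|=1$ forces $\frac{D}{dr}n_{rh}\perp n$, so it reduces to
\[
H_{\zeta,\phi}\Big\langle\nu,\frac{D}{dr}\Big|_0 n_{rh}\Big\rangle.
\]
This is the step I expect to need care: it must vanish. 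It does so because, from \eqref{eq:s4-testing-projections} with $b=rh$,
\[
\langle U_{rh},\nu\rangle=\langle V_{rh},\nu\rangle-\sum(g_{rh})^{ij}rh_i\langle\partial_jX_{rh},\nu\rangle .
\]
Here $V_{rh}$ is the parallel transport of $n$ along the geodesic, which stays orthogonal to the parallel transport of $\nu$. By \eqref{eq:s4-graph-tangent}, $\langle\partial_jX_{rh},\nu\rangle=O(r)$, so the sum is $O(r^2)$. The derivative $\frac{D}{dr}\nu$ of the parallel field along the geodesic vanishes, so $\langle U_{rh},\nu\rangle$ computed with the parallel-transported $\nu$ is $O(r^2)$. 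Normalising by $|U_{rh}|=1+O(r^2)$ preserves this. The cross term therefore drops out, and $D\mathscr P_{\zeta,\phi}(0)h=(\Delta_{g_\phi}+q)h$.

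The argument for $\mathscr T_{\zeta,\phi}$ is the same with the roles exchanged. The first term gives $\langle JV,\nu\rangle=Bh$. The second term is $H_{\iota,n}\langle n,\frac{D}{dr}\nu_{rh}\rangle$, which vanishes by the analogous computation using $W_{rh}$. There, $\sin(rh)\omega_i=O(r)$ multiplies $\langle\partial_jX_{rh},n\rangle$, which is itself $O(r)$ by \eqref{eq:s4-graph-tangent}, since the $V_b$-component is $rh_i$. Finally, Fréchet differentiability on $\mathcal U_{\zeta,c_0}$, rather than mere Gateaux differentiability, follows from the smooth dependence of all formulas on $(b,db,D^2b)$, so the directional derivatives computed above identify the derivative.
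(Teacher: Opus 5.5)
Your proposal is correct and follows the paper's proof. Both apply the product rule, identify the first term with the $n$- (resp.\ $\nu$-) component of $J(hn)$ via Lemma~\ref{lem:s4-normal-linearisation}, and show the cross term $\langle\mathbf H_\iota,\tfrac{D}{dt}\eta_{th}\rangle$ vanishes using \eqref{eq:s4-testing-projections}. The only difference is cosmetic: the paper differentiates $U_{th}$ and $W_{th}$ directly to get the tangent vectors $-d\iota(\nabla^{g_\phi}h)$ and $h\,d\iota(\omega^{\sharp_{g_\phi}})$, whereas you reach the same conclusion componentwise from unit length and the $O(r^2)$ bounds on $\langle n_{rh},\nu\rangle$ and $\langle\nu_{rh},n\rangle$.
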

\begin{proof}
Along the variation $t\mapsto th$ of $0\in\mathcal X_{\zeta}$ with initial velocity $h$, by definition of $\mathscr P_{\zeta,\phi}$ we have
\begin{align*}
    D\mathscr P_{\zeta,\phi}(0)h&=\frac{d}{dt}\bigg\vert_{t=0}\mathscr P_{\zeta,\phi}(th)=\frac{d}{dt}\bigg\vert_{t=0}\langle\mathbf H_{X_{th}},n_{th}\rangle\\
    &=\bigg\langle\frac{D}{dt}\bigg\vert_{t=0}\mathbf  H_{X_{th}},n_{0}\bigg\rangle+\bigg\langle\mathbf H_{X_{0}},\frac{D}{dt}\bigg\vert_{t=0}n_{th}\bigg\rangle\\
    &=\bigg\langle\frac{D}{dt}\bigg\vert_{t=0}\mathbf  H_{X_{th}},n\bigg\rangle+\bigg\langle\mathbf H_{\iota},\frac{D}{dt}\bigg\vert_{t=0}n_{th}\bigg\rangle,
\end{align*}
and, analogously, 
\begin{align*}
    D\mathscr T_{\zeta,\phi}(0)h=\bigg\langle\frac{D}{dt}\bigg\vert_{t=0}\mathbf  H_{X_{th}},\nu\bigg\rangle+\bigg\langle\mathbf H_{\iota},\frac{D}{dt}\bigg\vert_{t=0}\nu_{th}\bigg\rangle.
\end{align*}
Differentiating the identities \eqref{eq:s4-testing-projections} at $t=0$ gives
\begin{align*}
    \frac{D}{dt}\bigg\vert_{t=0}U_{th}=-d\iota(\nabla^{g_\phi}h), \qquad \frac{D}{dt}\bigg\vert_{t=0}W_{th}=h\,d\iota(\omega^{\sharp_{g_\phi}}).
\end{align*}
Moreover, differentiating \eqref{eq:s4-testing-lengths} at $t=0$, we get
\begin{align*}
    \frac{d}{dt}\bigg\vert_{t=0}\lvert U_{th}\rvert=\frac{d}{dt}\bigg\vert_{t=0}\lvert W_{th}\rvert=0.
\end{align*}
Therefore, we obtain
\begin{align*}
    \frac{D}{dt}\bigg\vert_{t=0}n_{th}=-d\iota(\nabla^{g_\phi}h)\in d\iota(TM_{\zeta}), \qquad \frac{D}{dt}\bigg\vert_{t=0}\nu_{th}=h\,d\iota(\omega^{\sharp_{g_\phi}})\in d\iota(TM_{\zeta}).
\end{align*}
Since $\mathbf H_{\iota}$ is normal to the immersion $\iota$, applying Lemma~\ref{lem:s4-normal-linearisation} to the normal variation with initial velocity $hn$ therefore gives
\begin{align*}
    D\mathscr P_{\zeta,\phi}(0)h&=\bigg\langle\frac{D}{dt}\bigg\vert_{t=0}\mathbf  H_{X_{th}},n\bigg\rangle=(\Delta_{g_\phi}+q)h,\\
    D\mathscr T_{\zeta,\phi}(0)h&=\bigg\langle\frac{D}{dt}\bigg\vert_{t=0}\mathbf  H_{X_{th}},\nu\bigg\rangle=Bh.
\end{align*}
The statement follows.
\end{proof}
\subsection{An auxiliary metric and a uniform inverse for its Laplacian}
To invert $P_{\zeta,\phi}$ with a bound independent of $m$ and $(v,\zeta)\in\mathcal B_m$, we first construct an auxiliary metric $g_*$ on $M_\zeta$ and prove that $\Delta_{g_*}$ is invertible on $\chi$-equivariant functions, with a uniform bound in the weighted norms. Then, we obtain the inverse of $P_{\zeta,\phi}$ by showing that $P_{\zeta,\phi}$ is a small perturbation of $\Delta_{g_{\ast}}$ in the appropriate norms.

Recall that $r_c=\tau_\zeta^\alpha$, and define
\begin{align}
    \label{eq:s4-model-wedge}
    \mathcal W_{m,\zeta}
        :=\left\{(\xi,\theta)\mbox{ : }
            0<\theta<\pi/4,\quad
            0<\xi<\log\frac{\ell}{m\tau_\zeta\cos\theta}
            \right\}.
\end{align}
Let $h_\zeta$ be the height function
given in \eqref{eq:s4-glued-height}. As in
Appendix~\ref{app:s4-initial-geometry}, we extend $h_\zeta$
to the region $\{q\in\mathbb T\mbox{ : }
d_{\mathbb T}(q,L_m)>\tau_\zeta\}\subset\mathbb T$
by setting $h_\zeta(q):=C_{\tau_\zeta}(d_{\mathbb T}(q,p))$
whenever $\tau_\zeta<d_{\mathbb T}(q,p)\leq9\tau_\zeta$,
for every $p\in L_m$.
Choose a smooth cutoff function
$\eta_*:[0,\infty)\to[0,1]$ equal to one on $[0,1/4]$
and zero on $[1/2,\infty)$, and define
\begin{align}
    \label{eq:s4-model-radius}
    r_{m,\zeta}(\xi)
        :=\frac{\tau_\zeta}{2}\left(
            e^\xi+
            \eta_*\!\left(\frac{\tau_\zeta e^\xi}{2r_c}\right)
            e^{-\xi}\right)
        \qquad\forall\,\xi\geq0.
\end{align}
Fix any $p\in L_m$, and define
$\Theta_{m,\zeta,p}:\mathcal W_{m,\zeta}\to M_\zeta$
by
\begin{align}
    \label{eq:s4-model-identification}
    \Theta_{m,\zeta,p}(\xi,\theta)
        :=\mathcal F\bigl(
            p+r_{m,\zeta}(\xi)(\cos\theta,\sin\theta),
            h_\zeta(p+r_{m,\zeta}(\xi)(\cos\theta,\sin\theta))
            \bigr).
\end{align}
For all sufficiently large $m$, $\Theta_{m,\zeta,p}$ is
a diffeomorphism from $\mathcal W_{m,\zeta}$ onto its image $D_{m,\zeta,p}\subset M_{\zeta}$, which is the upper part of $M_\zeta$ above the region
\begin{align*}
    \left\{p+(s,t)\mbox{ : }
        0<t<s<\frac{\ell}{2m},\quad
        s^2+t^2>\tau_\zeta^2\right\}.
\end{align*}
Note that the parametrisation $\Theta_{m,\zeta,p}$ extends smoothly to the boundary of $\mathcal W_{m,\zeta}$. Moreover, since $p$ is a nearest lattice point to the projection of every point in $D_{m,\zeta,p}$, the definition of $\rho_\zeta$ gives
\begin{align}
    \label{eq:s4-model-scale}
    \rho_\zeta\circ\Theta_{m,\zeta,p}=r_{m,\zeta}.
\end{align}
%
%
\begin{figure}[htbp]
    \centering
    \begingroup
    \definecolor{modelteal}{RGB}{0,112,116}
    \definecolor{modelorange}{RGB}{190,102,36}
    \begin{tikzpicture}[
        x=5.1cm,y=1.22cm,>=Stealth,
        every node/.style={font=\small}
    ]
        \fill[modelteal!8]
            (0,0)--({pi/4},0)--({pi/4},{2.6-ln(cos(45))})
            --plot[domain=pi/4:0,samples=50]
                (\x,{2.6-ln(cos(\x*180/pi))})--cycle;
        \draw[modelteal,line width=1pt] (0,0)--({pi/4},0);
        \draw[modelteal,line width=1pt] (0,0)--(0,2.6);
        \draw[modelorange,line width=1.2pt]
            ({pi/4},0)--({pi/4},{2.6-ln(cos(45))});
        \draw[modelteal,line width=1pt]
            plot[domain=0:pi/4,samples=50]
                (\x,{2.6-ln(cos(\x*180/pi))});
        \draw[black!40,dashed] (0,2.6)--({pi/4},2.6);
        \node[left=4pt] at (0,2.6) {$\log\frac{\ell}{m\tau_\zeta}$};
        \draw[black!65,->] (0,-0.10)--(0,3.38)
            node[above] {$\xi$};
        \draw[black!65,->] (-0.025,0)--(0.94,0)
            node[right] {$\theta$};
        \node[below left=3pt] at (0,0) {$0$};
        \node[below=4pt] at ({pi/4},0) {$\pi/4$};
        \node[left=6pt,text=modelteal] at (0,1.35)
            {$\partial_\theta\tilde b=0$};
        \node[right=7pt,text=modelorange] at ({pi/4},1.35)
            {$\tilde b=0$};
        \node[below=15pt,text=modelteal] at ({pi/8},0)
            {$\partial_\xi\tilde b=0$};
        \node[align=left,anchor=west] at (1.02,2.98)
            {Cell wall: $\xi=\log\left(\frac{\ell}{m\tau_\zeta\cos\theta}\right)$\\[3pt]
             $\partial_{\eta_{\mathrm{out}}}\tilde b=0$};
        \draw[black!45] (0.61,2.83)--(0.96,2.83);
        \draw[black!55,dashed]
            (0,1.91)--({pi/4},1.91);
        \node[above=2pt,fill=modelteal!8,inner sep=1pt]
            at ({pi/8},1.91) {$0<\theta<\pi/4$};
        \draw[modelorange,line width=1.25pt]
            ({acos(exp(-0.16))*pi/180},2.76)--({pi/4},2.76);
        \fill[modelorange]
            ({acos(exp(-0.16))*pi/180},2.76)
            circle[radius=1.4pt];
        \node[anchor=west,align=left]
            at (1.02,1.94)
            {$\theta=\arccos(\frac{\ell e^{-\xi}}{m\tau_\zeta})$\\for $\xi>\log\frac{\ell}{m\tau_\zeta}$};
        \draw[black!45]
            ({acos(exp(-0.16))*pi/180},2.76)
            --(0.91,2.14)--(0.97,2.14);
    \end{tikzpicture}
    \endgroup
    \caption{The coordinate domain $\mathcal W_{m,\zeta}$. The bottom, left, right and curved sides correspond to the waist, a cell axis, a cell diagonal and a cell wall, respectively. The displayed boundary conditions follow from $\chi$-equivariance; $\eta_{\mathrm{out}}$ denotes the outward unit normal with respect to $g_*$. In the figure, $b\in\mathcal{X}_{\zeta}$ and $\tilde b:=b\circ\Theta_{m,\zeta,p}$.}
    \label{fig:s4-reflected-domain}
\end{figure}

Define the metric $g_*$ on $D_{m,\zeta,p}\subset M_{\zeta}$ by
\begin{align}
    \label{eq:s4-model-metric}
        g_*:=\big(\Theta_{m,\zeta,p}^{-1}\big)^*\big(r_{m,\zeta}^2(d\xi^2+d\theta^2)\big).
\end{align}
Note that the $\Gamma_m$-translates of $\overline{D_{m,\zeta,p}}$ cover $M_\zeta$. We then extend $g_{\ast}$ to $M_{\zeta}$ by requiring every transformation $a\in\Gamma_m$ to be an isometry, namely
\begin{align*}
    (a\circ\Theta_{m,\zeta,p})^*g_*
        =\Theta_{m,\zeta,p}^*g_*
        \qquad\forall\,a\in\Gamma_m.
\end{align*}
Near the waist, $r_{m,\zeta}(\xi)=\tau_\zeta\cosh\xi$, thus
\begin{align*}
    \Theta_{m,\zeta,p}^*g_*
        =\tau_\zeta^2\cosh^2\xi(d\xi^2+d\theta^2).
\end{align*}
Since this expression is even in $\xi$, reflection
across the waist gives a smooth extension. Likewise, the coefficient $r_{m,\zeta}^2$ is independent of $\theta$, so reflections across the axes and diagonals give smoothness of our extension on each individual cell.
To check smoothness across neighbouring cells, observe that for $\tau_\zeta e^\xi\geq r_c$ we have
$r_{m,\zeta}(\xi)=\tau_\zeta e^\xi/2$, which implies $dr_{m,\zeta}=r_{m,\zeta}\,d\xi$. Hence,
\begin{align*}
    \Theta_{m,\zeta,p}^*g_*=(dr_{m,\zeta})^2+r_{m,\zeta}^2\,d\theta^2=(\pi\circ\Theta_{m,\zeta,p})^*g_{\mathbb T}.
\end{align*}
On the cell wall, $\xi=\log(\ell/(m\tau_\zeta\cos\theta))$,
and therefore
\begin{align*}
    \tau_\zeta e^\xi
        =\frac{\ell}{m\cos\theta}
        \geq\frac{\ell}{m}>r_c
\end{align*}
for all sufficiently large $m$, because $mr_c\to0$
as $m\to\infty$, uniformly for $|\zeta|\leq1$.
Thus, the metrics defined using neighbouring cells
both equal $\pi^*g_{\mathbb T}$ near their common side. Therefore, we have defined a
$\Gamma_m$-invariant metric $g_*$ on $M_\zeta$.

In the following, a subscript ``$*$'' on the weighted norms and the spaces
in \eqref{eq:s4-transverse-spaces} means that
$g_\zeta$ is replaced by $g_*$, even though we maintain the same scale $\rho_\zeta$ and weights.
\begin{lemma}[Uniform inverse for the auxiliary Laplacian]
    \label{lem:s4-model-inverse}
    The operator
    $\Delta_{g_*}:\mathcal X_{\zeta,*}\to\mathcal Y_{\zeta,*}$
    is an isomorphism, and
    \begin{align}
        \label{eq:s4-model-inverse-bound}
        \|b\|_{\mathcal X_{\zeta,*}}
            \leq C\|\Delta_{g_*}b\|_{\mathcal Y_{\zeta,*}}
            \qquad\forall\,b\in\mathcal X_{\zeta,*},
    \end{align}
    for some $C>0$ independent of $m$ and $\zeta$.
\end{lemma}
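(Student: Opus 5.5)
Reduce everything to a single fundamental cell and work in the conformal coordinates $(\xi,\theta)$ on $\mathcal W_{m,\zeta}$, where $g_*=r_{m,\zeta}^2(d\xi^2+d\theta^2)$. Since $g_*$ is conformal to the flat cylinder metric, $\Delta_{g_*}b=r_{m,\zeta}^{-2}(\partial_\xi^2+\partial_\theta^2)\tilde b$ with $\tilde b:=b\circ\Theta_{m,\zeta,p}$. By $\chi$-equivariance and $\Gamma_m$-invariance of $g_*$, a function $b\in\mathcal X_{\zeta,*}$ is determined by $\tilde b$ on $\mathcal W_{m,\zeta}$. Moreover $\tilde b$ satisfies the boundary conditions recorded in Figure~\ref{fig:s4-reflected-domain}: Neumann at $\theta=0$, at $\xi=0$ and on the cell wall, and Dirichlet at $\theta=\pi/4$. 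Conversely, such data reflect to $\chi$-equivariant functions. With the weights of \eqref{eq:s4-transverse-spaces} and \eqref{eq:s4-model-scale}, the equation $\Delta_{g_*}b=E$ becomes the flat problem
\begin{align*}
    (\partial_\xi^2+\partial_\theta^2)\tilde b=\tilde F,\qquad \tilde F:=r_{m,\zeta}^2\,E\circ\Theta_{m,\zeta,p},\qquad \|\tilde F\|_{C^{0,\beta}}\lesssim\|E\|_{\mathcal Y_{\zeta,*}},
\end{align*}
on a domain of uniformly bounded geometry in unit-scale patches. The target estimate is $\|\tilde b\|_{C^{2,\beta}}\le C\|\tilde F\|_{C^{0,\beta}}$ on $\mathcal W_{m,\zeta}$, with $C$ independent of the domain length $\log(\ell/(m\tau_\zeta))\asymp m^2$.

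Injectivity and existence come from the energy method. The Dirichlet side at $\theta=\pi/4$, together with Neumann at $\theta=0$, gives for each fixed $\xi$ the Poincaré inequality $\int|\partial_\theta\tilde b|^2\,d\theta\ge4\int|\tilde b|^2\,d\theta$, as in \eqref{eq:s4-symmetry-angular-poincare}. Near the cell wall the $\theta$-range is truncated, but the reflected cell region is still controlled. The cleanest route is to note that $g_*$ is flat there, equal to $\pi^*g_{\mathbb T}$, and that $\chi$-equivariant functions on the sheet portion vanish on the cell diagonals. The Poincaré constant of a square with Dirichlet conditions on the diagonals is bounded in units of the cell size, and in $(\xi,\theta)$ coordinates, after conformal rescaling, this is again a uniform constant. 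Hence the quadratic form $\int|\nabla\tilde b|^2$ is coercive on $H^1$ with the mixed boundary conditions, uniformly in the domain length. Lax--Milgram then gives a unique weak solution, and Schauder theory up to the boundary (reflect across the straight sides; the cell wall is smooth and the Neumann condition is standard there) gives regularity. This proves the isomorphism claim.

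The main obstacle is the uniform bound, because the domain is long (length $\sim m^2$), so a global $L^2$ estimate would lose length factors. I would use an exponential-decay (Saint-Venant) argument instead. On strips $\{k<\xi<k+1\}$, the uniform Poincaré inequality yields an interior energy inequality of the form $\int_{|\xi-k|<j}|\nabla\tilde b|^2\le \lambda\int_{|\xi-k|<j+1}|\nabla\tilde b|^2+C\sum e^{-\gamma|\xi-k|}\|\tilde F\|^2$ with $\lambda<1$. This is the standard decay estimate for the equation with zero angular mean, and it implies $\|\tilde b\|_{L^2(\text{strip}_k)}\le C\sup\|\tilde F\|_{L^\infty}$ uniformly in $k$. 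Equivalently, one can build the Green's function of $\partial_\xi^2+\partial_\theta^2$ with these boundary conditions and show it decays like $e^{-2|\xi-\xi'|}$, using the spectral gap $4$. Local Schauder estimates on unit patches, including the boundary patches, then upgrade this $L^2$ bound on strips to the $C^{2,\beta}$ bound. The cell-wall end needs separate care, since there the cross-sections are no longer full intervals. I would handle it in the flat $\pi^*g_{\mathbb T}$ picture: the region near the wall is a square of bounded size in $\rho$-scaled units, where the Dirichlet diagonal again provides coercivity with a uniform constant. Finally, transfer back through \eqref{eq:s4-model-scale}, since the weights $1$ and $\rho_\zeta^{-2}$ correspond exactly to the conformal factor $r_{m,\zeta}^2$.
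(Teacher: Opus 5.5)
Your proposal is correct, but it obtains the uniform bound by a different mechanism from the paper. Existence is handled the same way in both arguments: slicewise angular Poincar\'e, then Lax--Milgram on $\chi$-equivariant $H^1$, then regularity.

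For \eqref{eq:s4-model-inverse-bound}, however, the paper uses no decay along $\xi$ at all. It takes the explicit barrier $\Phi=\pi^2/16-\theta^2$ on the sector. This function satisfies $-\Delta_{g_*}\Phi=2\rho_\zeta^{-2}$, vanishes on the diagonal, has zero normal derivative on the axis and waist, and has outward normal derivative $2\theta\sin\theta/r_{m,\zeta}\geq0$ on the cell wall. Set $A_0=\sup\rho_\zeta^2|E|$. Testing $-\Delta_{g_*}(A_0\Phi\pm b)\geq0$ against its negative part and using coercivity gives $|b|\leq\frac{\pi^2}{16}A_0$ in one step, with no dependence on the length of the domain. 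Interior Schauder estimates on the closed surface $M_\zeta$ at scale $\rho_\zeta$ then finish the proof.

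Your Saint-Venant/Green's-function route replaces the barrier with the spectral gap. It buys something: it needs no favourable sign on the curved Neumann wall and is more robust under perturbation of the operator. It also costs more work: you must split the source into unit strips, sum the exponential tails, and prove boundary and corner regularity in $(\xi,\theta)$ (for instance at the $45^\circ$ Dirichlet--Neumann corner where diagonal and wall meet). The paper avoids all three by working on $M_\zeta$ itself.

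Two adjustments to your version.

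\begin{itemize}
\item The separate square argument at the cell wall is unnecessary. Each truncated slice is still an interval $(\pi/4-L_\xi,\pi/4)$ with its Dirichlet end at $\pi/4$ and $L_\xi\leq\pi/4$. Even reflection across the free end therefore gives the constant $\pi^2/(4L_\xi^2)\geq4$ on every slice, so the slice Poincar\'e inequality holds uniformly all the way to the wall.
\item Your displayed energy inequality has the wrong form: it bounds energy on a smaller strip by a fraction of energy on a larger one. What you want is the following. For a source supported in $\{\xi<\xi_0\}$, the far energy $I(t):=\int_{\{\xi>\xi_0+t\}}|\nabla\tilde b|^2\,d\xi\,d\theta$ satisfies $I(t)=-\int_{\{\xi=\xi_0+t\}}\tilde b\,\partial_\xi\tilde b\,d\theta\leq-\frac14I'(t)$, since the wall, axis and diagonal contribute no boundary terms. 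Hence $I(t)\leq e^{-4t}I(0)$ right up to the wall, and symmetrically towards the waist. Combine this with the uniform bound $\|\nabla\tilde b\|_{L^2}\leq\frac12\|\tilde F\|_{L^2}$ for each unit-strip piece of the source; that yields your strip-wise $L^2$ estimate uniformly in the domain length.
\end{itemize}
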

\begin{proof}
Since $g_*$ is $\Gamma_m$-invariant, $\Delta_{g_*}$
preserves $\chi$-equivariance. We first prove the
coercivity needed to solve the equation.

Assume that $b\in H^1(D_{m,\zeta,p},g_*)$ has zero trace on the cell diagonal $\theta=\pi/4$. For almost every fixed $\xi$, we have $\{\theta\in\mathbb{R} \mbox{ : } (\xi,\theta)\in\mathcal{W}_{m,\zeta}\}=(\pi/4-L_{\xi},\pi/4)$, for some $0<L_{\xi}\leq\pi/4$. Letting $\tilde b:=\Theta_{m,\zeta,p}^*b$, the even extension of $\tilde b(\xi,\,\cdot\,):(\pi/4-L_{\xi},\pi/4)\to\mathbb{R}$ across the left endpoint has zero trace at both ends of an interval of length $2L_{\xi}$. The Poincar\'e inequality therefore gives
\begin{align*}
    \int_{\frac{\pi}{4}-L_{\xi}}^{\frac{\pi}{4}}
        |\partial_\theta \tilde b(\xi,\theta)|^2\,d\theta\geq\frac{\pi^2}{4L_{\xi}^2}\int_{\frac{\pi}{4}-L_{\xi}}^{\frac{\pi}{4}}
            \tilde b(\xi,\theta)^2\,d\theta\geq4\int_{\frac{\pi}{4}-L_{\xi}}^{\frac{\pi}{4}}
            \tilde b(\xi,\theta)^2\,d\theta.
\end{align*}
Integrating in $\xi$ and using
$\Theta_{m,\zeta,p}^*g_*
=r_{m,\zeta}^2(d\xi^2+d\theta^2)$, $\rho_\zeta\circ\Theta_{m,\zeta,p}=r_{m,\zeta}$,
we obtain
\begin{align}
    \label{eq:s4-angular-coercivity}
    \begin{split}
        4\int_{D_{m,\zeta,p}}
            \rho_\zeta^{-2}b^2\,\dvol_{g_*}&=4\int_{\mathcal W_{m,\zeta}}
            \tilde b^2\,d\xi\,d\theta\\
            &\leq\int_{\mathcal W_{m,\zeta}}
            \bigl(|\partial_\xi\tilde b|^2
                 +|\partial_\theta \tilde b|^2\bigr)\,d\xi\,d\theta=\int_{D_{m,\zeta,p}}
            |\nabla^{g_*}b|_{g_*}^2\,\dvol_{g_*}.
    \end{split}
\end{align}
Consider the closed subspace
\begin{align*}
    H^1_\chi(M_\zeta)
        :=\{b\in H^1(M_\zeta,g_*)\mbox{ : }
            b\circ a=\chi(a)b
            \ \forall\,a\in\Gamma_m\}.
\end{align*}
Since the character $\chi$ is negative on the diagonal reflection, every function in this subspace has zero trace on the diagonal $\theta=\pi/4$. Summing \eqref{eq:s4-angular-coercivity} over the sectors covering $M_\zeta$, and using $\rho_\zeta\leq\pi/m$, gives
\begin{align*}
    \frac{4m^2}{\pi^2}\int_{M_\zeta}b^2\,\dvol_{g_*}
    \leq4\int_{M_\zeta}\rho_\zeta^{-2}b^2\,\dvol_{g_*}
    \leq\int_{M_\zeta}
        |\nabla^{g_*}b|_{g_*}^2\,\dvol_{g_*}
    \qquad\forall\,b\in H^1_\chi(M_\zeta).
\end{align*}
For every $E\in\mathcal Y_{\zeta,*}$, the Lax--Milgram theorem therefore gives a unique $b\in H^1_\chi(M_\zeta)$ satisfying
\begin{align*}
    \int_{M_\zeta}
        \langle\nabla^{g_*}b,\nabla^{g_*}w\rangle_{g_*}
        \,\dvol_{g_*}
    =-\int_{M_\zeta}Ew\,\dvol_{g_*}
    \qquad\forall\,w\in H^1_\chi(M_\zeta).
\end{align*}
Fix any arbitrary $w\in H^1(M_\zeta,g_*)$ and note that its average
\begin{align*}
    w_\chi:=\frac{1}{|\Gamma_m|}
        \sum_{a\in\Gamma_m}\chi(a)(w\circ a)
\end{align*}
belongs to $H^1_\chi(M_\zeta)$. Changing variables
by the elements of $\Gamma_m$, and using the
equivariance of $b$ and $E$, gives
\begin{align*}
    \int_{M_\zeta}
        \langle\nabla^{g_*}b,\nabla^{g_*}w\rangle_{g_*}
        \,\dvol_{g_*}
    &=\int_{M_\zeta}
        \langle\nabla^{g_*}b,\nabla^{g_*}w_\chi\rangle_{g_*}
        \,\dvol_{g_*}\\
    &=-\int_{M_\zeta}Ew_\chi\,\dvol_{g_*}
     =-\int_{M_\zeta}Ew\,\dvol_{g_*}.
\end{align*}
Hence, $\Delta_{g_*}b=E$ holds weakly on $M_\zeta$.
Standard elliptic regularity gives
$b\in C^{2,\beta}_\chi(M_\zeta)$, proving bijectivity of $\Delta_{g_{\ast}}$.

It remains to prove \eqref{eq:s4-model-inverse-bound}, with a constant $C>0$ independent of $m$ and $\zeta$. Let 
\begin{align*}
    A_0:=\sup_{M_\zeta}\rho_\zeta^2|E|
\end{align*}
and define $\Phi:D_{m,\zeta,p}\subset M_{\zeta}\to\mathbb R$ by
\begin{align*}
    \Phi(\Theta_{m,\zeta,p}(\xi,\theta))
        :=\frac{\pi^2}{16}-\theta^2 \qquad\forall\,(\xi,\theta)\in\mathcal{W}_{m,\zeta}
\end{align*}
The formula for the metric gives
\begin{align*}
    (-\Delta_{g_*}\Phi)\circ\Theta_{m,\zeta,p}=-r_{m,\zeta}^{-2}(\partial_\xi^2+\partial_\theta^2)\left(\frac{\pi^2}{16}-\theta^2\right)=2r_{m,\zeta}^{-2} \qquad\mbox{ on } \,\mathcal{W}_{m,\zeta}.
\end{align*}
Thus, we have $-\Delta_{g_*}\Phi=2\rho_\zeta^{-2}$.
Moreover, $\Phi$ vanishes on the diagonal and has
zero outward normal derivative on the axis and waist.

Let $\eta_{\mathrm{out}}$ denote the outward
$g_*$-unit normal to the boundary arcs of
$D_{m,\zeta,p}$. On the cell wall
$\xi=\log(\ell/(m\tau_\zeta\cos\theta))$, we have
\begin{align*}
    \eta_{\mathrm{out}}\circ\Theta_{m,\zeta,p}
        &=\frac{1}{r_{m,\zeta}}
            d\Theta_{m,\zeta,p}
            (\cos\theta\,\partial_\xi
             -\sin\theta\,\partial_\theta),\\
    (\partial_{\eta_{\mathrm{out}}}\Phi)
        \circ\Theta_{m,\zeta,p}
        &=\frac{2\theta\sin\theta}{r_{m,\zeta}}
        \geq0.
\end{align*}
Since the character $\chi$ is positive on the reflections across the axis, waist and cell wall, the solution $b$ has zero normal derivative on these boundary arcs; see Figure~\ref{fig:s4-reflected-domain}. Consequently, for either choice of sign, $w:=A_0\Phi\pm b$ satisfies
\begin{align*}
    -\Delta_{g_*}w
        =2A_0\rho_\zeta^{-2}\mp E
        \geq A_0\rho_\zeta^{-2}\geq0,
\end{align*}
with zero trace on the diagonal and nonnegative
outward normal derivative on the other boundary arcs.
Its negative part $w^-:=\max\{-w,0\}$ also has zero
diagonal trace. Integration by parts therefore gives
\begin{align*}
    -\int_{D_{m,\zeta,p}}
        |\nabla^{g_*}w^-|_{g_*}^2\,\dvol_{g_*}=\int_{D_{m,\zeta,p}}
        (-\Delta_{g_*}w)w^-\,\dvol_{g_*}+\int_{\partial D_{m,\zeta,p}}
        (\partial_{\eta_{\mathrm{out}}}w)w^-
        \,d\sigma_{g_*}
    \geq0.
\end{align*}
By \eqref{eq:s4-angular-coercivity}, $w^-=0$.
Applying this conclusion to both signs yields
$|b|\leq A_0\Phi$ on $D_{m,\zeta,p}$.
Since $|b|$ is $\Gamma_m$-invariant, we obtain
\begin{align}
    \label{eq:s4-model-barrier}
    \|b\|_{L^\infty(M_\zeta)}
        \leq\frac{\pi^2}{16}A_0
        \leq\frac{\pi^2}{16}
            \|E\|_{\mathcal Y_{\zeta,*}}.
\end{align}
We are just left to estimate the derivatives of $b$. We first verify that the interior Schauder estimates can be applied with constants independent of $m$
and $\zeta$. For $r_c/2\leq\tau_\zeta e^\xi\leq r_c$, the definition of $r_{m,\zeta}$ gives
\begin{align*}
    \left|\partial_\xi^j
        \left(\frac{2r_{m,\zeta}(\xi)}
                    {\tau_\zeta e^\xi}-1\right)\right|=\left|\partial_\xi^j
        \left(e^{-2\xi}
            \eta_*\left(\frac{\tau_\zeta e^\xi}{2r_c}\right)
        \right)\right|\leq C_j(\tau_\zeta/r_c)^2
        \qquad\forall\,j\in\{0,1,2,3\}.
\end{align*}
Together with the exact formulas
$r_{m,\zeta}=\tau_\zeta\cosh\xi$ for
$\tau_\zeta e^\xi\leq r_c/2$ and
$r_{m,\zeta}=\tau_\zeta e^\xi/2$ for
$\tau_\zeta e^\xi\geq r_c$, this implies
\begin{align*}
    |\nabla^{g_*}\rho_\zeta|_{g_*}
        \circ\Theta_{m,\zeta,p}
    =\frac{|r_{m,\zeta}'|}{r_{m,\zeta}}
    \leq2
\end{align*}
for sufficiently large $m$. At cell walls,
$\rho_\zeta$ is the minimum of the adjacent distance
functions and $g_*=\pi^*g_{\mathbb T}$.
Hence, $\rho_\zeta$ is $2$-Lipschitz on $M_\zeta$.
Fix a sufficiently large constant $\Lambda>4$.
If $\rho_\zeta(x)\leq\Lambda\tau_\zeta$, the
Lipschitz bound gives
\begin{align*}
    \rho_\zeta(y)\leq2\Lambda\tau_\zeta
    \qquad\forall\,y\in
        B_{g_*}(x,\rho_\zeta(x)/2).
\end{align*}
Since $\tau_\zeta/r_c\to0$ as $m\to\infty$,
uniformly for $|\zeta|\leq1$, this ball lies in the
exact catenoidal region for sufficiently large $m$.
The rescaled catenoid charts and injectivity estimate
of Appendix~\ref{app:s4-initial-geometry} therefore
apply.
If $\rho_\zeta(x)\geq\Lambda\tau_\zeta$, consider
the sheet containing $x$ over
$B_{\mathbb T}(\pi(x),\rho_\zeta(x)/2)$.
Every point of this disk has distance at least
$\rho_\zeta(x)/2>\tau_\zeta$ from $L_m$, so the
sheet is a graph throughout the disk.
Abbreviating $s=r_{m,\zeta}(\xi)$, we have
\begin{align*}
    \Theta_{m,\zeta,p}^*g_*=
        \left(\frac{r_{m,\zeta}(\xi)}
                   {r_{m,\zeta}'(\xi)}\right)^2
        ds^2+s^2d\theta^2,
\end{align*}
where
\begin{align*}
    \left(\frac{r_{m,\zeta}(\xi)}
               {r_{m,\zeta}'(\xi)}\right)^2
    =
    \begin{cases}
        (1-\tau_\zeta^2/s^2)^{-1},
            &\tau_\zeta e^\xi\leq r_c/2,\\
        1+O((\tau_\zeta/r_c)^2),
            &r_c/2\leq\tau_\zeta e^\xi\leq r_c,\\
        1,
            &\tau_\zeta e^\xi\geq r_c.
    \end{cases}
\end{align*}
The preceding derivative estimates also bound the
error in the middle line after one or two
applications of $s\partial_s$.
Since $s\geq\rho_\zeta(x)/2\geq\Lambda\tau_\zeta/2$
on the disk, rescaling coordinates and lengths by
$\rho_\zeta(x)^{-1}$ makes the metric
$O(\Lambda^{-2})+O((\tau_\zeta/r_c)^2)$ close to the
Euclidean metric in $C^2$. This comparison also
holds across cell walls, where
$g_*=\pi^*g_{\mathbb T}$.
For $\Lambda$ and then $m$ sufficiently large, the
exponential-map argument of
Appendix~\ref{app:s4-initial-geometry} therefore
gives
$\operatorname{inj}_{g_*}(x)\geq\rho_\zeta(x)/10$
and uniform bounds for the rescaled metric
coefficients. Thus, in both cases, interior Schauder estimates apply uniformly in $m$ and $\zeta$ at scale $\rho_\zeta(x)$.

Let $r:=\rho_\zeta(x)$. The Lipschitz bound on $\rho_{\zeta}$ gives $9r/10\leq\rho_\zeta\leq11r/10$ on $B_{g_*}(x,r/20)$. Hence, for $y,z\in B_{g_*}(x,r/20)$ satisfying $d_{g_*}(y,z)<r/200$ we have
$y,z\in B_{g_*}(y,\rho_\zeta(y)/100)$, and the definition of $\mathcal Y_{\zeta,*}$ gives
\begin{align*}
    |E(y)-E(z)|
        \leq Cr^{-2-\beta}
            \|E\|_{\mathcal Y_{\zeta,*}}d_{g_*}(y,z)^\beta.
\end{align*}
If instead $d_{g_*}(y,z)\geq r/200$, the same inequality
follows from the supremum bound
$|E|\leq Cr^{-2}\|E\|_{\mathcal Y_{\zeta,*}}$.
Consequently,
\begin{align*}
    r^2\|E\|_{C^{0,\beta}(B_{g_*}(x,r/20),r^{-2}g_*)}
        \leq C\|E\|_{\mathcal Y_{\zeta,*}}.
\end{align*}
Therefore, the local Schauder estimates yield
\begin{align*}
    \|b\|_{C^{2,\beta}
        (B_{g_*}(x,r/100),r^{-2}g_*)}
    &\leq C\left(
        \|b\|_{L^\infty(M_\zeta)}
        +r^2\|E\|_{C^{0,\beta}
            (B_{g_*}(x,r/20),r^{-2}g_*)}
        \right)\\
    &\leq C\left(
        \|b\|_{L^\infty(M_\zeta)}
        +\|E\|_{\mathcal Y_{\zeta,*}}
        \right).
\end{align*}
Taking the supremum over $x\in M_\zeta$ and applying
\eqref{eq:s4-model-barrier} proves
\eqref{eq:s4-model-inverse-bound}.
\end{proof}
\subsection{A uniform inverse for the linearised operator}
We now compare $g_*$ with $g_\phi$ and show that
$P_{\zeta,\phi}$ is a small perturbation of $\Delta_{g_{\ast}}$ in the appropriate norm.
\begin{lemma}[Comparison of the induced and auxiliary metrics]
    \label{lem:s4-metric-comparison}
    Let $\eta_m:[-1,1]\to(0,+\infty)$ be given by
    \begin{align}
        \label{eq:s4-comparison-smallness}
        \eta_m(\zeta):=\left(\frac{m^2\tau_\zeta}{2\pi}\right)^2+\tau_\zeta^{2-2\alpha}
            +\tau_\zeta^{1-\alpha/4}+\tau_\zeta^{1+\alpha/4}m^{-1/2}.
    \end{align}
    Then, $\eta_m(\zeta)\to0$ uniformly in $\zeta$ as $m\to\infty$, and
    \begin{align}
        \label{eq:s4-metric-comparison}
        \begin{split}
            \|g_\zeta-g_*\|_{C^{1,\beta}_{\zeta,*}(\rho_\zeta^2)}
                &\leq C\bigg(\left(\frac{m^2\tau_\zeta}{2\pi}\right)^2+\tau_\zeta^{2-2\alpha}\bigg),\\
            \|g_\phi-g_*\|_{C^{1,\beta}_{\zeta,*}(\rho_\zeta^2)}
                &\leq C\eta_m(\zeta).
        \end{split}
    \end{align}
    Consequently, there is $C>0$, independent of $m$ and $\zeta\in[-1,1]$, such that
    \begin{align}
    \label{eq:s4-norm-equivalence}
    \begin{split}
        C^{-1}\|b\|_{\mathcal X_\zeta}
            &\leq\|b\|_{\mathcal X_{\zeta,*}}
            \leq C\|b\|_{\mathcal X_\zeta},\\
        C^{-1}\|E_1\|_{\mathcal Y_\zeta}
            &\leq\|E_1\|_{\mathcal Y_{\zeta,*}}
            \leq C\|E_1\|_{\mathcal Y_\zeta},\\
        C^{-1}\|E_2\|_{\mathcal Z_\zeta}
            &\leq\|E_2\|_{\mathcal Z_{\zeta,*}}
            \leq C\|E_2\|_{\mathcal Z_\zeta}
    \end{split}
    \end{align}
    for every $b\in\mathcal X_\zeta$, $E_1\in\mathcal{Y}_{\zeta}$, and $E_2\in\mathcal Z_\zeta$. Moreover,
    \begin{align}
        \label{eq:s4-Laplacian-comparison}
        \|(\Delta_{g_\phi}-\Delta_{g_*})b\|_{\mathcal Y_\zeta}
            \leq C\eta_m(\zeta)\|b\|_{\mathcal X_\zeta}
            \qquad\forall\,b\in\mathcal X_\zeta.
    \end{align}
\end{lemma}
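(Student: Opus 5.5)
We will compare all three metrics cell by cell in the coordinates $\Theta_{m,\zeta,p}$ on $\mathcal W_{m,\zeta}$, where $g_*=r_{m,\zeta}^2(d\xi^2+d\theta^2)$ is explicit, and then transport the result to $M_\zeta$ by $\Gamma_m$-invariance. The smallness $\eta_m(\zeta)\to0$ is immediate from Remark~\ref{rem:s4-waist-asymptotics}, since each term is a positive power of $\tau_\zeta$ times a fixed power of $m$. For the first bound in \eqref{eq:s4-metric-comparison}, we write $g_\zeta=\Theta^*(\cos^2h_\zeta\,\pi^*g_{\mathbb T}+dh_\zeta^2)$ on the sheet part. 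The comparison $\cos^2h_\zeta-1=O(h_\zeta^2)$ gives the term $(m^2\tau_\zeta/2\pi)^2$ via Lemma~\ref{lem:s4-initial-geometry}.

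The remaining contribution $dh_\zeta^2$ we split into three regions.
\begin{itemize}
\item Where $\tau_\zeta e^\xi\ge r_c$, we have $g_*=\pi^*g_{\mathbb T}$ and $|dh_\zeta|\le C\tau_\zeta/\rho_\zeta$, so $dh_\zeta^2$ relative to $\rho_\zeta^2$ is $O(\tau_\zeta^2/\rho_\zeta^2)\le O(\tau_\zeta^{2-2\alpha})$, since $\rho_\zeta\gtrsim r_c=\tau_\zeta^\alpha$ there.
\item On the exact catenoid region, the flat catenoid metric in $(\xi,\theta)$ is exactly $\tau_\zeta^2\cosh^2\xi(d\xi^2+d\theta^2)=g_*$, so only the $\cos^2 z$ factor contributes. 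It produces errors $O(z^2)\le O((m^2\tau_\zeta)^2)$ after checking that the neck coordinates agree with $\Theta$ up to the reparametrisation $r=\tau_\zeta\cosh\xi$ versus $r_{m,\zeta}$. Where these differ, namely the cutoff region $r_c/2\le\tau_\zeta e^\xi\le r_c$, the discrepancy is $O((\tau_\zeta/r_c)^2)=O(\tau_\zeta^{2-2\alpha})$ by the derivative estimates for $r_{m,\zeta}$ already used in Lemma~\ref{lem:s4-model-inverse}.
\end{itemize}
$C^{1,\beta}$ control in the rescaled charts follows from the scaled derivative bounds on $h_\zeta$ (from \eqref{eq:s4-physical-derivatives} and the catenoid formula) and from the estimates on $\partial_\xi^j r_{m,\zeta}$.

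For $g_\phi-g_\zeta$, we use the standard first-variation expansion of the metric under the normal exponential displacement by $\phi\nu_\zeta$ in $\mathcal N$:
\begin{align*}
g_\phi=g_\zeta-2\phi\,h^{\nu_\zeta}_{\jmath_\zeta}+d\phi\otimes d\phi+O\bigl(\phi^2(|S|^2+1)\bigr),
\end{align*}
with derivatives controlled in the same way. Using \eqref{eq:s4-scalar-two-jet} and \eqref{eq:s4-initial-geometry-bounds}, the term $\phi\,S$ relative to $\rho_\zeta^2$ is bounded by $\tau_\zeta^{1+\alpha/4}f_\zeta\tau_\zeta\rho_\zeta^{-2}$. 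On the necks, where $f_\zeta\sim\tau_\zeta^{(1-\alpha)/2}\rho_\zeta^{1/2}$ and $\rho_\zeta\ge\tau_\zeta$, this yields something like $\tau_\zeta^{1-\alpha/4}$. The term $|d\phi|^2$ is bounded by $\tau_\zeta^{2+\alpha/2}f_\zeta^2\rho_\zeta^{-2}$, which away from necks, with $f_\zeta=\rho_\zeta^{3/2}$ and $\rho_\zeta\le\pi/m$, gives the $\tau_\zeta^{1+\alpha/4}m^{-1/2}$-type term. Bookkeeping of the two regimes of $f_\zeta$ is the main obstacle: one must check that each product lands in one of the four terms of $\eta_m$, and that H\"older seminorms of products are handled by the scale-invariant product rule for the local norms.

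The norm equivalences \eqref{eq:s4-norm-equivalence} follow because $g_\zeta$ and $g_*$ are $o_m(1)$-close in $C^{1,\beta}$ at scale $\rho_\zeta$. Geodesic balls of radius $\rho_\zeta/100$ are then comparable to slightly smaller and larger balls for the other metric, Christoffel symbols differ by $O(\eta_m\rho_\zeta^{-1})$, and covariant derivatives up to order two and parallel transports differ by correspondingly small amounts. Changing the ball radius costs only constants, as in Lemma~\ref{lem:s4-initial-geometry}. Finally, for \eqref{eq:s4-Laplacian-comparison} we write
\begin{align*}
\Delta_g b=g^{ij}\partial_{ij}b-g^{ij}\Gamma^k_{ij}\partial_k b
\end{align*}
in the rescaled charts. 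The difference $\Delta_{g_\phi}-\Delta_{g_*}$ then has coefficients bounded by $C\eta_m\rho_\zeta^{-2}$ on second derivatives and $C\eta_m\rho_\zeta^{-1}$ on first derivatives in local $C^{0,\beta}$, which requires exactly the $C^{1,\beta}$ metric bound. Combined with Remark~\ref{rem:scales} for $b\in\mathcal X_\zeta$, this gives the $\mathcal Y_\zeta$ estimate.
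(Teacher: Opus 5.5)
Your proposal is correct and follows essentially the same route as the paper. Both arguments compare $g_\zeta$ and $g_*$ cell by cell in the $\Theta_{m,\zeta,p}$ coordinates, splitting into the exact catenoidal region, the cutoff region $r_c/2\le\tau_\zeta e^\xi\le r_c$ and the graph region, and then use the triangle inequality, a Christoffel-symbol comparison for the norm equivalences, and the coordinate formula for the Laplacian. The one difference is the $g_\phi-g_\zeta$ step. The paper does not redo the first-variation bookkeeping there: it reuses the rescaled-chart bound $\|g_\phi-g_\zeta\|_{C^{1,\beta}_\zeta(\rho_\zeta^2)}\le C\sup_{M_\zeta}\tau_\zeta^{1+\alpha/4}f_\zeta/\rho_\zeta$ from the proof of Lemma~\ref{lem:s4-full-family-geometry}, transfers it to the $g_*$-based norm by the same connection comparison, and then \eqref{eq:s4-dimensionless-phi} gives exactly the last two terms of $\eta_m$, so the two-regime bookkeeping you flag as the main obstacle can be avoided.
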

The proof of Lemma~\ref{lem:s4-metric-comparison} is given in Appendix~\ref{app:s4-normal}.
\begin{proposition}[Uniform inverse in the direction $n$]
    \label{prop:s4-transverse-inverse}
    For all sufficiently large $m$, the operator
    $P_{\zeta,\phi}:\mathcal X_\zeta\to\mathcal Y_\zeta$ is an isomorphism. Its inverse $\mathscr G_{\zeta,\phi}$ satisfies
    \begin{align}
        \label{eq:s4-actual-inverse}
        \|\mathscr G_{\zeta,\phi}E\|_{\mathcal X_\zeta}
            \leq C\|E\|_{\mathcal Y_\zeta}
            \qquad\forall\,E\in\mathcal Y_\zeta.
    \end{align}
    Furthermore,
    \begin{align}
        \label{eq:s4-forcing-and-mixed}
        \begin{split}
            \|H_{\iota,n}\|_{\mathcal Y_\zeta}&\leq C\tau_\zeta^2,\\
            \|Bb\|_{\mathcal Z_\zeta}
                &\leq C\tau_\zeta\|b\|_{\mathcal X_\zeta}
                \qquad\forall\,b\in\mathcal X_\zeta.
        \end{split}
    \end{align}
\end{proposition}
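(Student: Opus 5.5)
The plan is to treat $P_{\zeta,\phi}=\Delta_{g_\phi}+q$ as a small perturbation of $\Delta_{g_*}$ and invoke the uniform inverse from Lemma~\ref{lem:s4-model-inverse}. The main difficulty is the zeroth-order term $q$, which is bounded but not small, since $|q|\leq C$ by Lemma~\ref{lem:s4-full-family-geometry}.

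\textbf{Step 1: the perturbation identity.} Write
\begin{align*}
    P_{\zeta,\phi}b=\Delta_{g_*}b+(\Delta_{g_\phi}-\Delta_{g_*})b+qb.
\end{align*}
By \eqref{eq:s4-Laplacian-comparison}, the second term has $\mathcal Y_\zeta$ norm at most $C\eta_m(\zeta)\|b\|_{\mathcal X_\zeta}$.

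For the third term, the weight in $\mathcal Y_\zeta$ is $\rho_\zeta^{-2}$, while the $\mathcal X_\zeta$ weight is $1$. Using $\rho_\zeta\leq\pi/m$, the scaled local H\"older norms of a product satisfy
\begin{align*}
    \|qb\|_{\mathcal Y_\zeta}\leq C\Bigl(\sup\rho_\zeta^2\Bigr)\|q\|_{C^{0,\beta}_\zeta(1)}\|b\|_{C^{0,\beta}_\zeta(1)}\leq Cm^{-2}\|b\|_{\mathcal X_\zeta}.
\end{align*}
The local $C^{0,\beta}$ seminorms are taken at scale $\rho_\zeta$, so multiplying by $\rho_\zeta^2$ converts the weight correctly. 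Thus $q$ is in fact small in these norms, because the weighted target space gains two powers of $\rho_\zeta\lesssim m^{-1}$.

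\textbf{Step 2: the Neumann series.} Combining Step 1 with the norm equivalences \eqref{eq:s4-norm-equivalence} and Lemma~\ref{lem:s4-model-inverse}, we get
\begin{align*}
    \|P_{\zeta,\phi}b-\Delta_{g_*}b\|_{\mathcal Y_\zeta}\leq C\bigl(\eta_m(\zeta)+m^{-2}\bigr)\|b\|_{\mathcal X_\zeta}.
\end{align*}
We need $P_{\zeta,\phi}$ to map $\mathcal X_\zeta$ into $\mathcal Y_\zeta$. This holds because $\Delta_{g_\phi}$ and multiplication by $q$ preserve $\chi$-equivariance: $g_\phi$ is $\Gamma_m$-invariant, since $G$ is equivariant, and $q$ is invariant by the transformation rules of Section~\ref{sec:s4-geometry}.

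Define $\mathscr G_{\zeta,\phi}$ by the Neumann series
\begin{align*}
    \mathscr G_{\zeta,\phi}=\Delta_{g_*}^{-1}\sum_{k\geq0}\bigl((\Delta_{g_*}-P_{\zeta,\phi})\Delta_{g_*}^{-1}\bigr)^k.
\end{align*}
For large $m$ the operator norm of $(\Delta_{g_*}-P_{\zeta,\phi})\Delta_{g_*}^{-1}$ is below $1/2$, since $\eta_m\to0$ uniformly. This gives a bounded right inverse with bound $2C$. Injectivity follows from the same estimate: if $P_{\zeta,\phi}b=0$, then $\|b\|\leq C\|\Delta_{g_*}b\|\leq \tfrac12\|b\|$, so $b=0$. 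Hence $P_{\zeta,\phi}$ is an isomorphism and \eqref{eq:s4-actual-inverse} holds.

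\textbf{Step 3: the estimates \eqref{eq:s4-forcing-and-mixed}.} The bound $\|H_{\iota,n}\|_{\mathcal Y_\zeta}\leq C\tau_\zeta^2$ is exactly the bound in \eqref{eq:s4-full-family-geometry}. We also need $H_{\iota,n}\in\mathcal Y_\zeta$, which follows from the parity rule \eqref{eq:s4-component-parities} at $b=0$.

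For $B$, recall
\begin{align*}
    Bb=-2\langle\omega,db\rangle-(\operatorname{div}\omega)b+\langle h^\nu,h^n\rangle b.
\end{align*}
We bound each term by products in the weighted scaled H\"older norms.
\begin{itemize}
    \item $|\omega|\leq C\tau_\zeta$ in $C^{0,\beta}_\zeta(1)$, and $|db|\leq\rho_\zeta^{-1}\|b\|$. Their product has $\rho_\zeta^{-1}\leq\rho_\zeta^{-2}\cdot(\pi/m)$ weight, which is within the $\rho_\zeta^{-2}$ weight of $\mathcal Z_\zeta$.
    \item $\nabla\omega$ is $O(\tau_\zeta)$ in weight $1$, which is again dominated by the weight $\rho_\zeta^{-2}$.
    \item $|S^\nu_\iota|\leq C\tau_\zeta\rho_\zeta^{-2}$ and $|h^n|\leq C$, so their product is $O(\tau_\zeta)\rho_\zeta^{-2}$.
\end{itemize}
Here the bound $|h^n|\leq C$ comes from $S$ being bounded. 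The stated $\rho_\zeta^2$ weight for $h^n_\iota$ in Lemma~\ref{lem:s4-full-family-geometry} appears to be a typographical slip; in any case $|h^n|\leq C$ holds by \eqref{eq:s4-normal-bounds}.

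Invariance of $Bb$ follows from the rule $(Bv)\circ g=\chi(g)B(v\circ g)$. Multiplying the local norms and using $\rho_\zeta\leq\pi/m$ gives
\begin{align*}
    \|Bb\|_{\mathcal Z_\zeta}\leq C\tau_\zeta\|b\|_{\mathcal X_\zeta}.
\end{align*}

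\textbf{Main obstacle.} The step I expect to be most delicate is verifying that the product estimates in the scaled H\"older norms are compatible: the local balls and the metric $g_\phi$ versus $g_\zeta$ versus $g_*$ all enter. Lemma~\ref{lem:s4-metric-comparison} makes these metrics uniformly comparable in $C^{1,\beta}$, which should absorb this.
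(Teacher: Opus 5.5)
Your proposal is correct and follows the paper's proof essentially step for step. Both arguments bound $qb$ by $Cm^{-2}\|b\|_{\mathcal X_\zeta}$ using $\rho_\zeta\le\pi/m$, combine this with the Laplacian comparison, invert by a Neumann series around $\Delta_{g_*}^{-1}$, and obtain the estimates for $H_{\iota,n}$ and $B$ from Lemma~\ref{lem:s4-full-family-geometry} and product estimates.

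One small correction: in the weighted norms, tensors are measured in the rescaled metric $\rho_\zeta^{-2}g_\zeta$. The lemma therefore gives $|\omega|\le C\tau_\zeta/\rho_\zeta$, $|\nabla\omega|\le C\tau_\zeta/\rho_\zeta^{2}$ and $|h^n_\iota|\le C$, so the $\rho_\zeta^2$ weight is not a typo. These bounds land exactly at the $\rho_\zeta^{-2}$ weight of $\mathcal Z_\zeta$ rather than with room to spare, which still yields $\|Bb\|_{\mathcal Z_\zeta}\le C\tau_\zeta\|b\|_{\mathcal X_\zeta}$.
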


\begin{proof}
Since $g_\phi$ and $q$ are $\Gamma_m$-invariant,
$P_{\zeta,\phi}$ maps $\mathcal X_\zeta$ into
$\mathcal Y_\zeta$. We first estimate its difference from $\Delta_{g_*}$. By Lemma~\ref{lem:s4-full-family-geometry} and since $\rho_\zeta\leq\pi/m$, we have
\begin{align*}
    \|qb\|_{\mathcal Y_\zeta}
        &\leq C\bigg(\sup_{M_\zeta}\rho_\zeta^2\bigg)
            \|q\|_{C^{0,\beta}_\zeta(1)}
            \|b\|_{\mathcal X_\zeta}
        \leq Cm^{-2}\|b\|_{\mathcal X_\zeta}.
\end{align*}
Consequently, setting
$\mathcal D:=P_{\zeta,\phi}-\Delta_{g_*}$ and using
\eqref{eq:s4-Laplacian-comparison}, we obtain
\begin{align*}
    \|\mathcal D b\|_{\mathcal Y_\zeta}
        \leq C\bigl(\eta_m(\zeta)+m^{-2}\bigr)
            \|b\|_{\mathcal X_\zeta}.
\end{align*}
Let $K_*:=(\Delta_{g_*})^{-1}$ be the inverse of $\Delta_{g_*}$ given by Lemma~\ref{lem:s4-model-inverse}. By \eqref{eq:s4-model-inverse-bound} and \eqref{eq:s4-norm-equivalence}, $K_*$ is uniformly bounded from $\mathcal Y_\zeta$ to $\mathcal X_\zeta$. Thus, for all sufficiently large $m$,
\begin{align*}
    \|K_*\mathcal D\|_{\mathcal X_\zeta\to\mathcal X_\zeta}
        \leq\tfrac12,\qquad
    P_{\zeta,\phi}
        =\Delta_{g_*}(\operatorname{Id}_{\mathcal X_\zeta}+K_*\mathcal D).
\end{align*}
The Neumann series therefore gives the inverse
\begin{align*}
    \mathscr G_{\zeta,\phi}
        =\sum_{j=0}^\infty(-K_*\mathcal D)^jK_*,\qquad
    \|\mathscr G_{\zeta,\phi}\|_{\mathcal Y_\zeta\to\mathcal X_\zeta}
        \leq2\|K_*\|_{\mathcal Y_\zeta\to\mathcal X_\zeta}\leq C.
\end{align*}
This proves \eqref{eq:s4-actual-inverse}.

The estimate for $H_{\iota,n}$ is part of
Lemma~\ref{lem:s4-full-family-geometry}. To estimate $B$,
recall that
\begin{align*}
    Bb=-2\langle\omega,db\rangle_{g_\phi}
        -(\operatorname{div}_{g_\phi}\omega)b
        +\langle h_\iota^\nu,h_\iota^n\rangle_{g_\phi}b.
\end{align*}
Lemma~\ref{lem:s4-full-family-geometry} bounds $\omega$, $\nabla^{g_\phi}\omega$,
$h_\iota^\nu$ and $h_\iota^n$, respectively, by
$C\tau_\zeta/\rho_\zeta$, $C\tau_\zeta/\rho_\zeta^2$,
$C\tau_\zeta/\rho_\zeta^2$ and $C$, with the corresponding
local H\"older estimates. Since
$|db|_{g_\phi}\leq C\rho_\zeta^{-1}\|b\|_{\mathcal X_\zeta}$,
the product estimate gives
\begin{align*}
    \|\langle\omega,db\rangle_{g_\phi}\|_{\mathcal Z_\zeta}
        +\|(\operatorname{div}_{g_\phi}\omega)b\|_{\mathcal Z_\zeta}
        +\|\langle h_\iota^\nu,h_\iota^n\rangle_{g_\phi}b\|_{\mathcal Z_\zeta}
        \leq C\tau_\zeta\|b\|_{\mathcal X_\zeta}.
\end{align*}
Hence \eqref{eq:s4-forcing-and-mixed} follows.
\end{proof}

\subsection{The normal correction and the remaining scalar equation}
Consider the second order remainders $Q_n:\mathcal U_{\zeta,c_0}\to\mathcal{Y}_{\zeta}$ and $Q_{\nu}:\mathcal U_{\zeta,c_0}\to\mathcal{Z}_{\zeta}$ given by 
\begin{align}
    \label{eq:s4-transverse-remainders}
    \begin{split}
        Q_n(b)&:=\mathscr P_{\zeta,\phi}(b)-\big(\mathscr P_{\zeta,\phi}(0)+D\mathscr P_{\zeta,\phi}(0)b\big)=\mathscr{P}_{\zeta,\phi}(b)-H_{\iota,n}-P_{\zeta,\phi}b,\\
        Q_\nu(b)&:=\mathscr T_{\zeta,\phi}(b)-\big(\mathscr T_{\zeta,\phi}(0)+D\mathscr T_{\zeta,\phi}(0)b\big)=\mathscr T_{\zeta,\phi}(b)-H_{\zeta,\phi}-Bb \qquad\forall\,b\in\mathcal U_{\zeta,c_0},
    \end{split}
\end{align}
where the second equalities in the above equations follow from Lemma~\ref{lem:s4-component-linearisation}.
\begin{lemma}[Quadratic estimates for the two components]
    \label{lem:s4-transverse-quadratic}
    After decreasing $c_0>0$ if necessary, for every
    $b_1,b_2\in\mathcal U_{\zeta,c_0}$ we have
    \begin{align}
        \label{eq:s4-transverse-quadratic}
        \begin{split}
            \|Q_n(b_2)-Q_n(b_1)\|_{\mathcal Y_\zeta}
                +\|Q_\nu(b_2)-Q_\nu(b_1)\|_{\mathcal Z_\zeta}\le&C\tau_\zeta^{-1}
                \bigl(\|b_1\|_{\mathcal X_\zeta}
                    +\|b_2\|_{\mathcal X_\zeta}\bigr)
                \|b_2-b_1\|_{\mathcal X_\zeta}.
        \end{split}
    \end{align}
    In particular,
    $\|Q_n(b)\|_{\mathcal Y_\zeta}+\|Q_{\nu}(b)\|_{\mathcal Z_\zeta}
        \leq C\tau_\zeta^{-1}\|b\|_{\mathcal X_\zeta}^2$ for every $b\in \mathcal U_{\zeta,c_0}$.
\end{lemma}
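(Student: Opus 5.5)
By the fundamental theorem of calculus along the segment $b_t:=b_1+t(b_2-b_1)$, $t\in[0,1]$, which stays in the convex ball $\mathcal U_{\zeta,c_0}$, we have
\begin{align*}
    Q_n(b_2)-Q_n(b_1)=\int_0^1\bigl(D\mathscr P_{\zeta,\phi}(b_t)-D\mathscr P_{\zeta,\phi}(0)\bigr)(b_2-b_1)\,dt,
\end{align*}
and the analogous identity holds for $Q_\nu$ with $\mathscr T_{\zeta,\phi}$. The plan is therefore to prove the Lipschitz bound
\begin{align*}
    \bigl\|\bigl(D\mathscr P_{\zeta,\phi}(b)-D\mathscr P_{\zeta,\phi}(0)\bigr)h\bigr\|_{\mathcal Y_\zeta}+\bigl\|\bigl(D\mathscr T_{\zeta,\phi}(b)-D\mathscr T_{\zeta,\phi}(0)\bigr)h\bigr\|_{\mathcal Z_\zeta}\leq C\tau_\zeta^{-1}\|b\|_{\mathcal X_\zeta}\|h\|_{\mathcal X_\zeta}
\end{align*}
for $b\in\mathcal U_{\zeta,c_0}$ and $h\in\mathcal X_\zeta$. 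The particular case follows from the main estimate by taking $b_1=0$, since $Q_n(0)=Q_\nu(0)=0$.

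To prove this bound, I will work pointwise at scale $\rho_\zeta(x)$. Rescaling lengths by $\rho_\zeta(x)^{-1}$ and using the charts of Lemma~\ref{lem:s4-initial-geometry} (transported to $g_\phi$ via $C^{-1}g_\zeta\leq g_\phi\leq Cg_\zeta$), the components $\mathscr P_{\zeta,\phi}(b)$ and $\mathscr T_{\zeta,\phi}(b)$ become explicit algebraic expressions in $(b,\rho_\zeta\,db,\rho_\zeta^2\nabla^2b)$ and in the rescaled geometric data $\rho_\zeta^2h^\nu_\iota$, $\rho_\zeta^2h^n_\iota$, $\rho_\zeta\omega$, $\rho_\zeta^2\nabla\omega$. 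These are obtained from \eqref{eq:s4-graph-tangent}, \eqref{eq:s4-graph-metric-exact}, \eqref{eq:s4-testing-projections} and \eqref{eq:s4-testing-normals}, together with the second derivatives $\partial_{ij}X_b$ computed through the Gauss and Weingarten equations for $\iota$. In the rescaled frame, Lemma~\ref{lem:s4-full-family-geometry} bounds all of these data in $C^{0,\beta}$: $\rho_\zeta^2|S^\nu_\iota|\leq C\tau_\zeta$, $\rho_\zeta\omega\leq C\tau_\zeta\rho_\zeta\leq C$, $\rho_\zeta^2h^n\leq C$. The rescaled mean-curvature components ($\rho_\zeta^2$ times the true ones) are then smooth functions $\mathcal G(x,\mathbf b,\rho_\zeta d\mathbf b,\rho_\zeta^2\nabla^2\mathbf b)$. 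The $\mathbf b$-dependence enters through $\cos b$, $\sin b$, the metric $g_b$ and its inverse, which is uniformly nondegenerate because $\|b\|_{\mathcal X_\zeta}\leq c_0\tau_\zeta$ forces $|db|\leq Cc_0$. They are linear in $\nabla^2b$, with coefficients whose $C^{0,\beta}$ norms are uniformly bounded.

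The key point is how the unscaled structure enters. The expressions are built from $b$, $db$ and $\nabla^2b$ without any scaling, so in rescaled variables one has $db=\rho_\zeta^{-1}(\rho_\zeta db)$. The second derivative of $\mathcal G$ in its jet variables, after undoing the scaling, can produce factors $\rho_\zeta^{-1}\leq\tau_\zeta^{-1}$. For instance, the term $b_ib_j$ in $g_b$ contributes $|db|^2$-type terms, and $|db|\leq\rho_\zeta^{-1}\|b\|_{\mathcal X_\zeta}$; multiplied by $\nabla^2 h\sim\rho_\zeta^{-2}\|h\|$, this gives $\rho_\zeta^{-2}\cdot\rho_\zeta^{-1}\|b\|\|h\|$ after accounting for the weight. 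This is the source of $C\tau_\zeta^{-1}$, using $\rho_\zeta\geq\tau_\zeta$. I will therefore organise the terms of $D\mathcal G(\mathbf b)-D\mathcal G(0)$ by the mean value theorem into bilinear expressions in (jet of $b$) and (jet of $h$). Each such term carries at worst one extra $\rho_\zeta^{-1}$ relative to the target weight $\rho_\zeta^{-2}$ in $\mathcal Y_\zeta$ and $\mathcal Z_\zeta$. Terms involving $b$ itself through $\sin b$ or $\cos b$ together with curvature data $h^n$ or $\omega$ are even better, being of order $\rho_\zeta^{-2}\|b\|\|h\|$. The H\"older seminorms are handled by the product rule in $C^{0,\beta}$ at scale $\rho_\zeta$. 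The projection onto $n_b,\nu_b$ and the normalisation are smooth in $(b,db)$ and add nothing worse, and $c_0$ is decreased only to keep $|U_b|,|W_b|$ bounded below.

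The main obstacle will be writing the second fundamental form of $X_b$ explicitly enough to verify that no term in $D^2\mathcal G$ carries two factors of $\rho_\zeta^{-1}$ beyond the weight. The worry is products like $\nabla^2b\cdot\nabla^2h$ inside $g_b^{-1}$ expansions. I expect each second-order derivative to appear linearly, with quadratic dependence only through first derivatives and zeroth-order terms, as for graphs. This makes the worst term $|db|\,|\nabla^2h|+|\nabla^2b|\,|dh|$, which scales as $\rho_\zeta^{-3}\|b\|\|h\|$ and hence fits the bound $\rho_\zeta^{-2}\cdot\tau_\zeta^{-1}$. I would first check this against the model case $\phi=0$ on a flat sheet, where $X_b$ is a graph and the standard graph mean-curvature expansion applies.
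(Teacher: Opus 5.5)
Your proposal is correct and follows the paper's argument. The paper rescales at $r=\rho_\zeta(x)$ and uses that the tested mean curvatures depend smoothly on the two-jet of $b$, with uniformly bounded rescaled geometry. It extracts a single factor $\rho_\zeta^{-1}\le\tau_\zeta^{-1}$ from the unscaled size of $b$, and then integrates along segments; its bound on $D^2\mathscr P_{\zeta,\phi},D^2\mathscr T_{\zeta,\phi}$, integrated once, is exactly your Lipschitz bound on $D\mathscr P_{\zeta,\phi}(b)-D\mathscr P_{\zeta,\phi}(0)$.

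The paper avoids your term-by-term check by substituting $a:=b/r$ and noting that $(\cos(ra)-1)/r$ and $\sin(ra)/r$ extend smoothly to $r=0$. The rescaled map $a\mapsto r\langle\mathbf H_{X_{ra}},\eta_{ra}\rangle$ then has uniformly bounded second derivative, and the factor $r^{-1}$ comes from scaling alone. This also makes your worry about two extra factors of $\rho_\zeta^{-1}$ moot: the argument does not need linearity in $\nabla^2b$.
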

The proof of Lemma~\ref{lem:s4-transverse-quadratic} is given in Appendix~\ref{app:s4-normal} and it uses only the $C^{2,\beta}$ bound for $\phi$.
\begin{proposition}[Correction in the direction $n$]
    \label{prop:s4-transverse-elimination}
    There exist $K>0$ and $m_0\in\mathbb{N}$ such that, for every
    $m\geq m_0$ and $(v,\zeta)\in\mathcal B_m$, there
    is a unique solution $b=b(v,\zeta)$ in
    \begin{align}
        \label{eq:s4-transverse-ball}
        \mathcal{V}_{\zeta,K}:=\{b\in\mathcal X_\zeta\mbox{ : }
            \|b\|_{\mathcal X_\zeta}\leq K\tau_\zeta^2\}
    \end{align}
    of the nonlinear equation
    \begin{align}
        \label{eq:s4-transverse-equation}
        \mathscr P_{\zeta,\phi}(b)=0.
    \end{align}
    The first order remainder in the other mean-curvature component, given by
    \begin{align}
        \label{eq:s4-feedback-definition}
        \mathcal R_\zeta(\phi)
            :=\mathscr T_{\zeta,\phi}(b(v,\zeta))-H_{\zeta,\phi}
    \end{align}
    belongs to $\mathcal{Z}_{\zeta}$ and satisfies the estimates
    \begin{align}
        \label{eq:s4-feedback-estimates}
        \|\mathcal R_\zeta(\phi)\|_{\mathcal Z_\zeta}
            \leq C\tau_\zeta^3,\qquad
        \|\mathcal R_\zeta(\phi)\|_{Y_\zeta}
            \leq C\tau_\zeta^{2+\alpha/2}.
    \end{align}
    In particular, on each neck, its projection onto $\operatorname{span}\{\cos\theta,\sin\theta\}$ vanishes
    on every circle of fixed axial coordinate.

    Lastly, fix $m$ and $0<\beta'<\beta$. If
    $(v_j,\zeta_j),(v,\zeta)\in\mathcal B_m$,
    $\zeta_j\to\zeta$ and $v_j\to v$ in
    $C^{2,\beta'}(M_0)$, then
    \begin{align}
        \label{eq:s4-transverse-continuity}
        \begin{aligned}
            b(v_j,\zeta_j)\circ F_{\zeta_j}
                &\to b(v,\zeta)\circ F_\zeta
                &&\quad\text{in }C^{2,\beta'}(M_0),\\
            \mathcal R_{\zeta_j}(\phi_j)\circ F_{\zeta_j}
                &\to\mathcal R_\zeta(\phi)\circ F_\zeta
                &&\quad\text{in }C^{0,\beta'}(M_0),
        \end{aligned}
    \end{align}
    where $\phi_j:=u_{\zeta_j}+v_j\circ F_{\zeta_j}^{-1}$.
\end{proposition}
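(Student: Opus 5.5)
We recast \eqref{eq:s4-transverse-equation} as a fixed-point problem and apply the contraction mapping theorem on $\mathcal V_{\zeta,K}$. By \eqref{eq:s4-transverse-remainders}, the equation $\mathscr P_{\zeta,\phi}(b)=0$ is equivalent to
\begin{align*}
    b=\mathcal S(b):=-\mathscr G_{\zeta,\phi}\bigl(H_{\iota,n}+Q_n(b)\bigr),
\end{align*}
where $\mathscr G_{\zeta,\phi}$ is the inverse of Proposition~\ref{prop:s4-transverse-inverse}. For large $m$, $K\tau_\zeta^2\leq c_0\tau_\zeta$, so $\mathcal V_{\zeta,K}\subset\mathcal U_{\zeta,c_0}$. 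Moreover, $Q_n$ maps into $\mathcal Y_\zeta$ by the parity relations \eqref{eq:s4-component-parities}. By \eqref{eq:s4-actual-inverse}, \eqref{eq:s4-forcing-and-mixed} and Lemma~\ref{lem:s4-transverse-quadratic}, every $b\in\mathcal V_{\zeta,K}$ satisfies
\[
\|\mathcal S(b)\|_{\mathcal X_\zeta}\leq C_1\tau_\zeta^2+C_1\tau_\zeta^{-1}K^2\tau_\zeta^4 .
\]
We choose $K:=2C_1$; then $m$ large gives $C_1K^2\tau_\zeta\leq C_1$, so $\mathcal S$ maps the ball into itself. The Lipschitz estimate \eqref{eq:s4-transverse-quadratic} gives Lipschitz constant at most $2CK\tau_\zeta\leq 1/2$. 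The ball is a closed subset of the Banach space $\mathcal X_\zeta$, so Banach's theorem yields existence and uniqueness of $b(v,\zeta)$. The threshold $m_0$ is uniform because all constants are independent of $(v,\zeta)\in\mathcal B_m$.

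For the feedback term, \eqref{eq:s4-transverse-remainders} gives $\mathcal R_\zeta(\phi)=Bb+Q_\nu(b)$, which lies in $\mathcal Z_\zeta$ by \eqref{eq:s4-component-parities}. Then \eqref{eq:s4-forcing-and-mixed} and Lemma~\ref{lem:s4-transverse-quadratic} bound its $\mathcal Z_\zeta$ norm by $C\tau_\zeta\cdot K\tau_\zeta^2+C\tau_\zeta^{-1}K^2\tau_\zeta^4\leq C\tau_\zeta^3$. To pass to $Y_\zeta$, whose weight is $f_\zeta/\rho_\zeta^2$ instead of $\rho_\zeta^{-2}$, note that the definition \eqref{eq:s4-weight} gives
\[
f_\zeta\geq\tau_\zeta^{(1-\alpha)/2}\rho_\zeta^{1/2}\geq\tau_\zeta^{1-\alpha/2},
\]
using $\rho_\zeta\geq\tau_\zeta$. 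Hence $\|E\|_{Y_\zeta}\leq\tau_\zeta^{-1+\alpha/2}\|E\|_{\mathcal Z_\zeta}$, which yields $C\tau_\zeta^{2+\alpha/2}$. The vanishing of the first angular harmonics follows because $\mathcal R_\zeta(\phi)$ is $\Gamma_m$-invariant, and the half-turn about $p$ (namely $Q_p^2$, with $\chi=+1$) sends $\theta\mapsto\theta+\pi$, exactly as in the remark comparing with the linearised doubling norms.

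The main obstacle is continuity. For fixed $m$, we pull everything back to $M_0$ by $F_\zeta$ via Lemma~\ref{lem:s4-identifications}. The functions $b_j\circ F_{\zeta_j}$ are bounded in $C^{2,\beta}$, hence precompact in $C^{2,\beta'}$ by Arzel\`a--Ascoli. The first step is to show that any limit point $\bar b$ solves $\mathscr P_{\zeta,\phi}(\bar b\circ F_\zeta^{-1})=0$ and lies in $\mathcal V_{\zeta,K}$. For this we need:
\begin{itemize}
\item $\phi_j\circ F_{\zeta_j}\to\phi\circ F_\zeta$ in $C^{2,\beta'}$, which follows from the continuity statement in Proposition~\ref{prop:s4-scalar-input}(1) applied to $D_{\zeta_j}$; this requires $D_{\zeta_j}\circ F_{\zeta_j}\to D_\zeta\circ F_\zeta$, which holds since $M_\zeta$ depends smoothly on $\zeta$;
\item the fact that $\mathscr P$ is a second-order quasilinear expression in the $2$-jets of $(\phi,b)$ with coefficients smooth in $\zeta$, so it passes to the limit in $C^{0,\beta'}$;
\item closedness of the ball $\mathcal V_{\zeta,K}$ under this convergence, by lower semicontinuity of the weighted norm.
\end{itemize}
Uniqueness then forces $\bar b=b(v,\zeta)$, so the whole sequence converges. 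The same passage to the limit in the explicit formula for $\mathscr T$ gives convergence of $\mathcal R_{\zeta_j}(\phi_j)\circ F_{\zeta_j}$ in $C^{0,\beta'}$.
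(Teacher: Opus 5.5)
Your proposal is correct and follows the paper's proof essentially step for step: the same contraction map $b\mapsto-\mathscr G_{\zeta,\phi}(H_{\iota,n}+Q_n(b))$ on $\mathcal V_{\zeta,K}$, the identity $\mathcal R_\zeta(\phi)=Bb+Q_\nu(b)$ combined with the weight comparison $f_\zeta\geq\tau_\zeta^{1-\alpha/2}$, half-turn invariance for the first angular harmonics, and compactness plus uniqueness for continuity. Two small points are left implicit but are immediate: the continuity clause of Proposition~\ref{prop:s4-scalar-input} also needs the uniform bound on $\|D_{\zeta_j}\|_{Y_{\zeta_j}}$, which \eqref{eq:s4-H-error} supplies, and the limit $b_\infty$ stays $\chi$-equivariant because the maps $F_{\zeta_j}$ are equivariant.
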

\begin{proof}
Let $C_L$ and $C_H$ be the constants in
\eqref{eq:s4-actual-inverse} and in the estimate for $H_{\iota,n}$ in \eqref{eq:s4-forcing-and-mixed} respectively. Choose $K>2C_LC_H$ and define the map $\mathcal{A}_{\zeta,\phi}:\mathcal V_{\zeta,K}\to\mathcal{X}_{\zeta}$ by
\begin{align}
    \label{eq:s4-transverse-contraction}
    \mathcal A_{\zeta,\phi}(b)
        :=-\mathscr G_{\zeta,\phi}(H_{\iota,n}+Q_n(b)) \qquad\forall\,b\in\mathcal V_{\zeta,K}
\end{align}
Since $P_{\zeta,\phi}$ is invertible, the equation
$\mathscr P_{\zeta,\phi}(b)=0$ is equivalent to
$\mathcal A_{\zeta,\phi}(b)=b$, i.e. $b$ solves $\mathscr P_{\zeta,\phi}(b)=0$ if and only if it is a fixed point for $\mathcal A_{\zeta,\phi}$. 
Since $\tau_{\zeta}\to 0$ uniformly in $\zeta\in[-1,1]$ as $m\to+\infty$, for $m$ sufficiently large we have that $\mathcal{V}_{\zeta,K}\subset\mathcal{U}_{\zeta,c_0}$. Therefore, by Proposition~\ref{prop:s4-transverse-inverse} and Lemma~\ref{lem:s4-transverse-quadratic}, we have
\begin{align*}
    \|\mathcal A_{\zeta,\phi}(b)\|_{\mathcal X_\zeta}
        &\leq C_LC_H\tau_\zeta^2+CK^2\tau_\zeta^3 \qquad\forall\, b\in\mathcal V_{\zeta,K},\\
    \|\mathcal A_{\zeta,\phi}(b_2)
        -\mathcal A_{\zeta,\phi}(b_1)\|_{\mathcal X_\zeta}
        &\leq CK\tau_\zeta\|b_2-b_1\|_{\mathcal X_\zeta} \qquad\forall\, b_1,b_2\in\mathcal V_{\zeta,K},
\end{align*}
For $m$ sufficiently large, the right-hand side of the first inequality is at most $K\tau_\zeta^2$, and $CK\tau_\zeta\leq1/2$. Thus, $\mathcal A_{\zeta,\phi}$ is a contraction of the closed ball $\mathcal{V}_{\zeta,K}$ into itself. The contraction lemma then gives the required unique solution.

By \eqref{eq:s4-transverse-remainders},
$\mathcal R_\zeta(\phi)=Bb+Q_\nu(b)$. Consequently,
\begin{align*}
    \|\mathcal R_\zeta(\phi)\|_{\mathcal Z_\zeta}
        &\leq C\tau_\zeta\|b\|_{\mathcal X_\zeta}
            +C\tau_\zeta^{-1}\|b\|_{\mathcal X_\zeta}^2
        \leq C\tau_\zeta^3,\\
    \|\mathcal R_\zeta(\phi)\|_{Y_\zeta}
        &\leq\tau_\zeta^{-1+\alpha/2}
            \|\mathcal R_\zeta(\phi)\|_{\mathcal Z_\zeta}
        \leq C\tau_\zeta^{2+\alpha/2},
\end{align*}
where the second line uses $f_\zeta\geq\tau_\zeta^{1-\alpha/2}$.
Moreover, the function $\mathcal R_\zeta(\phi)$ is $\Gamma_m$-invariant
by \eqref{eq:s4-component-parities}. In particular, its
pullback to each neck is unchanged by
$\theta\mapsto\theta+\pi$. Since both $\cos\theta$ and
$\sin\theta$ change sign under this substitution, their
integrals against that pullback vanish on every circle
of fixed axial coordinate.

It remains to prove continuity. Fix $m$ and a sequence as
in the statement. Proposition~\ref{prop:s4-scalar-input},
applied to the smoothly varying sources $D_{\zeta_j}$,
gives
\begin{align*}
    \phi_j\circ F_{\zeta_j}
        =u_{\zeta_j}\circ F_{\zeta_j}+v_j
        \rightarrow u_\zeta\circ F_\zeta+v
        =\phi\circ F_\zeta
        \quad\mbox{in }C^{2,\beta'}(M_0).
\end{align*}
The bound \eqref{eq:s4-transverse-ball} and the uniform
norm comparison under $F_{\zeta_j}$ imply that
$b(v_j,\zeta_j)\circ F_{\zeta_j}$ is bounded in
$C^{2,\beta}(M_0)$. Hence every subsequence has a further
subsequence converging in $C^{2,\beta'}(M_0)$ to a function
of the form $b_\infty\circ F_\zeta$.
The formulas for $X_b$ and $n_b$ express
$\mathscr P_{\zeta,\phi}(b)$ as a continuous function of
the first two derivatives of the parametrisation and of
$b$, with denominators bounded away from zero. Passing to
the limit in \eqref{eq:s4-transverse-equation} therefore gives
$\mathscr P_{\zeta,\phi}(b_\infty)=0$.
Equivariance of $F_{\zeta_j}$ preserves the character
condition, and lower semicontinuity of the weighted
H\"older norms gives
$\|b_\infty\|_{\mathcal X_\zeta}\leq K\tau_\zeta^2$.
Uniqueness now implies $b_\infty=b(v,\zeta)$, so the
whole sequence converges. Finally, the same continuity
of the curvature expressions in
\eqref{eq:s4-feedback-definition} proves the second
convergence in \eqref{eq:s4-transverse-continuity}.
\end{proof}

\begin{proposition}
    \label{prop:s4-remaining-scalar-equation}
    For all sufficiently large $m$ and every
    $(v,\zeta)\in\mathcal B_m$, the map
    \begin{align*}
        \widehat X_{v,\zeta}:=X_{b(v,\zeta)}:M_\zeta\to\mathbb S^4
    \end{align*}
    is a $C^{2,\beta}$ embedding of $M_{\zeta}$. Its image is minimal in $\mathbb{S}^4$ if and only if
    \begin{align}
        \label{eq:s4-remaining-scalar-equation}
        H_{\zeta,\phi}+\mathcal R_\zeta(\phi)=0
        \quad\text{on }M_\zeta.
    \end{align}
    Moreover,
    \begin{align}
        \label{eq:s4-transverse-two-jet-final}
        |(\nabla^{g_\zeta})^j b(v,\zeta)|_{g_\zeta}
            \leq C\tau_\zeta^2\rho_\zeta^{-j}
            \qquad\mbox{ on } M_\zeta,\quad \forall\,j\in\{0,1,2\},
    \end{align}
    with the local H\"older bounds of
    \eqref{eq:s4-transverse-ball}.
\end{proposition}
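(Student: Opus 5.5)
The plan is to prove the three assertions in order: first the embedding property, then the minimality criterion, then the derivative bounds. All of them follow from the smallness $\|b\|_{\mathcal X_\zeta}\leq K\tau_\zeta^2$ given by Proposition~\ref{prop:s4-transverse-elimination}.

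\emph{Derivative bounds.} Since $b:=b(v,\zeta)\in\mathcal V_{\zeta,K}$, Remark~\ref{rem:scales} with weight $w\equiv1$ gives $|(\nabla^{g_\zeta})^jb|_{g_\zeta}\leq K\tau_\zeta^2\rho_\zeta^{-j}$ for $j\in\{0,1,2\}$. The same definition of the weighted norm supplies the corresponding local scaled H\"older bounds. This proves \eqref{eq:s4-transverse-two-jet-final} immediately.

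\emph{Minimality criterion.} For $m$ large we have $K\tau_\zeta^2\leq c_0\tau_\zeta$, so $b\in\mathcal U_{\zeta,c_0}$. The discussion after \eqref{eq:s4-testing-normals} then shows that $X_b$ is a $C^{2,\beta}$ immersion and that $n_b,\nu_b$ span its normal plane everywhere. Hence $\mathbf H_{X_b}=0$ if and only if $\mathscr P_{\zeta,\phi}(b)=\mathscr T_{\zeta,\phi}(b)=0$. The first equation holds by \eqref{eq:s4-transverse-equation}. By \eqref{eq:s4-feedback-definition}, $\mathscr T_{\zeta,\phi}(b)=H_{\zeta,\phi}+\mathcal R_\zeta(\phi)$. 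Minimality is therefore equivalent to \eqref{eq:s4-remaining-scalar-equation}.

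\emph{Embedding.} I would use the tubular coordinates of Lemma~\ref{lem:s4-tubular}. By construction $X_b(x)=\Psi(\pi(G(x)),z(G(x)),b(x))$, and $|b|\leq K\tau_\zeta^2<b_0$. Since $\Psi$ is a diffeomorphism onto its image, $X_b(x)=X_b(y)$ forces $G(x)=G(y)$ together with equal $b$-values. Because $G=G_{\zeta,\phi}$ is an embedding (Proposition~\ref{prop:s4-scalar-input}(3)), this gives $x=y$. So $X_b$ is injective, and it is an immersion by the previous step. Since $M_\zeta$ is compact, an injective immersion is an embedding.

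The only mildly delicate point is the immersion bound. It is already contained in the estimate $\tfrac14 g_\phi\leq g_b$, which requires $|db|_{g_\phi}$ small. This follows from $|db|\leq K\tau_\zeta^2\rho_\zeta^{-1}\leq K\tau_\zeta$, using $\rho_\zeta\geq\tau_\zeta$ and the metric comparison $C^{-1}g_\zeta\leq g_\phi\leq Cg_\zeta$ of Lemma~\ref{lem:s4-full-family-geometry}.
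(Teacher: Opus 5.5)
Your proposal is correct and follows the paper's proof essentially step for step. The derivative bounds come from the definition of the weighted norm, and minimality comes from $n_b,\nu_b$ spanning the normal plane together with the defining equations for $b$ and $\mathcal R_\zeta$. The embedding comes from writing $X_b(x)=\Psi(\mathcal F^{-1}(G(x)),b(x))$ with $\Psi$ a diffeomorphism. The only cosmetic difference is that the paper gets the immersion property for free: the map $x\mapsto(\mathcal F^{-1}(G(x)),b(x))$ is already an embedding because its first component is, so your separate appeal to $g_b\geq\tfrac14 g_\phi$ is valid but unnecessary.
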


\begin{proof}
Since $G$ is an embedding and $\mathcal F$ is a
diffeomorphism onto $\mathcal N$, the map
\begin{align*}
    x\mapsto(\mathcal F^{-1}(G(x)),b(v,\zeta)(x))
\end{align*}
is an embedding. By Lemma~\ref{lem:s4-full-family-geometry}
and Proposition~\ref{prop:s4-transverse-elimination},
\begin{align*}
    |z\circ G|\leq C\frac{m^2\tau_\zeta}{2\pi},\qquad
    |b(v,\zeta)|\leq K\tau_\zeta^2.
\end{align*}
Both bounds tend to zero uniformly as $m\to\infty$.
Thus the image lies in the domain of the tubular
embedding $\Psi$, and
\begin{align*}
    \widehat X_{v,\zeta}(x)
        =\Psi\bigl(\mathcal F^{-1}(G(x)),b(v,\zeta)(x)\bigr)
\end{align*}
is a $C^{2,\beta}$ embedding of the closed connected
surface $M_\zeta$.

By the defining equations for $b=b(v,\zeta)$ and
$\mathcal R_\zeta(\phi)$, we have
\begin{align*}
    \langle\mathbf H_{\widehat X_{v,\zeta}},n_b\rangle=0,
    \qquad
    \langle\mathbf H_{\widehat X_{v,\zeta}},\nu_b\rangle
        =H_{\zeta,\phi}+\mathcal R_\zeta(\phi).
\end{align*}
Since $n_b$ and $\nu_b$ span the normal plane, the
mean-curvature vector vanishes if and only if the second
expression vanishes. This proves
\eqref{eq:s4-remaining-scalar-equation}. Finally,
\eqref{eq:s4-transverse-two-jet-final} and its local
H\"older estimates follow directly from
$\|b(v,\zeta)\|_{\mathcal X_\zeta}\leq K\tau_\zeta^2$
and the definition of the weighted norm.
\end{proof}
\begin{remark}
    Note that Proposition~\ref{prop:s4-scalar-input} and
    \eqref{eq:s4-feedback-estimates} give
    \begin{align*}
        \|Q_\zeta(\phi)+\mathcal R_\zeta(\phi)\|_{Y_\zeta}
            \leq C\tau_\zeta^2.
    \end{align*}
    This estimate and \eqref{eq:s4-transverse-continuity} will allow us to solve the remaining scalar equation by choosing
    $(v,\zeta)\in\mathcal B_m$ in Section~\ref{sec:s4-coupled}.
\end{remark}
\section{Final correction to minimality}
\label{sec:s4-coupled}
We finally choose the scalar correction $v$ and the parameter $\zeta$ so that the remaining mean-curvature component $H_{\zeta,\phi}+\mathcal R_\zeta(\phi)$ vanishes. The correction $b=b(v,\zeta)$ has already been selected
in Section~\ref{sec:s4-transverse}. Thus, the fixed-point problem needed to obtain the final minimal surfaces must be solved in the unknowns $(v,\zeta)$, expressed on the fixed surface $M_0$. We then prove smoothness and record the bounds needed to preserve negative curvature.

Recall that $\delta=(100m)^{-1}$. The definition of $\tau_\zeta$ and Remark~\ref{rem:s4-waist-asymptotics} therefore give
\begin{align}
    \label{eq:s4-coupled-scale-comparison}
    e^{-1}\tau_0\leq\tau_\zeta\leq e\tau_0
    \quad\forall\,\zeta\in[-1,1], \qquad \delta^{-p}\tau_0^q\rightarrow0
    \qquad\mbox{ as }m\rightarrow\infty,
\end{align}
for every fixed $p\geq0$ and $q>0$.
\subsection{The remaining equation and the fixed-point map}
Fix any $(v,\zeta)\in\mathcal B_m$, and recall that $\phi(v,\zeta):=u_\zeta+v\circ F_\zeta^{-1}$, where $u_\zeta$ is the preliminary correction defined in \eqref{eq:s4-initial-correction}. Define the combined nonlinear remainder $E_\zeta(v)$ by
\begin{align}
    \label{eq:s4-coupled-source}
    E_\zeta(v):=Q_\zeta(\phi(v,\zeta))+\mathcal R_\zeta(\phi(v,\zeta)),
\end{align}
where the function $Q_\zeta(\phi)$ is the nonlinear remainder in the scalar mean curvature of the graph within $\mathcal N$, defined in \eqref{eq:s4-Q-definition}, while the function $\mathcal R_\zeta(\phi)$ is the first order remainder defined in \eqref{eq:s4-feedback-definition}. Thus,
\begin{align*}
    H_{\zeta,\phi}+\mathcal R_\zeta(\phi)=H_\zeta+\mathcal L_\zeta\phi+E_\zeta(v) \qquad\mbox{ on } M_\zeta.
\end{align*}
As the two summands defining $E_\zeta(v)$ are $\Gamma_m$-invariant, we have $E_{\zeta}(v)\in Y_\zeta$, and we can apply to $E_{\zeta}(v)$ the linear solution operator $\mathscr{R}_{\zeta}$ that we have constructed in Proposition~\ref{prop:s4-scalar-input}.
\begin{figure}[htbp]
    \centering
    \begingroup
    \definecolor{internalteal}{RGB}{0,112,116}
    \definecolor{internalorange}{RGB}{190,102,36}
    \begin{tikzpicture}[
        x=1cm,y=1cm,>=Stealth,
        every node/.style={font=\small},
        reference/.style={black!30,line width=0.6pt},
        initial edge/.style={internalteal,line width=0.9pt},
        deformed edge/.style={internalorange,line width=0.9pt}
    ]
        \path[fill=black!2]
            (0,0)--(7.7,0)--(8.9,0.9)--(8.9,4.2)--(1.2,4.2)--(0,3.3)--cycle;
        \draw[reference,dashed]
            (0,0)--(1.2,0.9)--(8.9,0.9)
            (1.2,0.9)--(1.2,4.2);
        \draw[reference]
            (0,0)--(7.7,0)--(8.9,0.9)--(8.9,4.2)--(1.2,4.2)--(0,3.3)--cycle
            (0,3.3)--(7.7,3.3)--(8.9,4.2)
            (7.7,0)--(7.7,3.3);
        \node[anchor=south west,text=black!65] at (1.35,4.24)
            {coordinate neighbourhood in $\mathcal N$};
        \path[fill=internalteal!12]
            (0.72,0.63)--(6.53,0.63)--(7.30,1.22)--(1.49,1.22)--cycle;
        \draw[initial edge]
            (0.72,0.63)--(6.53,0.63)--(7.30,1.22)--(1.49,1.22)--cycle;
        \path[fill=internalorange!12]
            (0.84,2.00)
            .. controls (2.61,2.37) and (4.13,1.92) .. (6.73,2.14)
            --(7.50,2.73)
            .. controls (4.90,2.51) and (3.38,2.96) .. (1.61,2.59)
            --cycle;
        \draw[deformed edge]
            (0.84,2.00)
            .. controls (2.61,2.37) and (4.13,1.92) .. (6.73,2.14)
            --(7.50,2.73)
            .. controls (4.90,2.51) and (3.38,2.96) .. (1.61,2.59)
            --cycle;
        \draw[black!35,dashed]
            (1.49,1.22) .. controls (1.49,1.69) and (1.53,2.13) .. (1.61,2.59);
        \draw[black!35,dashed]
            (6.53,0.63) .. controls (6.53,1.08) and (6.61,1.68) .. (6.73,2.14);
        \coordinate (startpoint) at (3.51,0.91);
        \coordinate (endpoint) at (3.81,2.48);
        \draw[black!65,->,line width=0.65pt]
            (startpoint)--(3.51,1.58);
        \node[anchor=east,text=black!70] at (3.35,1.48)
            {$\nu_\zeta(x)$};
        \draw[internalorange,->,line width=1.05pt]
            (startpoint) .. controls (3.51,1.42) and (3.58,1.99) .. (endpoint);
        \fill[internalteal] (startpoint) circle[radius=1.6pt];
        \fill[internalorange] (endpoint) circle[radius=1.6pt];
        \node[anchor=north] at (3.51,0.38) {$\jmath_\zeta(x)$};
        \draw[black!40] (3.51,0.43)--(3.51,0.80);
        \node[anchor=south] at (3.81,2.83) {$G_{\zeta,\phi}(x)$};
        \draw[black!40] (3.81,2.79)--(3.81,2.58);
        \node[anchor=west,align=left] at (4.22,1.66)
            {geodesic in $\mathcal N$\\length $|\phi(x)|$};
        \node[anchor=west,align=left,text=internalteal] at (9.18,1.08)
            {initial surface\\$M_\zeta$};
        \draw[internalteal!70] (9.04,1.12)--(7.28,1.12);
        \node[anchor=west,align=left,text=internalorange] at (9.18,2.72)
            {deformed surface\\$M_{\zeta,\phi}$};
        \draw[internalorange!70] (9.04,2.76)--(7.47,2.66);
    \end{tikzpicture}
    \endgroup
    \caption{The first deformation moves $x\in M_\zeta$ within
    $\mathcal N$ through signed distance $\phi(x)$, initially
    in direction $\nu_\zeta(x)$.}
    \label{fig:s4-internal-displacement}
\end{figure}
Define then
\begin{align}
    \label{eq:s4-coupled-inverse-output}
    (w_{v,\zeta},d_{v,\zeta}):=-\mathscr R_\zeta E_\zeta(v)\in X_\zeta\times\mathbb R.
\end{align}
By \eqref{eq:s4-scalar-inverse}, we have
\begin{align}
    \label{eq:s4-coupled-linear-equation}
    \mathcal L_\zeta w_{v,\zeta}
    =-E_\zeta(v)+d_{v,\zeta}\mathcal J_\zeta
    \qquad\mbox{on }M_\zeta.
\end{align}
Thus, $w_{v,\zeta}$ cancels $E_\zeta(v)$ in the linearised equation up to a multiple of the prescribed function $\mathcal J_\zeta$. As the preliminary correction $u_{\zeta}$ satisfies
\begin{align*}
    H_\zeta+\mathcal L_\zeta u_\zeta=(-\tau_\zeta\zeta+c_{H,\zeta})\mathcal J_\zeta,
\end{align*}
substituting $\phi=u_\zeta+v\circ F_\zeta^{-1}$ and $E_\zeta(v)=-\mathcal L_\zeta w_{v,\zeta}+d_{v,\zeta}\mathcal J_\zeta$ into the remaining component therefore gives the identity
\begin{align}
    \label{eq:s4-coupled-residual-identity}
    \begin{split}
        H_{\zeta,\phi}+\mathcal R_\zeta(\phi)
        =\mathcal L_\zeta
            \bigl(v\circ F_\zeta^{-1}-w_{v,\zeta}\bigr)+
            (-\tau_\zeta\zeta+c_{H,\zeta}+d_{v,\zeta})
            \mathcal J_\zeta
            \qquad\text{ on }M_\zeta.
    \end{split}
\end{align}
Define the map $\mathcal T_m:\mathcal B_m\to X_0\times\mathbb{R}$ by
\begin{align}
    \label{eq:s4-coupled-map}
    \mathcal T_m(v,\zeta):=\left(w_{v,\zeta}\circ F_\zeta,\frac{c_{H,\zeta}+d_{v,\zeta}}{\tau_\zeta}\right) \qquad\forall\,(v,\zeta)\in\mathcal B_m.
\end{align}
At a fixed point of $\mathcal T_m$, the first coordinate gives $v\circ F_\zeta^{-1}=w_{v,\zeta}$, and the second gives $\tau_\zeta\zeta=c_{H,\zeta}+d_{v,\zeta}$. Thus, \eqref{eq:s4-coupled-residual-identity} gives $H_{\zeta,\phi}+\mathcal R_\zeta(\phi)=0$. We first prove that $\mathcal T_m$ maps $\mathcal B_m$ into itself.

\begin{lemma}
    \label{lem:s4-coupled-self-map}
    There exist $C>0$ and $m_0\in\mathbb N$ such that, for every $m\geq m_0$ and $(v,\zeta)\in\mathcal B_m$, we have
    \begin{align}
        \|E_\zeta(v)\|_{Y_\zeta}
        &\leq C\tau_\zeta^2,
        \label{eq:s4-coupled-source-bound}\\
        \|w_{v,\zeta}\|_{X_\zeta}
        &\leq C\delta^{-4-2\beta}\tau_\zeta^2,
        \label{eq:s4-coupled-w-bound}\\
        |d_{v,\zeta}|
        &\leq C\delta^{-2-\beta}\tau_\zeta^2,
        \label{eq:s4-coupled-d-bound}\\
        |c_{H,\zeta}|
        &\leq C\delta^{-2-\beta}\tau_\zeta^{1+\alpha/3}.
        \label{eq:s4-coupled-cH-bound}
    \end{align}
    Hence, writing $\mathcal T_m=(\mathcal T_m^1,\mathcal T_m^2)$,
    it follows that
    \begin{align}
        \frac{\|\mathcal T_m^1(v,\zeta)\|_{X_0}}
        {\tau_0^{1+\alpha}}
        &\leq C\delta^{-4-2\beta}\tau_0^{1-\alpha}
        \rightarrow0,
        \label{eq:s4-coupled-function-smallness}\\
        |\mathcal T_m^2(v,\zeta)|
        &\leq C\delta^{-2-\beta}
        (\tau_0^{\alpha/3}+\tau_0)
        \rightarrow0
        \label{eq:s4-coupled-parameter-smallness}
    \end{align}
    as $m\to\infty$, uniformly on $\mathcal B_m$. In particular, for every sufficiently large integer $m$ the map $\mathcal T_m$ sends $\mathcal B_m$ into itself. 
\end{lemma}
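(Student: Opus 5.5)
The plan is to assemble the four bounds directly from the estimates already established, and then convert them to the normalisations used in $\mathcal B_m$.

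\textbf{Bound on $E_\zeta(v)$.} For \eqref{eq:s4-coupled-source-bound}, fix $(v,\zeta)\in\mathcal B_m$ and set $\phi=\phi(v,\zeta)$. By Proposition~\ref{prop:s4-scalar-input}(3), $\phi$ satisfies \eqref{eq:s4-full-ball-bound}. Hence \eqref{eq:s4-scalar-quadratic} gives
\begin{align*}
\|Q_\zeta(\phi)\|_{Y_\zeta}\leq C\tau_\zeta^{-\alpha/2}\tau_\zeta^{2+\alpha/2}=C\tau_\zeta^2.
\end{align*}
Proposition~\ref{prop:s4-transverse-elimination} applies to the same pair $(v,\zeta)$, and \eqref{eq:s4-feedback-estimates} gives $\|\mathcal R_\zeta(\phi)\|_{Y_\zeta}\leq C\tau_\zeta^{2+\alpha/2}\leq C\tau_\zeta^2$. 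Adding the two bounds yields \eqref{eq:s4-coupled-source-bound}, with a constant independent of $(v,\zeta)$.

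\textbf{Bounds on $w_{v,\zeta}$, $d_{v,\zeta}$ and $c_{H,\zeta}$.} Estimates \eqref{eq:s4-coupled-w-bound} and \eqref{eq:s4-coupled-d-bound} then follow from linearity of $\mathscr R_\zeta$. We apply \eqref{eq:s4-scalar-inverse} and \eqref{eq:s4-coefficient-bound} to $E=-E_\zeta(v)$. For \eqref{eq:s4-coupled-cH-bound}, note that $c_{H,\zeta}$ is the coefficient produced by $-\mathscr R_\zeta D_\zeta$. So \eqref{eq:s4-coefficient-bound} together with \eqref{eq:s4-H-error} gives $|c_{H,\zeta}|\leq C\delta^{-2-\beta}\tau_\zeta^{1+\alpha/3}$.

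\textbf{Transfer to $M_0$.} For the first component we use Lemma~\ref{lem:s4-identifications}:
\begin{align*}
\|w_{v,\zeta}\circ F_\zeta\|_{X_0}\leq C_F\|w_{v,\zeta}\|_{X_\zeta}\leq C\delta^{-4-2\beta}\tau_\zeta^2.
\end{align*}
By \eqref{eq:s4-coupled-scale-comparison} we have $\tau_\zeta\leq e\tau_0$. Dividing by $\tau_0^{1+\alpha}$ gives \eqref{eq:s4-coupled-function-smallness}.

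For the second component, $\mathcal T_m^2=(c_{H,\zeta}+d_{v,\zeta})/\tau_\zeta$. Using the two coefficient bounds and $e^{-1}\tau_0\leq\tau_\zeta\leq e\tau_0$, we obtain $|\mathcal T^2_m|\leq C\delta^{-2-\beta}(\tau_0^{\alpha/3}+\tau_0)$.

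Both right-hand sides tend to zero, by \eqref{eq:s4-coupled-scale-comparison} and Remark~\ref{rem:s4-waist-asymptotics}. These give $\delta^{-p}\tau_0^q\to0$ for fixed $p\geq0$ and $q>0$; here $q=1-\alpha$, $\alpha/3$, or $1$, all positive since $\alpha<1/8$. The convergence is uniform on $\mathcal B_m$.

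\textbf{Self-map property.} For $m$ large, the first bound is at most $1$, so $\|\mathcal T^1_m\|_{X_0}\leq\tau_0^{1+\alpha}$. The second bound is at most $1$, so $\mathcal T^2_m\in[-1,1]$. We also need $\mathcal T_m^1\in X_0$, i.e.\ $\Gamma_m$-invariance. This holds because $w_{v,\zeta}\in X_\zeta$ and $F_\zeta$ is $\Gamma_m$-equivariant. Therefore $\mathcal T_m(\mathcal B_m)\subset\mathcal B_m$.

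The only genuinely delicate point is that the thresholds in Propositions~\ref{prop:s4-scalar-input} and~\ref{prop:s4-transverse-elimination} are uniform over $\mathcal B_m$. This is already guaranteed by their statements, so no real obstacle remains beyond bookkeeping of exponents.
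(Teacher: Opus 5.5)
Your proposal is correct and follows the paper's proof essentially step by step. You bound the source via \eqref{eq:s4-scalar-quadratic} and \eqref{eq:s4-feedback-estimates}, bound $w_{v,\zeta}$, $d_{v,\zeta}$ and $c_{H,\zeta}$ via \eqref{eq:s4-scalar-inverse}, \eqref{eq:s4-coefficient-bound} and \eqref{eq:s4-H-error}, and transfer to $M_0$ using \eqref{eq:s4-norm-identification} together with $e^{-1}\tau_0\leq\tau_\zeta\leq e\tau_0$; your explicit check that $\mathcal T_m^1$ is $\Gamma_m$-invariant, via the equivariance of $F_\zeta$, is a small detail the paper leaves implicit.
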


\begin{proof}
By \eqref{eq:s4-scalar-quadratic} and
\eqref{eq:s4-feedback-estimates},
\begin{align*}
    \|E_\zeta(v)\|_{Y_\zeta}\leq\|Q_\zeta(\phi(v,\zeta))\|_{Y_\zeta}+\|\mathcal R_\zeta(\phi(v,\zeta))\|_{Y_\zeta}\leq C\tau_\zeta^2+C\tau_\zeta^{2+\alpha/2}\leq C\tau_\zeta^2.
\end{align*}
Applying \eqref{eq:s4-scalar-inverse} and \eqref{eq:s4-coefficient-bound} to $(w_{v,\zeta},d_{v,\zeta})=-\mathscr R_\zeta E_\zeta(v)$ therefore gives
\begin{align*}
    \|w_{v,\zeta}\|_{X_\zeta}&\leq C\delta^{-4-2\beta}\|E_\zeta(v)\|_{Y_\zeta}\leq C\delta^{-4-2\beta}\tau_\zeta^2,\\
    |d_{v,\zeta}|&\leq C\delta^{-2-\beta}\|E_\zeta(v)\|_{Y_\zeta}\leq C\delta^{-2-\beta}\tau_\zeta^2.
\end{align*}
Similarly, applying the coefficient estimate \eqref{eq:s4-coefficient-bound} to $(u_\zeta,c_{H,\zeta})=-\mathscr R_\zeta D_\zeta$ and using \eqref{eq:s4-H-error}, we obtain
\begin{align*}
    |c_{H,\zeta}|\leq C\delta^{-2-\beta}\|D_\zeta\|_{Y_\zeta}\leq C\delta^{-2-\beta}\tau_\zeta^{1+\alpha/3}.
\end{align*}
Hence, we proved the first four estimates in the statement.

By definition of $\mathcal T_m$, the norm comparison \eqref{eq:s4-norm-identification} under $F_\zeta$, and $e^{-1}\tau_0\leq\tau_\zeta\leq e\tau_0$,
\begin{align*}
    \frac{\|\mathcal T_m^1(v,\zeta)\|_{X_0}}{\tau_0^{1+\alpha}}
        &\leq\frac{C_F\|w_{v,\zeta}\|_{X_\zeta}}{\tau_0^{1+\alpha}}
        \leq C\delta^{-4-2\beta}\tau_0^{1-\alpha},\\
    |\mathcal T_m^2(v,\zeta)|
        &\leq\tau_\zeta^{-1}(|c_{H,\zeta}|+|d_{v,\zeta}|)
        \leq C\delta^{-2-\beta}(\tau_0^{\alpha/3}+\tau_0).
\end{align*}
Since $0<\alpha<1$, \eqref{eq:s4-coupled-scale-comparison} implies that both right-hand sides tend to zero as $m\to\infty$, uniformly on $\mathcal B_m$. In particular, they are bounded above by one for all sufficiently large $m$, uniformly on $\mathcal B_m$. Thus, $\mathcal T_m(\mathcal B_m)\subset\mathcal B_m$.
\end{proof}
\subsection{Compactness and continuity in a weaker H\"older topology}
For fixed $m$, the weighted $C^{2,\beta}$ bound defining $\mathcal B_m$ gives its compactness in a weaker H\"older topology. We prove continuity of the map $\mathcal T_m$ in that same topology.

Fix any $\beta'\in(0,\beta)$ and let
\begin{align*}
    \mathcal E_m&:=C^{2,\beta'}_{\mathrm{inv}}(M_0)\times\mathbb R,\\
    \|(v,\zeta)\|_{\mathcal E_m}&:=\|v\|_{C^{2,\beta'}(M_0,g_0)}+|\zeta| \qquad\forall\,(v,\zeta)\in\mathcal E_m.
\end{align*}
As invariance is preserved under uniform convergence, $C_{\mathrm{inv}}^{2,\beta'}(M_0)$ is a closed subspace of the ordinary H\"older space $C^{2,\beta'}(M_0)$. Therefore, $\mathcal E_m$ is a Banach space. Moreover, note that $\mathcal B_m\subset X_0\times\mathbb R\subset\mathcal E_m$.
\begin{lemma}
    \label{lem:s4-coupled-compactness}
    For every fixed sufficiently large $m$, the set $\mathcal B_m$ is a nonempty convex compact subset of $\mathcal E_m$. The map $\mathcal T_m:\mathcal B_m\to\mathcal B_m$ is continuous in the topology induced by $\mathcal E_m$.
\end{lemma}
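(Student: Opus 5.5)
We treat the three assertions separately: convexity, compactness, and continuity.

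\emph{Nonemptiness and convexity.} These are immediate. The pair $(0,0)$ lies in $\mathcal B_m$. The set $\mathcal B_m$ is the product of the interval $[-1,1]$ with a closed ball of the norm $\|\cdot\|_{X_0}$ in the linear space $X_0$. A norm ball is convex, and so is a product of convex sets.

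\emph{Compactness.} For fixed $m$, the surface $M_0$ is closed and smooth. As noted after \eqref{eq:s4-XY}, the weighted norm $\|\cdot\|_{X_0}$ is equivalent to the ordinary $C^{2,\beta}(M_0)$ norm, with constants depending on $m$; the weights $f_0$ and $\rho_0$ are bounded above and below by positive constants. So the first factor of $\mathcal B_m$ is bounded in $C^{2,\beta}(M_0)$. The embedding $C^{2,\beta}\hookrightarrow C^{2,\beta'}$ is compact on a closed manifold by Arzel\`a--Ascoli and interpolation. Hence every sequence $(v_j,\zeta_j)$ in $\mathcal B_m$ has a subsequence with $v_j\to v$ in $C^{2,\beta'}$ and $\zeta_j\to\zeta\in[-1,1]$.

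It remains to show that the limit lies in $\mathcal B_m$. Invariance passes to uniform limits. The bound $\|v\|_{X_0}\le\tau_0^{1+\alpha}$ also passes to the limit, by lower semicontinuity of the weighted H\"older norm under $C^2$ convergence. Indeed, each local quantity in \eqref{eq:s4-weighted-norm} is a supremum of expressions that converge pointwise, since derivatives up to order two converge uniformly. Thus $\mathcal B_m$ is sequentially compact in $\mathcal E_m$, hence compact.

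\emph{Continuity.} Since $\mathcal B_m$ is a metric space, sequential continuity suffices. Let $(v_j,\zeta_j)\to(v,\zeta)$ in $\mathcal E_m$ within $\mathcal B_m$, and set $\phi_j=\phi(v_j,\zeta_j)$. We need two convergences:
\begin{align*}
w_{v_j,\zeta_j}\circ F_{\zeta_j}\to w_{v,\zeta}\circ F_\zeta \ \text{in } C^{2,\beta'}(M_0),
\qquad
(c_{H,\zeta_j}+d_{v_j,\zeta_j})/\tau_{\zeta_j}\to(c_{H,\zeta}+d_{v,\zeta})/\tau_\zeta .
\end{align*}
The factor $\tau_\zeta$ depends continuously on $\zeta$, so it causes no difficulty.

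For $c_{H,\zeta}$, we apply the continuity clause of Proposition~\ref{prop:s4-scalar-input}(1) to the sources $D_{\zeta_j}$. The initial surfaces $M_\zeta$, the function $\mathcal J_\zeta$ and the mean curvature $H_\zeta$ depend smoothly on $\zeta$, and $F_\zeta$ is smooth in $\zeta$. Hence $D_{\zeta_j}\circ F_{\zeta_j}\to D_\zeta\circ F_\zeta$ even in $C^{0,\beta}$. The $Y_{\zeta_j}$ norms of $D_{\zeta_j}$ are uniformly bounded by \eqref{eq:s4-H-error}. This gives $c_{H,\zeta_j}\to c_{H,\zeta}$, and also $u_{\zeta_j}\circ F_{\zeta_j}\to u_\zeta\circ F_\zeta$ in $C^{2,\beta'}$. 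It follows that $\phi_j\circ F_{\zeta_j}\to\phi\circ F_\zeta$ in $C^{2,\beta'}(M_0)$.

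For $w$ and $d$, we again use the continuity clause of Proposition~\ref{prop:s4-scalar-input}(1), now with sources $E_{\zeta_j}(v_j)$. Their $Y_{\zeta_j}$ norms are uniformly bounded by \eqref{eq:s4-coupled-source-bound}. So we only need
\[
E_{\zeta_j}(v_j)\circ F_{\zeta_j}\to E_\zeta(v)\circ F_\zeta \quad\text{in } C^{0,\beta'}(M_0).
\]
The $\mathcal R$ part of $E$ is exactly the second line of \eqref{eq:s4-transverse-continuity}. For the $Q$ part, recall that $Q_\zeta(\phi)=H_{\zeta,\phi}-H_\zeta-\mathcal L_\zeta\phi$.
\begin{itemize}
\item $H_\zeta$ and the coefficients of $\mathcal L_\zeta$ vary smoothly with $\zeta$ after pulling back by $F_\zeta$. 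Combined with the $C^{2,\beta'}$ convergence of $\phi_j\circ F_{\zeta_j}$, the terms $H_{\zeta_j}+\mathcal L_{\zeta_j}\phi_j$ converge in $C^{0,\beta'}$.
\item $H_{\zeta,\phi}$ pulled back to $M_0$ is a smooth function of the point, of $\zeta$, and of the two-jet of $\phi\circ F_\zeta$. This holds because $G_{\zeta,\phi}$ is the exponential map of $\mathcal N$ along the smooth field $\nu_\zeta$, and the mean curvature of an immersion is a smooth expression in its first two derivatives whose denominators stay bounded away from zero on $\mathcal B_m$, by Proposition~\ref{prop:s4-scalar-input}(3). Composition of $C^{0,\beta'}$ functions with a smooth nonlinear map preserves convergence in $C^{0,\beta'}$, given uniform bounds.
\end{itemize}

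The only delicate point is this last one: verifying that the nonlinear geometric expressions are jointly continuous in the weaker H\"older topology, uniformly across the family of domains. This is handled by pulling everything back to $M_0$ via $F_\zeta$ and using the smooth dependence of all data on $\zeta$, exactly as in the proof of \eqref{eq:s4-transverse-continuity}.
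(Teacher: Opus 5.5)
Your proposal is correct and follows essentially the same route as the paper. Convexity comes from the product structure. Compactness comes from the fixed-$m$ equivalence of $\|\cdot\|_{X_0}$ with the $C^{2,\beta}$ norm, the compact embedding into $C^{2,\beta'}$, and lower semicontinuity of the weighted norm. Continuity comes from applying the continuity clause of Proposition~\ref{prop:s4-scalar-input} first to $D_{\zeta_j}$ and then to $E_{\zeta_j}(v_j)$, using \eqref{eq:s4-transverse-continuity} and the smooth dependence of $Q_\zeta$ on the two-jet of $\phi\circ F_\zeta$. Your explicit remark that composing with a smooth function of the two-jet preserves $C^{0,\beta'}$ convergence under uniform bounds simply spells out what the paper asserts for the $Q$-term.
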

\begin{proof}
The set $\mathcal B_m$ is the product of the closed ball of radius $\tau_0^{1+\alpha}$ in the stronger $X_0$-norm and the interval $[-1,1]$. As both factors are convex, $\mathcal B_m$ is convex as a subset of $C^{2,\beta'}(M_0)\times\mathbb R$ and contains $(0,0)$. Fix any $m$. As we have already noted, the norm on $X_0$ defined in \eqref{eq:s4-XY} is equivalent to the $C^{2,\beta}$ norm. The compact inclusion $C^{2,\beta}(M_0)\hookrightarrow C^{2,\beta'}(M_0)$ therefore gives relative compactness of $\mathcal B_m$ in $\mathcal E_m$.

Suppose that $(v_j,\zeta_j)\in\mathcal B_m$ converges to $(v,\zeta)$ in $\mathcal E_m$. Then, $\zeta\in[-1,1]$, and $v$ is $\Gamma_m$-invariant by uniform convergence. For every pair of distinct points in each defining ball, the $\beta$-H\"older quotient of the second derivative
converges. Taking suprema in the definition of the norm therefore gives
\begin{align*}
    \|v\|_{X_0}\leq\liminf_{j\to\infty}\|v_j\|_{X_0}
        \leq\tau_0^{1+\alpha}.
\end{align*}
Thus, $(v,\zeta)\in\mathcal B_m$, which proves compactness.

To prove continuity of $\mathcal T_m$, consider a sequence as above. The explicit formulas defining the initial surfaces and the identifications $F_\zeta$ in Lemma~\ref{lem:s4-identifications} give
$D_{\zeta_j}\circ F_{\zeta_j}\to D_\zeta\circ F_\zeta$ in $C^{0,\beta'}(M_0)$, while \eqref{eq:s4-H-error} bounds the $Y_{\zeta_j}$-norms of $D_{\zeta_j}$ uniformly in $j$. The continuity assertion of Proposition~\ref{prop:s4-scalar-input} consequently gives
\begin{align}
    \label{eq:s4-initial-correction-continuity}
    u_{\zeta_j}\circ F_{\zeta_j}\to u_\zeta\circ F_\zeta
        \quad\mbox{in }C^{2,\beta'}(M_0),\qquad
    c_{H,\zeta_j}\to c_{H,\zeta}.
\end{align}
Since $\phi(v_j,\zeta_j)\circ F_{\zeta_j}=u_{\zeta_j}\circ F_{\zeta_j}+v_j$, we have
\begin{align*}
    \phi(v_j,\zeta_j)\circ F_{\zeta_j}\rightarrow \phi(v,\zeta)\circ F_\zeta \qquad\mbox{ in }C^{2,\beta'}(M_0).
\end{align*}
Recall that $Q_\zeta(\phi)=H_{\zeta,\phi}-H_\zeta-\mathcal L_\zeta\phi$.
The convergence of $\phi(v_j,\zeta_j)\circ F_{\zeta_j}$ in $C^{2,\beta'}(M_0)$, together with the smooth dependence of the initial geometry on $\zeta$, therefore gives
\begin{align*}
    Q_{\zeta_j}(\phi(v_j,\zeta_j))\circ F_{\zeta_j}\rightarrow Q_\zeta(\phi(v,\zeta))\circ F_\zeta \qquad\mbox{ in }C^{0,\beta'}(M_0).
\end{align*}
Combining this with \eqref{eq:s4-transverse-continuity}, we obtain
\begin{align}
    \label{eq:s4-coupled-source-continuity}
    E_{\zeta_j}(v_j)\circ F_{\zeta_j}\to E_\zeta(v)\circ F_\zeta\qquad\mbox{ in }C^{0,\beta'}(M_0).
\end{align}
Since by \eqref{eq:s4-coupled-source-bound} we have
\begin{align*}
    \|E_{\zeta_j}(v_j)\|_{Y_{\zeta_j}}
        \leq C\tau_{\zeta_j}^2
        \leq Ce^2\tau_0^2,
\end{align*}
we may apply the continuity of $\mathscr R_\zeta$ to these sources to get
\begin{align*}
    w_{v_j,\zeta_j}\circ F_{\zeta_j}&\to w_{v,\zeta}\circ F_\zeta\quad\mbox{in }C^{2,\beta'}(M_0),\\
    d_{v_j,\zeta_j}&\to d_{v,\zeta}.
\end{align*}
Together with \eqref{eq:s4-initial-correction-continuity} and the positivity and continuity of $\tau_\zeta$, these yield precisely the convergences of the two coordinates of $\mathcal T_m$.
\end{proof}
\subsection{Existence, regularity, and bounds for the deformations}
We can now choose the final correction to minimality by a fixed point argument. We also record stronger estimates for this fixed point than those defining $\mathcal B_m$, since they will be needed for the Gaussian curvature comparison in the next section.
\begin{proposition}
    \label{prop:s4-coupled-existence}
    There exists $m_0\in\mathbb N$ such that, for every integer
    $m\geq m_0$, there is a pair
    $(v_m,\zeta_m)\in\mathcal B_m$ with the following properties.
    Define
    \begin{align*}
        \tau_m&:=\tau_{\zeta_m},\qquad
        \phi_m:=u_{\zeta_m}+v_m\circ F_{\zeta_m}^{-1},
        \qquad b_m:=b(v_m,\zeta_m).
    \end{align*}
    Let the displacement $G_m:M_{\zeta_m}\rightarrow\mathcal N$ within $\mathcal N$ be given by
    \begin{align*}
        G_m(x)&:=\exp_{\jmath_{\zeta_m}(x)}^{\mathcal N}\bigl(\phi_m(x)\nu_{\zeta_m}(x)\bigr) \qquad\forall\,x\in M_{\zeta_m},
    \end{align*}
    where $\jmath_{\zeta_m}:M_{\zeta_m}\hookrightarrow\mathcal N$ is the inclusion of $M_{\zeta_m}$ in $\mathcal N$. Define the unit normal to $\mathcal N$ along $G_m$ by
    \begin{align*}
        n_m(x):=N\bigl(\pi(G_m(x))\bigr)
        \qquad\forall\,x\in M_{\zeta_m}.
    \end{align*}
    The map $\widehat X_m:M_{\zeta_m}\rightarrow\mathbb S^4$ describing the combined displacement in $\mathbb S^4$, given by 
    \begin{align}
        \label{eq:s4-final-embedding}
        \begin{split}
            \widehat X_m(x):=\exp_{G_m(x)}^{\mathbb S^4}
            \bigl(b_m(x)n_m(x)\bigr)=\cos\bigl(b_m(x)\bigr)G_m(x)
            +\sin\bigl(b_m(x)\bigr)n_m(x)
            \quad\forall\,x\in M_{\zeta_m},
        \end{split}
    \end{align}
    is a smooth embedding. Its image $\Sigma_m:=\widehat X_m(M_{\zeta_m})$ is a closed, connected, $\Gamma_m$-invariant minimal surface in $\mathbb S^4$, with
    \begin{align}
        \label{eq:s4-final-genus}
        \operatorname{genus}(\Sigma_m)=m^2+1.
    \end{align}
    Moreover, there exists $C>0$ independent of $m$ such that
    \begin{align}
        |\zeta_m|&\leq C\delta^{-2-\beta}\tau_m^{\alpha/3},
        \label{eq:s4-final-balance}\\
        \|v_m\|_{X_0}&\leq C\delta^{-4-2\beta}\tau_m^2,
        \label{eq:s4-final-v}\\
        \|\phi_m\|_{X_{\zeta_m}}
        &\leq C\delta^{-4-2\beta}\tau_m^{1+\alpha/3},
        \label{eq:s4-final-phi}\\
        \|b_m\|_{\mathcal X_{\zeta_m}}&\leq C\tau_m^2.
        \label{eq:s4-final-b}
    \end{align}
    In particular, $\zeta_m\to0$ and
    \begin{align}
        \label{eq:s4-final-tau-asymptotic}
        \frac{\tau_m}{\tau_0}
        =1+O\bigl(\delta^{-2-\beta}\tau_0^{\alpha/3}\bigr)
        \rightarrow1
        \qquad\mbox{as }m\rightarrow\infty.
    \end{align}
    Finally, abbreviating $\rho_m:=\rho_{\zeta_m}$ and $f_m:=f_{\zeta_m}$, for every $x\in M_{\zeta_m}$ and $j\in\{0,1,2\}$ we have
    \begin{align}
        \label{eq:s4-final-two-jets}
        \begin{split}
            |\nabla^j\phi_m(x)|_{g_{\zeta_m}}&\leq C\delta^{-4-2\beta}\tau_m^{1+\alpha/3}f_m(x)\rho_m(x)^{-j},\\
            |\nabla^j b_m(x)|_{g_{\zeta_m}}&\leq C\tau_m^2\rho_m(x)^{-j},
        \end{split}
    \end{align}
    where $\nabla$ is the Levi--Civita connection of $g_{\zeta_m}$.
\end{proposition}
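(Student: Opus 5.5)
The plan is to apply Schauder's fixed-point theorem to $\mathcal T_m$ on $\mathcal B_m$ and then read off all the listed properties from the estimates already established.

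\emph{Existence.} By Lemma~\ref{lem:s4-coupled-compactness}, for each sufficiently large $m$ the set $\mathcal B_m$ is a nonempty convex compact subset of the Banach space $\mathcal E_m$. The map $\mathcal T_m$ is continuous on it in the $\mathcal E_m$-topology, and by Lemma~\ref{lem:s4-coupled-self-map} it maps $\mathcal B_m$ into itself. Schauder's theorem therefore yields a fixed point $(v_m,\zeta_m)$.

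\emph{Minimality and embeddedness.} At the fixed point, identity \eqref{eq:s4-coupled-residual-identity} gives $H_{\zeta,\phi}+\mathcal R_\zeta(\phi)=0$. Proposition~\ref{prop:s4-remaining-scalar-equation} then shows that $\widehat X_m=X_{b_m}$ is a $C^{2,\beta}$ minimal embedding. Formula \eqref{eq:s4-final-embedding} follows from \eqref{eq:s4-transverse-graph} with $G=G_m$ and $n=n_m$.

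\emph{Smoothness.} This is the one genuinely new point. A $C^{2,\beta}$ minimal embedded surface in $\mathbb S^4$ is locally a graph over its tangent plane. It then solves the minimal surface system, which is a quasilinear elliptic system with diagonal principal part, so Schauder bootstrapping gives $C^\infty$ regularity of the image. The parametrisation $\widehat X_m$ itself is also smooth. To see this, write $\widehat X_m(x)=\Psi(\mathcal F^{-1}(G_m(x)),b_m(x))$ with $\Psi$ the smooth diffeomorphism of Lemma~\ref{lem:s4-tubular}. Since $\Sigma_m$ is smooth, its preimage under $\Psi$ is smooth, and it is a graph of small height $b$ over the smooth surface $G_m(M_{\zeta_m})$. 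Projecting along $\partial_b$ is a smooth map whose restriction is a diffeomorphism, because $|db|$ is small. Hence $G_m(M_{\zeta_m})$ and $b_m$ are smooth. Since $G_m$ is the normal exponential graph of $\phi_m$ over the smooth surface $M_{\zeta_m}$, $\phi_m$ is smooth as well.

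\emph{Topology and invariance.} Closedness, connectedness, $\Gamma_m$-invariance and genus $m^2+1$ transfer from $M_{\zeta_m}$ via Lemma~\ref{lem:s4-initial-geometry}, since $\widehat X_m$ is an equivariant embedding.

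\emph{Quantitative bounds.} The fixed-point identity $\tau_m\zeta_m=c_{H,\zeta_m}+d_{v_m,\zeta_m}$, combined with \eqref{eq:s4-coupled-d-bound} and \eqref{eq:s4-coupled-cH-bound}, gives
\begin{align*}
|\zeta_m|\leq C\delta^{-2-\beta}(\tau_m^{\alpha/3}+\tau_m),
\end{align*}
which is \eqref{eq:s4-final-balance}. Next, $v_m=w_{v_m,\zeta_m}\circ F_{\zeta_m}$, so \eqref{eq:s4-coupled-w-bound} and \eqref{eq:s4-norm-identification} give \eqref{eq:s4-final-v}.

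For $\phi_m$, I would use $\|u_\zeta\|_{X_\zeta}\leq C\delta^{-4-2\beta}\|D_\zeta\|_{Y_\zeta}\leq C\delta^{-4-2\beta}\tau^{1+\alpha/3}$, which follows from \eqref{eq:s4-scalar-inverse} and \eqref{eq:s4-H-error}. Adding the $v_m$ bound gives \eqref{eq:s4-final-phi}. The bound \eqref{eq:s4-final-b} is \eqref{eq:s4-transverse-ball}.

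For the asymptotics, $\tau_m/\tau_0=e^{\zeta_m}=1+O(\zeta_m)$, and by \eqref{eq:s4-coupled-scale-comparison} we may replace $\tau_m$ by $\tau_0$ there; this gives \eqref{eq:s4-final-tau-asymptotic}. The pointwise jet bounds \eqref{eq:s4-final-two-jets} follow from Remark~\ref{rem:scales} and \eqref{eq:s4-transverse-two-jet-final}.

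The main obstacle is the smoothness step. It needs care to transfer regularity of the image back to the individual displacement functions, but the tubular coordinates reduce it to the implicit function argument described above.
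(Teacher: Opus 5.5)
Your proposal is correct and follows the paper's proof essentially step by step. The shared steps are Schauder's theorem for $\mathcal T_m$ on $\mathcal B_m$, the residual identity together with Proposition~\ref{prop:s4-remaining-scalar-equation} for minimality and embeddedness, the same chain of estimates for \eqref{eq:s4-final-balance}--\eqref{eq:s4-final-two-jets}, and smoothness obtained by bootstrapping the minimal surface system and then pulling back through the tubular coordinates.

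The one check you leave implicit is in showing $\phi_m$ is smooth. Reading $G_m(M_{\zeta_m})$ as the graph of $\phi_m$ in the coordinates $(x,t)\mapsto\exp^{\mathcal N}_{\jmath_{\zeta_m}(x)}(t\nu_{\zeta_m}(x))$ requires this map to be nondegenerate at $t=\phi_m(x)$; knowing that $G_m$ is an immersion does not rule out focal points. The paper verifies this with a Jacobi field estimate giving invertibility for $|t|\leq c\rho_{\zeta_m}(x)$, together with $|\phi_m|=o(\rho_{\zeta_m})$ from \eqref{eq:s4-dimensionless-phi}.
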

\begin{proof}
By Lemmas~\ref{lem:s4-coupled-self-map} and
\ref{lem:s4-coupled-compactness}, $\mathcal T_m$ is a
continuous self-map of the nonempty compact convex set
$\mathcal B_m$. Schauder's fixed-point theorem therefore
gives $(v_m,\zeta_m)\in\mathcal B_m$ satisfying
\begin{align}
    \label{eq:s4-fixed-point-identities}
    v_m\circ F_{\zeta_m}^{-1}=w_{v_m,\zeta_m},\qquad
    \tau_m\zeta_m=c_{H,\zeta_m}+d_{v_m,\zeta_m}.
\end{align}
Substituting these identities into
\eqref{eq:s4-coupled-residual-identity} gives
$H_{\zeta_m,\phi_m}+\mathcal R_{\zeta_m}(\phi_m)=0$.
Thus, Proposition~\ref{prop:s4-remaining-scalar-equation}
shows that $\widehat X_m$ is a $C^{2,\beta}$ minimal
embedding. The transformation rules for $\phi_m,b_m$
and the exponential maps give its $\Gamma_m$-equivariance.
Since $\widehat X_m$ is an embedding, closedness, connectedness and
the genus follow from Lemma~\ref{lem:s4-initial-geometry}.

Using \eqref{eq:s4-fixed-point-identities} and
Lemma~\ref{lem:s4-coupled-self-map}, we obtain
\begin{align*}
    |\zeta_m|
        &\leq\tau_m^{-1}(|c_{H,\zeta_m}|+|d_{v_m,\zeta_m}|)
        \leq C\delta^{-2-\beta}(\tau_m^{\alpha/3}+\tau_m),\\
    \|v_m\|_{X_0}
        &\leq C_F\|w_{v_m,\zeta_m}\|_{X_{\zeta_m}}
        \leq C\delta^{-4-2\beta}\tau_m^2.
\end{align*}
The preliminary correction satisfies
$\|u_{\zeta_m}\|_{X_{\zeta_m}}
\leq C\delta^{-4-2\beta}\tau_m^{1+\alpha/3}$ by
\eqref{eq:s4-scalar-inverse} and \eqref{eq:s4-H-error}.
Consequently,
\begin{align*}
    \|\phi_m\|_{X_{\zeta_m}}
        &\leq\|u_{\zeta_m}\|_{X_{\zeta_m}}+C_F\|v_m\|_{X_0}
        \leq C\delta^{-4-2\beta}
            (\tau_m^{1+\alpha/3}+\tau_m^2).
\end{align*}
Since $\alpha/3<1$, these inequalities prove
\eqref{eq:s4-final-balance}--\eqref{eq:s4-final-phi};
\eqref{eq:s4-final-b} is the bound from
Proposition~\ref{prop:s4-transverse-elimination}.
Moreover, \eqref{eq:s4-coupled-scale-comparison} gives
$\zeta_m\to0$. Since $\tau_m/\tau_0=e^{\zeta_m}$ and
$|e^{\zeta_m}-1|\leq e|\zeta_m|$,
\eqref{eq:s4-final-tau-asymptotic} follows. The pointwise
and local H\"older estimates in
\eqref{eq:s4-final-two-jets} are the corresponding terms
in the weighted norms.

We next prove smoothness of the image and then of the
parametrisation. For fixed $m$, write the $C^{2,\beta}$
surface locally as a graph $y\mapsto(y,V(y))$ over its
tangent plane in smooth ambient coordinates. Projecting
the minimality equation onto the normal plane and
inverting the projection of the two graph directions gives
\begin{align*}
    g_V^{ij}\partial_{ij}V^\lambda
        =F^\lambda(y,V,DV),\qquad\lambda\in\{1,2\},
\end{align*}
where $F^\lambda$ is smooth and $g_V$ is the induced
metric. The matrix $(g_V^{ij})$ is uniformly positive
definite on a smaller coordinate disk. Interior Schauder
estimates, applied successively to this system and its
derivatives, therefore give $V\in C^\infty$.
Hence $\Sigma_m$ is a smooth submanifold.

On the tubular neighbourhood, define the smooth maps $P$
and $t_\perp$ by
\begin{align*}
    P(\Psi(q,b)):=\mathcal F(q),\qquad
    t_\perp(\Psi(q,b)):=b.
\end{align*}
Since $P\circ\widehat X_m=G_m$ and $G_m$ is an embedding,
$P|_{\Sigma_m}$ is injective and has injective
differential. Compactness of $\Sigma_m$ then makes it a
smooth embedding. In particular, $G_m(M_{\zeta_m})$ is
a smooth surface in $\mathcal N$.

To recover smoothness of $\phi_m$, consider
\begin{align*}
    \mathcal E(x,t)
        :=\exp_{\jmath_{\zeta_m}(x)}^{\mathcal N}
            (t\nu_{\zeta_m}(x)).
\end{align*}
The bound $|A_{\jmath_{\zeta_m}}|\leq C/\rho_{\zeta_m}$
and bounded ambient curvature control its tangential
Jacobi fields. Indeed, after comparison by parallel
transport, integrating the Jacobi equation with
$J'(0)=-S_{\jmath_{\zeta_m}}^{\nu_{\zeta_m}}J(0)$ gives
\begin{align*}
    |J(t)-J(0)|\leq
        C\bigl(|t|/\rho_{\zeta_m}(x)+t^2\bigr)|J(0)|.
\end{align*}
These fields are orthogonal to the normal geodesic
velocity. Thus $d\mathcal E$ is invertible when
$|t|\leq c\rho_{\zeta_m}(x)$ for a fixed sufficiently
small $c>0$. By \eqref{eq:s4-dimensionless-phi},
$t=\phi_m(x)$ lies in this range for large $m$.
In the resulting smooth local coordinates, the surface
$G_m(M_{\zeta_m})$ is the graph of $\phi_m$. Its projection
to $M_{\zeta_m}$ has invertible differential, since its
composition with $G_m$ is the identity. The inverse
function theorem therefore shows that $\phi_m$ is smooth.
It follows that $G_m$ is smooth, and then
\begin{align*}
    \widehat X_m=(P|_{\Sigma_m})^{-1}\circ G_m,\qquad
    b_m=t_\perp\circ\widehat X_m
\end{align*}
are smooth as well.
\end{proof}
\section{Strictly negative Gaussian curvature and the area limit}
\label{sec:s4-negative-curvature}
For sufficiently large $m$, let $\Sigma_m=\widehat X_m(M_{\zeta_m})$ be the minimal surfaces constructed in Proposition~\ref{prop:s4-coupled-existence}. In this last section, we prove the two remaining conclusions of Theorem~\ref{thm:s4-main}, i.e. $K_{\Sigma_m}<0$ for all sufficiently large $m$ and $\operatorname{Area}(\Sigma_m)\to4\pi^2$ as $m\to+\infty$. The estimates on the Gaussian curvature of $\Sigma_m$ will be achieved by means of two comparison arguments. Away from the neck waists, we will compare the height of $G_m(M_{\zeta_m})$ with the rescaled Green height $\tau_m\Phi_m$, use the estimates on $\Phi_m$ to prove that $G_m(M_{\zeta_m})$ is negatively curved in this region, and then control the perturbation caused by the displacement $b_m$ normal to $\mathcal N$. Near the waists, we will compare the rescaled final surfaces with the unit Euclidean catenoid.

For $q\in\mathbb T$, let $r_m(q):=d_{\mathbb T}(q,L_m)$, so that $\rho_m:=\rho_{\zeta_m}=r_m\circ\pi$ on $M_{\zeta_m}$. Fix $R>0$, to be chosen sufficiently large later. We prove that, for $m$ sufficiently large depending on $R$, the Gaussian curvature of $\Sigma_m$ is negative by treating separately the regions $\rho_m\geq 2R\tau_m$, away from the waists, and $\rho_m\leq 2R\tau_m$, near the waists.
\subsection{Height coordinates and comparison with the rescaled Green height}
We first express $G_m(M_{\zeta_m})$ as a height graph over $\mathbb T$ and transfer the deformation estimates to these coordinates. We use the extension of $h_{\zeta_m}$ to $\{r_m>\tau_m\}$ from Appendix~\ref{app:s4-initial-geometry}.

Since $\rho_m\leq\pi/m$, the definition of $f_m$ gives $\sup_{M_{\zeta_m}}f_m\leq Cm^{-3/2}$ for all sufficiently large $m$. The bound \eqref{eq:s4-final-two-jets} and the asymptotics \eqref{eq:s4-coupled-scale-comparison} therefore yield
\begin{align}
    \label{eq:s4-curvature-scalar-smallness}
    \sum_{j=0}^2\rho_m(x)^j
        |\nabla^j\phi_m(x)|_{g_{\zeta_m}}
        \leq C m^{5/2+2\beta}\tau_m^{1+\alpha/3}
        =o(\tau_m)
        \qquad\forall\,x\in M_{\zeta_m}.
\end{align}
\begin{lemma}[Height coordinates away from the neck waists]
    \label{lem:s4-outer-coordinates}
    For $R$ sufficiently large and $m$ sufficiently large depending on $R$, every $x\in M_{\zeta_m}$ with $\rho_m(x)\geq2R\tau_m$ has a neighbourhood whose image under $G_m$ is a height graph over $\mathbb T$. More precisely, after applying $Z$ if $x$ lies on the lower sheet, there are a neighbourhood $U\subset\{r_m>R\tau_m\}\subset\mathbb{T}$ of $\pi(G_m(x))\in\mathbb{T}$ and functions $z_m,\widetilde b_m\in C^{\infty}(U)$ such that the corresponding portions of $G_m(M_{\zeta_m})$ and $\Sigma_m$ are parametrised respectively by
    \begin{align}
        \label{eq:s4-outer-graph-form}
        X_m(q)&:=\mathcal F(q,z_m(q)),\notag\\
        X_m^b(q)&:=\cos(\widetilde b_m(q))X_m(q)+\sin(\widetilde b_m(q))N(q) \qquad\forall\,q\in U.
    \end{align}
    Moreover, there exists $C>0$ independent of $m$, $R$, and $x$, such that
    \begin{align}
        \label{eq:s4-outer-coordinate-estimates}
        \begin{split}
            \sum_{j=0}^2r_m(q)^j|D^j(z_m-h_{\zeta_m})(q)|&\leq C m^{5/2+2\beta}\tau_m^{1+\alpha/3},\\
            \sum_{j=0}^2r_m(q)^j|D^j\widetilde b_m(q)|&\leq C\tau_m^2,
        \end{split}
    \end{align}
    and
    \begin{align}
        \label{eq:s4-outer-height-bounds}
        |z_m(q)|\leq Cm^2\tau_m,\qquad |D^jz_m(q)|\leq C\tau_m r_m(q)^{-j} \,\forall\,j\in\{1,2\}
    \end{align}
    for every $q\in U$, where $D$ is the Levi--Civita connection of $g_{\mathbb T}$ and the tensor norms are computed using this metric.
\end{lemma}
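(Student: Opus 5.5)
We work in the region $\rho_m(x)\geq 2R\tau_m$, where $M_{\zeta_m}$ itself is the graph $q\mapsto\mathcal F(q,h_{\zeta_m}(q))$ over $\{r_m>\tau_m\}$ (using the extension of $h_{\zeta_m}$), and after applying $Z$ we may assume $x$ is on the upper sheet. The plan has three steps: first recall the graph bounds for $h_{\zeta_m}$, then convert the normal displacement $\phi_m$ into a height perturbation $z_m-h_{\zeta_m}$ by an implicit-function argument, and finally transfer $b_m$ to $\widetilde b_m$ by composition.

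\textbf{Bounds for $h_{\zeta_m}$.} On $\{r_m\geq R\tau_m\}$ we have
\begin{align*}
    |h_{\zeta_m}|\leq Cm^2\tau_m,\qquad |D^jh_{\zeta_m}|\leq C\tau_m r_m^{-j}\quad (j=1,2).
\end{align*}
On the catenoidal part $C_\tau(r)=\tau\operatorname{arcosh}(r/\tau)$ these follow from $|C_\tau'|=\tau/\sqrt{r^2-\tau^2}$ and $|C_\tau''|\leq C\tau r^{-2}$ once $r\geq R\tau$ with $R\geq2$. On the sheet part they follow from \eqref{eq:s4-physical-values}, \eqref{eq:s4-physical-derivatives}, the smallness of the $J_0$ correction $\tau_m\zeta_m\widehat v_m$, and the cutoff bounds \eqref{eq:der-bounds} on the interpolation annuli, where $C_\tau-\widetilde\psi_\zeta=O(\tau^3r^{-2})+\tau\,\mathcal E_{m,p}$ is small. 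These bounds are essentially those established in Appendix~\ref{app:s4-initial-geometry}. In particular $|Dh_{\zeta_m}|\leq C/R$, and by \eqref{eq:s4-height-graph-identities} the normal $\nu_{\zeta_m}$ is $O(1/R)$-close to $E_3$.

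\textbf{Height coordinates for $G_m$.} In $\mathcal F$-coordinates, write $G_m(\mathcal F(q,h(q)))=\mathcal F(q+\Xi(q),h(q)+\Lambda(q))$, with $\exp^{\mathcal N}$ expanded to second order. Because $\nu=(-w_sE_1-w_tE_2+cE_3)/\sqrt Q$ and geodesics of $g_{\mathcal N}$ in the $z$-direction are straight lines up to curvature corrections of order $|z|$ and $|Dh|$, we get
\begin{align*}
    \Lambda=\phi\,(1+O(|Dh|^2+|\phi|)),\qquad |\Xi|\leq C|\phi|\,|Dh|,
\end{align*}
with analogous derivative bounds when each $D$ costs $r_m^{-1}$. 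Using \eqref{eq:s4-curvature-scalar-smallness}, the base map $q\mapsto q+\Xi(q)$ is a $C^2$-small perturbation of the identity at scale $r_m$, and $r_m(q+\Xi)=(1+o(1))r_m(q)$ (compare \eqref{eq:s4-deformed-scale}). Hence it is a local diffeomorphism onto a neighbourhood $U\subset\{r_m>R\tau_m\}$ of $\pi(G_m(x))$. Inverting it defines $z_m$, and the scaled $C^2$ bound for $z_m-h_{\zeta_m}$ follows from the chain rule, since every derivative of $\Xi$ and $\Lambda$ carries the factor $\sum_j\rho_m^j|\nabla^j\phi_m|\leq Cm^{5/2+2\beta}\tau_m^{1+\alpha/3}$. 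To convert $\nabla^{g_\zeta}$ into flat $D$ we use the uniform comparison of $g_\zeta$ with $\pi^*g_{\mathbb T}$ at scale $\rho$ (the metric is $\cos^2h\,g_{\mathbb T}+dh^2$, with $|Dh|\leq C/R$ and $r_m|D^2h|\leq C\tau_m/r_m$). The bounds \eqref{eq:s4-outer-height-bounds} for $z_m$ then follow by combining this with the bounds on $h_{\zeta_m}$, noting that $m^{5/2+2\beta}\tau_m^{1+\alpha/3}\leq\tau_m$.

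\textbf{Transferring $b_m$, and the main obstacle.} Set $\widetilde b_m:=b_m\circ(\pi\circ G_m)^{-1}$. Then $X_m^b=\widehat X_m\circ(\pi\circ G_m)^{-1}$, and \eqref{eq:s4-final-embedding} gives the form \eqref{eq:s4-outer-graph-form}, since $n_m=N\circ\pi\circ G_m$. The bound on $\widetilde b_m$ follows from \eqref{eq:s4-final-two-jets} and the chain rule with the $C^2$-controlled chart just built. Smoothness of all objects follows from Proposition~\ref{prop:s4-coupled-existence}. The main obstacle is the second step: making the expansion of $\exp^{\mathcal N}$ in $\mathcal F$-coordinates quantitative up to second derivatives with the correct scaling in $r_m$. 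I would handle this by rescaling the chart around $\pi(x)$ by $r_m(\pi(x))^{-1}$. In rescaled coordinates $h/r$ and $Dh$ are small for $R$ large, the metric $g_{\mathcal N}$ is $C^k$-close to Euclidean with $O(r)$ Christoffel symbols, and the exponential map becomes a smooth function of $(h,Dh,\phi,D\phi)$ with uniformly bounded derivatives. The estimates then reduce to a fixed-size implicit function theorem.
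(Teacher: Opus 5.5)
Your proposal is correct and follows essentially the same route as the paper. The paper also works at the scale $r=r_m(\pi(x))$ and writes $G_m(\mathcal F(q,h_{\zeta_m}(q)))=\mathcal F(q+e_m(q),h_{\zeta_m}(q)+a_m(q))$ with $\sum_j r^j(|D^je_m|+|D^ja_m|)\le Cm^{5/2+2\beta}\tau_m^{1+\alpha/3}$, inverts the near-identity base map $Q_m=\operatorname{id}+e_m$, and sets $z_m=(h_{\zeta_m}+a_m)\circ Q_m^{-1}$ and $\widetilde b_m=b_m\circ\mathcal F(\,\cdot\,,h_{\zeta_m})\circ Q_m^{-1}$. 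One detail should be made explicit: the second-derivative estimate for $z_m-h_{\zeta_m}$ (the paper uses $((z_m-h_{\zeta_m})\circ Q_m)(q)=a_m(q)-\int_0^1Dh_{\zeta_m}(q+te_m(q))\cdot e_m(q)\,dt$) needs $|D^3h_{\zeta_m}|\le C\tau_m r_m^{-3}$, which is available from Appendix~\ref{app:s4-initial-geometry} but is not among the $j=1,2$ bounds you list.
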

The proof of Lemma~\ref{lem:s4-outer-coordinates} is given in Appendix~\ref{app:s4-outer-coordinates}.

We next compare the initial height function $h_{\zeta_m}$ with the rescaled Green height $\tau_m\Phi_m$.
\begin{lemma}[Comparison with the rescaled Green height]
    \label{lem:s4-glued-Green-comparison}
    For $R$ sufficiently large and $m$ sufficiently large
    depending on $R$, there exists $C>0$ independent of $R$ and $m$ such that
    \begin{align}
        \label{eq:s4-glued-Green-comparison}
        \sum_{j=0}^2r_m(q)^j
            |D^j(h_{\zeta_m}-\tau_m\Phi_m)(q)|
        \leq C\tau_m
            \bigl(|\zeta_m|+m^2\tau_m^{2\alpha}+R^{-2}\bigr)
    \end{align}
    for every $q\in\mathbb T$ with $r_m(q)\geq R\tau_m$.
    Consequently, on each neighbourhood $U$ from
    Lemma~\ref{lem:s4-outer-coordinates}, we have
    \begin{align}
        \label{eq:s4-final-Green-comparison}
        \sum_{j=0}^2r_m(q)^j
            |D^j(z_m-\tau_m\Phi_m)(q)|
        \leq C\tau_m
            \bigl(m^{5/2+2\beta}\tau_m^{\alpha/3}+R^{-2}\bigr)
        \qquad\forall\,q\in U.
    \end{align}
\end{lemma}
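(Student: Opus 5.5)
We will split the region $\{r_m\geq R\tau_m\}$ according to the definition \eqref{eq:s4-glued-height} of $h_{\zeta_m}$ (with its extension by $C_{\tau_m}$ on $\tau_m<r_p\leq9\tau_m$), and on each piece write $h_{\zeta_m}-\tau_m\Phi_m$ as a sum of explicit differences. Throughout we use $\zeta=\zeta_m$, $\tau_m=\tau_{\zeta_m}$, $\psi_{\zeta_m}=\tau_m\Phi_m$, $r_c=\tau_m^\alpha$ and $\delta=(100m)^{-1}$.

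\emph{Outer region $\rho\geq3r_c$.} Here $h_{\zeta_m}=\widetilde\psi_{\zeta_m}=\tau_m\Phi_m-\tau_m\zeta_m\widehat v_m$, so the difference is $-\tau_m\zeta_m\widehat v_m$. Where $\rho\leq\delta$, $\widehat v_m=J_0(\sqrt2 r_p)$ has $|D^j\widehat v_m|\leq C r_p^{2-j}$ for $j\geq1$ and $|\widehat v_m|\leq 1$. On $\delta\leq\rho\leq2\delta$ the cutoff derivatives are bounded by $C\delta^{-j}\leq Cr_m^{-j}$, by \eqref{eq:der-bounds}. Hence $\sum_j r_m^j|D^j\widehat v_m|\leq C$, and this region contributes $C\tau_m|\zeta_m|$.

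\emph{Inner region $R\tau_m\leq r_p\leq2r_c$.} Here $h_{\zeta_m}=C_{\tau_m}(r_p)$. We write
\begin{align*}
    C_{\tau_m}-\tau_m\Phi_m=\bigl(C_{\tau_m}-\tau_m\log(2r_p/\tau_m)\bigr)-\bigl(\tau_m\Phi_m-\tau_m\log(2r_p/\tau_m)\bigr).
\end{align*}
For the first bracket, the expansion of $\operatorname{arcosh}$, namely $\log(x+\sqrt{x^2-1})-\log(2x)=O(x^{-2})$ with scaled derivative bounds, gives $\sum_j r_p^j|D^j(\cdot)|\leq C\tau_m(\tau_m/r_p)^2\leq C\tau_mR^{-2}$.

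For the second bracket, \eqref{eq:s4-physical-expansion} and $A_m=\log(2/\tau_m)+\zeta_m$ give $\tau_m(\zeta_m+\mathcal E_{m,p})$. Applying the annulus estimate dyadically with $R'\sim r_p\leq 2r_c$ yields
\begin{align*}
    \sum_j r_p^j|D^j\mathcal E_{m,p}|\leq C r_c^2(m^2+|\log r_c|)\leq C m^2\tau_m^{2\alpha},
\end{align*}
using $|\log\tau_m|\leq Cm^2$ from Remark~\ref{rem:s4-waist-asymptotics}. This region therefore contributes $C\tau_m(|\zeta_m|+m^2\tau_m^{2\alpha}+R^{-2})$.

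\emph{Interpolation annulus $2r_c\leq r_p\leq3r_c$.} Here
\begin{align*}
    h_{\zeta_m}-\tau_m\Phi_m=\eta_{3r_c,2r_c}\bigl(C_{\tau_m}-\widetilde\psi_{\zeta_m}\bigr)+\bigl(\widetilde\psi_{\zeta_m}-\tau_m\Phi_m\bigr).
\end{align*}
The last term is handled as in the outer region. The difference $C_{\tau_m}-\widetilde\psi_{\zeta_m}$ is bounded in scaled $C^2$ by the two previous estimates together with the $\widehat v_m$ bound, giving $C\tau_m(|\zeta_m|+m^2\tau_m^{2\alpha}+(\tau_m/r_c)^2)$, where $(\tau_m/r_c)^2\leq R^{-2}$. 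The cutoff has $r_c^j|D^j\eta|\leq C$ with $r_p\sim r_c$, so the Leibniz rule preserves the bound. This proves \eqref{eq:s4-glued-Green-comparison}.

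\emph{Consequence \eqref{eq:s4-final-Green-comparison}.} This follows from the triangle inequality with \eqref{eq:s4-outer-coordinate-estimates}, using $U\subset\{r_m>R\tau_m\}$. We bound $|\zeta_m|\leq C\delta^{-2-\beta}\tau_m^{\alpha/3}\leq Cm^{5/2+2\beta}\tau_m^{\alpha/3}$ by \eqref{eq:s4-final-balance}. We also need $m^2\tau_m^{2\alpha}\leq m^{5/2+2\beta}\tau_m^{\alpha/3}$, which holds for large $m$.

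\emph{Main obstacle.} The step needing most care is the scaled derivative control of $\mathcal E_{m,p}$ near $r_p\sim R\tau_m$. There the annulus bound must be used with $R'$ comparable to $r_p$ and the $|\log R'|$ factor absorbed, since $|\log R'|\leq C m^2$ uniformly in the relevant range. Once this is in place, the other steps are direct bookkeeping with the scaled derivative bounds.
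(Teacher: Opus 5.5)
Your proposal is correct and follows essentially the paper's argument. The only difference is organisational: the paper uses a single global decomposition $h_{\zeta_m}-\tau_m\Phi_m=-\tau_m\zeta_m\widehat v_m+\sum_{p}\eta_p\bigl(C_{\tau_m}(d_{\mathbb T}(\,\cdot\,,p))-\widetilde\psi_{\zeta_m}\bigr)$ where you split into three regions. The ingredients are the same: the scaled bound on $\widehat v_m$, the catenoid expansion giving $R^{-2}$, the annulus estimate for $\mathcal E_{m,p}$ with $|\log d_{\mathbb T}(\,\cdot\,,p)|\le|\log\tau_m|\le Cm^2$, the cutoff Leibniz bound, and the triangle inequality with \eqref{eq:s4-outer-coordinate-estimates} and \eqref{eq:s4-final-balance}.
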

\begin{proof}
Using the cutoffs from \eqref{eq:s4-cutoff}, define $\eta_p\in C^\infty(\mathbb T;[0,1])$ by $\eta_p:=\eta_{3\tau_m^\alpha,2\tau_m^\alpha}\circ d_{\mathbb T}(\,\cdot\,,p)$, so that
\begin{align*}
    \eta_p(q)=\begin{cases}
        1 & \mbox{ if }\, d_{\mathbb T}(q,p)\leq2\tau_m^\alpha,\\
        0 & \mbox{ if }\, d_{\mathbb T}(q,p)\geq 3\tau_m^\alpha.
    \end{cases}
\end{align*}
By \eqref{eq:s4-glued-height}, we have
\begin{align}
    \label{eq:s4-height-difference-decomposition}
    h_{\zeta_m}-\tau_m\Phi_m=-\tau_m\zeta_m\widehat v_m+\sum_{p\in L_m}\eta_p\bigl(C_{\tau_m}\big(d_{\mathbb T}(\,\cdot\,,p)\big)-\widetilde\psi_{\zeta_m}\bigr) \qquad\mbox{ on } \,\{r_m\ge R\tau_m\},
\end{align}
with
\begin{align*}
    \widehat v_m=\sum_{p\in L_m}\eta_{2\delta,\delta}\big(d_{\mathbb T}(\,\cdot\,,p)\big)J_0\big(\sqrt2\,d_{\mathbb T}(\,\cdot\,,p)\big), \qquad \widetilde\psi_{\zeta_m}=\tau_m\Phi_m-\tau_m\zeta_m\widehat v_m,
\end{align*}
where $J_0$ is the Bessel function of the first kind of order zero and the catenoidal height $C_{\tau_m}$ is given by
\begin{align*}
    C_{\tau_m}(r)=\tau_m\operatorname{arcosh}(r/\tau_m) \qquad\forall\, r\geq\tau_m.
\end{align*}
We now estimate the first summand on the right-hand side of \eqref{eq:s4-height-difference-decomposition}. By \eqref{eq:s4-substitute}, the summands defining $\widehat v_m$ have disjoint supports contained in $B_{\mathbb T}(p,2\delta)$. Thus, the power series definition \eqref{eq:j0-def} of $J_0$ and the derivative bounds \eqref{eq:der-bounds} give
\begin{align*}
    |D^j\widehat v_m|\leq C\delta^{-j} \qquad\mbox{ on } \,\mathbb{T},\quad\forall\,j\in\{0,1,2\},
\end{align*}
with $C$ independent of $m$. Since $\widehat v_m$ is supported in $\{r_m\leq2\delta\}$, it follows that
\begin{align*}
    \sum_{j=0}^2r_m^j|D^j\widehat v_m|\leq C \qquad\mbox{ on } \,\mathbb{T},
\end{align*}
for some $C>0$ independent of $m$, $R$, and $p$. Hence, 
\begin{align}
\label{eq:b1}
    \sum_{j=0}^2r_m^j|D^j(-\tau_m\zeta_m\widehat v_m)|=\tau_m|\zeta_m|\sum_{j=0}^2r_m^j|D^j\widehat v_m|\leq C\tau_m|\zeta_m| \qquad\mbox{ on }\,\mathbb{T}.
\end{align}
We then estimate the second summand on the right-hand side of \eqref{eq:s4-height-difference-decomposition}. Fix any $p\in L_m$ and note that, by \eqref{eq:s4-physical-expansion} and \eqref{eq:s4-scales}, the local Green expansion takes the form
\begin{align*}
    \Phi_m=\log\bigg(\frac{2d_{\mathbb{T}}(\,\cdot\,,p)}{\tau_m}\bigg)+\zeta_m+\mathcal E_{m,p} \qquad\mbox{ on }\, B_{\mathbb T}(p,2\delta)\smallsetminus\{p\},
\end{align*}
where $\mathcal E_{m,p}$ is the remainder defined in \eqref{eq:s4-physical-expansion}. Substituting the above expansion into the definition of $\widetilde\psi_{\zeta_m}$ gives
\begin{align*}
    \widetilde\psi_{\zeta_m}-\tau_m\log\bigg(\frac{2d_{\mathbb{T}}(\,\cdot\,,p)}{\tau_m}\bigg)=\tau_m\mathcal E_{m,p}+\tau_m\zeta_m(1-\widehat v_m) \qquad\mbox{ on }\, B_{\mathbb T}(p,2\delta)\smallsetminus\{p\}.
\end{align*}
Note that, for every $p\in L_m$ the cutoff $\eta_p$ is supported in the region $\{d_{\mathbb T}(\,\cdot\,,p)\leq3\tau_m^\alpha\}$. Therefore, to estimate the term multiplied by $\eta_p$ on $\{r_m\geq R\tau_m\}$, it suffices to consider points $q\in\mathbb T$ satisfying
\begin{align*}
    R\tau_m\leq d_{\mathbb T}(q,p)\leq3\tau_m^\alpha<\delta.
\end{align*}
At such points, $\eta_{2\delta,\delta}(d_{\mathbb T}(q,p))=1$
and the other summands defining $\widehat v_m$ vanish.
Consequently, the power series \eqref{eq:j0-def} gives
\begin{align*}
    \widehat v_m(q)
    =J_0\bigl(\sqrt2\,d_{\mathbb T}(q,p)\bigr)
    =1-\frac12d_{\mathbb T}(q,p)^2
        +O\bigl(d_{\mathbb T}(q,p)^4\bigr).
\end{align*}
Differentiating this series yields
\begin{align*}
    |D^j(1-\widehat v_m)(q)|
    \leq C d_{\mathbb T}(q,p)^{2-j}
    \qquad\forall\,j\in\{0,1,2\}.
\end{align*}
Moreover, since $\tau_m\leq d_{\mathbb T}(q,p)<1$,
Remark~\ref{rem:s4-waist-asymptotics} gives
\begin{align*}
    |\log d_{\mathbb T}(q,p)|
    \leq|\log\tau_m|\leq Cm^2.
\end{align*}
The remainder estimate in \eqref{eq:s4-physical-expansion}
therefore implies
\begin{align*}
    |D^j\mathcal E_{m,p}(q)|\leq C d_{\mathbb T}(q,p)^{2-j}
        \bigl(m^2+|\log d_{\mathbb T}(q,p)|\bigr)\leq Cm^2d_{\mathbb T}(q,p)^{2-j}\qquad\forall\,j\in\{0,1,2\}.
\end{align*}
Combining these bounds with the preceding identity for
$\widetilde\psi_{\zeta_m}$ and using $|\zeta_m|\leq1$, we obtain
\begin{align}
\label{eq:b2}
    \left|
        D^j\left[
            \widetilde\psi_{\zeta_m}
            -\tau_m\log\bigg(
                \frac{2d_{\mathbb T}(\,\cdot\,,p)}{\tau_m}
            \bigg)
        \right](q)
    \right|
    \leq C\tau_m m^2d_{\mathbb T}(q,p)^{2-j}
    \qquad\forall\,j\in\{0,1,2\},
\end{align}
for every $q\in\mathbb T$ satisfying $R\tau_m\leq d_{\mathbb T}(q,p)\leq3\tau_m^\alpha$, with $C$ independent of $m$, $R$, and $p$. The catenoidal estimate proved in Appendix~\ref{app:s4-initial-geometry}, immediately before \eqref{eq:s4-matching-estimate}, gives
\begin{align*}
    |\partial_r^j(C_{\tau_m}(r)-\tau_m\log(2r/\tau_m))|\leq C\tau_m^3r^{-2-j} \qquad\forall\,r\geq 2\tau_m,\,\forall\,j\in\{0,1,2\}.
\end{align*}
Therefore, the radial Hessian formula gives 
\begin{align}
\label{eq:b3}
    \left|
        D^j\left[
            C_{\tau_m}\bigl(d_{\mathbb T}(\,\cdot\,,p)\bigr)
            -\tau_m\log\bigg(
                \frac{2d_{\mathbb T}(\,\cdot\,,p)}{\tau_m}
            \bigg)
        \right](q)
    \right|
    \leq C\tau_m^3d_{\mathbb T}(q,p)^{-2-j}
    \qquad\forall\,j\in\{0,1,2\},
\end{align}
for every $q\in\mathbb T$ outside the cut locus of $p$ satisfying $2\tau_m\leq d_{\mathbb T}(q,p)$. Thus, by \eqref{eq:b2}, \eqref{eq:b3}, and the triangle inequality, we obtain
\begin{align}
    \label{eq:s4-log-profile-error}
    \sum_{j=0}^2d_{\mathbb{T}}(\,\cdot\,,p)^j
        |D^j(C_{\tau_m}\big(d_{\mathbb{T}}(\,\cdot\,,p)\big)-\widetilde\psi_{\zeta_m})|
    \leq C\tau_m\bigl(m^2d_{\mathbb{T}}(\,\cdot\,,p)^2+\tau_m^2d_{\mathbb{T}}(\,\cdot\,,p)^{-2}\bigr)
\end{align}
on $\{R\tau_m\leq d_{\mathbb T}(\,\cdot\,,p)\leq3\tau_m^\alpha\}$. Note that the derivative bounds \eqref{eq:der-bounds} and the radial Hessian formula give
\begin{align*}
    \sum_{j=0}^2r_m^j|D^j\eta_p|\leq C \qquad\mbox{ on } \mathbb{T},
\end{align*}
for some $C>0$ independent of $m$, $R$, and $p$. The above cutoff bounds and the product rule preserve the estimate \eqref{eq:s4-log-profile-error} after multiplication by $\eta_p$. For every $q\in\operatorname{spt}\eta_p$ with $r_m(q)\geq R\tau_m$, we have
\begin{align*}
    R\tau_m\leq r_m(q)=d_{\mathbb T}(q,p)\leq3\tau_m^\alpha,
\end{align*}
and hence $\tau_m^2d_{\mathbb T}(q,p)^{-2}\leq R^{-2}$. Each cutoff term therefore satisfies
\begin{align}
    \label{eq:b4}
    \sum_{j=0}^2r_m^j|D^j[\eta_p(C_{\tau_m}\big(d_{\mathbb{T}}(\,\cdot\,,p)\big)-\widetilde\psi_{\zeta_m})]|\leq C\tau_m\bigl(m^2\tau_m^{2\alpha}+R^{-2}\bigr) \qquad\mbox{ on }\,\{r_m\geq R\tau_m\}.
\end{align}
As we noted above, the supports of the cutoffs $\eta_p$ are contained in the regions $\{d_{\mathbb T}(\,\cdot\,,p)\leq3\tau_m^\alpha\}$, which are disjoint. Therefore, at each point in $\{r_m\geq R\tau_m\}$ at most one cutoff $\eta_p$ contributes to the sum on the right-hand side of \eqref{eq:s4-height-difference-decomposition}. Adding \eqref{eq:b1} to \eqref{eq:b4} therefore proves \eqref{eq:s4-glued-Green-comparison}.

Finally, \eqref{eq:s4-final-balance} gives $|\zeta_m|\leq Cm^{2+\beta}\tau_m^{\alpha/3}$. Since $m\geq1$ and $\tau_m<1$, we obtain
\begin{align*}
    |\zeta_m|+m^2\tau_m^{2\alpha}
    \leq C m^{5/2+2\beta}\tau_m^{\alpha/3}.
\end{align*}
Adding \eqref{eq:s4-outer-coordinate-estimates} to \eqref{eq:s4-glued-Green-comparison} proves \eqref{eq:s4-final-Green-comparison}.
\end{proof}
\subsection{Negative curvature away from the neck waists}
We first express the curvature of a general height graph on the torus in terms of its defining height function and its first two derivatives. For any open set $U\subset\mathbb{T}$ and $z\in C^2(U)$, define
\begin{align}
    \label{eq:s4-quadratic-curvature}
    \mathcal K_2(z):=\det D^2z+z\Delta_{\mathbb T}z+|Dz|^2\in C^0(U),
\end{align}
where the determinant and trace are taken with respect to
$g_{\mathbb T}$.
\begin{lemma}[Curvature of a height graph over $\mathbb{T}$]
    \label{lem:s4-exact-height-curvature}
    Let $U\subset\mathbb T$ be open, $z\in C^2(U,(-z_0,z_0))$, and $X:=\mathcal F(\,\cdot\,,z)$ on $U$. The Gaussian curvature $K_X$ of $X^*g_{\mathcal N}$ satisfies
    \begin{align}
        \label{eq:s4-exact-height-curvature}
        \begin{split}
            (\cos^2z+|Dz|^2)^2K_X
                ={}&\det D^2z+\sin z\cos z\,\Delta_{\mathbb T}z
                    +|Dz|^2+|Dz|^4\\
                &+2\tan z\bigl((\Delta_{\mathbb T}z)|Dz|^2
                    -D^2z(Dz,Dz)\bigr).
        \end{split}
    \end{align}
    For $R$ sufficiently large and $m$ sufficiently large depending on $R$, if $U\subset\{r_m\geq R\tau_m\}$ and $z$ satisfies the bounds in \eqref{eq:s4-outer-height-bounds}, then
    \begin{align}
        \label{eq:s4-height-quadratic-error}
        |K_X-\mathcal K_2(z)| \leq C(m^4\tau_m^2+R^{-2})\tau_m^2r_m^{-4}
            \qquad\mbox{ on }U,
    \end{align}
    where $C>0$ is independent of $m$ and $R$.
\end{lemma}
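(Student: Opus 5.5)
The plan has two parts: an exact curvature computation for the graph metric, and then an error estimate.

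\textbf{Exact identity.} By \eqref{eq:s4-suspension-metric}, the pullback metric is
\begin{align*}
    X^*g_{\mathcal N}=\cos^2z\,(ds^2+dt^2)+dz\otimes dz.
\end{align*}
In the flat coordinates $(s,t)$ it has coefficients
\begin{align*}
    g_{ij}=c^2\delta_{ij}+z_iz_j,\qquad c:=\cos z,
\end{align*}
so $\det g=c^2Q$ with $Q:=c^2+|Dz|^2$.

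Rather than using Brioschi's formula directly, I would apply the Gauss equation to $X(U)\subset\mathcal N$:
\begin{align*}
    K_X=\overline K(e_1,e_2)+\det\bigl(S^{\nu}_X\bigr).
\end{align*}
The two ingredients are computed as follows.

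\emph{Second fundamental form.} Take the unit normal $\nu$ from \eqref{eq:s4-height-graph-identities}. Compute the Christoffel symbols of $dz^2+\cos^2z\,g_{\mathbb T}$; the only nonzero ones are $\Gamma^z_{ss}=\Gamma^z_{tt}=\sin z\cos z$ and $\Gamma^s_{sz}=\Gamma^t_{tz}=-\tan z$. This gives
\begin{align*}
    h_{ij}=Q^{-1/2}\bigl(c\,z_{ij}+2\sin z\,z_iz_j+\sin z\,c^2\delta_{ij}\bigr),
\end{align*}
where the middle term comes from the mixed symbols contracted against $\nu$'s horizontal part.

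\emph{Ambient sectional curvature.} For the warped product $dz^2+\cos^2z\,g_{\mathbb T}$ over a flat base, the horizontal sectional curvature is $\tan^2z$ and the mixed ones equal $1$. Since the unit normal has horizontal part of size $|Dz|/\sqrt Q$, the tangent plane $\nu^\perp$ has curvature
\begin{align*}
    \overline K=\frac{|Dz|^2+c^2\tan^2 z}{Q}\cdot(\text{appropriate weights}).
\end{align*}

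I would then expand $\det h/\det g$ and add $\overline K$. Multiplying through by $Q^2$, the terms should collapse to the right-hand side of \eqref{eq:s4-exact-height-curvature}; the cross terms $2\tan z(\Delta z|Dz|^2-D^2z(Dz,Dz))$ come from $\det(c\,D^2z+2\sin z\,Dz\otimes Dz)$. This bookkeeping is the main obstacle. Before trusting the general identity, I would sanity-check it at $Dz=0$, where it reduces to $c^4K=\det D^2z+\sin z\cos z\,\Delta z$, and on constant $z$, where $K=0$.

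\textbf{Error estimate.} Subtract $Q^2\mathcal K_2(z)$ from the identity and divide by $Q^2$. The inputs are \eqref{eq:s4-outer-height-bounds}, namely $|z|\le Cm^2\tau_m$, $|Dz|\le C\tau_m r_m^{-1}$ and $|D^2z|\le C\tau_mr_m^{-2}$, together with $r_m\ge R\tau_m$. The differences are bounded term by term:
\begin{align*}
    |\sin z\cos z-z|\,|\Delta z|&\le C|z|^3\tau_mr_m^{-2},\\
    |Dz|^4&\le C\tau_m^4r_m^{-4}\le CR^{-2}\tau_m^2r_m^{-4},\\
    |\tan z|\,|D^2z|\,|Dz|^2&\le Cm^2\tau_m^4r_m^{-4},\\
    \bigl|Q^2-1\bigr|\,|\mathcal K_2(z)|&\le C\bigl(|z|^2+|Dz|^2\bigr)\bigl(\tau_m^2r_m^{-4}+m^2\tau_m^2r_m^{-2}+\tau_m^2r_m^{-2}\bigr).
\end{align*}
For the last line I use $|\mathcal K_2|\le C(\tau_m^2r_m^{-4}+m^2\tau_m^2r_m^{-2})$ and $|Dz|^2\le R^{-2}$.

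Converting the powers of $r_m$ uses $r_m\le\pi/m$, so $r_m^{-2}\ge cm^2$. Hence $m^2\tau_m^2r_m^{-2}\le C\tau_m^2r_m^{-4}m^{-2}\cdot m^2$, and therefore
\begin{align*}
    |z|^2\cdot m^2\tau_m^2r_m^{-2}\le Cm^4\tau_m^2\cdot\tau_m^2r_m^{-4}.
\end{align*}
Likewise
\begin{align*}
    |z|^3\tau_mr_m^{-2}\le Cm^6\tau_m^4r_m^{-2}\le Cm^4\tau_m^2\cdot\tau_m^2r_m^{-4}
\end{align*}
after using $r_m\le\pi/m$. Collecting all the terms gives \eqref{eq:s4-height-quadratic-error}.
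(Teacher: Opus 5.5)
Your route is the paper's: the Gauss equation for the graph inside $\mathcal N$. Your second fundamental form $h_{ij}=Q^{-1/2}\bigl(\cos z\,z_{ij}+\sin z\cos^2z\,\delta_{ij}+2\sin z\,z_iz_j\bigr)$ is exactly the paper's $\frac{\cos z}{\sqrt Q}B_{ij}$, with $B:=D^2z+\sin z\cos z\,I+2\tan z\,Dz\otimes Dz$. Your term-by-term error estimate is also the paper's and is fine; it uses $Q\geq\cos^2z_0$, $|Q^{-2}-1|\le C(z^2+|Dz|^2)$, $|\mathcal K_2(z)|\le C\tau_m^2r_m^{-4}$ and $m^2\le Cr_m^{-2}$.

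\textbf{The step that fails is the ambient curvature.} The horizontal sectional curvature of $dz^2+\cos^2z\,g_{\mathbb T}$ is $-\tan^2z$, not $\tan^2z$. For a warped product over a flat base it is $-(f')^2/f^2$ with $f=\cos z$. Equivalently, the Gauss equation for $\mathcal N\subset\mathbb S^4$ gives $1+(-\sec z)(\sec z)$. With your sign the identity is false:
\begin{itemize}
\item for constant $z$ you would get $K_X=\tan^2z+\tan^2z=2\tan^2z$ rather than $0$, which your own sanity check would catch;
\item in general you would carry an extra term $2\sin^2z/Q\approx2z^2\sim m^4\tau_m^2$. Where $r_m\sim m^{-1}$ this is of size $\tau_m^2r_m^{-4}$ with no small prefactor, so \eqref{eq:s4-height-quadratic-error} would fail as well.
\end{itemize}

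\textbf{The fix.} The curvature operator of $\mathcal N$ is diagonal on $E_1\wedge E_2$, $E_1\wedge E_3$, $E_2\wedge E_3$, with eigenvalues $-\tan^2z,1,1$. The normal $\nu$ has $E_3$-component $\cos z/\sqrt Q$ and horizontal part of length $|Dz|/\sqrt Q$. Hence $\overline K=(|Dz|^2-\sin^2z)/Q=1-Q^{-1}$, with no further weights. The paper reads this off directly from $S(T_1)=-E_1$, $S(T_2)=E_2$, i.e.\ $\langle S(T_i),T_j\rangle=\operatorname{diag}(-\cos z,\cos z)$.

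Since $\det h/\det g=\det B/Q^2$, you get $Q^2K_X=Q(|Dz|^2-\sin^2z)+\det B$. Now set $p:=Dz$ and $A:=D^2z+\sin z\cos z\,I$, and use $\det(A+t\,p\otimes p)=\det A+t\,p^T\operatorname{cof}(A)p$. This gives
\begin{align*}
    \det B=\det D^2z+\sin z\cos z\,\Delta_{\mathbb T}z+\cos^2z\sin^2z
        +2\tan z\,p^T\operatorname{cof}(D^2z)p+2\sin^2z\,|p|^2.
\end{align*}
The zeroth-order terms then collapse:
\begin{align*}
    Q(|p|^2-\sin^2z)+\cos^2z\sin^2z+2\sin^2z\,|p|^2=|p|^2+|p|^4.
\end{align*}
Finally, $p^T\operatorname{cof}(D^2z)p=(\Delta_{\mathbb T}z)|p|^2-D^2z(p,p)$ yields \eqref{eq:s4-exact-height-curvature}.

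This cancellation is exactly what the wrong sign of $\overline K$ destroys. Note also that the $2\sin^2z\,|p|^2$ term comes from the $\delta_{ij}$ part of $h$ crossed with $Dz\otimes Dz$. It does not come from $\det(\cos z\,D^2z+2\sin z\,Dz\otimes Dz)$ alone.
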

The proof of Lemma~\ref{lem:s4-exact-height-curvature} is given in Appendix~\ref{app:s4-exact-curvature}.
\begin{lemma}[Negative curvature after the displacement within $\mathcal N$]
    \label{lem:s4-scalar-negative-curvature}
    There exists $c_*>0$ such that, after choosing $R$ sufficiently large and then $m$ sufficiently large depending on $R$, every graph $X_m:U\to\mathcal N$ from Lemma~\ref{lem:s4-outer-coordinates} satisfies
    \begin{align}
        \label{eq:s4-scalar-negative-curvature}
        K_{X_m}\leq-c_*\tau_m^2r_m^{-4} \qquad\mbox{ on }\,U.
    \end{align}
\end{lemma}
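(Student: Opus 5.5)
The plan is to combine Lemma~\ref{lem:s4-exact-height-curvature} with a perturbative comparison of $\mathcal K_2(z_m)$ against $\mathcal K_2(\tau_m\Phi_m)$, and then invoke the quadratic margin \eqref{eq:s4-height-quadratic-margin}. By Lemma~\ref{lem:s4-outer-coordinates}, $z_m$ satisfies the bounds \eqref{eq:s4-outer-height-bounds} on $U\subset\{r_m>R\tau_m\}$. Lemma~\ref{lem:s4-exact-height-curvature} then gives
\begin{align*}
    K_{X_m}\leq\mathcal K_2(z_m)+C(m^4\tau_m^2+R^{-2})\tau_m^2r_m^{-4}.
\end{align*}
Next, since $L_{\mathbb T}\Phi_m=0$ away from $L_m$, we have $\Phi_m\Delta_{\mathbb T}\Phi_m=-2\Phi_m^2$. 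Moreover, in two dimensions $\det D^2w=\tfrac12\bigl((\Delta w)^2-|D^2w|^2\bigr)$, so that $\det D^2\Phi_m+\Phi_m\Delta\Phi_m=\tfrac12(4\Phi_m^2)-\tfrac12|D^2\Phi_m|^2-2\Phi_m^2=-\tfrac12|D^2\Phi_m|^2$. This yields $\mathcal K_2(\tau_m\Phi_m)=\tau_m^2\bigl(|D\Phi_m|^2-\tfrac12|D^2\Phi_m|^2\bigr)\leq-c\tau_m^2(m^4+r_m^{-4})\leq-c\tau_m^2r_m^{-4}$ by \eqref{eq:s4-height-quadratic-margin}.

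It then remains to bound $\mathcal K_2(z_m)-\mathcal K_2(\tau_m\Phi_m)$. Write $z_m=\tau_m\Phi_m+e$, where by \eqref{eq:s4-final-Green-comparison} we have $r_m^j|D^je|\leq C\tau_m\varepsilon$ with $\varepsilon:=m^{5/2+2\beta}\tau_m^{\alpha/3}+R^{-2}$. Expanding the quadratic form $\mathcal K_2$ bilinearly, the cross terms and the pure-$e$ terms have the following structure.
\begin{itemize}
    \item The determinant terms are bounded by $C|D^2(\tau_m\Phi_m)||D^2e|+C|D^2e|^2\leq C\tau_m^2\varepsilon r_m^{-4}$, using \eqref{eq:s4-physical-derivatives}.
    \item The gradient terms are similarly $\leq C\tau_m^2\varepsilon r_m^{-2}$.
    \item The mixed zeroth-order terms $z\Delta z$ need care, since $\tau_m\Phi_m$ itself is of size $m^2\tau_m$, not $\tau_m$. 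The cross terms are $|\tau_m\Phi_m||\Delta e|+|e||\Delta(\tau_m\Phi_m)|+|e||\Delta e|\leq C m^2\tau_m\cdot\tau_m\varepsilon r_m^{-2}+C\tau_m\varepsilon\cdot\tau_m r_m^{-2}$.
\end{itemize}
Since $r_m\leq\pi/m$, we have $m^2r_m^{-2}\leq C r_m^{-4}$, so the third item is also $\leq C\tau_m^2\varepsilon r_m^{-4}$. (Alternatively, one can use $\Delta\Phi_m=-2\Phi_m$ to rewrite these terms.) Altogether,
\begin{align*}
    |\mathcal K_2(z_m)-\mathcal K_2(\tau_m\Phi_m)|\leq C\bigl(m^{5/2+2\beta}\tau_m^{\alpha/3}+R^{-2}\bigr)\tau_m^2r_m^{-4}.
\end{align*}

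Combining the three estimates,
\begin{align*}
    K_{X_m}\leq\bigl(-c+C R^{-2}+C m^4\tau_m^2+C m^{5/2+2\beta}\tau_m^{\alpha/3}\bigr)\tau_m^2r_m^{-4}.
\end{align*}
We first choose $R$ so that $CR^{-2}\leq c/4$. By Remark~\ref{rem:s4-waist-asymptotics}, we can then take $m$ large so that the $m$-dependent terms are $\leq c/4$. This gives the claim with $c_*=c/2$.

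The main obstacle I expect is the bookkeeping in the $z\Delta z$ cross term, where the large constant part $m^2\tau_m/(2\pi)$ of the height multiplies second derivatives of the error. One must check that the available weight $r_m^{-4}$ (through $r_m\lesssim m^{-1}$), rather than $m^4$ alone, absorbs the factor $m^2$, and that the comparison \eqref{eq:s4-final-Green-comparison} is genuinely $\tau_m$-relative, with smallness $\varepsilon$ and not merely bounded. If that cross term proves too lossy, the fallback is to use $\Delta_{\mathbb T}(\tau_m\Phi_m)=-2\tau_m\Phi_m$ exactly and to estimate $\Delta e$ via the near-harmonicity of $h_{\zeta_m}$ away from the interpolation annuli.
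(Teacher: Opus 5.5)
Your proposal is correct and follows essentially the same route as the paper. The paper likewise rewrites $\mathcal K_2(\tau_m\Phi_m)$ via $\Delta_{\mathbb T}\Phi_m=-2\Phi_m$ and the margin \eqref{eq:s4-height-quadratic-margin}. It then expands $\mathcal K_2(z_m)-\mathcal K_2(\tau_m\Phi_m)$ in $e_m=z_m-\tau_m\Phi_m$, using $|\tau_m\Phi_m|\le Cm^2\tau_m\le C\tau_m r_m^{-2}$ and \eqref{eq:s4-final-Green-comparison}, and combines the result with \eqref{eq:s4-height-quadratic-error}, choosing $R$ and then $m$. The only detail you leave implicit is that the terms quadratic in $e_m$ are absorbed using $\varepsilon\le 1$; this holds once $R$ and then $m$ are large, and the paper states it explicitly.
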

\begin{proof}
Since $2\det D^2\Phi_m=(\Delta_{\mathbb T}\Phi_m)^2-|D^2\Phi_m|^2$
and $\Delta_{\mathbb T}\Phi_m=-2\Phi_m$ away from $L_m$, by \eqref{eq:s4-height-quadratic-margin} we have
\begin{align}
    \label{eq:s4-Green-curvature-margin}
    \begin{split}
        \mathcal K_2(\tau_m\Phi_m)=\tau_m^2\bigl(|D\Phi_m|^2-\tfrac12|D^2\Phi_m|^2\bigr)\leq-c\tau_m^2(m^4+r_m^{-4})\leq-2c_*\tau_m^2r_m^{-4},
    \end{split}
\end{align}
after fixing $c_*>0$ sufficiently small. 
%
Define $e_m:=z_m-\tau_m\Phi_m$ on $U$. Expanding \eqref{eq:s4-quadratic-curvature}, we obtain
\begin{align}
\begin{split}
\label{eq:b5}
    \mathcal K_2(z_m)-\mathcal K_2(\tau_m\Phi_m)
        &=\tau_m(\Delta_{\mathbb T}\Phi_m)\Delta_{\mathbb T}e_m
            -\tau_m\langle D^2\Phi_m,D^2e_m\rangle
            +\det D^2e_m\\
        &\quad+\tau_m\Phi_m\Delta_{\mathbb T}e_m
            +\tau_me_m\Delta_{\mathbb T}\Phi_m
            +e_m\Delta_{\mathbb T}e_m\\
        &\quad+2\tau_m\langle D\Phi_m,De_m\rangle+|De_m|^2.
\end{split}
\end{align}
Since $R\tau_m\leq r_m\leq\pi/m$ on $U$, the Green bounds
\eqref{eq:s4-physical-values} and \eqref{eq:s4-physical-derivatives}, together with Remark~\ref{rem:s4-waist-asymptotics}, give
\begin{align*}
    |\tau_m\Phi_m|\leq Cm^2\tau_m\leq C\tau_mr_m^{-2}, \qquad |D^j(\tau_m\Phi_m)|\leq C\tau_mr_m^{-j}\quad\forall\,j\in\{1,2\}.
\end{align*}
Combining these bounds with \eqref{eq:s4-final-Green-comparison} and $r_m<1$, we find that every term on the right-hand side of \eqref{eq:b5} is bounded by
\begin{align*}
    C\bigl(m^{5/2+2\beta}\tau_m^{\alpha/3}+R^{-2}\bigr)
        \tau_m^2r_m^{-4}.
\end{align*}
Here, we have used $m^{5/2+2\beta}\tau_m^{\alpha/3}+R^{-2}\leq1$, which holds for $R$ sufficiently large and then $m$ sufficiently large, to bound the terms $\det D^2e_m$, $e_m\Delta_{\mathbb T}e_m$, and $|De_m|^2$. Combining this with \eqref{eq:s4-height-quadratic-error} gives
\begin{align*}
    K_{X_m}\leq\bigl[-2c_*+C\bigl(m^{5/2+2\beta}\tau_m^{\alpha/3}+m^4\tau_m^2+R^{-2}\bigr)\bigr]\tau_m^2r_m^{-4}.
\end{align*}
Choosing $R$ sufficiently large so that $CR^{-2}<c_*/2$, and then $m$ sufficiently large depending on $R$ so that the other two error terms have sum less than $c_*/2$, the statement follows.
\end{proof}
%
%
\begin{lemma}[Curvature change after the displacement normal to $\mathcal{N}$]
    \label{lem:s4-transverse-curvature-comparison}
    For $R$ sufficiently large and $m$ sufficiently large
    depending on $R$, the parametrisations $X_m$ and $X_m^b$
    in Lemma~\ref{lem:s4-outer-coordinates} satisfy
    \begin{align}
        \label{eq:s4-transverse-curvature-comparison}
        |K_{X_m^b}-K_{X_m}|\leq C(m^{-2}+\tau_m)\tau_m^2r_m^{-4}\qquad\mbox{ on }\,U,
    \end{align}
    where $C>0$ is independent of $m$ and $R$.
\end{lemma}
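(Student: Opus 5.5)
We compare the two Gaussian curvatures through the Gauss equation in $\mathbb S^4$ rather than through explicit metric formulas. Write $K_{X}=1+\det h^\nu_X+\det h^n_X$, where $\det$ of a bilinear form is taken with respect to the induced metric and $(\nu,n)$ is any orthonormal normal frame. On $U$ we use the same kind of frame for both immersions: $(\nu,n)$ for $X_m$, and for $X_m^b$ the Gram--Schmidt orthonormalisation of $(n_b,\nu_b)$ from \eqref{eq:s4-testing-normals}. This is legitimate because $U$ lies inside one sheet and \eqref{eq:s4-outer-graph-form} is exactly $X_b$ in the graph parametrisation. The plan is to show that the metrics $g_b$ and $g_\phi$ and the two second fundamental forms differ by relative errors small enough that each determinant changes by at most $C(m^{-2}+\tau_m)\tau_m^2r_m^{-4}$.

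\textbf{Step 1 (metric).} Start from \eqref{eq:s4-graph-metric-exact} with $\mathsf P_b=\cos b\,\mathrm{Id}-\sin b\,S^n$, where $|S^n|\le C$ by Lemma~\ref{lem:s4-full-family-geometry}. Combined with \eqref{eq:s4-outer-coordinate-estimates}, this gives $|g_b-g_\phi|_{g_\phi}\le C(|b|+|db|^2+|b|^2|\omega|^2)\le C\tau_m^2$. Here we use $|db|\le C\tau_m^2r_m^{-1}\le C\tau_m/R$.

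\textbf{Step 2 (second fundamental forms).} Differentiate \eqref{eq:s4-graph-tangent} once more in the ambient $\mathbb R^5$ and project onto $n_b$ and $\nu_b$. The leading terms are as follows. In the $n_b$ direction, $h^{n_b}=h^n+\nabla^2 b+O(b)(1+|A|^2)+O(|db|\,|\omega|)+O(b|\nabla\omega|)$. In the $\nu_b$ direction, $h^{\nu_b}=h^\nu+O(b\,|h^\nu||S^n|)+O(|db|\,|\omega|)+O(b|\nabla\omega|)+\ldots$. Inserting the bounds $|h^\nu|\le C\tau_m r_m^{-2}$, $|h^n|\le C$, $|\omega|\le C\tau_m r_m^{-1}$, $|\nabla\omega|\le C(|z|+\tau_mr_m^{-2})$ (Lemmas~\ref{lem:s4-normal-identities} and~\ref{lem:s4-full-family-geometry}) and $|\nabla^jb|\le C\tau_m^2r_m^{-j}$ yields
\begin{align*}
|h^{n_b}-h^n|\le C\tau_m^2r_m^{-2},\qquad |h^{\nu_b}-h^\nu|\le C\tau_m^3r_m^{-2}.
\end{align*}
The hardest bookkeeping step is checking that no cross term in the $\nu_b$ component is larger than $\tau_m\cdot\tau_m r_m^{-2}$. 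The dangerous term is the tangential part of $\partial_i n_b$ paired with $\partial_jX_b$, so we would track it via the Weingarten identity \eqref{eq:s4-weingarten-convention}, as was done in \eqref{eq:s4-testing-projections}.

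\textbf{Step 3 (determinants).} For $\det h^\nu$, the product rule combined with the metric change gives
\begin{align*}
|\Delta\det h^\nu|\le C|h^\nu|\bigl(|\Delta h^\nu|+\tau_m^2|h^\nu|\bigr)\le C\tau_m^4r_m^{-4}\le C\tau_m\cdot\tau_m^2r_m^{-4}.
\end{align*}
For $\det h^n$, the bound $|h^n|\le C$ alone would only give $|\Delta\det h^n|\le C\tau_m^2r_m^{-2}\le C\pi^2m^{-2}\tau_m^2r_m^{-4}$, using $r_m\le\pi/m$. This is precisely the $m^{-2}$ term in the statement. The quadratic part $|h^{n_b}-h^n|^2\le C\tau_m^4r_m^{-4}$ is harmless, and the metric-change contribution is $O(\tau_m^2)\le Cm^{-2}\tau_m^2r_m^{-4}$ as well.

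The constant term $1$ in the Gauss equation cancels between the two immersions. Summing the contributions gives \eqref{eq:s4-transverse-curvature-comparison}, with $C$ independent of $R$ because the conditions on $R$ are used only to make $|db|$ small.
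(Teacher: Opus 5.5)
Your argument is correct, but it is organised differently from the paper's proof.

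\textbf{The paper's route.} The paper never introduces tested normals for this comparison. It regards $X_m$ and $X_m^b$ as maps $U\to\mathbb R^5$ and writes $K_Y$ through the Gauss equation in $\mathbb R^5$. There it uses the Euclidean projection $P_Y=I-DY\,g_Y^{-1}(DY)^T$ onto the three-dimensional normal space, which contains the position vector, so the ambient $+1$ is built in. After that it only needs three inputs:
\begin{itemize}
\item the coordinate $C^2$-closeness $|DX_m^b-DX_m|\le C\tau_m^2r_m^{-1}$ and $|D^2X_m^b-D^2X_m|\le C\tau_m^2r_m^{-2}$, obtained by differentiating $\cos\widetilde b_m\,X_m+\sin\widetilde b_m\,N$ twice;
\item the bound $|D^2X_m|\le C(1+\tau_mr_m^{-2})$;
\item uniform equivalence of the metrics.
\end{itemize}
No information about $\omega$, $\nabla\omega$, $S_\iota^\nu$ or the frame $(n_b,\nu_b)$ is used.

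\textbf{Your route.} Going through the Gauss equation in $\mathbb S^4$ requires the expansions of your Step~2, which you only sketch. They do hold; a direct computation gives
\begin{align*}
\langle\partial_{ij}X_b,V_b\rangle&=\cos 2b\,h^n_{ij}+\sin b\cos b\bigl(g_{ij}-\langle S^n\partial_i,S^n\partial_j\rangle-\omega_i\omega_j\bigr)+\partial_{ij}b,\\
\langle\partial_{ij}X_b,\nu\rangle&=\cos b\,h^\nu_{ij}-\cos b\,(b_i\omega_j+b_j\omega_i)-\sin b\bigl(\partial_j\omega_i+\langle S^n\partial_i,S^\nu\partial_j\rangle\bigr).
\end{align*}
The projection and normalisation corrections in \eqref{eq:s4-testing-projections}--\eqref{eq:s4-testing-normals}, and your Gram--Schmidt step, are of lower order. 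Two small remarks on your sketch:
\begin{itemize}
\item The term you single out as dangerous, the tangential part of $\partial_in_b$ paired with $\partial_jX_b$, equals $-h^{n_b}_{ij}$. It belongs to the $n_b$ component, not the $\nu_b$ one.
\item For $\det h^\nu$ you only need $|h^{\nu_b}-h^\nu|\le C\tau_m^2r_m^{-2}$, so your claimed $\tau_m^3r_m^{-2}$ leaves slack.
\end{itemize}

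\textbf{Comparison.} Your route gives a sharper picture. The $\nu$ part, which carries the negative curvature, changes only by a relative $O(\tau_m^2)$. The entire $m^{-2}$ loss comes from pairing $D^2b$ with $\operatorname{cof}(h^n)\approx\operatorname{diag}(1,-1)$, that is, essentially from $\partial_{ss}b-\partial_{tt}b$. In the paper the same loss appears less transparently: it is the order-one normal part of $D^2X_m$ (its radial and $n$ components) multiplied by $D^2X_m^b-D^2X_m$. The paper's version is shorter and needs no frame bookkeeping; yours shows where the error actually originates.
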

\begin{proof}
All the computations are in the flat coordinates $(s,t)$ on $U\subset\mathbb{T}$, with the immersions regarded as maps into $\mathbb R^5$. To compare the curvatures using the Gauss equation in $\mathbb{R}^5$, we first estimate the change in the first two derivatives of the immersions and show that both induced metrics are uniformly equivalent to $g_{\mathbb T}$.

By \eqref{eq:s4-outer-height-bounds} and the definition $X_m=\mathcal F(\,\cdot\,,z_m)$, we have
\begin{align*}
    |DX_m|\leq C,\qquad
    |D^2X_m|\leq C(1+\tau_mr_m^{-2}).
\end{align*}
Moreover, $g_{X_m}=\cos^2z_m\,g_{\mathbb T}+dz_m\otimes dz_m$. Since $|z_m|\to0$ uniformly and $|Dz_m|\leq C/R$ on $U$, we have
\begin{align}\label{eq:metric-bound-1}
    \tfrac12g_{\mathbb T}\leq g_{X_m}\leq2g_{\mathbb T}
\end{align}
for $R$ sufficiently large and then $m$ sufficiently large. As the derivatives of $N$ are bounded, differentiating
$X_m^b=\cos(\widetilde b_m)X_m+\sin(\widetilde b_m)N$
and using \eqref{eq:s4-outer-coordinate-estimates} gives
\begin{align}
    \label{eq:s4-transverse-E12}
    |DX_m^b-DX_m|\leq C\tau_m^2r_m^{-1},\qquad
    |D^2X_m^b-D^2X_m|\leq C\tau_m^2r_m^{-2}.
\end{align}
For the second inequality, the terms are bounded by
\begin{align*}
    C\bigl(\widetilde b_m^2(1+\tau_mr_m^{-2})
        +|\widetilde b_m|+|D\widetilde b_m|
        +|D^2\widetilde b_m|+|D\widetilde b_m|^2\bigr)
        \leq C\tau_m^2r_m^{-2},
\end{align*}
using $\tau_m\leq r_m<1$. By \eqref{eq:s4-transverse-E12} and the bound for $|DX_m|$,
\begin{align*}
    |g_{X_m^b}-g_{X_m}|_{g_{\mathbb T}}\leq C\tau_m^2r_m^{-1}\leq C\tau_m\rightarrow0.
\end{align*}
Combining this with \eqref{eq:metric-bound-1} gives
\begin{align}\label{eq:metric-bound-2}
    \tfrac14g_{\mathbb T}\leq g_{X_m^b}\leq4g_{\mathbb T}
\end{align}
for all sufficiently large $m$.

For $Y\in\{X_m,X_m^b\}$, let $P_Y:=I-DYg_Y^{-1}(DY)^T$ be the orthogonal projection onto the normal space of $Y$ with respect to the Euclidean inner product on $\mathbb R^5$. The Gauss equation in the ambient $\mathbb{R}^5$ gives
\begin{align}
    \label{eq:s4-euclidean-Gauss-comparison}
    K_Y=\frac{\langle P_Y\partial_{ss}Y,P_Y\partial_{tt}Y\rangle-|P_Y\partial_{st}Y|^2}{\det g_Y} \qquad\mbox{ on }\,U.
\end{align}
The bounds \eqref{eq:metric-bound-1}, \eqref{eq:metric-bound-2}, and \eqref{eq:s4-transverse-E12} imply that
\begin{align*}
    |P_{X_m^b}-P_{X_m}|
        +|(\det g_{X_m^b})^{-1}-(\det g_{X_m})^{-1}|
        &\leq C\tau_m^2r_m^{-1},\\
    |P_{X_m^b}\partial_{ij}X_m^b-P_{X_m}\partial_{ij}X_m|
        &\leq C\bigl(\tau_m^2r_m^{-2}
            +\tau_m^2r_m^{-1}(1+\tau_mr_m^{-2})\bigr) \qquad\forall\,i,j\in\{s,t\}.
\end{align*}
Subtracting the products and denominators in the Gauss
equation, and using
$|P_{X_m}\partial_{ij}X_m|\leq C(1+\tau_mr_m^{-2})$, gives
\begin{align*}
    |K_{X_m^b}-K_{X_m}|
        \leq C\bigl((1+\tau_mr_m^{-2})\tau_m^2r_m^{-2}
            +\tau_m^2r_m^{-1}(1+\tau_mr_m^{-2})^2
            +\tau_m^4r_m^{-4}\bigr).
\end{align*}
Dividing by the required curvature scale yields
\begin{align*}
    \frac{|K_{X_m^b}-K_{X_m}|}{\tau_m^2r_m^{-4}}\leq C\bigl(r_m^2+\tau_m+r_m^3+\tau_mr_m+\tau_m^2/r_m+\tau_m^2\bigr)\leq C(m^{-2}+\tau_m),
\end{align*}
because $\tau_m\leq r_m\leq\pi/m<1$.
\end{proof}
\subsection{Curvature near the neck waists and the area limit}
It remains to prove negative Gaussian curvature on the regions of $\Sigma_m$ corresponding to $\{\rho_m\leq2R\tau_m\}$. We use ambient coordinates centred at each neck and rescale them by $\tau_m^{-1}$, so that the initial catenoid has waist radius one. For fixed $R$, the estimates for $\phi_m$ and $b_m$ show that the corresponding portions of $\Sigma_m$ converge in $C^2$ to the compact portion of this catenoid at distance at most $2R$ from its axis. Since this portion has strictly negative Gaussian curvature, the rescaled surfaces also have negative curvature for all sufficiently large $m$. Since rescaling preserves the sign of Gaussian curvature, the corresponding regions of $\Sigma_m$ also have negative curvature.
\begin{lemma}[Negative curvature near the neck waists]
    \label{lem:s4-core-negative-curvature}
    For $R$ sufficiently large and $m$ sufficiently large depending on $R$, we have
    \begin{align}
        \label{eq:s4-core-negative-curvature}
        K_{\Sigma_m}(\widehat X_m(x))\leq-\frac{1}{2(2R)^4\tau_m^2} \qquad\forall\,x\in M_{\zeta_m}\mbox{ with }\rho_m(x)\leq2R\tau_m.
    \end{align}
\end{lemma}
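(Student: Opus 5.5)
The plan is a blow-up argument at each neck. Fix $R$ and $p\in L_m$; by $\Gamma_m$-equivariance of $\widehat X_m$ it suffices to treat one neck. The region $\{\rho_m\le 2R\tau_m\}$ lies in the exact catenoidal part of $M_{\zeta_m}$ for large $m$, since $2R\tau_m<2r_c$. There it is parametrised by $\mathcal C_{\zeta_m,p}(\xi,\theta)$ with $\cosh\xi\le 2R$, a fixed compact parameter set $K_R:=\{\cosh\xi\le 2R\}\times(\mathbb R/2\pi\mathbb Z)$, independent of $m$.

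In the ambient $\mathbb R^5$, consider the rescaled maps
\begin{align*}
    Y_m(\xi,\theta):=\tau_m^{-1}\bigl(\widehat X_m(\mathcal C_{\zeta_m,p}(\xi,\theta))-f(p)\bigr).
\end{align*}
The first step is to show that $Y_m$ converges in $C^2(K_R)$ to the standard catenoid $Y_\infty(\xi,\theta)=(\cosh\xi\cos\theta)\,e_s+(\cosh\xi\sin\theta)\,e_t+\xi\,e_5$. Here $e_s,e_t$ are the orthonormal tangent directions $\partial_sf(p),\partial_tf(p)$, and $e_5=\partial_z\mathcal F$ at $(p,0)$. This splits into three comparisons.

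\textbf{(i) The initial neck.} Expand $\mathcal F(p+\tau_m v,\tau_m\xi)=f(p)+\tau_m\bigl(Df(p)v+\xi e_5\bigr)+O(\tau_m^2)$ with all derivatives in $(\xi,\theta)$, using smoothness of $\mathcal F$ and boundedness of $v$ and $\xi$ on $K_R$. This gives $C^k$ convergence of $\tau_m^{-1}(\mathcal C_{\zeta_m,p}-f(p))$ to $Y_\infty$.

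\textbf{(ii) The displacement within $\mathcal N$.} On $K_R$ we have $\rho_m\asymp\tau_m$, and $g_{\zeta_m}$ pulled back is $\tau_m^2$ times a metric uniformly comparable to $d\xi^2+d\theta^2$. Hence \eqref{eq:s4-final-two-jets} with $j\in\{0,1,2\}$ gives $\xi,\theta$-derivatives of $\tau_m^{-1}\phi_m$ of order at most
\begin{align*}
    C\delta^{-4-2\beta}\tau_m^{\alpha/3}f_m\le C m^{4+2\beta}\tau_m^{\alpha/3}\to 0,
\end{align*}
using $f_m\le Cm^{-3/2}$, as in \eqref{eq:s4-curvature-scalar-smallness}. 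Since $\exp^{\mathcal N}$ is smooth and $\nu_{\zeta_m}$ converges after rescaling to the catenoid normal, with derivatives controlled by the same rescaling, $\tau_m^{-1}(G_m-\jmath_{\zeta_m})$ tends to $0$ in $C^2(K_R)$.

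\textbf{(iii) The displacement normal to $\mathcal N$.} Similarly, \eqref{eq:s4-final-two-jets} gives $|\partial^j(\tau_m^{-1}b_m)|\le C\tau_m$ on $K_R$ for $j\le 2$. The formula $\widehat X_m=\cos b_m\,G_m+\sin b_m\,n_m$ with $n_m=N\circ\pi\circ G_m$ smooth then gives $\tau_m^{-1}(\widehat X_m-G_m)\to0$ in $C^2(K_R)$. The term $\tau_m^{-1}(\cos b_m-1)G_m$ is $O(\tau_m^3)$, and the term $\tau_m^{-1}\sin b_m\,n_m$ is controlled by the bounds on $b_m/\tau_m$ together with bounded derivatives of $n_m$ in the rescaled variables.

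The second step passes to curvature. The $C^2$ convergence $Y_m\to Y_\infty$ on the compact set $K_R$, where $Y_\infty$ is an immersion, gives uniform convergence of the Gaussian curvatures of the induced metrics. One can use the Euclidean Gauss formula \eqref{eq:s4-euclidean-Gauss-comparison}, which needs only first and second derivatives and a nondegenerate metric determinant.

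The curvature of the unit catenoid is $K_{Y_\infty}=-1/\cosh^4\xi\le-(2R)^{-4}$ on $K_R$. Hence $K_{Y_m}\le-\tfrac12(2R)^{-4}$ on $K_R$ for $m$ large depending on $R$. Because $K_{\Sigma_m}\circ\widehat X_m=\tau_m^{-2}K_{Y_m}$, the statement follows.

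The main obstacle is step (ii). One must check that the rescaled normal $\nu_{\zeta_m}$ and the exponential map of $\mathcal N$ behave uniformly after blow-up, so that $C^2$ smallness of $\phi_m/\tau_m$ in $(\xi,\theta)$ really gives $C^2$ smallness of the displacement. The approach is to use that $\tau_m^{-1}$-rescaled Fermi coordinates of $\mathcal N$ around $f(p)$ converge smoothly to Euclidean $\mathbb R^3$, with metric error $O(\tau_m)$ on bounded sets, which reduces everything to the Euclidean normal graph over the catenoid.
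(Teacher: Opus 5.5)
Your proposal is correct and follows essentially the paper's argument. You blow up at scale $\tau_m$ on the exact catenoidal region and get $C^2$ convergence of the rescaled final neck to the unit catenoid from the two-jet bounds \eqref{eq:s4-final-two-jets} for $\phi_m$ and $b_m$. You then compare curvatures against $K=-\cosh^{-4}\xi\le-(2R)^{-4}$. The only difference is cosmetic: you rescale in the ambient $\mathbb R^5$ and use the Euclidean Gauss formula, whereas the paper rescales the tubular coordinates $\Psi$, where the second deformation only shifts the fourth coordinate, and uses smooth convergence of $\tau_m^{-2}\mathcal P_{m,p}^*g_{\mathbb S^4}$ to the Euclidean metric.
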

\begin{proof}
Fix $R$ sufficiently large. Since $3R\tau_m<2\tau_m^\alpha$ for all sufficiently large $m$, the region $\rho_m\leq3R\tau_m$ consists of exact catenoidal pieces. For $p\in L_m$, define rescaled tubular coordinates by
\begin{align*}
    \mathcal P_{m,p}(y_1,y_2,v,w)
        :=\Psi(p+\tau_m(y_1,y_2),\tau_mv,\tau_mw)
        \qquad\forall\,(y_1,y_2,v,w)\in(-5R,5R)^4.
\end{align*}
The coordinate vectors of $\Psi$ at $(p,0,0)$ are orthonormal. Since rescaling evaluates the smooth coefficients of $\Psi^*g_{\mathbb S^4}$ at points tending to $(p,0,0)$, where they equal the Euclidean metric coefficients, and each derivative in the rescaled coordinates introduces a factor $\tau_m$, we obtain
\begin{align*}
    \tau_m^{-2}\mathcal P_{m,p}^*g_{\mathbb S^4}\rightarrow g_{\mathrm{Eucl}} \qquad\mbox{in }C^k((-5R,5R)^4)
\end{align*}
for every fixed $k$, uniformly in $p\in L_m$.
For $A\in\{2R,3R\}$ set
$D_A:=[-\operatorname{arcosh}A,\operatorname{arcosh}A]
\times(\mathbb R/2\pi\mathbb Z)$. The initial neck in these
coordinates is the unit catenoid
\begin{align*}
    c(\xi,\theta)&:=(\cosh\xi\cos\theta,
        \cosh\xi\sin\theta,\xi,0),\\
    \mathcal C_{\zeta_m,p}&=\mathcal P_{m,p}\circ c,
    \qquad \rho_m\circ\mathcal C_{\zeta_m,p}=\tau_m\cosh\xi.
\end{align*}
Its rescaled metric converges smoothly to
$\cosh^2\xi(d\xi^2+d\theta^2)$ on $D_{3R}$.
Consequently, the covariant derivative bounds in
\eqref{eq:s4-final-two-jets} control derivatives in
$(\xi,\theta)$. Since
\begin{align*}
    f_m\circ\mathcal C_{\zeta_m,p}
        =\max\{\tau_m^{3/2}\cosh^{3/2}\xi,
            \tau_m^{1-\alpha/2}\cosh^{1/2}\xi\}
        \leq C_R\tau_m^{1-\alpha/2},
\end{align*}
we obtain
\begin{align*}
    \left\|\frac{\phi_m\circ\mathcal C_{\zeta_m,p}}{\tau_m}
        \right\|_{C^2(D_{3R})}
        &\leq C_Rm^{4+2\beta}\tau_m^{1-\alpha/6}\rightarrow0,\\
    \left\|\frac{b_m\circ\mathcal C_{\zeta_m,p}}{\tau_m}
        \right\|_{C^2(D_{3R})}
        &\leq C_R\tau_m\rightarrow0.
\end{align*}
The normal exponential map of the initial neck in the rescaled
metric has uniformly bounded derivatives on this fixed region.
The first bound therefore shows that the first deformation,
expressed through $\mathcal P_{m,p}^{-1}$, converges to $c$
in $C^2$. In the tubular coordinates $\Psi$, the second
deformation changes only the fourth coordinate, adding
$b_m\circ\mathcal C_{\zeta_m,p}/\tau_m$. Hence
\begin{align*}
    \sup_{p\in L_m}
    \|\mathcal P_{m,p}^{-1}\circ\widehat X_m
        \circ\mathcal C_{\zeta_m,p}-c\|_{C^2(D_{3R})}
        \rightarrow0.
\end{align*}
The Euclidean catenoid has Gaussian curvature
$K_c=-\cosh^{-4}\xi$. The Gauss equation, the $C^2$
convergence of these immersions, and the smooth convergence
of the ambient metrics imply
\begin{align*}
    \sup_{p\in L_m}\sup_{D_{2R}}
        \left|\tau_m^2K_{\Sigma_m}
            (\widehat X_m\circ\mathcal C_{\zeta_m,p})
            +\cosh^{-4}\xi\right|\rightarrow0.
\end{align*}
The factor $\tau_m^2$ occurs because the ambient metric was
multiplied by $\tau_m^{-2}$. On $D_{2R}$,
$\cosh^{-4}\xi\geq(2R)^{-4}$, so taking the error smaller
than $1/(2(2R)^4)$ proves the claim.
\end{proof}
\begin{proof}[Proof of Theorem~\ref{thm:s4-main}]
Choose $R$ as in Lemma~\ref{lem:s4-scalar-negative-curvature}
and then take $m$ sufficiently large. If
$\rho_m(x)\geq2R\tau_m$, the height coordinates near $G_m(x)$
give, with $q=\pi(G_m(x))$,
\begin{align*}
    K_{\Sigma_m}(\widehat X_m(x))\leq[-c_*+C(m^{-2}+\tau_m)]\tau_m^2r_m(q)^{-4}\leq-\frac{c_*}{2}\tau_m^2r_m(q)^{-4}<0.
\end{align*}
Here we use reflection by $Z$ on the lower sheet.
If $\rho_m(x)\leq2R\tau_m$, negativity follows from
Lemma~\ref{lem:s4-core-negative-curvature}. Thus, $K_{\Sigma_m}<0$ everywhere. The embeddedness, minimality, and genus of $\Sigma_m$ have already been established in Proposition~\ref{prop:s4-coupled-existence}.

To prove the area limit, we first prove that the area of the initial surfaces $M_{\zeta_m}$ satisfies the claimed asymptotics. Let
\begin{align*}
    \mathcal U_m:=\mathbb T\smallsetminus
        \bigcup_{p\in L_m}B_{\mathbb T}(p,3\tau_m^\alpha).
\end{align*}
The metric on either graph over $\mathcal U_m$ is $\cos^2h_{\zeta_m}\,g_{\mathbb T}+dh_{\zeta_m}\otimes dh_{\zeta_m}$. Its area density relative to $g_{\mathbb T}$ is
\begin{align*}
    J_m:=\cos h_{\zeta_m}
        \sqrt{\cos^2h_{\zeta_m}+|Dh_{\zeta_m}|^2},
    \qquad
    |J_m-1|\leq C(m^4\tau_m^2+\tau_m^{2-2\alpha})\rightarrow0.
\end{align*}
The removed disks have total area $9\pi m^2\tau_m^{2\alpha}\to0$. The metric of $\mathcal N$ is bounded above by the Euclidean product metric in Fermi coordinates. Thus, each catenoidal piece has area at most
\begin{align*}
    2\pi\tau_m^2\int_{-a_m}^{a_m}\cosh^2\xi\,d\xi
        &=2\pi\tau_m^2(a_m+\sinh a_m\cosh a_m)
        \leq C\tau_m^{2\alpha},
    \qquad a_m:=\operatorname{arcosh}(2\tau_m^{\alpha-1}).
\end{align*}
The interpolation annuli have projected area at most
$C\tau_m^{2\alpha}$ and satisfy
$|Dh_{\zeta_m}|\leq C\tau_m^{1-\alpha}$, so their areas
obey the same bound. Consequently,
\begin{align}
    \label{eq:s4-initial-area-limit}
    \operatorname{Area}(M_{\zeta_m})
        =2\int_{\mathcal U_m}J_m\,d\vol_{g_{\mathbb T}}
            +O(m^2\tau_m^{2\alpha})
        \rightarrow2\operatorname{Area}(\mathbb T)=4\pi^2.
\end{align}
Finally, \eqref{eq:s4-curvature-scalar-smallness} and
$\rho_m\geq\tau_m$ give
\begin{align*}
    \frac{|\phi_m|}{\rho_m}+|d\phi_m|_{g_{\zeta_m}}
        \leq Cm^{5/2+2\beta}\tau_m^{\alpha/3}\rightarrow0.
\end{align*}
The uniformly controlled rescaled charts of
Lemma~\ref{lem:s4-initial-geometry} therefore give
\begin{align*}
    |G_m^*g_{\mathcal N}-g_{\zeta_m}|_{g_{\zeta_m}}
        \leq Cm^{5/2+2\beta}\tau_m^{\alpha/3}.
\end{align*}
For the transverse deformation,
\eqref{eq:s4-full-family-geometry} bounds the shape operator
and normal connection form independently of $m$.
Since $|b_m|\leq C\tau_m^2$ and
$|db_m|_{G_m^*g_{\mathcal N}}\leq C\tau_m$,
the exact metric formula \eqref{eq:s4-graph-metric-exact} gives
\begin{align*}
    |\widehat X_m^*g_{\mathbb S^4}-G_m^*g_{\mathcal N}|_{G_m^*g_{\mathcal N}}
        \leq C\tau_m^2.
\end{align*}
The ratio of the final and initial area elements is therefore
$1+O(m^{5/2+2\beta}\tau_m^{\alpha/3}+\tau_m^2)$, uniformly
on $M_{\zeta_m}$. Integrating and using
\eqref{eq:s4-initial-area-limit} proves
$\operatorname{Area}(\Sigma_m)\to4\pi^2$.
\end{proof}
\appendix
\section{Geometry of the suspension band and the normal Jacobi operator}
\label{app:s4-geometry}
\label{app:s4-geometric-identities}
We collect in this appendix the explicit computations concerning the suspension band and the normal linearisation used in Section~\ref{sec:s4-geometry}.

\smallskip
\noindent\textit{Proof of Lemma~\ref{lem:s4-suspension-geometry}.} Fix $(s,t,z)\in\mathbb T\times(-z_0,z_0)$ and set
$q:=\mathcal F(s,t,z)$. Differentiation gives
$E_1(q)=f_s(s,t)$, $E_2(q)=f_t(s,t)$, and
$E_3(q)=-\sin z\,f(s,t)+\cos z\,e_5$, an orthonormal
triple. Inner products in this computation are Euclidean.
Since $n$ is tangent to the sphere and perpendicular
to $T\mathcal N$,
$\nabla^{\mathbb S^4}_v n=dn_q(v)+\langle v,n(q)\rangle q
=dn_q(v)$ for $v\in T_q\mathcal N$. Thus, for example,
\begin{align*}
    S_q(E_1(q))=-(\nabla^{\mathbb S^4}_{E_1}n)(q)
       =-\frac1{\cos z}\partial_s(n\circ\mathcal F)(s,t,z)
        =-\frac1{\cos z}\partial_sN(s,t)
        =-\sec z\,E_1(q).
\end{align*}
Similarly, we prove the remaining identities in \eqref{eq:s4-S}. Since $\operatorname{tr}S_q=0$, the hypersurface
$\mathcal N$ is minimal in $\mathbb S^4$.

For tangent fields $X,Y$ on $\mathcal N$, projection
of the Euclidean derivative gives
\begin{align*}
    (\nabla^{\mathcal N}_XY)(q)
       =dY_q(X(q))+\langle X(q),Y(q)\rangle q
                   -\langle S_q(X(q)),Y(q)\rangle n(q).
\end{align*}
Thus, for example, as $\partial_{ss}f=-f-N$ we have
\begin{align*}
    (\nabla^{\mathcal N}_{E_1}E_1)(q)=\sec z\,\partial_{ss}f(s,t)+q+\sec z\,N(s,t) =q-\sec z\,f(s,t)=\tan z\,E_3(q).
\end{align*}
Analogously, for $i,j\in\{1,2\}$ we get
\begin{align*}
    (\nabla^{\mathcal N}_{E_i}E_j)(q)=\tan z\,\delta_{ij}E_3(q),\qquad(\nabla^{\mathcal N}_{E_i}E_3)(q)=-\tan z\,E_i(q),\qquad(\nabla^{\mathcal N}_{E_3}E_i)(q)=(\nabla^{\mathcal N}_{E_3}E_3)(q)=0.
\end{align*}
At $z=0$ the normal components in the first identity
vanish, so the second fundamental form of $f$ in
$\mathcal N$ is zero, i.e. $f(\mathbb{T})$ is totally geodesic within $\mathcal{N}$. Since $S_q(E_3(q))=0$, the Gauss equation gives sectional curvature one on each plane
$\operatorname{span}\{E_i(q),E_3(q)\}$ for $i\in\{1,2\}$, and hence $\operatorname{Ric}_{g_{\mathcal N}}(E_3(q),E_3(q))=2$.
The scalar Jacobi formula therefore gives $L_{\mathbb T}=\Delta_{\mathbb T}+2$.

Finally set $\lambda_1=-\sec z$, $\lambda_2=\sec z$.
Using $(\nabla_XS)(Y)=\nabla_X(S(Y))-S(\nabla_XY)$,
the only possibly nonzero terms are
\begin{align*}
    (\nabla_{E_3}^{\mathcal N}S)_q(E_i(q))&=\lambda_i\tan z\,E_i(q),\\
    (\nabla_{E_i}^{\mathcal N}S)_q(E_j(q))&=\lambda_j\tan z\,\delta_{ij}E_3(q),\\
    (\nabla_{E_i}^{\mathcal N}S)_q(E_3(q))&=\lambda_i\tan z\,E_i(q)
       \qquad\forall\,i,j\in\{1,2\}.
\end{align*}
Thus $|S_q|^2=2\sec^2z$ and
$|(\nabla^{\mathcal N}S)_q|^2=6\sec^2z\tan^2z$.
The inequalities $\sec z\leq\sec z_0$ and
$|\tan z|\leq\sec^2z_0|z|$ prove the asserted bounds.

\smallskip
\noindent\textit{Proof of Lemma~\ref{lem:s4-normal-identities}.} For every $X,Y,\in\Gamma(TM)$, differentiating $\langle Y,n\rangle=0$ gives
$h_\iota^n(X,Y)=\langle S(X),Y\rangle$.
Taking the trace over $TM\oplus\mathbb R\nu$ and
using $\operatorname{tr}S=0$ gives
$H_{\iota,n}=-\langle S(\nu),\nu\rangle$.
Differentiating $\langle\nu,n\rangle=0$ similarly gives
$\omega(X)=\langle S(X),\nu\rangle$.

The difference between the spherical and intrinsic
connections on $\mathcal N$ is a multiple of $n$,
so $h_\iota^\nu=h_\jmath^\nu$ and
$S_\iota^\nu=S_\jmath^\nu$. Consequently
\begin{align*}
    \nabla^{\mathcal N}_XY&=\nabla^M_XY+h_\iota^\nu(X,Y)\nu,
    \qquad \nabla^{\mathcal N}_X\nu=-S_\iota^\nu(X)
    \quad\forall\,X,Y\in\Gamma(TM).
\end{align*}
Substitution into
$(\nabla_X^M\omega)(Y)=X\langle S(Y),\nu\rangle
-\langle S(\nabla_X^MY),\nu\rangle$ gives
\eqref{eq:s4-omega-derivative}.
Since $S$ is self-adjoint and $S(E_3)=0$, we have
$S(\nu)=S(\nu_h)$ and
$\langle S(\nu),\nu\rangle=\langle S(\nu_h),\nu_h\rangle$.
Thus the first three bounds follow from $|S|\leq C$,
and the differentiated identity gives
$|\nabla^M\omega|\leq|\nabla^{\mathcal N}S|
+2|S||S_\iota^\nu|$. Lemma~\ref{lem:s4-suspension-geometry}
proves the last bound.

\smallskip
\noindent\textit{Proof of Lemma~\ref{lem:s4-normal-linearisation}.} For $V=u\nu+bn$, by \eqref{eq:s4-normal-connection-frame} we have
\begin{align*}
    \nabla_X^\perp V
       =(Xu-b\omega(X))\nu+(Xb+u\omega(X))n.
\end{align*}
About any point $p\in M$, choose a local tangent orthonormal frame $(e_1,e_2)$ whose covariant derivatives vanish at $p$. Applying the above identity twice and using the definition \eqref{eq:s4-delta-perp} of $\Delta^{\perp}$ yields
\begin{align*}
    \Delta^\perp V={}&
       (\Delta_Mu-2\langle\omega,db\rangle
          -b\operatorname{div}_M\omega-u|\omega|^2)\nu\\
       &+(\Delta_Mb+2\langle\omega,du\rangle
          +u\operatorname{div}_M\omega-b|\omega|^2)n.
\end{align*}
Since $A_\iota=h_\iota^\nu\nu+h_\iota^n n$, the
second-fundamental-form term in \eqref{eq:s4-normal-variation-formula} is represented in the frame $(\nu,n)$ by a matrix with diagonal entries $|h_\iota^\nu|^2=|S_\iota^\nu|^2$ and
$|h_\iota^n|^2=|S_\iota^n|^2$, and off-diagonal entries $\langle h_\iota^\nu,h_\iota^n\rangle=\langle S_\iota^\nu,S_\iota^n\rangle$. This gives $B,B^*,q$ as stated. For the first diagonal entry, note that the Gauss equation and the preceding identities give
\begin{align*}
    \operatorname{Ric}_{g_{\mathcal N}}(\nu,\nu)
       &=2-|S(\nu)|^2,\qquad
    |S(\nu)|^2=|\omega|^2+H_{\iota,n}^2.
\end{align*}
Together with $S_\iota^\nu=S_\jmath^\nu$, these prove
\eqref{eq:s4-J-matrix}. On a closed $M$,
integration by parts gives
$\int_Mu\langle\omega,db\rangle
=-\int_Mb(\langle\omega,du\rangle+u\operatorname{div}_M\omega)$.
Substituting into the definition of $B$ proves
$\int_MuBb=\int_MbB^*u$, with all integrals taken
against $d\operatorname{vol}_M$.

\section{Estimates for the initial surfaces}
\label{app:s4-initial}
In this appendix, we derive the required estimates on the Green function and we discuss the geometric and topological properties of the initial surfaces. Moreover, we verify the hypotheses of the scalar linear construction. All constants have the dependence specified in Section~\ref{sec:s4-scalar-family}.

\subsection{The Green function and its derivatives}
\label{app:s4-green}
In this section, we prove Lemma~\ref{lem:s4-Green}. Since $\widehat{G_a-G_0}(0)=0$ and
\begin{align*}
    \lvert\widehat{G_a-G_0}(n)\rvert=\bigg\lvert\frac{a}{|n|^2(|n|^2-a)}\bigg\rvert\le\frac{4a}{3\lvert n\rvert^4} \qquad\forall\,n\in\mathbb Z^2\smallsetminus\{0\},
\end{align*}
we get $\|G_a-G_0\|_{C^0}\leq Ca$. Since $G_0\in L^2(\mathbb T_0)$, this also gives a uniform $L^2$ bound for $G_a$ as $a$ ranges over $[0,1/4]$. Interior elliptic estimates for the equation $(\Delta_0+a)G_a=-1$ on $\mathbb T_0\smallsetminus\{0\}$ give uniform bounds for every derivative on fixed compact subsets away from $0\in\mathbb T_0$.

To separate the singularity of $G_a$ from a smooth
remainder, we construct a radial fundamental solution $P_a$
for the elliptic operator $\Delta_0+a$. Recall that $J_0$ denotes the Bessel function of the first kind of order zero. Set
$h_j:=\sum_{i=1}^j i^{-1}$ for every integer $j\geq1$
and define, for $a\geq0$ and $r>0$,
\begin{align}
    \label{eq:s4-radial-fundamental}
    P_a(r):=2\pi\left[J_0(\sqrt a\,r)\log r+
       \sum_{j\geq1}\frac{(-1)^{j+1}h_j}{(j!)^2}
                       \left(\frac{ar^2}{4}\right)^j\right].
\end{align}
The series converge with all derivatives locally uniformly
for $a\geq0$ and $r>0$. Termwise differentiation gives
$P_a''+r^{-1}P_a'+aP_a=0$ on $(0,\infty)$.
Moreover, as $r\downarrow0$, uniformly for $a\in[0,1/4]$,
\begin{align*}
    P_a(r)&=2\pi\log r+O(ar^2(1+|\log r|)),\\
    P_a'(r)&=2\pi/r+O(ar(1+|\log r|)).
\end{align*}
Writing $r(q):=d_{\mathbb T_0}(q,0)$ on
$B_{\mathbb T_0}(0,1)$, integration by parts on
punctured disks therefore gives
\begin{align*}
    (\Delta_0+a)(P_a\circ r)=4\pi^2\delta_0
    \qquad\text{distributionally on }B_{\mathbb T_0}(0,1).
\end{align*}
Indeed, $2\pi rP_a'(r)\to4\pi^2$ and $rP_a(r)\to0$,
which determine the point mass and eliminate the
boundary term involving the derivative of the test function.

Thus $Q_a:=G_a-P_a\circ r$ satisfies
$(\Delta_0+a)Q_a=-1$ distributionally on
$B_{\mathbb T_0}(0,1)$. The uniform $L^2$ bounds for
$G_a$ and $P_a\circ r$, together with interior elliptic
estimates, give a smooth extension of $Q_a$ across the
origin, with uniform bounds for all derivatives on
$B_{\mathbb T_0}(0,1/2)$.
Set $c_a:=Q_a(0)$. Since $P_a(r)-P_0(r)\to0$ as
$r\downarrow0$, evaluating the absolutely convergent
Fourier series of $G_a-G_0$ at the origin gives
\begin{align*}
    c_a-c_0=-a\sum_{n\in\mathbb Z^2\smallsetminus\{0\}}
        \frac1{|n|^2(|n|^2-a)}=O(a).
\end{align*}
The invariance of $Q_a$ under both coordinate reflections
gives $D_0Q_a(0)=0$. Taylor's formula and the series
defining $P_a$ therefore yield, for $0<R<r_*/2$,
\begin{align*}
    \|Q_a-c_a\|_{C^{k,\beta}_R(\mathcal A_R)}
    &\leq C_{k,\beta}R^2,\\
    \|P_a\circ r-2\pi\log r\|_{C^{k,\beta}_R(\mathcal A_R)}
    &\leq C_{k,\beta}aR^2(1+|\log R|).
\end{align*}
Setting $E_a:=G_a-2\pi\log r-c_a$ on $0<r<r_*$
proves the local expansion and remainder estimate.
Combining these with the previously established
estimates away from the origin gives the global bounds.

Finally, on $B_{\mathbb T_0}(0,r_*/2)\smallsetminus\{0\}$,
the Hessian of $2\pi\log r$ has eigenvalues
$-2\pi/r^2$ and $2\pi/r^2$. For $q\in B_{\mathbb T_0}(0,r_*/2)\smallsetminus\{0\}$,
we have $q\in\mathcal A_{r(q)}$. The local expansion
and \eqref{eq:s4-Green-remainder}, applied with
$k=2$, $\beta=1/2$ and $R=r(q)$, therefore give
\begin{align*}
    |D_0^2G_a(q)|
    &=\bigl|D_0^2(2\pi\log r)(q)+D_0^2E_a(q)\bigr|\\
    &\geq |D_0^2(2\pi\log r)(q)|-|D_0^2E_a(q)|\\
    &\geq \frac{2\sqrt2\pi}{r(q)^2}
        -\frac{\|E_a\|_{C^{2,1/2}_{r(q)}(\mathcal A_{r(q)})}}{r(q)^2}\\
    &\geq \frac{2\sqrt2\pi}{r(q)^2}
        -C(1+a|\log r(q)|),
\end{align*}
uniformly for $a\in[0,1/4]$.
Consequently, there exist $r_0\in(0,r_*/2)$ and $c>0$, independent of $a$, such that $|D_0^2G_a|^2\geq cr^{-4}$ on $B_{\mathbb T_0}(0,r_0)\smallsetminus\{0\}$ for every $a\in[0,1/4]$. Outside $B_{\mathbb T_0}(0,r_0)$, the uniform bound
for $G_a$ allows us to choose $a_*\in(0,1/4]$ such
that $|aG_a|\leq1/2$ for every $a\in[0,a_*]$.
Consequently,
\begin{align*}
    |D_0^2G_a|^2
    \geq\frac12|\Delta_0G_a|^2
    =\frac12|1+aG_a|^2
    \geq\frac18.
\end{align*}
Combining the two regions proves the first Hessian
bound with some constant $c>0$. Finally, the global
gradient estimate gives
$|D_0G_a|^2\leq C(1+r^{-4})$.
Decreasing $a_*$ so that $2a_*C\leq c$, we obtain
\begin{align*}
    2a|D_0G_a|^2-2|D_0^2G_a|^2
    \leq-c(1+r^{-4}),
\end{align*}
which proves the second bound.
\subsection{Geometry of the initial surfaces}
\label{app:s4-initial-geometry}
Here, we prove Lemma~\ref{lem:s4-initial-geometry}. Fix $\zeta\in[-1,1]$ and recall that $r_c=\tau_\zeta^\alpha$. By \eqref{eq:s4-physical-values} and $\log\tau_\zeta=\log2-A_m+\zeta$, for every $q\in\mathbb{T}$ such that $\rho(q)\geq r_c$ we have $\Phi_m(q)\geq(1-\alpha)m^2/(2\pi)+(1-\alpha)\log(\sqrt2m)-C$. Together with the trivial upper bound $\rho\leq\pi/m$, for all sufficiently large $m$ this gives
\begin{align}
    \label{eq:s4-truncated-positivity}
    \frac{1-\alpha}{4\pi}m^2\leq\Phi_m(q)\leq Cm^2 \qquad\forall\,q\in\mathbb{T}\, \mbox{ s.t. } \rho(q)\geq r_c.
\end{align}
The summands defining $\widehat v_m$ are uniformly bounded and have disjoint supports, so $\|\widehat v_m\|_{C^0}\leq C$. Hence,
\begin{align*}
    0<\widetilde\psi_\zeta(q)\leq Cm^2\tau_\zeta \qquad\forall\,q\in\mathbb{T}\, \mbox{ s.t. } \rho(q)\geq r_c.
\end{align*}
As
\begin{align*}
    0<C_{\tau_\zeta}(r)\leq\tau_\zeta\log(2r/\tau_\zeta)\leq Cm^2\tau_\zeta
    \qquad\forall\,\tau_\zeta<r\leq3r_c,
\end{align*}
the convex interpolation gives $0<h_\zeta\leq Cm^2\tau_\zeta$ on $\Omega_\zeta$, and all pieces lie in the region $|z|<z_0$ for large $m$ by Remark \ref{rem:s4-waist-asymptotics}.

The catenoidal parametrisations are embeddings by Section~\ref{sec:s4-geometry}. Positivity, disjoint neck disks and exact agreement on overlaps show that their union with the graphs is smooth and embedded. Extend $h_\zeta$ to $\{q\in\mathbb T \mbox{ : } \rho(q)>\tau_\zeta\}$ by setting $h_\zeta(q):=C_{\tau_\zeta}(r_p(q))$ for every $p\in L_m$ and every $q\in\mathbb T$ satisfying $\tau_\zeta<r_p(q)<9\tau_\zeta$, and define the open set
\begin{align*}
    \mathcal O_\zeta
    :=\{\mathcal F(q,z) \mbox{ : }\rho(q)>\tau_\zeta,\
        |z|<h_\zeta(q)\}\subset\mathcal{N}.
\end{align*}
Note that $\mathcal O_\zeta$ has compact closure in $\mathcal{N}$. Near the waist corresponding to $p\in L_m$, the inequality $|z|<h_\zeta(q)$ is equivalent to $r_p(q)>\tau_\zeta\cosh(z/\tau_\zeta)$. Hence, the boundary of $\mathcal O_{\zeta}$ there is given by $\{\mathcal F(q,z) \mbox{ : } r_p(q)=\tau_\zeta\cosh(z/\tau_\zeta)\}$, which is
precisely the image of $\mathcal C_{\zeta,p}$ near
$\xi=0$. Away from the waists, the boundary is given
by $\{\mathcal F(q,z) \mbox{ : } z=\pm h_\zeta(q)\}$. Thus, $\partial\mathcal O_\zeta=M_\zeta$, and we denote its outward unit normal within $\mathcal N$ by $\nu_\zeta$. The symmetries of $h_\zeta$ and the condition $|z|<h_\zeta(q)$ make $\mathcal O_\zeta$ and its boundary $\Gamma_m$-invariant. Joining two tori with $m^2$ disks removed from each by $m^2$ annuli gives a closed connected surface with Euler characteristic $-2m^2$ and genus $m^2+1$. Since $\rho(q)\geq9\tau_\zeta$ for every $q\in\Omega_\zeta$ and $\rho_\zeta(\mathcal C_{\zeta,p}(\xi,\theta))=\tau_\zeta\cosh\xi$, the lattice covering radius gives $\tau_\zeta\leq\rho_\zeta\leq\pi/m$.

For $p\in L_m$ and for every $q\in\mathbb{T}$ such that $0<r_p(q)\leq\delta$, \eqref{eq:s4-physical-expansion} and \eqref{eq:s4-exact-mismatch} give
\begin{align*}
    \widetilde\psi_\zeta(q)-\tau_\zeta\log(2r_p(q)/\tau_\zeta)=\tau_\zeta\mathcal E_{m,p}(q)+\tau_\zeta\zeta\bigl(1-J_0(\sqrt2r_p(q))\bigr).
\end{align*}
By differentiating the explicit expression $C_{\tau_\zeta}$ for the catenoidal height, for every $r\geq2\tau_\zeta$ and every integer $j\geq0$ we get $|\partial_r^j(C_{\tau_\zeta}(r)-\tau_\zeta\log(2r/\tau_\zeta))|\leq C_j\tau_\zeta^3r^{-2-j}$. Using \eqref{eq:s4-physical-expansion}, the Taylor expansion $J_0(\sqrt2r)=1-\frac12r^2+O(r^4)$ and its differentiated series, together with $|\log\tau_\zeta|\asymp m^2$, for every integer $k\geq0$ we obtain
\begin{align}
    \label{eq:s4-matching-estimate}
    \|\widetilde\psi_\zeta-C_{\tau_\zeta}\circ r_p\|_{C^{k,\beta}_{r_c}(\{2r_c\leq r_p\leq3r_c\})}\leq C_{k,\beta}\bigl(\tau_\zeta r_c^2|\log\tau_\zeta|+\tau_\zeta^3r_c^{-2}\bigr).
\end{align}
Since $r_c=\tau_\zeta^\alpha$ and $0<\alpha<1$, Remark~\ref{rem:s4-waist-asymptotics} gives $r_c^2|\log\tau_\zeta|+\tau_\zeta^2r_c^{-2}\to0$ as $m\to\infty$, uniformly in $\zeta$. The scaled cutoff bounds and \eqref{eq:s4-physical-derivatives} then yield
\begin{align*}
    |D^jh_\zeta(q)|\leq C_j\tau_\zeta\rho(q)^{-j}
    \qquad\forall\,q\in\mathbb{T}\, \mbox{ s.t. } \rho(q)\geq 2\tau_{\zeta},\quad\forall\,j\geq1.
\end{align*}
We now prove the bounds for the horizontal component of $\nu_\zeta$ and the shape operator
$S_{\jmath_\zeta}^{\nu_\zeta}$ in \eqref{eq:s4-initial-geometry-bounds}. In order to do so, we will use the height graph coordinates on the region where $\rho_\zeta\geq2\tau_\zeta$ and the catenoidal parametrisations near the waists. On a graph $F_w(q)=\mathcal F(q,w(q))$, with $w=\pm h_\zeta$, let $\nu^+$ be the upward unit normal. The induced metric on the graph is $F_w^*g_{\mathcal N}=\cos^2w\,g_{\mathbb T}+dw\otimes dw$, and its scalar second fundamental form in the coordinates $(s,t)$ has components
\begin{align*}
    h^{\nu^+}_{ij}=\frac{\cos w}{\sqrt{\cos^2w+|Dw|^2}}\bigl(w_{ij}+\sin w\cos w\,\delta_{ij}+2\tan w\,w_iw_j\bigr),\qquad i,j\in\{1,2\}.
\end{align*}
Where $\rho\geq2\tau_\zeta$, the metric $F_w^*g_{\mathcal N}$ is uniformly comparable to $g_{\mathbb T}$. Since $\nu_\zeta=\pm\nu^+$, these formulas and \eqref{eq:s4-height-graph-identities} give $|(\nu_\zeta)_h|\leq C\tau_\zeta/\rho_\zeta$ and $|S_{\jmath_\zeta}^{\nu_\zeta}|_{g_\zeta} \leq C\tau_\zeta(\rho_\zeta^{-2}+m^2)\leq C\tau_\zeta\rho_\zeta^{-2}$, using $\rho_\zeta\leq\pi/m$. For $x=\mathcal C_{\zeta,p}(\xi,\theta)$ with $\rho_\zeta(x)\leq2\tau_\zeta$, set $c:=\cos(\tau_\zeta\xi)$ and
$E_r:=\cos\theta E_1(x)+\sin\theta E_2(x)$. Then,
\begin{align*}
    \nu_\zeta(x)
    =\frac{-E_r+c\sinh\xi E_3(x)}
        {\sqrt{1+c^2\sinh^2\xi}},
    \qquad
    |(\nu_\zeta)_h(x)|
        \leq\sec z_0\,\frac{\tau_\zeta}{\rho_\zeta(x)}.
\end{align*}
Since $\rho_\zeta(x)=\tau_\zeta\cosh\xi\leq2\tau_\zeta$, we have $|\xi|\leq\operatorname{arcosh}2$. The preceding formula gives the required horizontal-normal bound. Moreover,
\begin{align*}
    |z(x)|=\tau_\zeta|\xi|\leq\tau_\zeta\operatorname{arcosh}2,
\end{align*}
which also gives the required height bound in this region. It remains to estimate the shape operator. Under the coordinate map
\begin{align*}
    (y_1,y_2,y_3)\mapsto\mathcal F\bigl(p+\tau_\zeta(y_1,y_2),\tau_\zeta y_3\bigr),
\end{align*}
the pullback of $\tau_\zeta^{-2}g_{\mathcal N}$ is
\begin{align*}
    dy_3^2+\cos^2(\tau_\zeta y_3)(dy_1^2+dy_2^2).
\end{align*}
Since $\tau_\zeta\to0$, these metrics converge smoothly to the Euclidean metric on every fixed bounded subset of $\mathbb R^3$ as $m\to\infty$. In these coordinates the neck is the unit catenoid. The condition $|\xi|\leq\operatorname{arcosh}2$ confines the points under consideration to a fixed compact portion of this catenoid. Smooth convergence of the ambient metrics therefore gives a uniform bound for its shape operator in the rescaled metric. Scaling back and using $\rho_\zeta(x)\leq2\tau_\zeta$ gives
\begin{align*}
    |S_{\jmath_\zeta}^{\nu_\zeta}(x)|_{g_\zeta}
        \leq\frac{C}{\tau_\zeta}
        \leq\frac{4C\tau_\zeta}{\rho_\zeta(x)^2}.
\end{align*}

To prove \eqref{eq:s4-scaled-injectivity}, first consider
a complete Euclidean catenoid at a point of horizontal
radius $r$. Every loop based there of length less than
$2r$ projects horizontally into the open disk of radius
$r$ centred at the projected base point: each point
can be reached along the loop by an arc of length less
than $r$. This disk avoids the origin, so the loop has
zero winding number and is contractible on the catenoid.
Since the catenoid has nonpositive Gaussian curvature,
the Cartan--Hadamard theorem on its universal cover
therefore gives injectivity radius at least $r$.

Fix a large $B>4$ and set $r:=\rho_\zeta(x)$.
If $r\leq B\tau_\zeta$, the metrics
$\tau_\zeta^{-2}g_\zeta$ converge smoothly to the unit
catenoid metric on the neck region
$\rho_\zeta\leq4B\tau_\zeta$. Every path from $x$ to
the boundary of this region has length at least
\begin{align*}
    \cos z_0(4B\tau_\zeta-r)>r.
\end{align*}
Thus geodesics of length at most $r/2$ remain in this
region. The limiting exponential maps are embeddings
on the closed tangent balls of radius $r/(2\tau_\zeta)$.
Their $C^1$ convergence therefore gives
$\operatorname{inj}_{g_\zeta}(x)\geq r/2$ for large $m$,
uniformly for $r/\tau_\zeta\in[1,B]$.

If $r\geq B\tau_\zeta$, the sheet through $x$ is a graph
over $B_{\mathbb T}(\pi(x),r/2)$. In graph coordinates
rescaled by $r$, the metric differs from the Euclidean
metric in $C^2$ by $O(B^{-2})+o_m(1)$, and the boundary
is at distance at least $\cos z_0/2>1/3$ from $x$.
Let $E_x$ denote the exponential map of
$r^{-2}g_\zeta$, expressed in these coordinates.
The geodesic equation gives $\|DE_x-I\|<1/2$ on its
tangent ball of radius $1/5$, for $B$ and then $m$
sufficiently large. Integrating along line segments yields
\begin{align*}
    |E_x(v)-E_x(w)|\geq\tfrac12|v-w|
\end{align*}
for all $v,w$ in this ball. Thus $E_x$ is injective
and its differential is nonsingular, proving
\eqref{eq:s4-scaled-injectivity}, uniformly in $\zeta$.

Finally, local catenoidal charts in the first region
and graph charts in the second give the asserted
coordinate estimates after rescaling by $r$, using
the smooth catenoidal convergence and the bounds
for all derivatives of $h_\zeta$.

\subsection{Application of the scalar linear construction}
\label{app:s4-scalar-input}
We prove Proposition~\ref{prop:s4-scalar-input} by applying
Proposition~4.18, Corollary~4.21 and Lemmas~4.6 and~5.1 of
\cite{KapouleasMcGrathLD} to $\Gamma_m$-invariant functions.
We first verify the hypotheses of these results.
The ambient three-manifold is $\mathcal N$, the base surface
is the central torus, and its Fermi map is $\mathcal F$.
Convention~2.1 of the cited paper allows an open ambient
manifold and does not require completeness.

Since $L_{\mathbb T}=\Delta_{\mathbb T}+2$, its kernel consists
of the four first Fourier modes. Invariance under the lattice
translations requires both Fourier frequencies to belong to
$m\mathbb Z$. Thus, for $m\geq2$, every invariant source is
orthogonal to the kernel. The inverse on its orthogonal
complement preserves invariance, because $L_{\mathbb T}$
commutes with the symmetries. Its $C^{0,\beta}\to C^{2,\beta}$
bound is independent of $m$, since the operator and torus
are fixed. This verifies Assumption~4.1 on the invariant
subspace used in the construction.

To verify Assumption~3.11, use polar coordinates $(r,\theta)$
centred at $p\in L_m$ and multiply the local solutions
\begin{align*}
    J_0(\sqrt2r),\qquad
    \sqrt2J_1(\sqrt2r)\cos\theta,\qquad
    \sqrt2J_1(\sqrt2r)\sin\theta
\end{align*}
by the cutoff in \eqref{eq:s4-substitute}, where
$J_1(x):=\sum_{j\geq0}(-1)^j(x/2)^{2j+1}/(j!(j+1)!)$.
Their expansions are $1-r^2/2+O(r^4)$, $x_1+O(r^3)$
and $x_2+O(r^3)$, respectively. Consequently, their values
and differentials at $p$ form the standard basis of
$\mathbb R\oplus T_p^*\mathbb T$. The cutoff functions are
supported in $r\leq2\delta$, and their images under
$L_{\mathbb T}$ are supported in $\delta\leq r\leq2\delta$.
Differentiating the cutoff bounds the $C^{2,\beta}$ norm
by $C\delta^{-2-\beta}$ times the largest coefficient.
The supports are separated by a distance comparable to
$\delta$, so the same bound holds for their sum over $L_m$.
The expansions also bound the difference from each prescribed
affine function by $Cr^2$, with the corresponding scaled
derivative bounds. These are the requirements of
Assumption~3.11. On invariant functions, the coordinate
reflections eliminate the linear terms and the lattice
translations identify the constant terms. Hence the two
obstruction spaces reduce to $\mathbb R\widehat v_m$ and
$\mathbb Rk_m$.

For Convention~3.8, the disks of radius $9\delta$ are disjoint
because $18\delta<\ell/m$, and they lie in the fixed Fermi
neighbourhood for large $m$. Moreover, on a disk of radius
$\delta'\leq3\delta$, the first Dirichlet eigenvalue of
$-\Delta$ is $j_{0,1}^2/(\delta')^2>2$, where $j_{0,1}$
is the first positive zero of $J_0$. Thus $\Delta+2$ is
Dirichlet invertible there. The function $P_2/(2\pi)$ in
\eqref{eq:s4-radial-fundamental} provides the required
logarithmic solution.

The scale and Green estimates give, uniformly for $|\zeta|\leq1$,
\begin{align*}
    9\tau_\zeta^\alpha&<\tau_\zeta^{\alpha/100}<\delta,
        \qquad \tau_\zeta\leq\tau_\zeta^{1-\alpha/100},\\
    \delta^{-2}\|\psi_\zeta\|_{C^{2,\beta}(\partial B_{\mathbb T}(p,\delta))}
        &\leq C\tau_\zeta m^{4+\beta}
            =o(\tau_\zeta^{1-\alpha/9}),\\
    \|\psi_\zeta\|_{C^{3,\beta}(\mathbb T\smallsetminus
        \bigcup_{p\in L_m}B_{\mathbb T}(p,\tau_\zeta^\alpha))}
        &\leq C\tau_\zeta^{1-\alpha(3+\beta)}
            =o(\tau_\zeta^{8/9}),\\
    \psi_\zeta(q)&\geq c\tau_\zeta m^2
        \geq\tau_\zeta^{1+\alpha/5}
        \qquad\mbox{if }d_{\mathbb T}(q,L_m)\geq\tau_\zeta^\alpha.
\end{align*}
For the global H\"older estimate, pairs at distance less than
$\tau_\zeta^\alpha/4$ lie in a common chart where
$d_{\mathbb T}(\cdot,L_m)\geq\tau_\zeta^\alpha/2$.
The local estimate applies there; for the remaining pairs,
divide twice the supremum bound by $(\tau_\zeta^\alpha/4)^\beta$.
The displayed estimates verify Convention~3.15.
Taking exponents $(3/2,1/2)$ in Convention~4.5 requires
$1/4>2\alpha$ and $(1-\alpha)/2>2\alpha$, which follow
from \eqref{eq:s4-alpha}. Its fixed neck cutoff is chosen
before $m$.

By \eqref{eq:s4-exact-mismatch}, the constant mismatch is
$\tau_\zeta\zeta$. Thus Definition~3.17, with zero elevation
and tilt and with sheet corrections
$-\tau_\zeta\zeta\widehat v_m$ on both sides, gives precisely
$M_\zeta$. Since $k_m$ is supported in
$\delta\leq r_p\leq2\delta$ and $3\tau_\zeta^\alpha<\delta$,
its lift in the cited construction is $\mathcal J_\zeta$.

We apply the linear construction on invariant functions,
as in the proof of Theorem~6.17 of the cited paper.
In the torus step, solve $L_{\mathbb T}u^{\mathrm{base}}=E'$
using the invariant inverse above. Invariance gives a common
value $c$ and zero differential at every lattice point.
Therefore,
\begin{align*}
    u'&:=u^{\mathrm{base}}-c\widehat v_m,\qquad
    L_{\mathbb T}u'=E'-ck_m,\\
    u'(p)&=0,\qquad du'(p)=0
        \qquad\forall\,p\in L_m,
\end{align*}
which is the normalised solution required in equation~(4.13).
The neck inverses can be made equivariant by averaging their
conjugates over the neck stabiliser and then translating
over the lattice. Averaging does not increase their norms,
since the symmetries preserve the metrics and weights.
Reflection by $Z$ identifies the two substitute terms with
the same sign, so they have a single coefficient $c_E$.
Moreover, invariant functions have zero first angular
harmonics on every neck. The norm comparison in
Section~\ref{sec:s4-scalar-family} and Corollary~4.21
therefore give \eqref{eq:s4-scalar-inverse}.

To obtain the separate bound for $c_E$, we prove
\begin{align}
    \label{eq:s4-k-size}
    c\delta^{-2-\beta}\leq\|k_m\|_{C^{0,\beta}(\mathbb T)}
        \leq C\delta^{-2-\beta}.
\end{align}
Indeed, writing $\eta:=\eta_{2,1}$, the product rule and
$L_{\mathbb T}J_0(\sqrt2r)=0$ give on the cutoff annulus
\begin{align*}
    k_m(p+\delta x(\cos\theta,\sin\theta))
        =\delta^{-2}\bigl(\eta''(x)+x^{-1}\eta'(x)\bigr)+O(1),
        \qquad 1\leq x\leq2,
\end{align*}
with bounded derivatives of the remainder in $x$.
The leading coefficient is not identically zero: otherwise
$(x\eta')'=0$, which is incompatible with a smooth cutoff
equal to one near $x=1$ and zero near $x=2$.
Comparing a point where this coefficient is nonzero with a
point where $k_m=0$, at distance comparable to $\delta$,
gives a H\"older quotient bounded below by
$c\delta^{-2-\beta}$. Scaling gives the upper bound.
Dividing \eqref{eq:s4-scalar-inverse} by this lower bound
proves \eqref{eq:s4-coefficient-bound}.

We choose the operators continuously by retaining the
normalisations in equations~(4.13) and~(4.16) of the cited
construction. The torus solutions have zero value and
differential at $L_m$. On each neck, the constant and first
angular modes have zero value and axial derivative at the
waist; the higher modes are obtained from the inverse of
$\Delta_{\mathbb S^2}+2$ on angular orders at least two,
where its kernel is absent. For fixed $m$, pullback by
$F_\zeta$ makes the coefficients and cutoffs depend smoothly
on $\zeta$ on fixed domains. The normalised ODE solutions
and elliptic inverses are consequently continuous from
$C^{0,\beta'}$ to $C^{2,\beta'}$ for $0<\beta'<\beta$.
This proves continuity of each finite stage of the correction
series. The contraction estimate in Proposition~4.18 bounds
the tail after $N$ terms by $C_m2^{-N}\|E\|_{Y_\zeta}$,
uniformly in $\zeta$, in $X_\zeta\times\mathbb R$.
Given the sequence in the statement, first let its index
tend to infinity in a fixed finite sum, and then let
$N\to\infty$. Uniform boundedness of the source norms
controls the tails and proves the asserted continuity.

For the initial error, Lemma~4.6 applies with both substitute
terms equal to $-\tau_\zeta\zeta k_m$. It gives
\eqref{eq:s4-H-error}. Since
$(u_\zeta,c_{H,\zeta})=-\mathscr R_\zeta D_\zeta$, linearity
and \eqref{eq:s4-scalar-inverse} yield
\begin{align*}
    \mathcal L_\zeta u_\zeta
        &=-D_\zeta+c_{H,\zeta}\mathcal J_\zeta\\
        &=-H_\zeta+(-\tau_\zeta\zeta+c_{H,\zeta})\mathcal J_\zeta,
\end{align*}
which is \eqref{eq:s4-linear-cancellation}.
For $(v,\zeta)\in\mathcal B_m$, the norm comparison under
$F_\zeta$ and $e^{-1}\leq\tau_\zeta/\tau_0\leq e$ give
\begin{align*}
    \|\phi\|_{X_\zeta}
        +|c_{H,\zeta}|\|k_m\|_{C^{0,\beta}(\mathbb T)}
        &\leq C\delta^{-4-2\beta}\tau_\zeta^{1+\alpha/3}
            +C\tau_\zeta^{1+\alpha}
         =o(\tau_\zeta^{1+\alpha/4}).
\end{align*}
This proves \eqref{eq:s4-full-ball-bound} and, after the
fixed norm comparison, verifies the smallness hypothesis
of Lemma~5.1. Hence $G_{\zeta,t\phi}$ is well defined and
embedded for every $t\in[0,1]$.

For completeness, the factor in the quadratic estimate
can be read directly from the weights. Fix $x\in M_\zeta$
and put $r:=\rho_\zeta(x)$. After dividing lengths by $r$,
the displacement has local $C^{2,\beta}$ norm at most
$Cf_\zeta(x)\|\phi\|_{X_\zeta}/r$.
Mean curvature is a smooth function of the first two
derivatives of the graph. Taylor's formula, with its
constant and linear terms subtracted, therefore gives
\begin{align*}
    \frac{r^2}{f_\zeta(x)}
    \|Q_\zeta(\phi)\|_{C^{0,\beta}(B_{g_\zeta}(x,r/100),r^{-2}g_\zeta)}
        \leq C\frac{f_\zeta(x)}r\|\phi\|_{X_\zeta}^2.
\end{align*}
Here the unweighted remainder has size
$Cr^{-3}f_\zeta(x)^2\|\phi\|_{X_\zeta}^2$, because mean
curvature scales by $r^{-1}$. Since
$f_\zeta/\rho_\zeta\leq C\tau_\zeta^{-\alpha/2}$, taking
the supremum proves \eqref{eq:s4-scalar-quadratic}.
Finally, \eqref{eq:s4-scalar-two-jet} and its local
H\"older estimates follow from the definition of $X_\zeta$.

\section{Estimates for normal deformations}
\label{app:s4-normal}
In this appendix, we prove the geometric comparisons and nonlinear estimates deferred from Section~\ref{sec:s4-transverse}, using its notation and conventions.

\smallskip
\noindent\textit{Proof of Lemma~\ref{lem:s4-full-family-geometry}.}
By Proposition~\ref{prop:s4-scalar-input}-(3),
$\|\phi\|_{X_\zeta}\leq\tau_\zeta^{1+\alpha/4}$.
The definition of $f_\zeta$ and
$\tau_\zeta\leq\rho_\zeta\leq\pi/m$ imply
\begin{align}
    \label{eq:s4-dimensionless-phi}
    \sup_{M_\zeta}\frac{\tau_\zeta^{1+\alpha/4}f_\zeta}{\rho_\zeta}
        &\leq C\bigl(\tau_\zeta^{1-\alpha/4}
            +\tau_\zeta^{1+\alpha/4}m^{-1/2}\bigr)=o_m(1),\\
    \label{eq:s4-sharp-phi}
    \sup_{M_\zeta}\frac{\tau_\zeta^{1+\alpha/4}f_\zeta}{\tau_\zeta}
        &\leq C\tau_\zeta^{\alpha/4}m^{-3/2}=o_m(1).
\end{align}
For the first inequality, divide the two terms defining
$f_\zeta$ by $\rho_\zeta$ and maximise each separately.
For the second, both terms increase with $\rho_\zeta$,
and their ratio at $\rho_\zeta=\pi/m$ is
$(m/\pi)\tau_\zeta^{(1-\alpha)/2}=o_m(1)$.

Fix $x\in M_\zeta$ and set $r:=\rho_\zeta(x)$.
Use the charts of Lemma~\ref{lem:s4-initial-geometry},
translate the ambient origin to $\iota_\zeta(x)$, and
divide lengths by $r$. The initial immersion and its
normal exponential map have uniform $C^{3,\beta}$ bounds.
The local $C^{2,\beta}$ norm of $\phi/r$ is at most
$C\tau_\zeta^{1+\alpha/4}f_\zeta(x)/r=o_m(1)$.
Applying the chain rule to the exponential map gives
the same bound for the change in the rescaled immersion.

To pass to the geometric quantities, use the coordinate
identities
\begin{align*}
    (g_\phi)_{ij}&=\langle\partial_i\iota,\partial_j\iota\rangle,
    & (h_\iota^\nu)_{ij}&=\langle\partial_i\partial_j\iota,\nu\rangle,
    & (S_\iota^\nu)^i{}_j
        &=\sum_{k=1}^2(g_\phi)^{ik}(h_\iota^\nu)_{kj}.
\end{align*}
The unit normal depends smoothly on the position and first
derivatives, and the rescaled metric stays positive definite.
Thus the metric changes in $C^{1,\beta}$, and the normal
and rescaled shape operator in $C^{0,\beta}$, by at most
$C\tau_\zeta^{1+\alpha/4}f_\zeta(x)/r$.
Restoring the scale and taking suprema gives
\begin{align*}
    \|g_\phi-g_\zeta\|_{C^{1,\beta}_\zeta(\rho_\zeta^2)}
        &\leq C\sup_{M_\zeta}
            \frac{\tau_\zeta^{1+\alpha/4}f_\zeta}{\rho_\zeta}=o_m(1),\\
    \|\nu-\nu_\zeta\|_{C^{0,\beta}_\zeta(f_\zeta/\rho_\zeta)}
        &\leq C\tau_\zeta^{1+\alpha/4},\\
    \|S_\iota^\nu-S_{\jmath_\zeta}^{\nu_\zeta}
        \|_{C^{0,\beta}_\zeta(f_\zeta/\rho_\zeta^2)}
        &\leq C\tau_\zeta^{1+\alpha/4}.
\end{align*}
The first estimate gives $C^{-1}g_\zeta\leq g_\phi\leq Cg_\zeta$.
By \eqref{eq:s4-sharp-phi}, the other two changes are
$o_m(1)\tau_\zeta/\rho_\zeta$ and
$o_m(1)\tau_\zeta/\rho_\zeta^2$, respectively, with the
corresponding local H\"older bounds. The formulas in
Appendix~\ref{app:s4-initial-geometry} give the initial
estimates also with these H\"older bounds. Combining them
with the smoothness of $(E_1,E_2,E_3)$ proves the asserted
bounds for $\nu_h$ and $S_\iota^\nu$.
The same composition estimate applied to the height gives
\begin{align*}
    \|z\circ G\|_{C^{0,\beta}_\zeta(1)}
        \leq C\left(\frac{m^2\tau_\zeta}{2\pi}
            +\tau_\zeta^{1+\alpha/4}\sup_{M_\zeta}f_\zeta\right)
        \leq C\frac{m^2\tau_\zeta}{2\pi}.
\end{align*}
Since $S(E_3)=0$, \eqref{eq:s4-normal-identities} gives
$H_{\iota,n}=-\langle S(\nu_h),\nu_h\rangle$ and
$\omega(Y)=\langle S(dG(Y)),\nu_h\rangle$.
The H\"older product estimate therefore gives the bounds
for $H_{\iota,n}$ and $\omega$, while boundedness of $S$
gives the bound for $h_\iota^n$.
The components of $\nabla^{\mathcal N}S$ are smooth multiples
of $\tan z$, as computed in Appendix~\ref{app:s4-geometric-identities}.
Consequently, \eqref{eq:s4-omega-derivative} yields
\begin{align*}
    \|\nabla^{g_\phi}\omega\|_{C^{0,\beta}_\zeta(1)}
        \leq C\left(\frac{m^2\tau_\zeta}{2\pi}
            \sup_{M_\zeta}\rho_\zeta^2+\tau_\zeta\right)
        \leq C\tau_\zeta.
\end{align*}
The formula $q=2+|S_\iota^n|_{g_\phi}^2-|\omega|_{g_\phi}^2$
and $\tau_\zeta/\rho_\zeta\leq1$ give the remaining bound
for $q$. All these expressions use at most two derivatives
of $G$, and hence of $\phi$.

Finally, the normal geodesic from $x$ to $G(x)$ has length
$|\phi(x)|$. Since $|d\pi|\leq\sec z_0$ and distance to
$L_m$ is $1$-Lipschitz on $\mathbb T$,
\begin{align*}
    |d_{\mathbb T}(\pi(G(x)),L_m)-\rho_\zeta(x)|
        \leq\sec z_0|\phi(x)|=o_m(1)\rho_\zeta(x).
\end{align*}
The last equality follows from \eqref{eq:s4-dimensionless-phi}
and proves \eqref{eq:s4-deformed-scale}.

\smallskip
\noindent\textit{Proof of Lemma~\ref{lem:s4-metric-comparison}.}
We first compare $g_\zeta$ and $g_*$ on the upper half of
a cell. Where $\tau_\zeta e^\xi\leq r_c/2$, the height is
$\tau_\zeta\xi$ and $r_{m,\zeta}=\tau_\zeta\cosh\xi$.
Substitution in \eqref{eq:s4-suspension-metric} gives
\begin{align*}
    r_{m,\zeta}^{-2}\Theta_{m,\zeta,p}^*(g_\zeta-g_*)
        =(\cos^2(\tau_\zeta\xi)-1)
            (\tanh^2\xi\,d\xi^2+d\theta^2).
\end{align*}
The height is bounded by $Cm^2\tau_\zeta/(2\pi)$.
Since its first $\xi$ derivative is $\tau_\zeta$ and its
second derivative vanishes, the coefficients in this
identity and their first two derivatives are bounded by
$C(m^2\tau_\zeta/(2\pi))^2$.

On $r_c/2\leq\tau_\zeta e^\xi\leq r_c$, the radius
$r_{m,\zeta}$ is comparable to $r_c$. Differentiating
\eqref{eq:s4-model-radius} gives
\begin{align*}
    \left|\partial_\xi^j
        \left(\frac{r_{m,\zeta}'}{r_{m,\zeta}}-1\right)\right|
        \leq C\frac{\tau_\zeta^2}{r_c^2}
        \qquad\forall\,j\in\{0,1,2\}.
\end{align*}
Writing $z=C_{\tau_\zeta}(r_{m,\zeta}(\xi))$ in these
coordinates, we have $|\partial_\xi^jz|\leq C\tau_\zeta$
for $j\in\{1,2,3\}$. Moreover,
\begin{align*}
    r_{m,\zeta}^{-2}\Theta_{m,\zeta,p}^*(g_\zeta-g_*)
        ={}&\left[\cos^2z
            \left(\frac{r_{m,\zeta}'}{r_{m,\zeta}}\right)^2
            +\left(\frac{\partial_\xi z}{r_{m,\zeta}}\right)^2-1\right]d\xi^2\\
        &+(\cos^2z-1)d\theta^2.
\end{align*}
Thus its coefficients and their first two derivatives are
bounded by $C((m^2\tau_\zeta/(2\pi))^2+\tau_\zeta^2/r_c^2)$.
Where $\tau_\zeta e^\xi\geq r_c$, the height graph
coordinates instead give
\begin{align*}
    g_*=g_{\mathbb T},\qquad
    g_\zeta-g_*=(\cos^2h_\zeta-1)g_{\mathbb T}
        +dh_\zeta\otimes dh_\zeta.
\end{align*}
Here $d_{\mathbb T}(q,L_m)\geq r_c/2$, and the initial
height estimates give $|h_\zeta|\leq Cm^2\tau_\zeta/(2\pi)$ and
\begin{align*}
    d_{\mathbb T}(q,L_m)^j|D^jh_\zeta(q)|
        \leq C_j\tau_\zeta\qquad\forall\,j\in\{1,2,3\}.
\end{align*}
Differentiating the last metric formula
therefore gives the same bound in rescaled coordinates.
The second derivative bounds control the first derivative
H\"older seminorms. Since $r_c=\tau_\zeta^\alpha$, these
three comparisons and $\Gamma_m$-invariance prove the first
inequality in \eqref{eq:s4-metric-comparison}.

The rescaled metrics are uniformly positive definite and
close in $C^{1,\beta}$. Their distances are therefore
uniformly comparable, and their Christoffel symbols differ
by a small quantity in $C^{0,\beta}$. For a scalar function,
the precise change of Hessian is
\begin{align*}
    \bigl((\nabla^{g_\zeta})^2b-(\nabla^{g_*})^2b\bigr)_{ij}
        =-\sum_{k=1}^2
        \bigl(\Gamma^k_{ij}(g_\zeta)-\Gamma^k_{ij}(g_*)\bigr)\partial_kb.
\end{align*}
This identity compares the local $C^{2,\beta}$ norms.
Covering the metric balls by uniformly many smaller
coordinate disks gives the asserted weighted norm equivalence.
The same connection comparison applies to the first
derivatives of tensor fields. Thus the preceding proof's
estimate for $g_\phi-g_\zeta$ also holds in the norm based
on $g_*$. The triangle inequality and
\eqref{eq:s4-dimensionless-phi} prove the second metric estimate.

Finally, fix a chart at scale $r=\rho_\zeta(x)$.
The coordinate formula
\begin{align*}
    \Delta_g b=\sum_{i,j=1}^2g^{ij}\partial_i\partial_jb
        -\sum_{i,j,k=1}^2g^{ij}\Gamma^k_{ij}(g)\partial_kb
\end{align*}
shows that every coefficient of
$r^2(\Delta_{g_\phi}-\Delta_{g_*})$ is bounded in
$C^{0,\beta}$ by $C\eta_m(\zeta)$ in the rescaled chart.
The H\"older product estimate, followed by the supremum
over $x$, proves \eqref{eq:s4-Laplacian-comparison}.

\smallskip
\noindent\textit{Proof of Lemma~\ref{lem:s4-transverse-quadratic}.}
For $\eta_b\in\{n_b,\nu_b\}$, the coordinate formula for
the tested mean curvature is
\begin{align}
    \label{eq:s4-tested-curvature-formula}
    \langle\mathbf H_{X_b},\eta_b\rangle
        =\sum_{i,j=1}^2(g_b)^{ij}
            \langle\partial_i\partial_jX_b,\eta_b\rangle.
\end{align}
Indeed, the derivatives on the right are taken in
$\mathbb R^5$, and both the tangential Christoffel terms
and the radial spherical term are orthogonal to $\eta_b$.
The formulas for $X_b$, the normal projections and their
normalisations show that this expression depends smoothly
on the first two derivatives of $\iota,n,b$ and on the
values of $\nu$. Since $n=N\circ\pi\circ G$, and $\nu$
depends only on $G$ and its first derivatives, it uses
at most two derivatives of $\phi$.

Fix $x\in M_\zeta$ and set $r:=\rho_\zeta(x)$.
In the rescaled charts, $r^{-1}(\iota-\iota(x))$ has a
uniform $C^{2,\beta}$ bound and uniformly positive
definite induced metric. The fields $\iota,n$ also have
uniform $C^{2,\beta}$ bounds there, while $\nu$ has a
uniform $C^{1,\beta}$ bound. Put $a:=b/r$. Then
\begin{align*}
    \frac{X_b-\iota(x)}r
        =\frac{\iota-\iota(x)}r
            +\frac{\cos(ra)-1}r\iota+\frac{\sin(ra)}rn.
\end{align*}
The last two coefficients extend smoothly to $r=0$, with
uniform derivatives for bounded $a$. Moreover,
$\|b\|_{\mathcal X_\zeta}\leq c_0\tau_\zeta$ and
$r\geq\tau_\zeta$ bound the local $C^{2,\beta}$ norm of
$a$ by $Cc_0$. For small $c_0$, the rescaled graph metrics
remain uniformly positive definite and
$|U_b|,|W_b|\geq1/2$, by \eqref{eq:s4-testing-lengths}.
Thus inversion of the metric and normalisation of the
projections have uniformly bounded derivatives.
Consequently, the map
$a\mapsto r\langle\mathbf H_{X_{ra}},\eta_{ra}\rangle$
has uniformly bounded second derivative from
$C^{2,\beta}$ to $C^{0,\beta}$.

Each differentiation with respect to $b$ contributes
$r^{-1}$ because $a=b/r$, and restoring mean curvature
from its rescaled value contributes another $r^{-1}$.
The weight in $\mathcal Z_\zeta$ contributes $r^2$.
Since $r^{-1}\leq\tau_\zeta^{-1}$, we obtain
\begin{align*}
    \|D^2\mathscr P_{\zeta,\phi}(b)[h,k]\|_{\mathcal Y_\zeta}+\|D^2\mathscr T_{\zeta,\phi}(b)[h,k]\|_{\mathcal Z_\zeta}\leq C\tau_\zeta^{-1}
        \|h\|_{\mathcal X_\zeta}\|k\|_{\mathcal X_\zeta}
        \qquad\forall\,h,k\in\mathcal X_\zeta.
\end{align*}
Since $DQ_\eta(0)=0$, integration along $t\mapsto tb$ gives
$\|DQ_\eta(b)h\|_{C^{0,\beta}_\zeta(\rho_\zeta^{-2})}
\leq C\tau_\zeta^{-1}\|b\|_{\mathcal X_\zeta}\|h\|_{\mathcal X_\zeta}$.
Applying this bound to
\begin{align*}
    Q_\eta(b)-Q_\eta(\widetilde b)
        =\int_0^1DQ_\eta(\widetilde b+t(b-\widetilde b))
            [b-\widetilde b]\,dt
\end{align*}
proves \eqref{eq:s4-transverse-quadratic}. Taking
$\widetilde b=0$ and using $Q_\eta(0)=0$ gives its final
assertion.

\section{Coordinate and curvature calculations}
\label{app:s4-curvature}
\label{app:s4-outer-coordinates}
\label{app:s4-exact-curvature}
In this appendix, we give the details of the coordinate and curvature computations used in Section~\ref{sec:s4-negative-curvature}. Recall that $D$ is the Levi--Civita connection of $g_{\mathbb T}$, and all derivative norms on the torus are computed using this metric.

\smallskip
\noindent\textit{Proof of Lemma~\ref{lem:s4-outer-coordinates}.}
Fix $R$ sufficiently large. The initial height satisfies
\begin{align}
    \label{eq:s4-initial-flat-derivatives}
    |D^kh_{\zeta_m}(q)|\leq C_k\tau_m r_m(q)^{-k}
        \qquad\forall\,k\geq1,\quad r_m(q)\geq R\tau_m/2.
\end{align}
On a catenoidal piece this follows by differentiating
$\tau_m\operatorname{arcosh}(r/\tau_m)$ for
$r\geq R\tau_m/2$. On the remaining graphs it follows
from the Green estimates and \eqref{eq:s4-matching-estimate},
as in Appendix~\ref{app:s4-initial-geometry}.

Fix an upper-sheet point as in the statement, let $q_0$
be its initial projection onto $\mathbb T$, and abbreviate
$r:=r_m(q_0)$. For a sufficiently small fixed $c_1>0$,
the disk $B_{\mathbb T}(q_0,4c_1r)$ lies in
$\{r_m>R\tau_m/2\}$, and $r_m$ is comparable to $r$
there. Write
\begin{align*}
    X_m^0(q):=\mathcal F(q,h_{\zeta_m}(q)),\qquad
    A:=m^{5/2+2\beta}\tau_m^{1+\alpha/3}.
\end{align*}
In the coordinates $(q-q_0)/r$, the metric
$r^{-2}(X_m^0)^*g_{\mathcal N}$ is uniformly positive
definite and has bounded derivatives of each fixed order.
The same bounds hold for the initial immersion and its
normal exponential map after ambient lengths are divided
by $r$. Since $f_m\leq Cm^{-3/2}$,
\eqref{eq:s4-final-two-jets} gives
\begin{align*}
    \sum_{j=0}^2r^j|D^j(\phi_m\circ X_m^0)|\leq CA,
    \qquad
    \sum_{j=0}^2r^j|D^j(b_m\circ X_m^0)|\leq C\tau_m^2.
\end{align*}
Here the conversion from covariant to coordinate derivatives
uses the preceding metric bounds. Moreover,
$A/\tau_m=m^{5/2+2\beta}\tau_m^{\alpha/3}\to0$.

Express the first deformation in product coordinates as
\begin{align*}
    G_m\circ X_m^0(q)
        =\mathcal F(Q_m(q),H_m(q)),\qquad
    Q_m(q)=q+e_m(q),\qquad H_m(q)=h_{\zeta_m}(q)+a_m(q).
\end{align*}
At zero displacement the normal exponential map equals
$X_m^0$. Taylor's formula in the displacement, followed
by two differentiations in $q$, therefore gives
\begin{align}
    \label{eq:s4-outer-projection-estimates}
    |D^je_m|+|D^ja_m|\leq CA r^{-j}
        \qquad\forall\,j\in\{0,1,2\}.
\end{align}
Indeed, in the rescaled coordinates these differences have
$C^2$ norm at most $C(A/r+(A/r)^2)\leq CA/r$;
multiplication by $r$ restores the stated estimates.

Since $DQ_m=I+De_m$ and $|De_m|\leq CA/r\to0$,
$Q_m$ is injective on a smaller disk and has a smooth inverse.
Differentiating $Q_m\circ Q_m^{-1}=\operatorname{id}$ gives
\begin{align*}
    DQ_m^{-1}&=(DQ_m)^{-1}\circ Q_m^{-1},\\
    D^2Q_m^{-1}[u,v]
        &=-DQ_m^{-1}\bigl((D^2Q_m\circ Q_m^{-1})
            [DQ_m^{-1}u,DQ_m^{-1}v]\bigr).
\end{align*}
Thus $|DQ_m^{-1}|\leq2$ and $|D^2Q_m^{-1}|\leq CAr^{-2}$.
Also $|Q_m(q)-q|\leq CA=o(r)$. As $r\geq2R\tau_m$,
the smaller disk can be chosen so that its image $U$
lies in $\{r_m>R\tau_m\}$ and $r_m$ is comparable to
$r$ on both disks.

Define
\begin{align*}
    z_m:=H_m\circ Q_m^{-1},\qquad
    \widetilde b_m:=b_m\circ X_m^0\circ Q_m^{-1}.
\end{align*}
The fundamental theorem of calculus gives
\begin{align*}
    ((z_m-h_{\zeta_m})\circ Q_m)(q)
        =a_m(q)-\int_0^1
            Dh_{\zeta_m}(q+t e_m(q))\cdot e_m(q)\,dt.
\end{align*}
Differentiating twice uses at most three derivatives of
$h_{\zeta_m}$ and two of $e_m$. By
\eqref{eq:s4-initial-flat-derivatives} and
\eqref{eq:s4-outer-projection-estimates}, the sum of the
absolute values of these derivatives, weighted by $1,r,r^2$,
is at most $C\tau_m A/r\leq CA$ for the integral term.
Consequently,
\begin{align*}
    \sum_{j=0}^2r^j
        |D^j((z_m-h_{\zeta_m})\circ Q_m)|\leq CA.
\end{align*}
Composition with $Q_m^{-1}$ proves the first estimate in
\eqref{eq:s4-outer-coordinate-estimates}; the same chain
rule applied to $b_m\circ X_m^0$ proves the second.
The identity $\pi(G_m\circ X_m^0)=Q_m$ also gives the
parametrisation of the second deformation in
\eqref{eq:s4-outer-graph-form}. Finally,
$|h_{\zeta_m}|\leq Cm^2\tau_m$,
\eqref{eq:s4-initial-flat-derivatives}, and $A=o(\tau_m)$
give \eqref{eq:s4-outer-height-bounds}. Reflection by $Z$
gives the lower-sheet statement.

\smallskip
\noindent\textit{Proof of Lemma~\ref{lem:s4-exact-height-curvature}.}
For this calculation abbreviate
\begin{align*}
    c:=\cos z,\qquad s_z:=\sin z,\qquad p:=Dz,
    \qquad H:=D^2z,\qquad Q:=c^2+|p|^2.
\end{align*}
Differentiating $X(q)=\mathcal F(q,z(q))$ gives tangent
vectors $T_i=cE_i+p_iE_3$, for $i\in\{1,2\}$. The induced
metric and upward unit normal are therefore
\begin{align*}
    (g_X)_{ij}=c^2\delta_{ij}+p_ip_j,\qquad
    \det g_X=c^2Q,\qquad
    \nu^+=\frac{-p_1E_1-p_2E_2+cE_3}{\sqrt Q}.
\end{align*}
The connection formulas for $(E_1,E_2,E_3)$ give
\begin{align*}
    \nabla^{\mathcal N}_{T_i}T_j
        &=-s_z(p_iE_j+p_jE_i)+(H_{ij}+cs_z\delta_{ij})E_3,\\
    \langle\nabla^{\mathcal N}_{T_i}T_j,\nu^+\rangle
        &=\frac c{\sqrt Q}
            (H_{ij}+cs_z\delta_{ij}+2\tan z\,p_ip_j).
\end{align*}
Set $B:=H+cs_zI+2\tan z\,p\otimes p$. The determinant
of this scalar second fundamental form divided by
$\det g_X$ equals $\det B/Q^2$. Moreover,
$S(T_1)=-E_1$ and $S(T_2)=E_2$, so the matrix of
$\langle S(T_i),T_j\rangle$ is $\operatorname{diag}(-c,c)$.
The Gauss equation for $\mathcal N\subset\mathbb S^4$
gives sectional curvature $1-Q^{-1}$ on the graph tangent
plane. Applying the Gauss equation to the graph gives
\begin{align*}
    Q^2K_X=Q^2-Q+\det B=Q(|p|^2-s_z^2)+\det B.
\end{align*}
For a symmetric $2\times2$ matrix $A$, its cofactor matrix
is $\operatorname{cof}(A)=(\operatorname{tr}A)I-A$, and
\begin{align*}
    \det(A+t\,p\otimes p)
        =\det A+t\,p^T\operatorname{cof}(A)p.
\end{align*}
Taking $A=H+cs_zI$ gives
\begin{align*}
    \det B={}&\det H+cs_z\operatorname{tr}H+c^2s_z^2
        +2\tan z\,p^T\operatorname{cof}(H)p+2s_z^2|p|^2.
\end{align*}
Since $Q=c^2+|p|^2$ and $c^2+s_z^2=1$,
$Q(|p|^2-s_z^2)+c^2s_z^2+2s_z^2|p|^2=|p|^2+|p|^4$.
Substitution, followed by
$p^T\operatorname{cof}(H)p=(\operatorname{tr}H)|p|^2-H(p,p)$,
proves \eqref{eq:s4-exact-height-curvature}.

For the error estimate, the assumed bounds and
$r_m\leq\pi/m$ give
\begin{align*}
    |z|\leq Cm^2\tau_m\leq C\tau_m r_m^{-2},\qquad
    |p|\leq C\tau_m r_m^{-1},\qquad
    |H|\leq C\tau_m r_m^{-2}.
\end{align*}
Consequently, $|\mathcal K_2(z)|\leq C\tau_m^2r_m^{-4}$.
Also $Q\geq\cos^2z_0$ and
$|Q^{-2}-1|\leq C(z^2+|p|^2)$. The remaining terms satisfy
\begin{align*}
    |(cs_z-z)\operatorname{tr}H|
        &\leq C|z|^3|H|\leq Cz^2\tau_m^2r_m^{-4},\\
    |p|^4&\leq C|p|^2\tau_m^2r_m^{-4},\\
    |2\tan z\,p^T\operatorname{cof}(H)p|
        &\leq C|z||H||p|^2
            \leq C|p|^2\tau_m^2r_m^{-4}.
\end{align*}
Subtracting $\mathcal K_2(z)$ from the exact formula divided
by $Q^2$ therefore gives
\begin{align*}
    |K_X-\mathcal K_2(z)|
        \leq C(z^2+|p|^2)\tau_m^2r_m^{-4}
        \leq C(m^4\tau_m^2+R^{-2})\tau_m^2r_m^{-4},
\end{align*}
where the last inequality uses $r_m\geq R\tau_m$.
\let\c\textcedilla
\bibliographystyle{amsalpha}
\bibliography{main}
\end{document}